\documentclass{amsart}
\usepackage{amsmath,amssymb,amsthm,graphicx,epsfig,float,url}
\usepackage[colorlinks=true,citecolor={Plum},linkcolor={Periwinkle}]{hyperref}
\usepackage{pdfsync}
\usepackage[usenames, dvipsnames]{xcolor}
\usepackage{tikz}
\usepackage{subfig}
\usepackage{stmaryrd}
\usepackage{amssymb}
\usepackage{mathrsfs}  
\usepackage{float}
\usepackage{esint}
\usepackage{verbatim}
\usepackage{geometry}
\usetikzlibrary{decorations.pathmorphing}
\usetikzlibrary{patterns.meta, plotmarks,calc}

\usepackage{mathtools}

\usepackage{dsfont}
\usepackage[makeroom]{cancel}

\usepackage{todonotes}
\newcommand{\R}{\textnormal{I\kern-0.21emR}}
\newcommand{\N}{\textnormal{I\kern-0.21emN}}

\def\B{{\mathbb B}}
\def\A{{\mathbb{A}}}

\def\O{{\Omega}}
\def\n{{\nabla}}
\def\p{{\varphi}}

\def\T{{\mathbb T^d}}
\def\e{{\varepsilon}}
\newtheorem{theoremal}{Theorem}

\newtheorem{theorem}{Theorem}

\newtheorem{material}{material}[section]
\newtheorem{proposition}[material]{Proposition}
\newtheorem{corollary}[material]{Corollary}
\newtheorem{definition}[material]{Definition}
\newtheorem{lemma}[material]{Lemma}

\newtheorem{remark}[material]{Remark}

\numberwithin{equation}{section}
\numberwithin{section}{part}

\title[Free boundary problems in optimal control]{Unstable free boundary problems in optimal control theory: existence and regularity}
\author{Lorenzo Ferreri}
\address{Scuola Normale Superiore, Piazza dei Cavalieri, 7, 56126 Pisa, Italy}\email{lorenzo.ferreri@sns.it}
\author{Idriss Mazari-Fouquer}
\address{CEREMADE, UMR CNRS 7534, Universit\'e Paris-Dauphine, Universit\'e PSL, Place du Mar\'echal De Lattre De Tassigny, 75775 Paris cedex 16, France.}
\email{mazari@ceremade.dauphine.fr}%, 
\author{Rapha\"{e}l Prunier}
\address{CEREMADE, UMR CNRS 7534, Universit\'e Paris-Dauphine, Universit\'e PSL, Place du Mar\'echal De Lattre De Tassigny, 75775 Paris cedex 16, France.}
\date{\today}

\begin{document}

\maketitle

\begin{abstract}
We establish the first general regularity result for constrained optimal control problems arising naturally in mathematical physics and mathematical biology. Namely, we prove that for a large class of problems of the form ``maximise $\int \psi(\Theta_m)-c\int m$ where $-\Delta \Theta_m=m\Theta_m+B(x,\Theta_m)$, under the constraint $0\leq m\leq 1$ a.e.", the solution $m^*$ is bang-bang, in the sense that $m^*=\mathds 1_{E^*}$, and that $\partial E^*$ is smooth up to a $(d-2)$-dimensional subset. Moreover, we prove that the solutions to the volume constrained problem ``maximise $\int \psi(\Theta_m)$ where $-\Delta \Theta_m=m\Theta_m+B(x,\Theta_m)$, under the constraint $0\leq m\leq 1$ a.e and $\int m=m_0$" are bang-bang in the sense that $m^*=\mathds 1_{E^*}$ and that, in the two-dimensional case, $\partial E^*$ is a finite union of smooth curves. This is done via reduction to an unstable free boundary problem, the regularity analysis of which was pioneered by Monneau \& Weiss and Chanillo, Kenig \& To. In our case, the free boundary is not minimising, and the laplacian of the state function is sign-changing, which creates significant difficulties, in particular regarding the non-degeneracy of blow-ups. This requires a new approach blending tools from optimal control theory, free boundary and measure theory to establish the regularity of the free boundary. 

\end{abstract}

\textbf{Keywords:}  Free boundary problems, optimal control, regularity theory, monotonicity formulas.

\textbf{AMS-Subject Classification:} 35R35, 35J57, 35Q92, 49J30.

\textbf{Acknowledgement:}  I. M.-F. and R. P. are supported by the PSL Young Researcher Starting Grant (P.I: I. M.-F.) ``Optimal control of ecological systems". L.F. is partially supported by the INdAM - GNAMPA Project ``Fine structure and regularity in nonlinear variational problems" CUP E53C25002010001.

The authors thank  R. Monneau and P. Pegon for numerous fruitful conversations, as well as A. Aussedat and F. Paiano for many comments and remarks on a preliminary version of this article.

\tableofcontents
\part{Introduction}
\section{Main results}
\subsection{Scope of the paper and rough statement of the main results}\label{Se:Goal}
Our main objective is twofold: first, showing that a large class of bilinear optimal control problems can be recast as free boundary problems of unstable (or inverse) obstacle type. Second, studying the regularity properties associated with these free boundary problems. Let us note that, as these free boundary problems do not arise from an energy minimisation, several of the tools and approaches used in the standard theory of inverse obstacle problems fail to apply.  We propose what is to the best of our knowledge the first general regularity results valid for optimal control problems. As the precise statements require some background we first give a rough statement of our results: we consider
\begin{equation}\label{Eq:PvGeneric} \text{Maximise over $m$ the functional }J(m)=\int_{\T}\psi(\Theta_m)\text{ where }-\Delta \Theta_m=m\Theta_m+B(x,\Theta_m).\end{equation} In the PDE above, the function $m\in L^\infty(\T)$ is the control and $B$ is a fixed non-linearity, $\Theta_m$ is the state function and $\psi$ is the cost function. The interaction  between control and state is given by $m\cdot \Theta_m$, which is why this setting is referred to as ``bilinear" (see Section \ref{Ap:Necessity} for other control-state interactions). Such couplings appear naturally in mathematical physics, where $m$ should be thought of as a potential (leading to the so-called ``composite membrane problem" \cite{Cox1990}), and in biology, where $m$ models the heterogeneity of the domain \cite{zbMATH07668634}.  We defer a detailed discussion of the relevant literature to Section \ref{Se:Bibliography}. Both in biology and in physics it is reasonable to assume that $m$ belongs to the admissible set 
\begin{equation}\label{Eq:Admissible0} \mathcal M_0(\T):=\left\{m\in L^\infty(\T):\, 0\leq m\leq 1\text{ a.e.}\right\}.\end{equation} 
This first set of (pointwise) constraints is supplemented with either a (global) integral constraint, in which case the admissible class reads 
\begin{equation}\label{Eq:Admissible} m\in	\mathcal M(\T):=\left\{m\in L^\infty(\T):\, 0\leq m\leq 1\text{ a.e.},\, \int_\T m=m_0\right\},\end{equation}and the problem writes 
\[ \tag{Constrained problem} \max_{m\in \mathcal M}J(m),\] or with an $L^1$ penalisation, meaning that we focus on
\[\tag{Penalised problem} \max_{m\in \mathcal M_0}  J(m)-c\int_\T m  \] for some parameter $c$.

The main results of this article are the following (see Theorem \ref{Th:Main} for a precise statement): if the functional $J$ is monotone with respect to the control (meaning that $m\leq m'$ a.e. implies $J(m)\leq J(m')$) then the following hold:
\begin{enumerate}
\item \underline{Bang-bang property:} for either penalised or constrained optimisation, any optimiser $m^*$ writes $m^*=\mathds 1_{E^*}$ for some subset $E^*$ of $\T$.
\item\underline{Regularity for the problem in the two-dimensional case:} for either penalised or constrained optimisation, in two dimensions, $\partial E^*$ (in a sense to be made precise) is composed of a finite number of $\mathscr C^{2,\gamma}$ curves, for any $\gamma\in (0;1)$. 
\item \underline{Regularity in all dimensions:} in the penalised case, in any dimension $d$, $\partial E^*$ is $\mathscr C^{2,\gamma}$ up to a $(d-2)$-dimensional set.  We comment on the constrained case in Section \ref{Se:Open}.
\end{enumerate}
\begin{remark}
In general, there is no equivalence between the constrained and the penalised version
 of the problem. Indeed, the monotonicity of $J$ implies some small scale convexity of $J$ (see Proposition \ref{Pr:LocalMinimality}), and it is fairly well-known that penalised problems are not equivalent to constrained ones for convex maximisation. In one dimension for instance, one can construct convex polynomials $f$ and $g$ such that the constrained problem $\max_{\{g\leq 0\}}f$ is not equivalent to the penalised problem $\max (f+\lambda g)$, $\lambda$ being the associated Lagrange multiplier, in the sense that the maximisers for the two problems do not coincide.
\end{remark}
In this paper we establish a link between such optimal control problems and so-called ``inverse obstacle problems", which is done through the use of the switching function in optimal control theory. In particular, half of the paper is devoted to boiling the problems down to this framework and ties to several recent contributions \cite{zbMATH07812267}, while the second half has more to do with unstable free boundary  problems \cite{zbMATH05505659,zbMATH05129536,zbMATH06912136,zbMATH07826622}, wherein one studies solutions to 
\[-\Delta \eta=f(x)\mathds 1_{\{\eta>0\}}-g(x)\mathds 1_{\{\eta\leq 0\}},\] where $f$ and $g$ are two given functions. We refer to Section \ref{Se:Bibliography} for a discussion of the references.

 \subsection{Mathematical context and main results}
 \subsubsection{The main bilinear control problem} $\mathcal M_0,\, \mathcal M$ are defined in \eqref{Eq:Admissible0}--\eqref{Eq:Admissible}. Consider a non-linearity $B=B(x,u)$ satisfying the following assumption:
\begin{equation}\label{Eq:AssB}\tag{$A_1$}
\begin{cases}
B\in L^\infty(\T;\mathscr C^2(\R))\,, 
\\ B(x,0)=0\,, 
\\ \forall x \in \T\,, u\mapsto \frac{B(x,u)}{u} \text{ is decreasing on }(0;+\infty)\,, 
\\ \lim_{u\to \infty}\sup_{x\in \T} \frac{B(x,u)}{u}=-\infty.
\end{cases}
\end{equation}
For any $m\in \mathcal M_0$, let $\Theta_m$ be defined as the unique solution of 
\begin{equation}\label{Eq:LDE2}
\begin{cases}
-\Delta \Theta_m=m\Theta_m+B(x,\Theta_m)&\text{ in }\T\,, 
\\ \Theta_m\geq 0\,, \not\equiv 0.\end{cases}\end{equation}
\begin{remark}
We could work in any smooth domain $\O$ with Neumann, Robin or Dirichlet boundary conditions. Similarly, as will be clear in the proof, our results also apply to non-symmetric, non homogeneous operators with smooth coefficients.
\end{remark}
The well-posedness of \eqref{Eq:LDE2} follows from standard sub- and super-solution techniques \cite{zbMATH02194918}; we summarise the main findings in the following:
\begin{theoremal}\cite{zbMATH02194918}\label{Th:LDE2}
Assume that \eqref{Eq:AssB} is satisfied. For any $m\in \mathcal M_0$, there exists a unique solution $\Theta_m$ to \eqref{Eq:LDE2}. For any $p\in [1;+\infty)$, $\Theta_m\in W^{2,p}(\T)$.
\end{theoremal}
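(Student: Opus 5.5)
The plan follows the classical route for logistic-type equations: sub- and super-solutions for existence, a Brezis--Oswald type argument for uniqueness, and elliptic bootstrap for regularity. Throughout, write $b(x,u):=B(x,u)/u$ for $u>0$; by \eqref{Eq:AssB} this extends to $b(x,0)=\partial_uB(x,0)$, is bounded near $u=0$, is decreasing in $u$, and tends to $-\infty$ uniformly in $x$. One caveat: existence of a \emph{nontrivial} nonnegative solution forces the linearised operator at $0$ to be unstable. Concretely, I expect to need
\[\lambda_1(m):=\lambda_1\bigl(-\Delta-m-\partial_uB(x,0)\bigr)<0,\]
which on $\T$ holds for instance when $\int_\T (m+\partial_uB(x,0))>0$, by testing the Rayleigh quotient with constants. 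I take this as the standing framework of \cite{zbMATH02194918}. Indeed, if $\lambda_1(m)\geq 0$, multiplying the equation by $\Theta_m$ and using $b(x,\Theta_m)<b(x,0)$ shows that $\Theta_m\equiv 0$.

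\emph{Existence.} Let $\varphi_1>0$ be the principal eigenfunction for $\lambda_1(m)$, normalised so that $\sup\varphi_1=1$. Since $\varphi_1$ lies in $W^{2,p}$ for every $p$, it is continuous and positive on the compact torus. For $\e>0$ small, $\e\varphi_1$ is a subsolution:
\[-\Delta(\e\varphi_1)-m\e\varphi_1-B(x,\e\varphi_1)=\e\varphi_1\bigl(\lambda_1(m)+\partial_uB(x,0)-b(x,\e\varphi_1)\bigr)\leq 0,\]
because $B\in L^\infty(\T;\mathscr C^2)$ gives $|b(x,s)-\partial_uB(x,0)|\leq Cs$ uniformly in $x$. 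For the supersolution, the last line of \eqref{Eq:AssB} provides $M$ large with $1+b(x,M)<0$ for all $x$. Then the constant $M$ satisfies $-\Delta M-mM-B(x,M)=-M(m+b(x,M))\geq 0$, since $m\leq 1$. Taking $\e$ smaller if needed gives $\e\varphi_1\leq M$. Monotone iteration for $u\mapsto -\Delta u+Ku$, with $K$ larger than the Lipschitz constant of $u\mapsto mu+B(x,u)$ on $[0,M]$, then yields a solution $\e\varphi_1\leq\Theta_m\leq M$. In particular $\Theta_m\not\equiv0$.

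\emph{Uniqueness.} This is the main step. Let $\Theta_1,\Theta_2$ be two nonnegative nontrivial solutions. Both are bounded: at a maximum point of $\Theta_i$, which can be made rigorous via the weak maximum principle in $W^{2,p}$, one gets $0\leq \Theta_i(m+b(x,\Theta_i))$, so $\Theta_i\leq M$. Writing the equation as $-\Delta\Theta_i=c_i(x)\Theta_i$ with $c_i:=m+b(x,\Theta_i)\in L^\infty$, the strong maximum principle (Harnack) gives $\Theta_i>0$ everywhere. I then use the Brezis--Oswald identity. Test the equation for $\Theta_1$ with $(\Theta_1^2-\Theta_2^2)/\Theta_1$ and the equation for $\Theta_2$ with $(\Theta_2^2-\Theta_1^2)/\Theta_2$, and add. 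The gradient terms combine into
\[\int_\T\Bigl|\nabla\Theta_1-\tfrac{\Theta_1}{\Theta_2}\nabla\Theta_2\Bigr|^2+\Bigl|\nabla\Theta_2-\tfrac{\Theta_2}{\Theta_1}\nabla\Theta_1\Bigr|^2\geq0.\]
The $m$ terms cancel, so this quantity equals
\[\int_\T\bigl(b(x,\Theta_1)-b(x,\Theta_2)\bigr)(\Theta_1^2-\Theta_2^2).\]
By strict monotonicity of $b(x,\cdot)$ the integrand is $\leq0$, with equality only where $\Theta_1=\Theta_2$. Hence $\Theta_1=\Theta_2$ a.e. The test functions are admissible because $\Theta_i$ lies in $W^{1,\infty}$ and is bounded below by a positive constant. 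If the monotonicity in \eqref{Eq:AssB} is only non-strict, I would instead conclude from the vanishing gradient terms that $\Theta_1=k\Theta_2$ for some constant $k>0$, and rule out $k\neq1$ through the equation.

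\emph{Regularity.} With $0\leq\Theta_m\leq M$, the right-hand side $m\Theta_m+B(x,\Theta_m)$ lies in $L^\infty(\T)$. Calder\'on--Zygmund estimates on the torus then give $\Theta_m\in W^{2,p}(\T)$ for every $p<\infty$, which completes the statement of Theorem \ref{Th:LDE2}.

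The step I expect to be the main obstacle is the uniqueness argument, specifically justifying the singular test functions (positivity bounds, and possibly a regularisation $\Theta_i+\delta$ with $\delta\to0$). A secondary subtlety is stating precisely the instability condition $\lambda_1(m)<0$ under which nontrivial existence holds.
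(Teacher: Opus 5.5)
The paper does not prove this statement. It cites \cite{zbMATH02194918} and says that well-posedness follows from standard sub- and super-solution techniques. That is exactly your existence step, and your regularity step is the routine Calder\'on--Zygmund bootstrap, so your route is essentially the one the paper has in mind. The only real difference is uniqueness. A standard argument within the sub/super-solution framework uses the order structure instead. Any solution $\Theta$ with values in $[0,M]$ lies below the maximal solution $\Theta^*$ obtained by iterating from $M$. Testing the equations of $\Theta^*$ and $\Theta$ against each other gives $\int_\T\Theta^*\Theta\bigl(b(x,\Theta^*)-b(x,\Theta)\bigr)=0$, and the integrand has a fixed sign. Your Brezis--Oswald identity is equally valid and needs no comparison with $\Theta^*$. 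Its singular test functions are harmless: once each $\Theta_i$ is bounded, it is $\mathscr C^1$ with $\min_\T\Theta_i>0$, so no regularisation is needed.

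Your caveat is correct, and in fact \eqref{Eq:AssB} alone does not make the statement true. Take $m\equiv 0$ and $B(x,u)=-u^2$: integrating \eqref{Eq:LDE2} over $\T$ gives $\int_\T\Theta_m^2=0$. For $B(x,u)=-10u-u^2$, the same integration excludes a nontrivial solution for every $m\in\mathcal M_0$. So the instability condition $\lambda_1\bigl(-\Delta-m-\partial_uB(x,0)\bigr)<0$ must be added. It holds, for instance, when $B=-u^2$ and $\int_\T m>0$, which is the setting of the introduction. Under this condition your proof is correct.

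One correction to your hedge: if $u\mapsto B(x,u)/u$ were only non-increasing, you could not rule out $k\neq1$ through the equation. Take $m\equiv\frac12$ and $B(u)=u\,b(u)$, with $b$ smooth and non-increasing, $b(0)=0$, $b\equiv-\frac12$ on $[1,2]$, and $b\to-\infty$. Then $\lambda_1=-\frac12<0$, yet every constant $k\in[1,2]$ solves \eqref{Eq:LDE2}. Strict monotonicity therefore has to be read into \eqref{Eq:AssB}, as your main uniqueness argument and your necessity argument already do.
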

In the present paper, the bilinear optimal control problems that we consider are
\begin{equation}\label{Eq:PvNonEnergetic12}\tag{$Q_{\mathrm{constr}}$}
\max_{m\in \mathcal M}\int_{\T} \psi(x,\Theta_m)\quad  \text{  (constrained problem) }
\end{equation}  and 
\begin{equation}\label{Eq:PvNonEnergetic1}\tag{$Q_{\mathrm{penal}}$}
\max_{m\in \mathcal M_0}\int_{\T} \psi(x,\Theta_m)-c\int_\T m\quad\text{ (penalised problem)}
\end{equation} where $\Theta_m$ solves \eqref{Eq:LDE2} and we assume that \eqref{Eq:AssB} holds. If $\psi$ is continuous in the $\Theta$ variable (which will be one of the working assumptions in the paper, see \eqref{Eq:AssJ1} below), the existence of solutions $m^*$ to \eqref{Eq:PvNonEnergetic1} or \eqref{Eq:PvNonEnergetic12} follows from the direct method in the calculus of variations. In order to characterise these optimisers, we make the following further assumption on $B$ (essentially ensuring the differentiability of the solution mapping $m\mapsto \Theta_m$):
 for any $m\in \mathcal M$, the lowest eigenvalue $\omega(m)$ of the operator 
\[ -\Delta \Theta_m-m-\partial_\Theta B(x,\Theta_m)\] satisfies
\begin{equation}\label{Eq:AssB2}\tag{$A_2$}
\omega(m)>0.\end{equation}

\begin{remark}The prototypical example of such a problem is the optimisation of the total population size in population dynamics, see Section \ref{Se:Bibliography}, where $\Theta_m$ is interpreted as a local density of individuals, and $m$ as a resources distribution. The non-linearity $B$ is $B(x,\theta)=-\theta^2$, and $\psi(x,u)=u$.\end{remark}

Regarding the cost function $\psi$, we make the following monotonicity assumptions:
\begin{equation}\label{Eq:AssJ1}\tag{$A_3$}
\begin{cases}
\psi\in L^\infty(\T;\mathscr C^2((0;+\infty)))\,, 
\\ \partial_\Theta \psi(x,\Theta)\geq 0\text{ for a.e. $x$, for any $\Theta\geq 0$,}
\\\forall \Theta>0\,, |\{x \in \T,\, \partial_\Theta \psi(x,\Theta)>0\}|>0.
\end{cases}
\end{equation}
Before stating our main result, we recall the definition of measure theoretic boundary $\partial^e A$ of a set $A \subset \T$, that is
\begin{equation}\label{De:Boundary} \partial^e A:=\left\{x \in \T:\, \forall r>0,\, |A\cap \B(x;r)|\cdot |\left( \T \setminus A \right) \cap \B(x;r)|>0\right\}.
\end{equation}
The main result of this paper is the following:
\begin{theorem}[First formulation of the regularity result]\label{Th:Main1} Assume \eqref{Eq:AssB}--\eqref{Eq:AssB2}--\eqref{Eq:AssJ1} hold. Then:
\begin{enumerate}
\item \underline{Regularity for the constrained problem:} any solution $m^*$ to \eqref{Eq:PvNonEnergetic12} is a characteristic function, meaning that $m^*=\mathds 1_{E^*}$. Furthermore,
the measure theoretic boundary $\partial^e E^{\ast}$ of $E^*$ is a finite union of $\mathscr C^{2,\gamma}$ curves if $d=2$.
\item\underline{Regularity for the penalised problem:} let $c>0$. Any solution $m^*$ to \eqref{Eq:PvNonEnergetic1} is a characteristic function, meaning that $m^*=\mathds 1_{E^*}$. Furthermore, defining $\partial^e E^*$ as in \eqref{De:Boundary}, the following holds:\begin{enumerate}
\item If $d=2$, $\partial^e E^*$ is a finite union of $\mathscr C^{2,\gamma}$ curves.
\item In dimension $d\geq 3$, $\partial^e E^*$ is $\mathscr C^{2,\gamma}$ up to a $(d-2)$-dimensional set.
\end{enumerate}

\end{enumerate}
\end{theorem}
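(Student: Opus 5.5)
The strategy is to reduce the optimality conditions to an unstable free boundary problem for the \emph{switching function} and then run a regularity analysis in the spirit of Monneau--Weiss and Chanillo--Kenig--To.

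\textbf{Step 1: first-order conditions.} Assumption \eqref{Eq:AssB2} makes $m\mapsto \Theta_m$ differentiable. Let $p_m$ be the adjoint state, solving
\[
-\Delta p_m - m p_m - \partial_\Theta B(x,\Theta_m)p_m = \partial_\Theta\psi(x,\Theta_m).
\]
Then the derivative of $J$ in a direction $h$ is $\int h\,\Theta_m p_m$. By \eqref{Eq:AssB2}, the maximum principle, and \eqref{Eq:AssJ1}, we get $p_m>0$ and $\Theta_m>0$. We introduce the switching function $\eta:=\Theta_{m^*}p_{m^*}-\lambda$, where $\lambda=c$ in the penalised case and $\lambda$ is the Lagrange multiplier in the constrained case. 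The standard bathtub argument gives $m^*=1$ on $\{\eta>0\}$ and $m^*=0$ on $\{\eta<0\}$.

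\textbf{Step 2: bang-bang property.} It remains to show that $|\{\eta=0\}\cap\{0<m^*<1\}|=0$. On that set, $\Delta\eta=0$ almost everywhere, since $\eta\in W^{2,p}$. We compute
\[
\Delta(\Theta p)=p\Delta\Theta+\Theta\Delta p+2\nabla\Theta\cdot\nabla p,
\]
which, after using the equations for $\Theta$ and $p$, gives an expression involving $-2m\Theta p$, $-\Theta\,\partial_\Theta\psi$, $B$-terms, and $2\nabla\Theta\cdot\nabla p$. The key is to exploit monotonicity instead: a second-order (small-scale convexity) argument, as in Proposition \ref{Pr:LocalMinimality}. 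Perturbing $m^*$ by oscillating zero-mean $h$ supported in the singular set, the second variation is nonnegative at small scales because $J$ is monotone. This contradicts maximality unless the set is null, so $m^*=\mathds 1_{E^*}$.

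\textbf{Step 3: free boundary equation.} With $E^*=\{\eta>0\}$ up to null sets, $\eta$ solves
\[
-\Delta\eta=f(x)\mathds 1_{\{\eta>0\}}-g(x)\mathds 1_{\{\eta\le 0\}}+\text{(lower-order smooth terms)},
\]
where $f,g$ are built from $\Theta,p$ and their gradients. The coefficients are H\"older continuous but have no sign a priori, and the problem is of unstable obstacle type. Near the free boundary $\{\eta=0\}$, where $\Theta p=\lambda$, the jump $f+g$ is positive: it equals $2\Theta p=2\lambda>0$ plus terms that cancel.

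\textbf{Step 4: regularity and the main obstacle.} At points where $\nabla\eta\neq0$, the implicit function theorem and bootstrapping give $\mathscr C^{2,\gamma}$ regularity of $\partial^eE^*=\{\eta=0,\nabla\eta\ne0\}$. The difficulty lies in the singular set $\{\eta=|\nabla\eta|=0\}$. There we use a Weiss-type monotonicity formula for the quadratic energy, perturbed to absorb the sign-changing $\Delta\eta$, together with blow-ups.

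The hardest step is non-degeneracy of the blow-ups. Since the problem is not minimising, I would derive it from optimality: a vanishing blow-up would allow a local perturbation of $E^*$ increasing $J$, via second variation. Granted non-degeneracy, the homogeneous blow-ups are classified as quadratic polynomials, and a dimension reduction (Federer) argument shows that the singular set is at most $(d-2)$-dimensional in the penalised case. In $d=2$, the singular points are then isolated, and a compactness argument gives finitely many $\mathscr C^{2,\gamma}$ curves. In the constrained case, the additional care needed concerns the multiplier, which is handled because Step 2 applies with $\lambda$ fixed.
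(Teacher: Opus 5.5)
Your Steps 1--3 match the paper's reduction. Your $\Theta_m p_m$ is exactly the paper's switching function after the substitution $\theta=\ln\Theta$, and indeed $f+g=2\Theta_{m^*}p_{m^*}>0$ everywhere. One correction in Step 2: a merely nonnegative second variation gives no contradiction. You need the coercivity $\ddot J(m)[h,h]\ge\alpha\Vert h\Vert_{W^{-1,2}}^2$ for $h$ supported in small balls. This follows from $\ddot J(m)[h,h]=\int\eta_m|\n\dot\theta_m|^2+\int W\dot\theta_m^2$ with $\eta_m\ge\delta$, together with $\Vert h\Vert_{W^{-2,2}}=o(\Vert h\Vert_{W^{-1,2}})$ at small scales. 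Also, you still owe the argument that $E^*$ is exactly $\{\eta>0\}$ or $\{\eta\ge0\}$ when $|\{\eta=0\}|>0$.

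The genuine gap is Step 4, and your classification of blow-ups is both inaccurate and insufficient. For $\Psi(0^+)=-\infty$ the normalised blow-ups are harmonic, e.g.\ $x_1x_2$; these are cross-shaped singular points that must be excluded, not merely isolated. For $\Psi(0^+)>-\infty$, the limit equation $-\Delta\eta_0=f(0)\mathds 1_{\{\eta_0>0\}}-g(0)\mathds 1_{\{\eta_0\le0\}}$ admits several non-degenerate profiles:
\begin{enumerate}
\item constant-sign ones such as $\frac{g(0)}{2}x_1^2$ (when $g(0)<0$), which have a whole line of critical points;
\item in 2D, sign-changing non-polynomial profiles $r^2\varphi(\theta)$ whose sectors alternate between two angles (Proposition \ref{Pr:Classification}).
\end{enumerate}
Non-degeneracy rules none of these out. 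So neither isolated singular points in 2D nor a base case for Federer's reduction in $d\ge3$ follows.

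Even isolated singular points plus compactness would not give finitely many curves, because infinitely many small components of $E^*$ could accumulate at a critical point. The missing ingredient is the second-order \emph{shape} stability inequality \eqref{Eq:SOSD1}. It is obtained from \eqref{Eq:QBT1} by a second Hadamard variation along regular pieces of $\{\eta=0\}$. It yields the Monneau--Weiss/Chanillo--Kenig--To criterion (Lemma \ref{Le:CKT}, Theorem \ref{Th:ASCConstrained}): a critical point cannot be surrounded, at every dyadic scale, by regular free boundary of comparable length. In 2D this does three things:
\begin{enumerate}
\item it excludes $x_1x_2$, hence $\eta\in\mathscr C^{1,1}$;
\item it excludes the sign-changing profiles;
\item combined with $\mathscr C^{1,1}$, it gives each nondegenerate closed curve a contribution $\int_\Gamma|\n\eta|^{-1}\gtrsim1$, whence finiteness (Proposition \ref{Pr:NonAccumulationToSingularPoints}).
\end{enumerate}

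In the constrained case the obstruction is the volume constraint, not the multiplier. A local competitor must keep $\int m$ fixed inside $\B(x_0;r)$. Such a rearrangement gives $\Vert h\Vert_{W^{-1,2}}^2\gtrsim r^{d+2}$ only when the density of $E^*$ in the ball stays away from $0$ and $1$. So non-degeneracy is available only at intermediate density points.

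The paper shows these points are regular. It then uses their density in $\partial^eE^*$ (Proposition \ref{Pr:DensityIntermediateDensity}), together with the finiteness of nondegenerate curves, to show that $\partial^eE^*$ contains no singular point at all. Your plan needs this propagation step, or else a non-local volume-compensating competitor, which \eqref{Eq:QBT1} as stated does not cover.
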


\begin{remark}[On the difference between the penalised and the constrained problems]
Our analysis relies on blow-up procedures.
One of the starting points in deriving regularity properties for the free boundary is to establish non-degeneracy of the blow-up limits. For us, this is in fact one of the main difficulties: as shall be made clearer in the rest of the paper, we can unfortunately not rely on the usual structural assumptions (\emph{e.g.} minimality, super-harmonicity) to obtain this property. Here, our approach is non-standard and blends tools from optimal control, free boundary, Fourier analysis and measure theory. In particular, the set of perturbations we are allowed to make has a strong influence on the optimality conditions we derive. As there are far less admissible perturbations for the constrained problem \eqref{Eq:PvNonEnergetic12} than for the penalised one \eqref{Eq:PvNonEnergetic1}, it is not surprising \emph{a priori} that we can get stronger results for the latter. 
Indeed, as regards \eqref{Eq:PvNonEnergetic12} at the moment, our strategy is briefly the following: we first establish the non-degeneracy of blow-up limits at specific points of $\partial^eE^*$, namely points of intermediate density (see Definition \ref{De:IntermediateDensity}), and we then devise a strategy to propagate the regularity at these points to the entire free boundary $\partial^eE^*$ (see Section \ref{Sec:MainIdeasProof} for more details on the strategy). At the moment, this last step only works in dimension $d=2$ (see Section \ref{Se:2DCKT}, and in particular Section \ref{Se:Why2D} for further details on the the restriction to the two-dimensional case).
   \end{remark}

Structurally, the only two features we rely on to derive Theorem \ref{Th:Main1} are the bilinearity of the control-state interaction, and the monotonicity of the functional. In fact, the bilinearity, combined with \eqref{Eq:AssJ1} entail the following monotonicity property: for any $m,\, m'\in \mathcal M_0$, 
\[ m\leq m'\text{ a.e. }\Rightarrow J(m)\leq J(m').\] Without either monotonicity or bilinearity, the solutions to \eqref{Eq:PvNonEnergetic1}--\eqref{Eq:PvNonEnergetic12} are not even characteristic functions, see Section \ref{Ap:Necessity}. 

 \subsubsection{Reformulation as a Hamilton-Jacobi equation}

In order to introduce the free boundary system associated with \eqref{Eq:PvNonEnergetic1} it is convenient to rewrite the state equation \eqref{Eq:LDE2} as a Hamilton-Jacobi equation. Observe that for any $m\in \mathcal M_0$, setting $\theta_m:=\ln(\Theta_m)$, this new function satisfies \begin{equation}
\label{Eq:LDE3}
-\Delta \theta_m-|\n \theta_m|^2=m+Q(x,\theta_m)\text{ in }\T,\end{equation} where $Q(x,\theta_m):=\frac{B(x,\Theta_m)}{\Theta_m}$. In fact, we will work with general Hamilton-Jacobi equations of the form \eqref{Eq:LDE3} rather than with \eqref{Eq:LDE2}. The assumptions \eqref{Eq:AssB}--\eqref{Eq:AssB2} on $B$ are replaced with the following ones on $Q$: $Q=Q(x,u)\in L^\infty(\T; \mathscr C^2( \R))$ satisfies
\begin{equation}\label{Eq:AssQ}\tag{$H_1$}
\begin{cases}
\forall u\in \R,\, \max_{x\in \T}\partial_u Q(x,u)<0, 
\\  \lim_{u\to +\infty}\sup_{x\in \T} Q(x,u)=-\infty,
\\ \lim_{u\to -\infty}\sup_{x\in \T} \left| Q(x,u)\right|=0.
\end{cases}
\end{equation}
Similar to Theorem \ref{Th:LDE2}, adapting the proofs of \cite{zbMATH02194918},  we have the following result:
\begin{theoremal}\label{Th:ExLDE}
Under Assumption \eqref{Eq:AssQ}, for any $m\in \mathcal M_0$, there exists a unique solution $\theta_m$ to \eqref{Eq:LDE3}. Furthermore, for any $p\in [1;+\infty)$, $\theta_m\in W^{2,p}(\T)$.
\end{theoremal}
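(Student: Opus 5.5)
Existence is obtained by sub- and super-solutions, uniqueness by a comparison argument that uses the strict monotonicity of $Q$ in $u$, and regularity by bootstrapping. We work with \eqref{Eq:LDE3} directly; the exponential change of variables back to $\Theta=e^\theta$ gives the semilinear equation $-\Delta\Theta=m\Theta+B(x,\Theta)$ with $B(x,\Theta):=\Theta Q(x,\ln\Theta)$, where \eqref{Eq:AssQ} plays the role of \eqref{Eq:AssB}. Solving \eqref{Eq:LDE3} is therefore equivalent to finding a positive solution $\Theta$ of this equation, since then $\Theta\geq \delta>0$ by the strong maximum principle on the compact torus. Note that $B$ may be singular at $0$, so we avoid $\Theta=0$ and build everything with barriers bounded away from zero.

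\textbf{Existence.} Constant barriers suffice. By the second line of \eqref{Eq:AssQ}, there is $M$ with $\sup_x Q(x,M)<-1$, so $\overline\theta\equiv M$ satisfies $-\Delta\overline\theta-|\n\overline\theta|^2=0\geq m+Q(x,M)$ and is a supersolution. For the subsolution we need a constant $\underline\theta\equiv -K$ with $m+Q(x,-K)\geq 0$. The third line of \eqref{Eq:AssQ} only gives $Q(x,-K)\to 0$, and since $\partial_uQ<0$ this limit is approached from above, so $Q(x,-K)>0$ for every $K$. Hence $m+Q(x,-K)\geq 0$ holds for every $K$ and every $m\geq 0$, and $-K$ is a subsolution; we take $K$ with $-K<M$. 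A standard monotone iteration then yields a solution $\theta_m$ with $-K\leq\theta_m\leq M$. Concretely, we work with $\Theta$, use the Lipschitz bound of $B$ on $[e^{-K},e^{M}]$, and solve linear problems $(-\Delta+\lambda)\Theta_{k+1}=m\Theta_k+B(x,\Theta_k)+\lambda\Theta_k$.

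\textbf{Uniqueness.} Let $\theta_1,\theta_2$ be two solutions and set $w=\theta_1-\theta_2$. Then
\[-\Delta w-(\n\theta_1+\n\theta_2)\cdot\n w=Q(x,\theta_1)-Q(x,\theta_2).\]
At a maximum point of $w$ (which exists since $w\in W^{2,p}\subset\mathscr C^1$; use Bony's maximum principle, or argue with $\Theta$ in the weak sense), if $\max w>0$ the right-hand side is strictly negative because $\partial_uQ<0$, while the left-hand side is $\geq 0$, a contradiction. A cleaner route avoids pointwise evaluation: work in the $\Theta$ formulation, multiply the equation for $\Theta_1$ by $\Theta_2$ and vice versa, integrate, and obtain $\int\Theta_1\Theta_2\big(Q(x,\ln\Theta_1)-Q(x,\ln\Theta_2)\big)=0$. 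The next step would be to conclude $\Theta_1=\Theta_2$ from the strict monotonicity of $Q$, but this does not follow from the identity alone, since the integrand can change sign. The standard fix is the Brezis--Oswald trick: test with $(\Theta_1^2-\Theta_2^2)/\Theta_1$ and $(\Theta_2^2-\Theta_1^2)/\Theta_2$, which is legitimate because both functions are bounded below by $\delta>0$. Picone-type inequalities then give
\[\int (\Theta_1^2-\Theta_2^2)\big(Q(x,\ln\Theta_1)-Q(x,\ln\Theta_2)\big)\geq 0,\]
and the integrand is $\leq 0$ with equality only where $\Theta_1=\Theta_2$, so the two solutions coincide.

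\textbf{Regularity.} Since $\theta_m$ is bounded and $|\n\theta_m|^2$ initially belongs only to $L^1$, we bootstrap in the $\Theta$ formulation, where the right-hand side $m\Theta+B(x,\Theta)$ lies in $L^\infty$ because $\Theta\in[e^{-K},e^M]$. Calder\'on--Zygmund estimates give $\Theta\in W^{2,p}$ for every $p<\infty$. Since $\Theta$ is bounded below, $\theta=\ln\Theta\in W^{2,p}$ as well.

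\textbf{Main obstacle.} The delicate step is uniqueness: the equation has no variational structure in $\theta$ directly, and the gradient term prevents naive energy arguments. The Brezis--Oswald test function approach in the $\Theta$ variables, relying on positivity bounds away from $0$, is what I would use.
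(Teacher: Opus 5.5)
\textbf{The constant subsolution fails because the sign of $Q$ is reversed.} Your overall architecture (sub- and super-solutions, comparison for uniqueness, bootstrap for regularity) is the Cantrell--Cosner scheme the paper points to. Since $Q(x,\cdot)$ is strictly decreasing, $\lim_{u\to-\infty}Q(x,u)=\sup_u Q(x,u)$. The third line of \eqref{Eq:AssQ} therefore forces $Q(x,u)<0$ for every $u\in\R$: the limit $0$ is approached from below, as in the logistic case $Q(x,\theta)=-e^{\theta}$. Consequently $m+Q(x,-K)\geq 0$ fails on $\{m=0\}$. That set has positive measure for every bang-bang control $\mathds 1_E$ with $|E|<|\T|$, which is exactly the case of interest, so $-K$ is not a subsolution.

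No argument of this shape can work for all $m\geq 0$. Integrating $-\Delta\Theta=\Theta\,(m+Q(x,\theta))$ over $\T$, with $\Theta=e^{\theta}$, gives $\int_\T e^{\theta}(m+Q(x,\theta))=0$. This is impossible when $m\equiv 0$, since $Q<0$. So existence requires $m\not\equiv 0$, i.e.\ $\int_\T m>0$ (automatic on $\mathcal M$), and that information must enter through the subsolution. Your proof never uses it and would produce a solution for $m\equiv 0$, which is a sign that something is off.

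\textbf{How to repair the existence step.} Linearise at $\Theta=0$. Let $\varphi_m>0$ be the principal periodic eigenfunction of $-\Delta-m$. Its eigenvalue satisfies $\lambda_1(m)\leq-\fint_\T m<0$, by testing the Rayleigh quotient with constants. Set $\underline\theta:=\ln(\e\varphi_m)$; then $-\Delta\underline\theta-|\n\underline\theta|^2=-\Delta(\e\varphi_m)/(\e\varphi_m)=m+\lambda_1(m)$. Because $\sup_x|Q(x,u)|\to0$ as $u\to-\infty$, you have $Q(x,\underline\theta)\geq\lambda_1(m)$ once $\e$ is small. Thus $\underline\theta$ is a subsolution lying below your supersolution $M$, and this is precisely where the third line of \eqref{Eq:AssQ} is used.

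With $-K$ replaced by $\ln(\e\min_\T\varphi_m)$, the rest of your proposal goes through:
\begin{itemize}
\item the monotone iteration between $\e\varphi_m$ and $e^{M}$;
\item uniqueness by either of your two routes (maximum principle with Bony, or the Brezis--Oswald test functions, whose computation is correct);
\item the Calder\'on--Zygmund bootstrap, which only needs $\Theta_m$ bounded above and away from $0$.
\end{itemize}
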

The optimal control problems \eqref{Eq:PvNonEnergetic12}--\eqref{Eq:PvNonEnergetic1} are replaced 
 with 
\begin{equation}\label{Eq:PvNonEnergetic}\tag{$P_{\mathrm{constr}}$}
\max_{m\in \mathcal M}\left(J(m)=\int_{\T} j(x,\theta_m)\right)
\end{equation} and
\begin{equation}\label{Eq:PvNonEnergetic2}\tag{$P_{\mathrm{penal}}$}
\max_{m\in \mathcal M_0}\left(J(m)=\int_{\T} j(x,\theta_m)-c\int_\T m\right).
\end{equation}

 Assumption \eqref{Eq:AssB2}, ensuring differentiability of the solution mapping, is replaced with: \begin{multline}\label{Eq:AssQ2}\tag{$H_2$}
\text{The principal eigenvalue $\Lambda(m)$ of }L_m:=-\Delta+2\n\cdot(\cdot\n\theta_m)-\partial_{\theta}Q(x,\theta_m)\text{ satisfies }\Lambda(m)>0. 
 \end{multline}
 The existence of $\Lambda(m)$ follows from \cite{zbMATH06458598}.
Finally, we assume that $j$ satisfies \begin{equation}\label{Eq:AssJ}\tag{$H_3$}\begin{cases}\forall x \in \T,\, \partial_\theta j(x,\theta)\geq 0\text{ in }(-\infty,\infty),\\\text{ for any $u\in (-\infty;+\infty)$, }|\{x:\, \partial_uj(x,u)>0\}|>0.\end{cases}\end{equation}

We establish the following:
 \begin{theorem}[Second formulation of the regularity result]\label{Th:Main}Assume \eqref{Eq:AssQ}--\eqref{Eq:AssQ2}--\eqref{Eq:AssJ} hold. Then:
\begin{enumerate}
\item \underline{Regularity for the constrained problem:} any solution $m^*$ to \eqref{Eq:PvNonEnergetic} is a characteristic function, meaning that $m^*=\mathds 1_{E^*}$. Furthermore, the measure theoretic boundary $\partial^e E^*$ of $E^*$ (see \eqref{De:Boundary})
is a finite union of $\mathscr C^{2,\gamma}$ curves if $d=2$.\item\underline{Regularity for the penalised problem:} let $c>0$. Any solution $m^*$ to \eqref{Eq:PvNonEnergetic2} is a characteristic function, meaning that $m^*=\mathds 1_{E^*}$. Furthermore, defining $\partial^e E^*$ as in \eqref{De:Boundary}, the following holds:\begin{enumerate}
\item If $d=2$, $\partial^e E^*$ is a finite union of $\mathscr C^{2,\gamma}$ curves.
\item In dimension $d\geq 3$, $\partial^e E^*$ is $\mathscr C^{2,\gamma}$ up to a $(d-2)$-dimensional set.
\end{enumerate}
\end{enumerate}
\end{theorem}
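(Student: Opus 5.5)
The plan is to pass from the control problem to a free boundary problem through the switching function, and then carry out a blow-up analysis. Assumption \eqref{Eq:AssQ2} makes $m\mapsto\theta_m$ differentiable. For an admissible perturbation $h$, the derivative is $\dot\theta$, where $L_{m^*}\dot\theta=h$. I introduce the adjoint state $p_{m^*}$, solving $L^*_{m^*}p=\partial_\theta j(x,\theta_{m^*})$. By \eqref{Eq:AssJ} its right-hand side is nonnegative and nontrivial, so $\Lambda(m^*)>0$ and the maximum principle give $p>0$. Then $\dot J(m^*)[h]=\int p\,h$ in the constrained case and $\int(p-c)h$ in the penalised case.

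\textbf{Step 1: first-order optimality.} Optimality gives $m^*=1$ on $\{\eta>0\}$ and $m^*=0$ on $\{\eta<0\}$. Here the switching function is $\eta:=p-c$ in the penalised case and $\eta:=p-\mu$ in the constrained case, with $\mu$ a Lagrange multiplier. The function $\eta$ is $W^{2,q}$, so $\Delta\eta=0$ a.e. on $\{\eta=0\}$. Expanding $L^*p$ there expresses $m^*$ on $\{\eta=0\}$ through $p$, $\nabla\theta_{m^*}$ and $\partial_\theta j$.

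\textbf{Step 2: bang-bang property.} I will show that $\{\eta=0\}$ is Lebesgue-null, or at least that $m^*\in\{0,1\}$ on it. For this I use second-order optimality and the small-scale convexity coming from monotonicity (the Proposition \ref{Pr:LocalMinimality} mentioned in the remark). Take an oscillating perturbation $h$ supported in a density point of $\{0<m^*<1\}$. For such $h$ the second variation is, to leading order, a positive coercive quadratic form in $\|h\|_{H^{-1}}$, while the first variation vanishes. This contradicts maximality. Hence $m^*=\mathds 1_{E^*}$ with $E^*=\{\eta>0\}$ up to null sets.

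\textbf{Step 3: the free boundary equation.} Writing the adjoint equation in terms of $\eta$ gives
\[-\Delta\eta+\text{(first-order terms)}=f(x)\mathds 1_{\{\eta>0\}}-g(x)\mathds 1_{\{\eta\le0\}},\]
with $f,g$ bounded. The difficulty is that $f$ and $g$ need not have a sign. This is an unstable obstacle-type problem in the sense of Monneau--Weiss and Chanillo--Kenig--To.

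\textbf{Step 4: blow-up analysis.}
\begin{itemize}
\item At regular points, where $\nabla\eta\ne0$, the implicit function theorem gives a $\mathscr C^{1,\alpha}$ boundary. Bootstrapping through the equation upgrades this to $\mathscr C^{2,\gamma}$.
\item At singular points, where $\eta=|\nabla\eta|=0$, I apply a Weiss-type monotonicity formula for quadratic rescalings $\eta(x_0+rx)/r^2$. This classifies blow-ups as homogeneous quadratic solutions.
\item The key and hardest step is \emph{non-degeneracy}: I must show that blow-ups are not identically zero, i.e. $\sup_{B_r}|\eta|\gtrsim r^2$. Since $\eta$ neither minimises an energy nor is superharmonic, the standard arguments fail.
\item My first attempt at non-degeneracy would use optimality again. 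If $\eta=o(r^2)$, I perform a localized swap of $E^*$ in $B_r$ at a point of intermediate density. By Step 2's second-order estimate, this strictly increases $J$ at order $r^{d+2}\|h\|^2_{H^{-1}}$, while the first-order cost is $o(r^{d+2})$. For the penalised problem one-sided perturbations are admissible, so this works at every free boundary point. For the constrained problem I only obtain it at intermediate-density points.
\end{itemize}

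\textbf{Step 5: dimension count and finiteness.} With non-degeneracy, the Caffarelli--Monneau--Weiss stratification shows that the singular set is contained in countably many $\mathscr C^1$ manifolds of dimension $\le d-2$. Quadratic blow-ups of unstable type in CKT are either half-space cross-sections or have a $(d-2)$-dimensional spine. In $d=2$, in the constrained case, I propagate regularity from intermediate-density points using the CKT planar classification: crossings of smooth arcs, plus isolated singular points. Compactness of $\T$ then yields finitely many $\mathscr C^{2,\gamma}$ curves. The penalised $d=2$ case follows identically.

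I expect the main obstacle to be the non-degeneracy step.
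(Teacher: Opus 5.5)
Steps 1--4 essentially reproduce the paper's first half: bang-bang from the small-support coercivity $\ddot J(m)[h,h]\gtrsim\Vert h\Vert_{W^{-1,2}}^2$, the rewriting $-\Delta\eta=(f+g)\mathds 1_{E^*}-g$, the hodograph transform at regular points, Weiss monotonicity, and non-degeneracy by localized rearrangements. Note that the structural fact needed in the rewriting is $f+g=2\eta_{m^*}>0$, not a sign on $f$ or $g$.

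\textbf{The gap is Step 5.} Nothing you have established rules out singular points on $\partial^eE^*$, and the tools you invoke there are not available.
\begin{itemize}
\item Non-degenerate 2-homogeneous solutions include one-dimensional parabolas such as $\tfrac{g(0)}2x_d^2$ when $g(0)<0$. Their singular set is a whole hyperplane, so non-degeneracy alone gives no $(d-2)$-dimensional bound.
\item The Caffarelli--Monneau--Weiss stratification into $\mathscr C^1$ manifolds needs uniqueness of blow-ups, which is open here.
\item The Chanillo--Kenig--To classification rests on strict superharmonicity or on minimality, neither of which holds.
\item At points with $\Psi(0^+)=-\infty$ your quadratic rescalings are unbounded; critical points of such problems need not be $\mathscr C^{1,1}$ (Andersson--Weiss).
\item In $d=2$, your Steps 1--4 (non-degeneracy and the limit equation) do not exclude the normalised blow-up $x_1x_2$, nor the sign-changing cones of Proposition \ref{Pr:Classification}. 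The nodal rays of those cones alternate between two angles fixed by $f(0),g(0)$. Your picture of ``arcs crossing at isolated singular points'' is therefore not the theorem; the paper shows $\nabla\eta\neq0$ on all of $\partial^eE^*$.
\item Compactness of $\T$ does not give finitely many curves. Infinitely many shrinking components could accumulate at a critical point, which is exactly the fragmentation scenario.
\end{itemize}

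\textbf{The missing idea} is the second-order optimality condition \emph{in the sense of shapes}, point (3) of Theorem \ref{Th:ReformConstrained}. Deform a regular piece $\Gamma$ by a normal field with $\int_\Gamma\langle\Phi,\nu\rangle=0$ and insert it into \eqref{Eq:QBT1} via the second-order Hadamard formula. After a spectral rewriting this gives $\sigma\int_\Gamma v^2/|\nabla\eta|\le\int|\nabla v|^2$ whenever $\int_\Gamma v/|\nabla\eta|=0$. In $d=2$, test with rescaled radial bumps on dyadic annuli and use the Zygmund bound $|\nabla\eta(x)|\lesssim|x-x_0|\,|\log|x-x_0||$ at a critical point $x_0$. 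This shows that no critical point can be surrounded by regular free boundary of length $\gtrsim2^{-k}$ in each annulus (Theorem \ref{Th:ASCConstrained}).

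This one criterion does the remaining work:
\begin{itemize}
\item it excludes the $x_1x_2$ blow-ups, so $\eta\in\mathscr C^{1,1}$ (Theorem \ref{Th:C11Regularity});
\item it excludes the star-shaped cones, so intermediate-density points are regular;
\item it forces each regular arc to close into a simple closed curve with $\nabla\eta\neq0$;
\item combined with the $\mathscr C^{1,1}$ bound, it gives finitely many curves (Proposition \ref{Pr:NonAccumulationToSingularPoints}). The bound gives $\int_{\Gamma_i}1/|\nabla\eta|\gtrsim1$ uniformly, because $\nabla\eta$ vanishes at an interior extremum.
\end{itemize}
Points of $\partial^eE^*$ where you have no non-degeneracy are then not classified but eliminated. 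Intermediate-density points are dense in $\partial^eE^*$ (Proposition \ref{Pr:DensityIntermediateDensity}), so any point of $\partial^eE^*$ off the finitely many curves would generate a new one.

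For $d\ge3$ in the penalised case, replace stratification by Federer dimension reduction. Carry the blown-up form of \eqref{Eq:QBT2} to homogeneous limits and bottom out in the two-dimensional argument.

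A minor point: Step 2 yields $m^*\in\{0,1\}$ on $\{\eta=0\}$, not that this set is null. What one gets is $E^*=\{\eta>0\}$ or $E^*=\{\eta\ge0\}$.
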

\begin{remark}
    Notice that one of the unexpected facts about Theorem \ref{Th:Main1} is the general bang-bang property, namely, that any solution is a characteristic function. We refer to Section \ref{Se:Bibliography} for a discussion of this aspect, as well as to Section \ref{Ap:Necessity}.
\end{remark}
One might wonder whether we could handle Hamilton-Jacobi equations with non-quadratic hamiltonians. We address this question in Section \ref{Re:HJB}.

\subsubsection{Structure of the rest of the introduction}
In the remainder of this introduction, we  describe the link between \eqref{Eq:PvNonEnergetic}--\eqref{Eq:PvNonEnergetic2} and unstable free boundary problems, see Section \ref{Se:Link}, while in Section \ref{Sec:MainIdeasProof} we give the general idea of the proof of the regularity of the free boundary. In Section \ref{Se:Plan} we give the roadmap to the paper. We then present the main references for the study of such problems, see Section \ref{Se:Bibliography}. Finally, we make a series of comments (some of which are expanded on in the appendices) and list some open problems in Section \ref{Se:Open}.

\subsection{From optimal control to unstable free boundary problems}\label{Se:Link}
In this section we describe how \eqref{Eq:PvNonEnergetic}--\eqref{Eq:PvNonEnergetic2} tie to unstable free boundary problems, and what the main connections and differences with the existing free boundary literature are. We focus on \eqref{Eq:PvNonEnergetic}.  Part \ref{Se:Basics} actually consists in boiling down all bilinear optimal control problems to the free boundary problem setting. Specifically, we prove the following things: 
\begin{enumerate}
\item First, any solution to \eqref{Eq:PvNonEnergetic} is a characteristic function: $m^*=\mathds 1_{E^*}$.
\item Second, $E^*$ can be characterised as follows: 
\[ E^*=\{\eta_{m^*}>c^*\}\text{ or }\{\eta_{m^*}\geq c^*\}\] for some constant $c^*$, where the switching function $\eta_{m^*}$ is the (unique) solution to 
\[-\Delta \eta_{m^*}+2\n\cdot( \eta_{m^*}\n\theta_{m^*})-\partial_\theta Q(x,\theta_{m^*})=\partial_\theta j(x,\theta_{m^*}).\] 
\item Third, we can rewrite the equation on $ \eta_{m^*}$ as a free boundary problem of the type 
\begin{equation}\label{Eq:CMPG}-\Delta  \eta_{m^*}=f\mathds 1_{E^*}-g\mathds 1_{\T \setminus E^*}, \qquad f+g > 0,
\end{equation}

alongside some optimality conditions on the shape $E^*$ (see \eqref{Eq:QBT1}), and where $f$ and $g$ are smooth enough functions. This opens the way to blow-up analysis.
\end{enumerate}

From Section \ref{Se:BU} onward, our aim is to prove the regularity of $\eta_{m^{\ast}}$ and of the free boundary $\partial^e E^*$; to do so, we prove that the fact that $E^*$ solves an optimal control problem endows it with particular shape optimality conditions (see \eqref{Eq:QBT1} below), and we reduce our analysis to the regularity of weak solutions of \eqref{Eq:CMPG}, under such shape optimality conditions.

First, let us describe the main contributions and links with the existing literature. Problem \eqref{Eq:CMPG} has been studied by several authors in the past twenty years \cite{MR2606634,zbMATH06021980, Andersson2006CrossshapedAD,  zbMATH06179931, zbMATH05311233, zbMATH05505659, zbMATH05129536,zbMATH05204068,zbMATH06912136, zbMATH07826622} and falls within the theory of unstable obstacle problems. Up to replacing $\eta_{m^*}$ with $-\eta_{m^*}$,  we can assume that (locally at least) $f > 0$, so that the different contributions can be classified in three different regimes, according to the sign of $g$.
\begin{enumerate}
    \item\label{item:IntroLiteraturePt1} $g \equiv 0$. This is the \emph{unstable obstacle problem} \cite{  MR2606634, zbMATH06021980,Andersson2006CrossshapedAD, zbMATH06179931,zbMATH05129536}. The regularity was pioneered by Monneau \& Weiss \cite{zbMATH05129536}, where the authors proved that, in dimension $d=2$, the variational solutions of
    \begin{equation*}
        \min_{u - u_0  \in W_0^{1,2}(\Omega)} \int_\Omega |\n u|^2-\int_\Omega u_+^2
    \end{equation*}
    are $\mathscr C^{1, 1}$, and the free boundary $\partial \{ u>0 \} \cap \Omega$ is locally analytic. In particular, by dimension reduction, in any dimension $d \ge 2$ the free boundary of a minimiser is smooth up to a Hausdorff $(d-2)-$dimensional set. Concerning non-minimising solutions, on the other hand, in a series of papers \cite{ MR2606634, zbMATH06021980, Andersson2006CrossshapedAD, zbMATH06179931}, Andersson, Shahgholian \& Weiss exhibited examples which fail to be $\mathscr C^{1, 1}$ and displaying singular cross-shaped free boundaries, even in dimension $d = 2$. They proceed to prove that, in any dimension $d \ge 2$, the set of points where non-minimising solutions fail to be $\mathscr C^{1, 1}$ has Hausdorff dimension at most $(d-2)$, together with finiteness of the measure and rectifiability properties. To the best of our knowledge, not much is known about the regularity of the free boundary for non-minimising solutions.
    \item\label{item:IntroLiteraturePt2} $g < 0$. This is the \emph{composite membrane problem} \cite{zbMATH01588525, zbMATH05311233, zbMATH05505659, zbMATH05204068}. It arises in the context of the optimisation problem
    \begin{multline}\label{Eq:CMP2}
    \hspace{5.5cm} \min_{m\in \mathcal M}\lambda(m)\\\text{ where $\lambda(m)$ is the lowest eigenvalue of }\begin{cases}
    -\Delta \phi_m=\lambda(m)\phi_m+m\phi_m&\text{ in }\Omega,\,,\\ \phi_m=0&\text{ on }\partial \Omega,
    \end{cases}
    \end{multline}
    and can be reformulated as the volume-constrained unstable free boundary problem 
    \begin{equation}\label{Eq:VC} \min_{u\in W^{1,2}_0(\Omega),\, \int_\Omega u^2=1,\, |\{u>0\}|\leq m_0}\int_\Omega |\n u|^2-\int_\Omega u_+^2.
    \end{equation}
    The first general regularity result for \eqref{Eq:CMP2} and \eqref{Eq:VC} is contained in the work by Chanillo \& Kenig \cite{zbMATH05311233} where, relying on the results from \cite{zbMATH05129536, zbMATH05204068}, the authors prove that any critical point (\emph{i.e.} just a weak solution of \eqref{Eq:CMPG}) is $\mathscr C^{1, 1}$ regular, up to a Hausdorff $(d-2)-$dimensional set, and that the free boundary $\partial\{ u>0 \}$  is locally analytic, up to a Hausdorff $(d-1)-$dimensional set. Furthermore, for variational solutions, the work by Chanillo, Kenig \& To \cite{zbMATH05505659} improves the previous results in dimension $d = 2$, proving that the $\mathscr C^{1, 1}$ regularity and the local analyticity of the free boundary actually hold everywhere in $\Omega$. 
    \item\label{item:IntroLiteraturePt3} $g > 0$. This is the \emph{unstable two-phase membrane problem} \cite{zbMATH06912136, zbMATH07826622}. In this case, the first general regularity result was  proved by Soave \& Terracini \cite{zbMATH06912136}, where the authors show that any weak solution to \eqref{Eq:CMPG}  is $\mathscr C^{1, 1}$ and the free boundary $\partial\{ u > 0\}$ is analytic, both properties holding true outside of a Hausdorff $(d-2)-$dimensional set. These results were extended to the context of Alt-Phillips problems by Soave \& Tortone in \cite{zbMATH07826622}.
\end{enumerate}
The \emph{main differences} in our analysis with respect to the above results is that \emph{we cannot guarantee \emph{a priori} neither that $\eta_{m^{\ast}}$ is a local minimizer of}
\begin{equation*}
    \eta\mapsto \frac12\int_\T |\n \eta|^2-\int_\T (f\eta_++g\eta_-)
\end{equation*}
\emph{nor that $g$ has a fixed constant sign}, which creates several difficulties in term of non-degeneracy of the solutions and regularity of the free boundary (as it turns out, the optimality conditions stemming from optimal control allow to bypass such strong sign or minimality assumptions). Let us discuss this in connection with the literature.  

Concerning regime \eqref{item:IntroLiteraturePt3}, in \cite{zbMATH06912136, zbMATH07826622} the non-positivity of $g$ is fundamental to prove the (almost-)monotonicity of a family of Weiss monotonicity formulas, which in turn implies the non-degeneracy of the solutions at some finite order. The strict negativity of $g$, on the other hand, is required to classify the orders of vanishing of the solutions, allowing for blow-up analysis and dimensional estimates on the singularities of the free boundary. Unfortunately, the monotonicity properties and the analysis of the free boundary break down if $g$ changes sign. Moreover, it is not possible to obtain a satisfactory regularity result in the generality that we require, that is independent of the particular regime \eqref{item:IntroLiteraturePt1}, \eqref{item:IntroLiteraturePt2} or \eqref{item:IntroLiteraturePt3}, using only the PDE \eqref{Eq:CMPG}. Indeed, in regime \eqref{item:IntroLiteraturePt2}, the function $-\frac{1}{2} x_2^2$ is a solution in dimension $d = 2$ with a whole line of singular points (which would prevent dimension reduction arguments), while in regime \eqref{item:IntroLiteraturePt1} we recall the existence of non $\mathscr C^{1,1}$, non-minimising solutions  with singular free boundaries even in dimension $d = 2$. Thus, compensating \eqref{Eq:CMPG} with some second order optimality conditions (as we do) is natural and not restrictive.

Concerning regime \eqref{item:IntroLiteraturePt2}, in \cite{ zbMATH05311233, zbMATH05505659,zbMATH05204068} the strict positivity of $g$ is crucial throughout the analysis as it guarantees strict superharmonicity of the solution, which in turn implies non-degeneracy at order two at singular free boundary points (namely where $\{ u = \vert \nabla u \vert = 0 \}$), and thus allows for blow-up analysis and dimension reduction arguments.

Regarding regime \eqref{item:IntroLiteraturePt1}, the variational formulation, and in particular minimality, is a starting point of the analysis and is used to prove non-degeneracy of order two for the solutions. This, once again, allows for blow-up analysis and to study the regularity of the free boundary.

To conclude, from the free boundary regularity perspective, the main difficulty of our work relies in \emph{devising a strategy which is valid independently of the regime \ref{item:IntroLiteraturePt1}, \ref{item:IntroLiteraturePt2} or \ref{item:IntroLiteraturePt3}, and allows to prove the regularity of the free boundary without a priori non-degeneracy}. To this aim, we introduce new ideas based on optimal control and Fourier analysis to establish non-degeneracy at points of intermediate density, and use these points together with arguments from measure theory to reconstruct the entire free boundary in two dimensions (we refer to Section \ref{Sec:MainIdeasProof} for a more detailed discussion). Incidentally, this is also where the difference between the constrained problem \eqref{Eq:PvNonEnergetic} and the penalised one \eqref{Eq:PvNonEnergetic2} comes into light: for \eqref{Eq:PvNonEnergetic2}, the local optimality conditions are much stronger, and allow to derive non-degeneracy of the blow-ups at \emph{any} critical free boundary point, once again relying on optimal control tools.

\subsection{Main ideas of the proof: the regularity of the free boundary}\label{Sec:MainIdeasProof}
In the following discussion, we consider $m^*=\mathds 1_{E^*}$ (justified by Theorem \ref{Th:Main} (1)) and we briefly discuss the main ideas for the regularity of $\partial^e E^*$ (see also \eqref{De:Boundary}). For the sake of readability, we focus on the constrained case \eqref{Eq:PvNonEnergetic}--the strategy for the penalised case \eqref{Eq:PvNonEnergetic2} is similar-- and, more specifically, on point (1) of Theorem \ref{Th:Main}. 

Let  $E^{\ast} \subset \T$ be an optimal set, and consider the full free boundary is $\partial E^{\ast}$. We say that a point $x_0 \in \partial E^{\ast}$ is regular if $\nabla \eta_{m^{\ast}}(x_0) \neq 0$ and we write $x_0 \in \mathcal{R}$. The implicit function theorem guarantees the $\mathscr C^{2,\gamma}$ regularity of the free boundary near $x_0$. Then, we say that a point $x_0 \in \partial E^{\ast}$ is singular if $x_0 \in \mathcal S:= \partial E^{\ast} \setminus \mathcal{R}$, leading to a first decomposition of the free boundary as \begin{equation*}
    \partial E^{\ast} = \mathcal{R} \sqcup \mathcal{S}.
\end{equation*}
The main difficulty in the study of the regularity of the free boundary for \eqref{Eq:PvNonEnergetic} relies in the several possible scenarios that might occur for points belonging to $\mathcal S$, and on the fact that, as noted in the previous paragraph, we can not rely on energy minimisation properties (this is a connection with the recent work \cite{zbMATH07971611}). To be more explicit, let us describe the possible behaviours at singular points: we decompose $\mathcal{S}$ into a measure theoretic part $\mathcal{S}^e \coloneqq \mathcal{S} \cap \partial^e E^{\ast}$ (where $\partial^e E^{\ast}$ is defined in \eqref{De:Boundary}) and a degenerate part $\mathcal{S}^d \coloneqq \mathcal{S} \setminus \mathcal{S}^e$, so that
\begin{equation*}
    \partial E^{\ast} = \mathcal{R} \sqcup \mathcal{S}^e \sqcup \mathcal{S}^d.
\end{equation*}
The part $\mathcal{S}^d$ is the most singular portion of $\partial E^{\ast}$ as it is singular both analytically  (for the function) and geometrically (for the set). In general, $\mathcal{S}^d$ need not be empty \cite{zbMATH01631054}, although its presence is physically and biologically less important as it does not describe a transition from optimal to non-optimal regions. For these reasons, the study of $\mathcal{S}^d$ will not be addressed in the present paper. Let us focus on the remaining singularities $\mathcal{S}^e$. Given a point $x_0 \in \mathcal{S}^e$, we can classify $x_0$ according to three mutually exclusive scenarios described in detail in Theorem \ref{Th:MW1} but that can be summarised as follows:
\begin{enumerate}
    \item The decay of $\eta_{m^{\ast}}$ at $x_0$ is less than quadratic (or, in terms of the Weiss functional introduced in Section \ref{Se:Weiss}, $\Psi(0^+)=-\infty$). We call these harmonic singular points, as the renormalised quadratic blow-ups of $\eta_{m^{\ast}}$ at $x_0$ are nondegenerate 2-homogeneous harmonic functions (see Theorem \ref{Th:MW1}). We write $x_0 \in \mathcal{S}_{h}^e$.

    \item The decay of $\eta_{m^{\ast}}$ at $x_0$ is exactly quadratic (in terms of the Weiss functional $\Psi(0^+)>-\infty$, see Theorem \ref{Th:MW1}). We call these nondegenerate singular points, and the quadratic blow-ups of $\eta_{m^{\ast}}$ at $x_0$ are nondegenerate 2-homogeneous functions solving a limit equation involving $E^{\ast}$. We write $x_0 \in \mathcal{S}_{nd}^e$.

     \item The decay of $\eta_{m^{\ast}}$ at $x_0$ is more than quadratic (in terms of the Weiss functional $\Psi(0^+) = 0$). We call these degenerate singular points, and the quadratic blow-ups of $\eta_{m^{\ast}}$ at $x_0$ are identically zero. We write $x_0 \in \mathcal{S}_{d}^e$.
\end{enumerate}
The final decomposition of the free boundary reads
\begin{equation*}
    \partial E^{\ast} = \mathcal{R} \sqcup \mathcal{S}_h^e \sqcup \mathcal{S}_{nd}^e \sqcup \mathcal{S}_d^e \sqcup \mathcal{S}^d.
\end{equation*}
Restricting the analysis to the two dimensional case, in Parts \ref{Se:BU} and \ref{Pa:Constrained} we show that $\mathcal{S}_h^e, \mathcal{S}_{nd}^e, \mathcal{S}_d^e = \emptyset$, so that $\mathcal{S}^e = \emptyset$. Let us shortly describe how we deal with each term.
\begin{itemize}
    \item $\mathcal{S}_h^e = \emptyset$. This is a consequence of the strong second order optimality conditions (see Theorem \ref{Th:ASCConstrained}). The result is proved in Section \ref{Se:2dReg}, and follows the lines of \cite{zbMATH05505659,zbMATH05129536}. Actually, it is likely that for \eqref{Eq:PvNonEnergetic}, $\mathcal{H}^{d-2}\left( \mathcal{S}_h^e \right)<+\infty$ in any dimension $d\ge2$, together with $(d-2)-C^1$-rectifiability (see \cite{zbMATH06021980, zbMATH06179931}).

    \item $\mathcal{S}_{nd}^e = \mathcal{S}_{nd, p}^e \sqcup \mathcal{S}_{nd, s}^e $ and $\mathcal{S}_{nd, s}^e =\emptyset$. We can further decompose the set of nondegenerate singular points $\mathcal{S}_{nd}^e$ into two mutually disjoint subsets $\mathcal{S}_{nd, p}^e$ and $\mathcal{S}_{nd, s}^e$, with the following properties. 
    \begin{itemize}
        \item  The set $\mathcal{S}_{nd, p}^e$ consists of points $x_0 \in \mathcal{S}_{nd, p}^e$ at which all blow-ups have constant sign and, consequently, are either one-dimensional parabolas or one-dimensional half-parabolas. We cannot exclude such blow-ups \emph{a priori}, as we cannot guarantee sub/super-harmonicity properties for the blow-ups, and as we can not show that blow-ups satisfy blown-up versions of local optimality conditions for points of $\mathcal{S}_{nd}^e$. The analysis of $\mathcal{S}_{nd, p}^e$ is deferred to the next point.

        \item The set $\mathcal{S}_{nd, s}^e \coloneqq \mathcal{S}_{nd}^e \setminus \mathcal{S}_{nd, p}^e$ consists of points $x_0 \in \mathcal{S}_{nd}^e$ which admit at least one sign-changing blow-up. We notice that the structure of these sign-changing blow-ups is quite peculiar and nontrivial even in dimension $d = 2$ (see Proposition \ref{Pr:Classification}). The zero set is composed of star-shaped lines connected by a unique singular point. These lines separate connected components with constant sign whose angular opening is alternating between two angles depending on $f(x_0)$ and $g(x_0)$ (see Figure \ref{Fig:BU}), which can vary in a finite family (depending on the point). As a consequence of this classification and of the strong second order optimality conditions from Theorem \ref{Th:ASCConstrained}, we have $\mathcal{S}_{nd, s}^e = \emptyset$ (see Proposition \ref{Pr:RegularityIntermediateDensity}).
    \end{itemize}

    \item $\mathcal{S}_{d}^e \cup \mathcal{S}_{nd, p}^e = \emptyset$ in dimension $d=2$. This is the core of our analysis and requires several new ideas. The main difficulty (and also the main difference with respect to the literature \cite{zbMATH05311233, zbMATH05505659,zbMATH05129536, zbMATH06912136}) is that, in order to exclude the degenerate singular points, we can only rely on the second order optimality conditions. To this aim, we first consider the notion of intermediate density point for $E^{\ast}$, see \eqref{Eq:NDAssumption}. Then, we derive a weak version of the stability condition (Proposition \ref{Pr:LocalMinimalityShape}), which is flexible enough to pass to the blow-up limit (unlike the strong version from Theorem \ref{Th:ASCConstrained}), and with which we are able to prove the nondegeneracy of blow-up limits at intermediate density points by means of Fourier analysis (Theorem \ref{Th:NonDegeneracyConstrained}). In particular, since we have already excluded $\mathcal{S}_h^e$ and $\mathcal{S}_{nd, s}^e$ and the blow-ups are sign-changing (see Proposition \ref{Pr:Classification}), the intermediate density points are regular. More precisely, the free boundary passing through them is a simple and closed curve, with a positive contribution bounded from below independently of the particular point or curve to the strong stability condition from Theorem \ref{Th:ASCConstrained}, see Lemma \ref{Le:NonDegenerateConnectedComponentsAreSmooth} (for this last property the arguments are similar to \cite{zbMATH05505659}). As a consequence, only a finite number of such curves can occur, see Proposition \ref{Pr:NonAccumulationToSingularPoints}. Now we can use the nondegeneracy at intermediate density points to deduce some information on $\mathcal{S}_{d}^e$ and $\mathcal{S}_{nd, p}^e$. More precisely, by means of an argument from measure theory (Proposition \ref{Pr:DensityIntermediateDensity}), the intermediate density points are dense in the geometric free boundary $\partial^e E^{\ast}$ (here is crucial to consider the geometric  free boundary and not the whole free boundary, thus discarding $\mathcal{S}^d$). Thus, if $x_0 \in \mathcal{S}_{d}^e \cup \mathcal{S}_{nd, p}^e$ we could find a sequence of intermediate density points $ x_{0, i} $ converging to $x_0$ and in turn, generate a sequence of smooth and disjoint free boundary curves $\Gamma_i \subset \partial^e E^{\ast}$. Since we can only have finitely many of them, we have reached a contradiction and $\mathcal{S}_{d}^e \cup \mathcal{S}_{nd, p}^e = \emptyset$.  
\end{itemize}

\subsection{Plan of the article}\label{Se:Plan}
In Part \ref{Se:Basics}, we study in detail how bilinear optimal control problems can be recast as unstable free boundary problems, with a specific emphasis on local optimality conditions (Section \ref{Se:UFBP}). We perform a blow-up analysis in Section \ref{Se:BU} to obtain some first regularity results and, in Section \ref{Se:NonDegeneracy}, we give some (sharp) non-degeneracy estimates. In Part \ref{Pa:Constrained}, we give the complete regularity of the free boundary in the two-dimensional case as well as the regularity in higher dimensions for the penalised problem.
\subsection{Notational conventions}
\begin{itemize}
\item We write $f\lesssim g$ to mean ``there exists a constant $C$ that does not depend on $f,\, g$ or any of the remaining parameters, such that $f\leq C g$."
%\item The Sobolev spaces (with both positive and negative indices) are denoted (and defined) in the standard way $W^{s,p}(\T)$, for $s\in \R$, $p\in [1;+\infty]$.
\item When $\Sigma$ is a $k$-dimensional object, $d\mathscr H^k\llcorner\Sigma$ denotes the restriction of the $k$-dimensional Hausdorff measure to $\Sigma$.
\item When $\Sigma$ is a smooth oriented hypersurface and $f$ is a function that has a discontinuity across $\Sigma$, $\llbracket f\rrbracket_\Sigma$ (or $\llbracket f\rrbracket$ when no ambiguity is possible) denotes the jump of $f$ across $\Sigma$.
\item The notation $\fint_\O$ denotes the average integral:
\[ \fint_\O f=\frac1{|\O|}\int_\O f.\]
\end{itemize}

 \section{References, comments and open problems}
 \subsection{Bibliographical references and state of the art}\label{Se:Bibliography}
 
\subsubsection{A biological motivation for our study: understanding the fragmentation phenomenon}
Our main motivation to study regularity of optimal controls comes from mathematical biology and the complex geometric features one encounters there. We begin by recalling the most basic optimal control problem in mathematical biology (the optimal survival ability) that has a reasonable geometric behaviour, and give an example of a seemingly simple optimisation problem that has a ``wild" behaviour (the total population size).

 \textbf{The optimal survival ability}
In mathematical biology, the control $m$ should be understood as a resources distribution available to a population, and the basic problem in optimal control for population dynamics is the following: how should we spread resources in order to make it as quick as possible for a very small population to grow \cite{zbMATH02194918,MR1105497,CantrellCosner1,zbMATH05530397,zbMATH06690453}? As it turns out, this so-called ``optimal survival ability problem" amounts to solving the eigenvalue minimisation problem\begin{equation}\label{Eq:CMP} \min_{m\in \mathcal M}\lambda(m)\text{ where }\lambda(m)=\min_{u\in W^{1,2}_0(\O)\,, \int_\O u^2=1}\int_\O |\n u|^2-\int_\O mu^2\end{equation} where $\mathcal M$ is defined in \eqref{Eq:Admissible}.
This problem also appears as the so-called ``composite membrane problem" \cite{zbMATH01631054,zbMATH05505659,Cox1990,zbMATH05204068}.
The following properties for \eqref{Eq:CMP} are by now standard \cite{zbMATH01631054}:
\begin{enumerate}
\item Any solution $m^*$ is a characteristic function: $m^*=\mathds 1_{E^*}$.
\item Furthermore, $E^*$ can be characterised through the eigenfunction $u_m^*$ \emph{i.e.} the solution of
\[-\Delta u_{m^*}=\lambda(m^*)u_{m^*}+m^*u_{m^*}\] as 
\begin{equation}\label{Eq:IntroOC1} E^*=\{u_{m^*}>c\}\end{equation} for some $c$.
\item Finally, because of intimate ties to the isoperimetric inequality, several geometric properties are obtainable:
when $\O$ is a ball, the Schwarz rearrangement shows that the unique optimiser $m^*$ of \eqref{Eq:CMP} is the characteristic function of a centred ball. More generally, and even if various symmetry breaking phenomena might occur, $E^*$ ``often" inherits good properties from the domain $\O$ \emph{e.g.} when $\O$ is convex and has enough symmetries, any optimal set $E^*$  is convex. 
\end{enumerate}
So this problem is, in simple geometries, ``well-behaved". Naturally, there are still various open and difficult questions about \eqref{Eq:CMP}; see \cite{zbMATH06690453,zbMATH07573593} for a survey of some problems and \cite{zbMATH08132443,zbMATH07577014,zbMATH07890709,zbMATH07733788} for recent progress. The main regularity question for \eqref{Eq:CMP} is the following: for a general domain $\O$, and for an optimal $E^*$, how regular is $\partial E^*$? This seemingly simple question was solved in $\R^2$ in the $|E|\ll 1$ regime  by Chanillo, Kenig \& To \cite{zbMATH05505659}: they proved  that if $m^*=\mathds 1_{E^*}$  is such that $\lambda(m^*)\geq 0$ and solves \eqref{Eq:CMP}, then  $\partial E^*$  (in fact, $\partial\{u_{m^*}>c\}$) is analytic.  It should be noted that, because of the optimality conditions \eqref{Eq:IntroOC1}, the problem 
\[ \min_{m\in \mathcal M}\lambda(m)\] 
is similar to the inverse (or unstable) obstacle problem and, because of the Rayleigh quotient formulation of $\lambda(m)$, it can be recast as a minimisation, but only in $u$; indeed, one can see that for this simple problem, it is equivalent to study
\[ \min_{u\in W^{1,2}_0(\Omega),\int_\O u^2=1}\int_\O |\n u|^2-\int_\O \mathds 1_{E_u}u^2\]where $\{u> c_u\}\subset E_u\subset \{E_u\geq c_u\}$ and $|E_u|=m_0$.

 \textbf{The optimisation of the total population size} Our initial motivation was the optimisation of the total population size in logistic-diffusive models, which displays many more complex geometric behaviours. We consider, in a domain $\O$ and for a given  $\mu>0$ (quantifying the speed of dispersal of a population) and $m\in \mathcal M$ (modelling the resources distribution inside a domain) the solution $\Theta_{m,\mu}$ to 
 \begin{equation}\label{Eq:LDEIntro}
 \begin{cases}
 -\mu\Delta\Theta_{m,\mu}=\Theta_{m,\mu}\left(m-\Theta_{m,\mu}\right)&\text{ in }\O,\, 
 \\ \partial_\nu\Theta_{m,\mu}=0&\text{ on }\partial \O,\, 
 \\ \Theta_{m,\mu}>0.
 \end{cases}
 \end{equation} 
Existence and uniqueness of a solution to \eqref{Eq:LDEIntro} are guaranteed by  $\int_\O m=m_0>0$
   \cite{zbMATH02194918,MR1105497,CantrellCosner1}.  Lou \cite{zbMATH05023210} proposed the study of the optimisation problem 
 \begin{equation}\label{Eq:SizeIntro}
 \max_{m\in \mathcal M}\int_\O \Theta_{m,\mu},\end{equation} with two salient questions:
 \begin{enumerate}
 \item The first one was the question of the saturation of constraints; in other words, considering a solution $m^*$ of \eqref{Eq:SizeIntro}, is it true that $m^*=\mathds 1_{E^*}$ for some $E^*$?
 \item The second one is more geometric in nature: if the answer to the previous question is positive, what does the optimal set $E^*$ look like? 
 \end{enumerate}
 Regarding the first question, after several partial results  \cite{zbMATH07155098,NagaharaYanagida}, it was proved by Mazari, Nadin \& Privat \cite{zbMATH07523453} that any solution $m^*$ writes $m^*=\mathds 1_{E^*}$ for some $E^*$. Regarding the geometry of $E^*$ it was observed in \cite{zbMATH07155098} that, as $\mu\to \infty$, the optimal set $E^*$ behaves geometrically like the optimal set for the composite membrane problem \eqref{Eq:CMP}; it was later shown by Mazari \& Ruiz-Balet \cite{zbMATH07310945} (and extended by Heo \& Kim \cite{zbMATH07411877}) that, as $\mu\to 0^+$, there holds 
 \begin{equation}\label{Eq:IntroFragmentation}
\mathrm{Per}(E_\mu^*)\underset{\mu\to 0^+}\rightarrow +\infty
 \end{equation} where, for any $\mu>0$, $\mathds 1_{E_\mu^*}$ solves \eqref{Eq:SizeIntro}. In the one-dimensional case, this means that the number of connected components is unbounded as $\mu\to 0^+$, so that this phenomenon is dubbed \emph{fragmentation}. In higher dimensions, this indicates that the optimal set $E_\mu^*$ either fragments, or develops oscillations on its boundary. \emph{Understanding precisely if, for a fixed diffusivity $\mu$, the optimal set can develop singularities, was our initial motivation for undertaking a general study of bilinear optimal control problems.} In particular, as a byproduct of our analysis, in dimensions 1 and 2, the optimal set has finite perimeter for any $\mu>0$.

 \begin{remark}[The single large \emph{vs} several small debate]
As a side note, we want to remark that such fragmentation phenomena (in the sense of \eqref{Eq:IntroFragmentation}) are not a purely mathematical artefact, and that they resonate with long concerns in conservation ecology; this field emerged in the 70's, in the wake of the works of Diamond \cite{Diamond1975}, and is concerned with the question: is it better to give a population access to small, scattered resources or to concentrated resources? We hope that the present work contributes, from the mathematical perspective, to this endeavour.
 \end{remark}
 
 Note that while \eqref{Eq:CMP} is concerned with the optimisation of the natural energy associated with the PDE constraint (here, the Rayleigh quotient), it is not so for the total population size problem, and this creates several very delicate difficulties--in particular, one can not expect the functional to be convex in general.

 Finally, let us mention that Nadin  proved \cite{arXiv:2112.02876}, for the optimisation of the total population size, that, in the one-dimensional case, optimisers $m^*=\mathds 1_{E^*}$ are $BV$ functions, meaning that $E^*$ has finite perimeter (which is the optimal regularity); his approach relies on the Pontryagin maximum principle and can not be extended to the higher dimensional case.

 \subsubsection{Stable free boundary problems in optimal control theory}
The basic free boundary problem in optimal control is of stable type, by which we mean the following: study the regularity of $\partial\{u>0\}$, where $u$ minimises
\[ \mathcal E_{\mathrm{stable}}:v\mapsto \frac12\int_\O |\n v|^2+\int_\O v_+\] subject to some boundary conditions in a smooth domain $\O$. This problem, called the obstacle problem,
% also dubbed the Alt-Phillips problem since \cite{zbMATH03964583,zbMATH03867795}, 
is a reference case that is now fairly well understood and, in particular, non-degeneracy estimates as well as $\mathscr C^{1,1}$ regularity of $u$ are now standard facts.  The theory was pioneered by Kinderlehrer \& Nirenberg \cite{zbMATH03547784}, where the first results addressed the regularity of $u$. The major breakthrough regarding the regularity of the free boundary is the work of Caffarelli \cite{zbMATH03600481}.  We refer to the monograph \cite{zbMATH06062397} for a comprehensive review. Although this problem finds its roots in economics and physics, it potentially has a number of applications in optimal control problems. To be more specific (and since we could not locate a discussion of this aspect in the existing literature) let us discuss the recent contributions of Buttazzo, Casado-D\'iaz \& Maestre \cite{zbMATH07930852,arXiv:2601.01591}. They study bilinear optimal control problem of the form 
\begin{equation}\label{Eq:BCDM}\max_{m\in \mathcal M} \int_{\T} j(\theta_m)\text{ subject to }\begin{cases}-\Delta \theta_m+m\theta_m=f&\text{ in }\O,\, \\ \theta_m=0&\text{ on }\partial \O.\end{cases}\end{equation} From the point of view of regularity, \cite{zbMATH07930852} establishes that, if $m^*$ is an optimal control, then $m^*$ is a $BV$ function. However, from a mathematical perspective, the optimality system associated with \eqref{Eq:BCDM} writes
\[-\Delta z_{m^*}+\phi(z_{m^*})=j'(\theta_{m^*}),\] with $\phi(\cdot)$ a non-decreasing function, thereby bearing strong resemblance to minimisers of $\mathcal E_{\mathrm{stable}}$. This allows to show that $\phi(z_{m^*})$ is $BV$, which in turn entails that $m^*$ is $BV$. There are two crucial differences with our setting:
\begin{enumerate}
\item First, in general, solutions of \eqref{Eq:BCDM} are not characteristic functions of sets (and one can construct simple examples where the optimiser $m^*$ of \eqref{Eq:BCDM} is the constant control). 
We do however emphasise that if one can prove \emph{a priori} that $m^*=\mathds 1_{E^*}$ then it is likely that one can use the standard techniques of \cite{zbMATH06062397} to prove $\mathscr C^{2,\gamma}$ regularity of $\partial E^*$. As this is not the primary focus of the paper we leave this as a question for future research.
\item Second, as already observed in the case of the composite membrane problem (see Eq. \eqref{Eq:IntroOC1}), the optimality system associated with the class of problems we study  write
\[-\Delta u_{m^*}=\phi(u_{m^*})+\lambda(m^*)u_{m^*}\] where $\phi$ is non-decreasing, see \eqref{Eq:CMPG}; this reverts the monotonicity of the optimality system and prohibits using the tools of \cite{zbMATH07930852}. From an energetic perspective, this is very simply explained by considering the energy associated with \eqref{Eq:CMPG} (in the case $f=1,\, g=0$ for simplicity):
\[\mathcal E_{\mathrm{unstable}}:v\mapsto \frac12\int_\O |\n v|^2-\int_\O v_+.\]
 On the one-hand, $\mathcal E_{\mathrm{stable}}$ is a convex functional, while $\mathcal E_{\mathrm{unstable}}$ is not, leading to non-uniqueness of minimisers, bifurcations etc. We expand on these aspects in the next paragraph.
\end{enumerate}

\subsubsection{Regularity theory for unstable free boundary problems} Naturally, our work has strong ties to the theory of the unstable obstacle problem, for which we refer to the detailed overview we gave in Section \ref{Se:Link}.

\subsubsection{Bilinear control problems as free boundary problems}
A part of the analysis carried in this paper is related to a recent series of contributions devoted to the understanding of the so-called bang-bang property in optimal control problems--namely, is it true that any optimiser $m^*$ of an $L^\infty-L^1$ constrained optimal control problem writes as $m^*=\mathds 1_{E^*}$? In particular, several results \cite{zbMATH07523453,zbMATH07812267} derived this property under monotonicity and bilinearity assumptions for elliptic and parabolic problems. Nevertheless, the approach we present here, based on the reformulation of the problem as a Hamilton-Jacobi problem, is in our opinion more efficient, and sheds a new light on this type of results. Note however that, as far as we are aware, this is the first time generic bilinear optimal control problems are treated as free boundary problems at this level of generality, and we hope that the present paper contributes to the qualitative and quantitative theory of optimal control problems in applied fields. 

\subsubsection{Some ties to thresholding schemes}
We already mentioned the interest of regularity theory in the understanding of surprising fragmentation phenomena in population dynamics. Another interest comes from the numerical approximation of optimal controls through the use of a  thresholding scheme. To give a concrete example, let us consider the composite membrane problem \eqref{Eq:CMP}. A popular iterative scheme is an adaptation of the work of C\'ea, Gioan \& Michel \cite{Cea}, which consists of a fixed-point method on the optimality conditions of \eqref{Eq:CMP}, see \eqref{Eq:IntroOC1}. Namely, a given initialisation $m_1\in \mathcal M$, the algorithm goes as follows:
 \begin{enumerate}
 \item For $k\geq 1$, compute the $L^2$ normalised, non-negative eigenfunction $u_k$ associated with the lowest eigenvalue $\lambda(m_k)$ of $-\Delta -m_k$.
 \item Compute $c_k$ such that $|\{u_k>c_k\}|=m_0$.
 \item Set $m_{k+1}:=\mathds 1_{\{u_k>c_k\}}.$
 \end{enumerate}
Such schemes can be seen as a control version of the seminal \cite{MBO} that rather deals with numerical approximations of the mean curvature flow and which  has been the topic of much research activity over the past decades \cite{Barles_1995,evans1993convergence,ishii1995generalization,ISHII_1999,laux2016convergence,Laux_2017,ruuth2001diffusion,Swartz_2017}. In the context of optimal control, this method and its generalisations to many other classes of optimal control problems is  popular and efficient \cite{BintzLenhart,Quantum,HintermullerKaoLaurain,KaoLouYanagida,KaoMohammadi,zbMATH07301284,zbMATH06690453,Pironneau,KaoMohammadi2}.
  Nevertheless, a general proof of convergence (or the identification of suitable assumptions guaranteeing convergence) is still missing. Chambolle, Mazari-Fouquer \& Privat initiated \cite{zbMATH08079247} a systematic study of such convergence properties, and emphasised the role of local stability of optimal shapes, which is intimately tied to their regularity. We hope that the results we present here can provide new impetus in this research line.

\subsection{Further comments}\label{Se:Open}

\subsubsection{Open problem: volume constrained unstable free boundary problems}
As alluded to in several places, one of the main difficulties is the lack of a systematic theory for volume constrained optimisation problem of the type \eqref{Eq:VC}, wherein one enforces, in the parlance of free boundary problems, a volume constraint on the phase $\{u>0\}$. In contrast, compare this to the case of the standard obstacle problem
\[\min_{u,\, |\{u>0\}|=V_0}\frac12\int_\O |\n u|^2+\int u_+.\] As the underlying functional is convex, it is standard that, up to choosing the right Lagrange multiplier $c$, this is equivalent to studying  the penalised problem
\[\min_{u}\frac12\int_\O |\n u|^2+\int u_++c|\{u>0\}|.\]

\subsubsection{A comment on the optimal control of Hamilton-Jacobi equations}\label{Re:HJB}
As we set ourselves in the framework of optimal control of Hamilton-Jacobi equations, a natural question is the following:
is  there any hope to tackle, with similar techniques, optimal control problems of the form
\[ \max_{m\in \mathcal M}\int_{\T}j(u_m)\text{ subject to }-\Delta u_m-H(x,\nu_m)=m+Q(x,u_m)?\] Here, we do not detail the assumptions on $Q$, and merely assume that for any $m$, this equation is well-posed in $W^{1,2}(\T)$. We still assume that $j'>0$ (for the sake of simplicity, we drop the dependency in $x$). On the one-hand, if
\[ H(x,p)=V(x)|p|^2,\] where $V\in \mathscr C^1(\T)$, $\min_{\T}V>0$, we claim that,  by adapting the proofs of this paper, the following holds: \begin{enumerate}
\item Any solution $m^*$ is a characteristic function, say $m^*=\mathds 1_{E^*}$, and that $E^*=\{\eta_{m^*}>c_{m^*}\}$ or $E^*=\{\eta_{m^*}\geq c_{m^*}\}$ for some Lagrange multiplier $c_{m^*}$, $\eta_{m^*}$ being the switch function defined as the unique solution of the equation 
\[-\Delta \eta_{m^*}+2\n\cdot(V\eta_{m^*}\n u_{m^*})-\partial_uQ(x,u_{m^*})\eta_{m^*}=j'(u_{m^*}),\]
\item $E^*$ satisfies the conclusions of Theorem \ref{Th:Main} (whether one looks at the constrained or penalised case).\end{enumerate} In fact, this extends to any Hamiltonian $H$ satisfying 
\[ \n^2_{pp}H(x,p)\gtrsim \mathrm{Id}\] in the sense of quadratic forms, uniformly in $x\in \T$. The real difficulty comes from the case of non-quadratic Hamiltonians and, in general, we do not expect that such problems can  be reformulated as free boundary problems: the basic problem is that, if $H$ decays at 0 faster than a quadratic polynomial, we can not guarantee that the solution $m^*$ to the optimal control problem is a bang-bang function. More specifically, in Part \ref{Se:Basics} we show that any second-order critical point for quadratic Hamiltonian is bang-bang, which we can guarantee is not true for non-quadratic Hamiltonian. Indeed, consider some $\alpha>2$, the Hamiltonian
\[ H(x,p):=|p|^\alpha\]
and the optimisation problem
\[ \max_{m\in \mathcal M} \mathscr J(m):=\int_{\T} j(u_m)\text{ subject to }-\Delta u_m+|\n u_m|^\alpha=m+Q(u).\]  It is a simple exercise to check that, for $Q(u)=e^{-u}$, $j(u)=e^{\frac{u}2}$, the constant control $\bar m\equiv m_0$ is a second-order critical point of $\mathscr J$.

\subsubsection{On the structural assumptions needed to ensure the bang-bang property}\label{Ap:Necessity}
Both bilinearity and monotonicity are crucial to obtain our results. Let us briefly explain why.
\begin{enumerate}
\item It is quite simple to construct monotone optimal control problem with linear interaction that fail to satisfy the bang-bang property: take for instance the control problem
\[ \max_{m\in \mathcal M}\int_{\T} y_m\text{ subject to }-\Delta y_m-y_m(1-y_m)=m\text{ in }\T.\] Then, by a simple comparison argument, this control problem is monotone but, by an elementary concavity argument, the unique maximiser is the constant $\bar m\equiv m_0$.
\item Likewise, a simple example of a bilinear control problem that fails the monotonicity requirement and whose maximisers are not bang-bang is provided by Lou \cite{zbMATH05023210} where one aims at minimising the total population size in logistic models. Namely, consider
\[ \max_{m\in\mathcal M}\int_{\T}\left(-\theta_m\right)\text{ subject to }-\Delta\theta_{m}=\theta_{m}\left(m-\theta_{m}\right)\text{ in }\T.\]
By a direct comparison, Lou proved that the unique solution to that problem is the constant $\bar m\equiv m_0$.
\end{enumerate}

\part{From optimal control to free boundary problems}\label{Se:Basics}
\section*{Plan of the part }
The goal of this part is to reformulate \eqref{Eq:PvNonEnergetic}--\eqref{Eq:PvNonEnergetic2} as unstable free boundary problems satisfying certain stability conditions. In Section \ref{Se:UFBP} deals with \eqref{Eq:PvNonEnergetic} and Section \ref{Se:UFBP2} with \eqref{Eq:PvNonEnergetic2}. The proofs are nearly identical for both, and we only give all details for \eqref{Eq:PvNonEnergetic}.

\section{ The case of volume constrained problems}\label{Se:UFBP}
\subsection{Main result}
The main result is the following:
\begin{theorem}\label{Th:ReformConstrained}
Let $m^*$ solve \eqref{Eq:PvNonEnergetic}. Let $\eta_{m^*}$ denote the associated switch function, that is, the unique solution of 
\begin{equation}\label{Eq:C2IMNonEnergetic}
-\Delta \eta_{m^*}+2\n\cdot\left(\eta_{m^*}\n \theta_{m^*}\right)-\partial_\theta Q(x,\Theta_{m^*})\eta_{m^*}=\partial_\theta j(x,\theta_{m^*})\text{ in }\T.
\end{equation}
Then:
\begin{enumerate}
\item \underline{Reformulation as a free boundary problem:} $m^*=\mathds 1_{E^*}$ for some $E^*$ and there exists $c_{m^*}>0$ such that either 
\[ E^*={\{\eta_{m^*}>c_{m^*}\}}\]
or 
\[ E^*={\{\eta_{m^*}\geq c_{m^*}\}}.\] 
 In particular, $(\theta_{m^*},\eta_{m^*})$ solves the free boundary system 
\begin{equation}\label{Eq:FBNE}
\begin{cases}
-\Delta \theta_{m^*}-|\n \theta_{m^*}|^2=\mathds 1_{\{\eta_{m^*}> c_{m^*}\}\text{ or }\{\eta_{m^*}\geq  c_{m^*}\}}+Q(x,\theta_{m^*})&\text{ in }\T,\, 
\\ -\Delta \eta_{m^*}+2\n\cdot\left(\eta_{m^*}\n \theta_{m^*}\right)-\partial_\theta Q(x,\theta_{m^*})\eta_{m^*}=\partial_\theta j(x,\theta_{m^*})&\text{ in }\T,
\end{cases}
\end{equation}Setting $\eta:=\eta_{m^*}-c_{m^*}$,  there exist $f,\, g$ such that
\[\forall p\in [1;+\infty),\, f,\, g\in W^{1,p}(\T),\, \min_\T\left(f+g\right)>0\] and 
\[ -\Delta \eta=(f+g)\mathds 1_{\{\eta>0\}\text{ or }\{\eta\geq 0\}}-g.\]
\item\underline{Second-order optimality conditions I:} with the same notation for $\eta$ there exist constants $\alpha>0$, $r_0>0$ such that the following holds: for any $m\in \mathcal M$ satisfying 
\[ \{m\neq m^*\}\subset \B(x_0;r_0)\] for some $x_0\in \T$, there holds
\begin{equation}\label{Eq:QBT1} 0\geq \int_{\T}\eta(m-m^*)+\alpha \Vert m-m^*\Vert_{W^{-1,2}(\T)}^2.\end{equation}
\item\underline{Second-order optimality conditions II:} there exists $r_0,\,\sigma>0$ such that, for any $\Gamma\subset \partial\{\eta>0\}$ of class $\mathscr C^{2,\gamma}$ (for any $\gamma \in (0;1)$), for any $v\in W^{1,2}(\T)$ supported in $\B(x_0;r_0)$ for some $x_0\in \T$, 
\begin{equation}\label{Eq:SOSD1}\int_\Gamma \frac{v}{|\n \eta|}=0\Rightarrow \sigma\int_\Gamma\frac{v^2}{|\n \eta|}\leq \int_\T |\n v|^2.\end{equation}
\end{enumerate}
\end{theorem}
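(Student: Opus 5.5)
The plan is to compute the first and second variations of $J$ along admissible perturbations $h=m-m^*$ and to extract everything from the positivity of an explicit quadratic form, which comes from the quadratic Hamiltonian $|\n\theta|^2$.

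\emph{First variation.} By \eqref{Eq:AssQ2} and the implicit function theorem (using Theorem \ref{Th:ExLDE} for $W^{2,p}$ bounds), $m\mapsto\theta_m$ is twice Fréchet differentiable from $L^2$ into $W^{1,2}$. The derivative $\dot\theta=\dot\theta_m[h]$ solves $L_m^*$-type equation
\[-\Delta\dot\theta-2\n\theta_m\cdot\n\dot\theta-\partial_\theta Q\,\dot\theta=h .\]
Testing against the adjoint state $\eta_{m^*}$ from \eqref{Eq:C2IMNonEnergetic} gives $\dot J(m^*)[h]=\int_\T\eta_{m^*}h$. By \eqref{Eq:AssJ} and the maximum principle for the adjoint operator (positivity of $\Lambda(m^*)$), $\eta_{m^*}>0$.

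\emph{Lagrange multiplier.} The constraint $\int h=0$ together with the standard bathtub argument yields a constant $c_{m^*}$ such that $m^*=1$ on $\{\eta_{m^*}>c_{m^*}\}$ and $m^*=0$ on $\{\eta_{m^*}<c_{m^*}\}$. The constant $c_{m^*}$ is positive because $\eta_{m^*}>0$.

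\emph{Second variation.} Differentiating the state equation twice gives
\[-\Delta\ddot\theta-2\n\theta\cdot\n\ddot\theta-\partial_\theta Q\,\ddot\theta=2|\n\dot\theta|^2+\partial^2_{\theta\theta}Q\,\dot\theta^2 .\]
Hence
\[\ddot J[h,h]=\int\partial^2_{\theta\theta}j\,\dot\theta^2+\int\eta_{m^*}\bigl(2|\n\dot\theta|^2+\partial^2_{\theta\theta}Q\,\dot\theta^2\bigr).\]
The term $2\int\eta_{m^*}|\n\dot\theta|^2$ is coercive, since $\eta_{m^*}\ge\min\eta_{m^*}>0$. The lower-order terms are bounded by $C\|\dot\theta\|_{L^2}^2$. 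Elliptic estimates give $\|\dot\theta\|_{W^{1,2}}\simeq\|h\|_{W^{-1,2}}$, where the lower bound uses the equation tested against $\dot\theta$. For $h$ supported in a small ball $\B(x_0;r_0)$, interpolation shows $\|\dot\theta\|_{L^2}$ is small compared to $\|\n\dot\theta\|_{L^2}$, because $\dot\theta$ is nearly harmonic outside the ball; one argues through the $W^{-2,2}\ll W^{-1,2}$ smallness for localized $h$ with $\int h=0$, or with bounded mass. This gives
\[\ddot J[h,h]\ge 2\alpha\|h\|^2_{W^{-1,2}} .\]

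\emph{Bang-bang and \eqref{Eq:QBT1}.} A Taylor expansion with a uniform remainder, relying on the $\mathscr C^2$ hypotheses and $W^{2,p}$ bounds, gives
\[0\ge J(m)-J(m^*)\ge\int\eta_{m^*}h+\alpha\|h\|^2_{W^{-1,2}} .\]
Since $\int h=0$, $\eta_{m^*}$ can be replaced by $\eta=\eta_{m^*}-c_{m^*}$, which is \eqref{Eq:QBT1}. Bang-bang follows from this inequality. If $0<m^*<1$ on a set of positive measure, that set lies inside $\{\eta=0\}$ up to a null set. Take a point of density $x_0$ and a mean-zero perturbation $h$ supported there with $0\le m^*+h\le1$. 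Then $\int\eta h=0$ while $\alpha\|h\|^2>0$, a contradiction.

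\emph{Free boundary form.} Expand $2\n\cdot(\eta_{m^*}\n\theta)$ and substitute $\Delta\theta$ from \eqref{Eq:LDE3}, which equals $-|\n\theta|^2-m^*-Q$. This makes $-\Delta\eta$ equal to $-\eta_{m^*}m^*$ plus terms of class $W^{1,p}$. Writing $\eta_{m^*}=\eta+c_{m^*}$ with $m^*=\mathds 1_{\{\eta>0\}}$ gives
\[-\Delta\eta=(f+g)\mathds 1-g ,\]
where $f+g=\eta_{m^*}$ on the relevant set. One still has to check that $f+g$ is a $W^{1,p}$ function bounded below by $\min\eta_{m^*}>0$; this needs some bookkeeping of signs, possibly after replacing $\eta$ by $-\eta$.

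\emph{Condition \eqref{Eq:SOSD1}.} On a smooth piece $\Gamma$ of the free boundary, perturb the set by normal displacement $\varepsilon v/|\n\eta|$. The volume constraint is satisfied at first order exactly when $\int_\Gamma v/|\n\eta|=0$. The first-order term vanishes because $\eta=0$ on $\Gamma$. The second-order term coming from $\int\eta h$ equals $-\tfrac{\varepsilon^2}{2}\int_\Gamma v^2/|\n\eta|$, up to a correction needed to restore exact volume. The $W^{-1,2}$ norm of the measure $v\,d\mathscr H^{d-1}\llcorner\Gamma/|\n\eta|$ is bounded by its duality pairing, via $\sup_w\int_\Gamma vw/|\n\eta|$. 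Plugging into \eqref{Eq:QBT1} gives a Poincaré/trace-type inequality; reinterpreting it by duality gives \eqref{Eq:SOSD1}.

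\emph{Main obstacle.} I expect the main obstacle to be the uniform coercivity $\ddot J\gtrsim\|h\|^2_{W^{-1,2}}$ for small supports, namely absorbing the $L^2$ terms in $\dot\theta$. The second difficulty is making the Taylor remainder uniform, so that $r_0$ and $\alpha$ do not depend on $m$.
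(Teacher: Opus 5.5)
Your route for Points (1)--(2) is the paper's: adjoint state, second variation with principal part $2\int_{\T}\eta_{m^*}|\n\dot\theta|^2$, a localised comparison of $W^{-2,2}$ and $W^{-1,2}$ norms, and bang-bang via a localised mean-zero perturbation inside $\{0<m^*<1\}$. However, Point (1) has two concrete defects.

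\textbf{The level-set dichotomy is never proved.} Bathtub plus bang-bang only give $\{\eta_{m^*}>c_{m^*}\}\subset E^*\subset\{\eta_{m^*}\ge c_{m^*}\}$. Nothing you wrote excludes that both $F:=E^*\cap\{\eta_{m^*}=c_{m^*}\}$ and $G:=\{\eta_{m^*}=c_{m^*}\}\setminus E^*$ have positive measure; the equation only forces $f=0$ a.e.\ on $F$ and $g=0$ a.e.\ on $G$. Yet your final step uses $m^*=\mathds 1_{\{\eta>0\}}$. The paper closes this with a swap. Take Lebesgue points $x_F,x_G$ and $h=\mathds 1_{\B(x_G;r_G)\cap G}-\mathds 1_{\B(x_F;r_F)\cap F}$ with $\int h=0$. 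Then $\int\eta h=0$ while $\alpha\Vert h\Vert^2_{W^{-1,2}}>0$, a contradiction. Since $x_F,x_G$ may be far apart, this needs the coercivity for $h$ supported in two small balls, which the cutoff argument below also gives.

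\textbf{The sign in the free boundary form is wrong.} We have $-2\n\cdot(\eta_{m^*}\n\theta_{m^*})=-2\langle\n\eta_{m^*},\n\theta_{m^*}\rangle-2\eta_{m^*}\Delta\theta_{m^*}$ and $-\Delta\theta_{m^*}=m^*+Q(x,\theta_{m^*})+|\n\theta_{m^*}|^2$. So the control enters $-\Delta\eta_{m^*}$ as $+2\eta_{m^*}m^*$, and $f+g=2\eta_{m^*}\ge 2\delta$. Your fallback of replacing $\eta$ by $-\eta$ could never rescue a negative coefficient. Indeed $-\Delta(-\eta)=(f+g)\mathds 1_{\{-\eta\ge 0\}}-f$, so the flip only swaps $f$ and $g$ and leaves $f+g$ unchanged. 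The unstable sign $f+g>0$ is intrinsic: it is exactly the positivity of the switch function.

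In Point (2), the estimate you call the main obstacle is asserted rather than proved, and your heuristic (interpolation, near-harmonicity of $\dot\theta$ away from the ball) does not by itself give it. The paper proves, for $d\ge 2$ and with no mean or mass condition, that $\Vert h\Vert_{W^{-2,2}(\T)}\le\omega(r)\Vert h\Vert_{W^{-1,2}(\T)}$ for every $h$ supported in $\B(x_0;r)$, with $\omega(r)\to 0$. For $u\in W^{2,2}$ one writes $\langle h,u\rangle=\langle h,\phi_r u\rangle\le\Vert h\Vert_{W^{-1,2}}\Vert\phi_r u\Vert_{W^{1,2}}$ with a cutoff $\phi_r\equiv 1$ on the ball. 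Sobolev embeddings then give $\Vert\phi_r u\Vert_{W^{1,2}}\le\omega(r)\Vert u\Vert_{W^{2,2}}$; in $d=2$ this requires a logarithmic cutoff between radii $r$ and $\sqrt r$. Combined with $\Vert\dot\theta\Vert_{L^2}\lesssim\Vert h\Vert_{W^{-2,2}}$ and $\Vert h\Vert_{W^{-1,2}}\lesssim\Vert\dot\theta\Vert_{W^{1,2}}$, this yields the coercivity.

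Your second difficulty, a uniform Taylor remainder, is awkward as posed since $h$ is not small in $L^\infty$, and it is unnecessary. Prove the coercivity at every $m\in\mathcal M$ with uniform constants; this is possible because $\min_{m\in\mathcal M}\min_{\T}\eta_m\ge\delta>0$, whereas you only used $\min_{\T}\eta_{m^*}$. Then the exact Lagrange form $J(m)-J(m^*)=\dot J(m^*)[h]+\frac12\ddot J(m^*+sh)[h,h]$, with $m^*+sh\in\mathcal M$ by convexity, gives \eqref{Eq:QBT1} at once.

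For Point (3), your duality argument is sound and a slightly cleaner substitute for the paper's Steklov eigenfunction. With normal displacement $w/|\n\eta|$, the limiting measure $\mu_w$ has zero mass. Hence $\int_\Gamma w^2/|\n\eta|=\langle\mu_w,w\rangle\lesssim\Vert\mu_w\Vert_{W^{-1,2}}\Vert\n w\Vert_{L^2}$. Combining this with $\alpha\Vert\mu_w\Vert^2_{W^{-1,2}}\le\frac12\int_\Gamma w^2/|\n\eta|$ gives \eqref{Eq:SOSD1} without having to produce a minimiser of the Rayleigh quotient.
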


\begin{remark}
Condition \eqref{Eq:QBT1} is a quantitative Hardy-Littlewood inequality. Conditions \eqref{Eq:QBT1}--\eqref{Eq:SOSD1} are related to local minimality for the associated energy, but do not allow to conclude that $\eta$ is an energy minimiser, see Section \ref{Se:LocalMinimality}.
These optimality conditions serve different purposes: \eqref{Eq:QBT1} will give non-degeneracy at the blown-up scale, while \eqref{Eq:SOSD1} will be used to rule out certain blow-up profiles, in the spirit of \cite[Theorem 8.1]{zbMATH05129536}--\cite[Section 3]{zbMATH05505659}, as it roughly states that
\[ \int_{\{\eta=0\}}\frac1{|\n \eta|}<+\infty.\] We refer to  Section \ref{Se:ASCConstrained}, Theorem \ref{Th:ASCConstrained}.

\end{remark}
 
Points $(1)$ and $(2)$ of Theorem  \ref{Th:ReformConstrained} are proved in Section \ref{Se:BangBang}. Point (3) is established in Section \ref{Se:Shape}.
 \subsection{Reformulation as a free boundary problem}\label{Se:BangBang}

\subsubsection{Preliminary comments and results}
The set $\mathcal M$ is convex; its extreme points are given by 
\[ \mathcal E=\left\{\mathds 1_E:\,\text{ $E$ measurable, } E\subset\T\,, |E|=m_0\right\},\] see  \cite[Proposition 7.2.17]{zbMATH06838450}. Our approach relies on first and second-order optimality conditions, which in turn hinge on differentiability of the solution mappings. For the sake of readability, we gather the relevant information here.

\begin{lemma}\label{Le:Differentiability}
The map $m\mapsto \theta_m$ is twice Gateaux differentiable and letting, for any $h\in L^\infty(\T)$, 
\[ \dot\theta_m:=\underset{t\to 0}\lim\frac{\theta_{m+th}-\theta_m}{t},\] where the limit is $W^{1,2}(\T)$ weak, $L^2(\T)$ strong, $\dot\theta_m$ is the unique solution of 
\begin{equation}\label{Eq:DotTheta}
-\Delta \dot\theta_m-2\langle \n \dot\theta_m,\n \theta_m\rangle=h+\partial_\theta Q(x,\theta_m)\dot\theta_m.
\end{equation}
Finally, letting $\ddot\theta_m$ be the second-order Gateaux derivative of $m\mapsto \theta_m$, $\ddot \theta_m$ is the unique solution of 
\begin{equation}\label{Eq:DdotTheta}
-\Delta \ddot\theta_m-2\langle \n \ddot\theta_m,\n \theta_m\rangle=\partial_\theta Q(x,\theta_m)\ddot\theta_m+\partial_{\theta\theta}^2Q(x,\theta_m)(\dot\theta_m)^2+2|\n\dot\theta_m|^2.
\end{equation}
\end{lemma}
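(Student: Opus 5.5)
The plan is to apply the implicit function theorem to the map
\[ F:(m,\theta)\mapsto -\Delta \theta-|\n\theta|^2-m-Q(x,\theta),\qquad F: L^\infty(\T)\times W^{2,p}(\T)\to L^p(\T),\]
with $p>d$ large, so that $W^{2,p}(\T)\hookrightarrow \mathscr C^{1}(\T)$ and $|\n\theta|^2$ depends smoothly on $\theta$. By Theorem \ref{Th:ExLDE}, $F(m,\theta_m)=0$ for every $m\in\mathcal M_0$. Since $Q\in L^\infty(\T;\mathscr C^2(\R))$ and $W^{2,p}$ is an algebra embedded in $L^\infty$, the Nemytskii map $\theta\mapsto Q(x,\theta)$ is $\mathscr C^2$ from $W^{2,p}$ to $L^p$. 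The quadratic term $\theta\mapsto|\n\theta|^2$ is smooth, because it is a continuous bilinear form $W^{2,p}\times W^{2,p}\to L^p$. So $F$ is $\mathscr C^2$, affine in $m$, and its partial differential in $\theta$ at $(m,\theta_m)$ is
\[ \partial_\theta F(m,\theta_m)[\phi]=-\Delta\phi-2\langle\n\theta_m,\n\phi\rangle-\partial_\theta Q(x,\theta_m)\phi.\]

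The key step, and the main obstacle, is to show that $\partial_\theta F(m,\theta_m)$ is an isomorphism $W^{2,p}\to L^p$. Its formal adjoint is precisely the operator $L_m=-\Delta+2\n\cdot(\cdot\,\n\theta_m)-\partial_\theta Q(x,\theta_m)$ appearing in \eqref{Eq:AssQ2}. The operator and its adjoint share the principal eigenvalue $\Lambda(m)$, by Krein--Rutman on the torus with $\mathscr C^{0,\gamma}$ drift coefficients. Assumption \eqref{Eq:AssQ2} gives $\Lambda(m)>0$, so $0$ is not in the spectrum. Here I would invoke the Fredholm alternative: the operator is a compact perturbation of $-\Delta+1$, so injectivity yields surjectivity. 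Injectivity itself would follow from the existence of a positive principal eigenfunction together with the generalised maximum principle. Elliptic $L^p$ regularity then upgrades the inverse from $W^{1,2}$ to $W^{2,p}$. (Alternatively, $\partial_u Q<0$ from \eqref{Eq:AssQ} already gives coercivity via the maximum principle, but \eqref{Eq:AssQ2} is the cleaner route.)

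With this in hand, the implicit function theorem gives a $\mathscr C^2$ (Fréchet, hence Gateaux) map $m\mapsto\tilde\theta_m$ on an $L^\infty$-neighbourhood of $m$. Uniqueness in Theorem \ref{Th:ExLDE} identifies $\tilde\theta_{m+th}=\theta_{m+th}$ for small $t$. There is a caveat: $m+th$ may leave $\mathcal M_0$. In that case I extend well-posedness to a neighbourhood, which is harmless since the IFT produces the solution directly. Differentiating $F(m+th,\theta_{m+th})=0$ once in $t$ gives \eqref{Eq:DotTheta}. Differentiating twice, the term $-|\n\theta|^2$ produces $-2|\n\dot\theta_m|^2$ and the term $Q$ produces $\partial_{\theta\theta}^2Q(\dot\theta_m)^2$, while the $m$-term is linear and disappears; this gives \eqref{Eq:DdotTheta}. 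Uniqueness of $\dot\theta_m$ and $\ddot\theta_m$ is again the invertibility above. Convergence in $W^{2,p}$ implies the stated weak-$W^{1,2}$ and strong-$L^2$ convergence of the difference quotients.
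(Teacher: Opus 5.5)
Your proof is correct, and it takes a different route from the paper.

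\textbf{The paper's route.} The paper works with the difference quotients $z_t=(\theta_{m+th}-\theta_m)/t$. Subtracting the two state equations and using the mean value theorem, $z_t$ solves a linear equation whose zeroth-order coefficient $\partial_\theta Q(x,\xi_t)$ is $\le-\delta<0$. The maximum principle then gives a uniform $L^\infty$ bound, and testing with $z_t$ gives a $W^{1,2}$ bound. The paper extracts a weak limit and identifies it through uniqueness for \eqref{Eq:DotTheta}, which again comes from the maximum principle and the sign condition in \eqref{Eq:AssQ}. The second derivative is only asserted to follow ``along the same lines''.

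\textbf{Your route.} You write the state equation as $F(m,\theta)=0$ from $L^\infty\times W^{2,p}$ to $L^p$ and apply the implicit function theorem. This gives more than the paper:
\begin{itemize}
\item Fr\'echet $\mathscr C^2$ dependence in the strong $W^{2,p}$ topology, which trivially implies the weak-$W^{1,2}$/strong-$L^2$ convergence in the statement.
\item Both \eqref{Eq:DotTheta} and \eqref{Eq:DdotTheta} at once, by differentiating $F(m+th,\theta_{m+th})=0$ twice.
\item A clean meaning for $\theta_{m+th}$ when $m+th$ leaves $\mathcal M_0$. This happens for generic $h\in L^\infty(\T)$ when $m=\mathds 1_E$, and the paper uses it tacitly.
\end{itemize}

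\textbf{What your route costs.} You need the functional-analytic setup. The Nemytskii map $\theta\mapsto Q(x,\theta)$ must be $\mathscr C^2$. Taking the target $L^p$ with $p<\infty$, as you do, is the right choice, because dominated convergence then handles the merely measurable $x$-dependence of $Q$. You also need the linearised operator to be an isomorphism $W^{2,p}\to L^p$.

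\textbf{A simpler invertibility argument.} Your main argument for invertibility goes through \eqref{Eq:AssQ2}, Krein--Rutman and the generalised maximum principle, which is heavier than necessary. Since $\theta_m$ is bounded, \eqref{Eq:AssQ} already gives $-\partial_\theta Q(x,\theta_m)\ge\delta>0$. Injectivity then follows from the plain maximum principle, which is what the paper uses, and Fredholm index zero gives surjectivity. Equivalently, the identity $-\Delta\phi-2\langle\n\theta_m,\n\phi\rangle=-e^{-2\theta_m}\n\cdot(e^{2\theta_m}\n\phi)$ turns the linearised problem into a coercive symmetric one, solvable by Lax--Milgram.
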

We give the (standard) proof in Appendix \ref{Ap:Differentiability}. From Lemma \ref{Le:Differentiability}, $J$ is Gateaux differentiable at any $m\in \mathcal M$, and its Gateaux derivative $\dot J(m)[h]$ at $m$ in the direction $h$ is given by 
\[ \dot J(m)[h]=\int_{\T} \dot\theta_m\partial_\theta j(x,\theta_m).\] Introduce the solution $\eta_m$ to 
\begin{equation}\label{Eq:C2Switch1}
-\Delta \eta_{m}+2\n\cdot\left(\eta_{m}\n \theta_{m}\right)-\partial_\theta Q(x,\theta_{m})\eta_{m}=\partial_\theta j(x,\theta_{m})\text{ in }\T.\end{equation}

\begin{lemma}\label{Le:SwitchFunction}
Equation \eqref{Eq:C2Switch1} is well-posed. Furthermore,  there exists $\delta>0$ such that 
\begin{equation}\label{Eq:MinSwitch}
\min_{m\in \mathcal M}\min_{x\in \T}\eta_m(x)\geq \delta.
\end{equation}
 \end{lemma}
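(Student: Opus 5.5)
The plan is to read \eqref{Eq:C2Switch1} as $L_m\eta_m=\partial_\theta j(x,\theta_m)$, where $L_m=-\Delta+2\n\cdot(\cdot\,\n\theta_m)-\partial_\theta Q(x,\theta_m)$ is exactly the operator in \eqref{Eq:AssQ2}. Well-posedness and positivity should then follow from the principal eigenvalue theory of \cite{zbMATH06458598}. The uniform bound \eqref{Eq:MinSwitch} should follow from a compactness argument over $\mathcal M$.

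\textbf{Step 1: well-posedness and positivity for a fixed $m$.} By Theorem \ref{Th:ExLDE}, $\theta_m\in W^{2,p}(\T)$ for every $p<\infty$, so $\n\theta_m$ is Hölder continuous and $\partial_\theta Q(x,\theta_m)\in L^\infty$. The operator $L_m$ is therefore a uniformly elliptic divergence-form operator with bounded coefficients on the compact manifold $\T$.

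Since $\Lambda(m)>0$ by \eqref{Eq:AssQ2}, the standard characterisation of the principal eigenvalue gives two facts: $L_m$ is invertible from $W^{1,2}$ onto $W^{-1,2}$, and $L_m$ satisfies the strong maximum principle. By the latter, $L_m u\geq 0$ implies $u\geq 0$, and moreover $u>0$ everywhere unless $L_m u\equiv 0$. Existence and uniqueness of $\eta_m$ then follow, and elliptic regularity gives $\eta_m\in W^{2,p}$, hence $\eta_m\in\mathscr C^{1,\gamma}$. By \eqref{Eq:AssJ}, $\partial_\theta j(x,\theta_m)\geq 0$, so the maximum principle gives $\eta_m\geq 0$. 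Provided the source is nontrivial, the strong maximum principle (Harnack) gives $\min_\T\eta_m>0$.

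\textbf{Step 2: uniform estimates over $\mathcal M$.} I will show that $\{\theta_m\}_{m\in\mathcal M}$ is bounded in $W^{2,p}$. For the upper bound, evaluate at a maximum point, using Bony's maximum principle for $W^{2,p}$ functions: there $0\leq m+Q(x,\theta_m)\leq 1+Q(x,\max\theta_m)$, and \eqref{Eq:AssQ} bounds $\max\theta_m$ from above. For the lower bound, I would pass to $\Theta_m=e^{\theta_m}$, which solves a linear equation $-\Delta\Theta_m=V\Theta_m$ with $\|V\|_\infty$ uniformly bounded. The Harnack inequality then gives $\max\Theta_m\lesssim\min\Theta_m$. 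The integrated identity $\int_\T(m+Q(x,\theta_m)+|\n\theta_m|^2)=0$, combined with $Q\to0$ as $u\to-\infty$ and $\int m=m_0>0$, forbids $\max\theta_m\to-\infty$. Together these give a uniform lower bound on $\theta_m$, and the $W^{2,p}$ bound follows by elliptic regularity.

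\textbf{Step 3: contradiction by compactness.} Suppose there are $m_k\in\mathcal M$ with $\min\eta_{m_k}\to0$. Since $\mathcal M$ is weak-$*$ compact, extract $m_k\rightharpoonup m\in\mathcal M$. By Step 2, $\theta_{m_k}\to\bar\theta$ in $\mathscr C^{1}$, and passing to the limit in \eqref{Eq:LDE3} together with uniqueness gives $\bar\theta=\theta_m$. The coefficients of $L_{m_k}$ then converge uniformly to those of $L_m$, so $\Lambda(m_k)\to\Lambda(m)>0$. Hence $L_{m_k}^{-1}$ is uniformly bounded and $\eta_{m_k}$ is bounded in $W^{2,p}$. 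Extracting a further subsequence, $\eta_{m_k}\to\eta_m$ uniformly, so $\min\eta_m=0$, which contradicts Step 1.

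\textbf{Main obstacle.} The delicate point is the nontriviality of the source, namely $|\{x:\partial_\theta j(x,\theta_m(x))>0\}|>0$. Assumption \eqref{Eq:AssJ} only guarantees positivity on a set of positive measure at each \emph{fixed} value $u$, and $\theta_m$ need not be constant. My first attempt would be a continuity/measure argument on the level sets of $\theta_m$. On each open set where $\theta_m$ is locally constant, \eqref{Eq:AssJ} applies directly. On the remaining region, where $\n\theta_m\neq0$ a.e. in the sense of the coarea formula, the required positivity does not follow from the literal statement of \eqref{Eq:AssJ}. If the argument stalls there, I would interpret \eqref{Eq:AssJ} in the natural strengthened sense, namely that for every continuous $u$ one has $|\{\partial_u j(x,u(x))>0\}|>0$; this holds in all the motivating examples, where $j$ does not depend on $x$. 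Even with this interpretation, Step 3 also needs the positivity to be inherited by the limit $\theta_m$, which the same condition ensures.
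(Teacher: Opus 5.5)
Your proof follows the same route as the paper's. For fixed $m$ you get solvability and positivity from a maximum principle, and then a uniform bound over $\mathcal M$ by compactness; the paper compresses your Steps 2--3 into ``by elliptic regularity, it suffices to show that $\min_{\T}\eta_m>0$ for every $m$''. The only difference is the source of invertibility. The paper gets it from \eqref{Eq:AssQ}, through the adjoint $L^*=-\Delta-2\langle\nabla\,\cdot,\nabla\theta_m\rangle-\partial_\theta Q(x,\theta_m)$, whose zeroth-order coefficient is positive, together with the Fredholm alternative. You use \eqref{Eq:AssQ2} instead. Here the two are interchangeable: the principal eigenvalue of $L_m$ equals that of $L^*$, which is at least $\inf_{\T}\left(-\partial_\theta Q(x,\theta_m)\right)>0$.

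Your ``main obstacle'' is genuine, and the paper does not address it. Its proof simply asserts that \eqref{Eq:AssQ} and \eqref{Eq:AssJ} give $\eta_m\not\equiv 0$, which tacitly uses your strengthened reading. That reading is in fact necessary rather than a fallback, because under the literal \eqref{Eq:AssJ} the statement fails:
\begin{itemize}
\item Take $m_1\in\mathcal M$ with $\theta_{m_1}$ non-constant, and let $4\varepsilon=\operatorname{osc}_{\T}\theta_{m_1}$.
\item Choose $\phi\in\mathscr C^\infty(\R)$ with $\phi\geq 0$ and $\{\phi=0\}=[-\varepsilon,\varepsilon]$, and set $j(x,u):=\int_0^u\phi(s-\theta_{m_1}(x))\,ds$.
\item For every $u$, the set $\{x:\,|u-\theta_{m_1}(x)|>\varepsilon\}$ is open, and it is non-empty because the oscillation exceeds $2\varepsilon$. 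So \eqref{Eq:AssJ} holds.
\item Yet $\partial_\theta j(x,\theta_{m_1}(x))\equiv\phi(0)=0$, hence $\eta_{m_1}\equiv 0$, contradicting \eqref{Eq:MinSwitch}.
\end{itemize}
So no continuity or measure argument on the level sets of $\theta_m$ can work. In particular, your claim that \eqref{Eq:AssJ} ``applies directly'' on open sets where $\theta_m$ is locally constant is wrong. It only gives positivity of $\partial_u j(\cdot,u)$ somewhere in $\T$, not on that open set.

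Your argument is complete once the hypothesis is stated as $|\{x:\,\partial_\theta j(x,\theta_m(x))>0\}|>0$ for every $m\in\mathcal M$. This is all your Steps 1 and 3 use, and your strengthened version implies it.
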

 
 \begin{remark}
\eqref{Eq:MinSwitch} reflects Assumption \eqref{Eq:AssJ} and the monotonicity of $J$ in the sense that 
 \[\forall m,m'\in \mathcal M,\, \max(J(m),J(m'))\leq J(\max(m,m')).\]
 \end{remark}
 \begin{proof}[Proof of Lemma \ref{Le:SwitchFunction}]
 Regarding the well-posedness of \eqref{Eq:C2Switch1}  observe that the differential operator 
\[ L:=-\Delta +2\n\cdot(\cdot\n \theta_m)-\partial_\theta Q(x,\theta_m)\] is the adjoint of the invertible operator (the invertibility being guaranteed by Assumption \eqref{Eq:AssQ})
\[ L^*=-\Delta-2\langle\, \cdot,\n\theta_m\rangle-\partial_\theta Q(x,\theta_m).\] The Fredholm alternative yields the unique solvability of the equation. Regarding \eqref{Eq:MinSwitch}, by elliptic regularity, it suffices to show that for any $m\in \mathcal M$ we have $\min_{\T}\eta_m>0$.  As  \eqref{Eq:AssQ} and  \eqref{Eq:AssJ} imply $\eta_m\geq 0,\, \not\equiv 0$ in $\T$, the strong maximum principle yields the conclusion.
 \end{proof}

Multiplying \eqref{Eq:DotTheta} by $\eta_m$ and integrating by parts gives
\begin{equation}\label{Eq:DotJNu}
\dot J(m)[h]=\int_{\T} \eta_mh.
\end{equation} Taking, for an optimiser $m^*$ of \eqref{Eq:PvNonEnergetic}, $h=m-m^*$ for some competitor $m\in \mathcal M$, we obtain:
\begin{proposition}\label{Pr:OC1}
If $m^*$ solves \eqref{Eq:PvNonEnergetic}, then, 
\[ \forall m\in \mathcal M,\, \int_{\T}\eta_{m^*} m\leq \int_{\T} \eta_{m^*}m^*.\]
\end{proposition}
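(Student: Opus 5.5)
The plan is to derive Proposition \ref{Pr:OC1} as the first-order optimality condition for a maximiser over the convex set $\mathcal M$. The two ingredients are the Gateaux differentiability of $m\mapsto J(m)$ from Lemma \ref{Le:Differentiability} and the adjoint representation \eqref{Eq:DotJNu} of the derivative through the switch function $\eta_{m^*}$.

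\textbf{Step 1: an admissible one-sided perturbation.} Fix any competitor $m\in\mathcal M$ and set $h:=m-m^*\in L^\infty(\T)$. By convexity of $\mathcal M$ (both the pointwise constraint $0\leq m\leq 1$ and the integral constraint $\int_\T m=m_0$ are preserved under convex combinations), we have
\[ m_t:=m^*+th\in\mathcal M \quad\text{for every } t\in[0;1]. \]
Optimality of $m^*$ then gives $J(m_t)\leq J(m^*)$ for all $t\in(0;1]$.

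\textbf{Step 2: passing to the limit $t\to0^+$.} Dividing by $t>0$, we get
\[ \frac{J(m^*+th)-J(m^*)}{t}\leq 0. \]
By Lemma \ref{Le:Differentiability}, $(\theta_{m_t}-\theta_{m^*})/t\to\dot\theta_{m^*}$ strongly in $L^2(\T)$. Since $\partial_\theta j$ is continuous and the family $\theta_{m_t}$ is uniformly bounded in $L^\infty$ (a consequence of the uniform $W^{2,p}$ bounds of Theorem \ref{Th:ExLDE}), a mean-value argument shows the difference quotient converges to
\[ \dot J(m^*)[h]=\int_\T \dot\theta_{m^*}\,\partial_\theta j(x,\theta_{m^*}). \]
Hence $\dot J(m^*)[h]\leq 0$.

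\textbf{Step 3: adjoint identity.} Multiply \eqref{Eq:DotTheta} by $\eta_{m^*}$ and integrate by parts on the torus, which produces no boundary terms. The operator acting on $\dot\theta_{m^*}$ is exactly the adjoint of the one defining $\eta_{m^*}$ in \eqref{Eq:C2Switch1}. This yields \eqref{Eq:DotJNu}, namely
\[ \dot J(m^*)[h]=\int_\T\eta_{m^*}h. \]
Therefore $\int_\T\eta_{m^*}(m-m^*)\leq 0$, which is the claim.

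\textbf{Main obstacle.} The only delicate point is the justification of the limit in Step 2, i.e.\ that the Gateaux derivative of the state in $L^2$ transfers to $J$. I expect this to be routine given the uniform elliptic bounds on $\theta_m$ for $m\in\mathcal M$ and the $\mathscr C^2$ regularity of $j$ in $\theta$. The integration by parts in Step 3 requires $\eta_{m^*},\dot\theta_{m^*}\in W^{1,2}(\T)$, which follows from the well-posedness in Lemma \ref{Le:SwitchFunction} and elliptic regularity.
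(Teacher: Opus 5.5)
Your proof is correct and is exactly the paper's argument: the one-sided Gateaux derivative of $J$ at $m^*$ along the admissible segment $\{m^*+t(m-m^*)\}_{t\in[0;1]}\subset\mathcal M$ is non-positive, and the adjoint identity \eqref{Eq:DotJNu} rewrites it as $\int_\T\eta_{m^*}(m-m^*)\leq 0$. One small correction on attribution: the $L^\infty$ control of $\theta_{m_t}$ used in Step 2 comes from the maximum-principle bound $\Vert\theta_{m_t}-\theta_{m^*}\Vert_{L^\infty(\T)}\lesssim t\Vert m-m^*\Vert_{L^\infty(\T)}$ in the proof of Lemma \ref{Le:Differentiability}, not from Theorem \ref{Th:ExLDE}, which gives no uniform bounds.
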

If we had, for any $c\in \R,\, |\{\eta_{m^*}=c\}|=0$ this would give  
$m^*=\mathds 1_{\{\eta_{m^*}>c\}}$ for some $c$. However, this is not guaranteed \emph{a priori}.

\subsubsection{Lower bound on the second-order derivative} We denote, for $m\in \mathcal M$ and $h\in L^\infty(\T)$  $\ddot J(m)[h,h]$  the second-order Gateaux derivative of $J$ at $m$ in the direction $h$.
\begin{proposition}\label{Pr:LocalMinimality} Let $m\in \mathcal M$.
There exist a constant $\alpha>0$ and $r_0>0$ such that the following holds: for any $m'\in L^\infty(\T)$ such that $\{m\neq m'\}\subset \B(x_0;r_0)$ for some $x_0\in \T$, 
\begin{equation}\label{Eq:DdotJLB}
\ddot J(m)[m'-m,m'-m]\geq \alpha \Vert m'-m\Vert_{W^{-1,2}(\T)}^2.
\end{equation}
\end{proposition}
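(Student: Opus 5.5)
We compute $\ddot J(m)[h,h]$ with $h:=m'-m$ and then isolate a coercive term. Differentiating $\dot J(m)[h]=\int_\T \partial_\theta j(x,\theta_m)\dot\theta_m$ once more gives
\[\ddot J(m)[h,h]=\int_\T \partial^2_{\theta\theta} j(x,\theta_m)\dot\theta_m^2+\int_\T \partial_\theta j(x,\theta_m)\ddot\theta_m .\]
We test \eqref{Eq:DdotTheta} against the switch function $\eta_m$ and use that $\eta_m$ solves the adjoint equation \eqref{Eq:C2Switch1}, as was done to derive \eqref{Eq:DotJNu}. This yields
\[\ddot J(m)[h,h]=2\int_\T \eta_m|\n\dot\theta_m|^2+\int_\T\Big(\partial^2_{\theta\theta}Q(x,\theta_m)\eta_m+\partial^2_{\theta\theta}j(x,\theta_m)\Big)\dot\theta_m^2 .\]
The quadratic Hamiltonian produces the positive term $2\int\eta_m|\n\dot\theta_m|^2$. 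Lemma \ref{Le:SwitchFunction} gives $\eta_m\ge\delta$, so this term is at least $2\delta\|\n\dot\theta_m\|_{L^2}^2$. The remaining term is bounded by $C\|\dot\theta_m\|_{L^2}^2$, where $C$ is uniform in $m$ thanks to the $W^{2,p}$ bounds on $\theta_m$ and $\eta_m$ (Theorem \ref{Th:ExLDE} and elliptic regularity). Hence
\[\ddot J(m)[h,h]\ge 2\delta\|\n\dot\theta_m\|_{L^2}^2-C\|\dot\theta_m\|_{L^2}^2 .\]

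Next we relate $\dot\theta_m$ to $h$ in $W^{-1,2}$. By \eqref{Eq:DotTheta}, $h=L^*\dot\theta_m$, where
\[L^*=-\Delta-2\langle\n\cdot,\n\theta_m\rangle-\partial_\theta Q(x,\theta_m)\]
has bounded coefficients. Therefore $\|h\|_{W^{-1,2}}\lesssim\|\dot\theta_m\|_{W^{1,2}}$. By invertibility of $L^*$ (assumption \eqref{Eq:AssQ2}), we also have $\|\dot\theta_m\|_{W^{1,2}}\lesssim\|h\|_{W^{-1,2}}$. It thus suffices to show that, when $h$ is supported in a small ball, $\|\dot\theta_m\|_{L^2}$ is small compared with $\|\n\dot\theta_m\|_{L^2}$ (or with $\|h\|_{W^{-1,2}}$).

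This smallness is the main obstacle, and it is where $r_0$ and the localisation enter; I would split into cases according to the mean of $h$. For $h$ supported in $\B(x_0;r_0)$ we have $\|h\|_{L^1}\le |\B_{r_0}|^{1/2}\|h\|_{L^2}$. The first attempt is a duality estimate: $\|\dot\theta_m\|_{L^2}=\sup_{\|\phi\|_{L^2}\le1}\int h\,\psi_\phi$, where $L\psi_\phi=\phi$. Since $\psi_\phi\in W^{2,2}$, we have $\n\psi_\phi\in L^{2^*}$ for $d\geq3$ (and in $L^p$ for every $p<\infty$ if $d=2$). Writing $\psi_\phi=\psi_\phi(x_0)+(\psi_\phi-\psi_\phi(x_0))$, the oscillation part should give $\int h(\psi_\phi-\psi_\phi(x_0))\lesssim \|h\|_{W^{-1,2}}\,\|\n\psi_\phi\|_{L^2(\B_{2r_0})}$ via a cutoff, and $\|\n\psi_\phi\|_{L^2(\B_{2r_0})}\le|\B_{2r_0}|^{1/d}\|\n\psi_\phi\|_{L^{2^*}}$, which goes to zero as $r_0\to0$ uniformly in $\phi$ and $m$. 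The constant part $\psi_\phi(x_0)\int h$ is the delicate one. In the penalised problem $\int h$ need not vanish, so I would test $h$ against a cutoff $\chi$ equal to $1$ on $\B_{r_0}$ and controlled in $W^{1,2}$; this should show that $|\int h|\lesssim\|h\|_{W^{-1,2}}\|\chi\|_{W^{1,2}}$, and in dimension $d\geq3$ the cutoff can be chosen with small $W^{1,2}$ norm, so that the product with the bounded quantity $|\psi_\phi(x_0)|$ is $o(1)\|h\|_{W^{-1,2}}$. In dimension $2$, capacity is only logarithmically small, so a log-cutoff should be used there. These estimates give $\|\dot\theta_m\|_{L^2}\le\varepsilon(r_0)\|h\|_{W^{-1,2}}$ with $\varepsilon(r_0)\to0$. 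Combining the bounds, $\ddot J(m)[h,h]\ge(2\delta c-C\varepsilon(r_0)^2)\|h\|_{W^{-1,2}}^2$, and choosing $r_0$ small gives \eqref{Eq:DdotJLB} with some $\alpha>0$.
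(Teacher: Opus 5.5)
Your architecture is the paper's. It has three pieces: the second-variation identity with the coercive term $2\int_\T\eta_m|\n\dot\theta_m|^2$ (your factor $2$ is correct; the paper's \eqref{Eq:DdotJ} drops it harmlessly), the equivalence $\Vert\dot\theta_m\Vert_{W^{1,2}}\simeq\Vert h\Vert_{W^{-1,2}}$ of Proposition \ref{Th:NegativeSobolev}, and your duality identity. The duality identity says $\Vert\dot\theta_m\Vert_{L^2}\lesssim\Vert h\Vert_{W^{-2,2}}$, so what remains is exactly Proposition \ref{Pr:CompW12W22}. The gap is in your proof of that localisation step.

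\begin{itemize}
\item You split $\psi_\phi=\psi_\phi(x_0)+(\psi_\phi-\psi_\phi(x_0))$ and treat $|\psi_\phi(x_0)|$ as bounded. You only control $\Vert\psi_\phi\Vert_{W^{2,2}}$, and $W^{2,2}\not\hookrightarrow\mathscr C^0$ when $d\geq4$. The point value is then neither defined nor uniformly bounded, so the constant part is not controlled.
\item The oscillation bound by $\Vert h\Vert_{W^{-1,2}}\Vert\n\psi_\phi\Vert_{L^2(\B(x_0;2r_0))}$ is also misstated. The gradient of the cutoff produces the term $r_0^{-1}\Vert\psi_\phi-\psi_\phi(x_0)\Vert_{L^2(\B(x_0;2r_0))}$. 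A Poincar\'e inequality with a point value in place of a mean fails, even uniformly over $W^{2,2}$-bounded families.
\end{itemize}
For $d=2,3$ this extra term is still $o(1)$ uniformly, by H\"older continuity of $W^{2,2}$ functions, so your argument does close there. It does not close for $d\geq4$.

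The repair is easy, and the mean of $h$ needs no special treatment: the statement imposes nothing on $\int h$, and the cutoff absorbs it. Take $\chi\equiv1$ on $\B(x_0;r_0)$. Write $\int_\T h\psi_\phi=\int_\T h\,\chi\psi_\phi\leq\Vert h\Vert_{W^{-1,2}}\Vert\chi\psi_\phi\Vert_{W^{1,2}}$ and estimate directly
\[\Vert\chi\psi_\phi\Vert_{W^{1,2}}^2\lesssim\int_\T|\n\chi|^2\psi_\phi^2+\int_{\mathrm{supp}\,\chi}\left(\psi_\phi^2+|\n\psi_\phi|^2\right).\]
This is $o(1)$ uniformly in $\phi$, using the following embeddings of $W^{2,2}$:
\begin{itemize}
\item into $L^\infty$ when $d=3$;
\item into every $L^q$, $q<\infty$, when $d=4$;
\item into $L^{2d/(d-4)}$ when $d>4$;
\item into $L^\infty$ together with a logarithmic cutoff when $d=2$.
\end{itemize}
This is precisely the paper's proof of Proposition \ref{Pr:CompW12W22}.

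Alternatively, keep your split but subtract the mean $\bar\psi_\phi$ of $\psi_\phi$ over $\B(x_0;2r_0)$ instead of the point value. Poincar\'e then gives your oscillation bound. For the constant part, $|\bar\psi_\phi|\lesssim r_0^{-(d-4)/2}$ when $d>4$, and $|\bar\psi_\phi|\lesssim r_0^{-\e}$ for every $\e>0$ when $d=4$. This growth is beaten by $\Vert\chi\Vert_{W^{1,2}}\lesssim r_0^{(d-2)/2}$.
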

To prove it we use the following lemma:
\begin{lemma}\label{Pr:OC2} There exist $\alpha\,, \beta>0$ such that, for any $m\in \mathcal M$ and any $h\in L^\infty(\T)$ there holds 
\begin{equation}\label{Eq:LEDdotJ}
\ddot J(m)[h,h]\geq \alpha \int_{\T}|\n \dot\theta_m|^2-\beta\int_{\T}\dot\theta_m^2,\end{equation} where $\dot \theta_m$ solves \eqref{Eq:DotTheta}.
In particular, there exist $\alpha,\beta>0$ such that, for any $h\in L^\infty(\T)$, there holds
\begin{equation}\label{Eq:DdotJ2}
\ddot J(m)[h,h]\geq \alpha\Vert h\Vert_{W^{-1,2}(\T)}^2-\beta\Vert h\Vert_{W^{-2,2}(\T)}^2.
\end{equation}
\end{lemma}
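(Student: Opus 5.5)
We compute $\ddot J(m)[h,h]$ explicitly and use the adjoint $\eta_m$ to write it as an integral with a good-sign gradient term. Differentiating $\dot J(m)[h]=\int_\T \dot\theta_m\,\partial_\theta j(x,\theta_m)$ once more in the direction $h$ gives
\[
\ddot J(m)[h,h]=\int_\T \partial^2_{\theta\theta} j(x,\theta_m)\,\dot\theta_m^2+\int_\T \partial_\theta j(x,\theta_m)\,\ddot\theta_m .
\]
In the second term we multiply \eqref{Eq:DdotTheta} by $\eta_m$ and integrate by parts. Since $\eta_m$ solves the adjoint equation \eqref{Eq:C2Switch1}, exactly as in the derivation of \eqref{Eq:DotJNu} we obtain
\[
\int_\T \partial_\theta j\,\ddot\theta_m=\int_\T \eta_m\left(\partial^2_{\theta\theta}Q(x,\theta_m)\dot\theta_m^2+2|\n\dot\theta_m|^2\right).
\]
Hence
\[
\ddot J(m)[h,h]=2\int_\T \eta_m|\n\dot\theta_m|^2+\int_\T\left(\partial^2_{\theta\theta}j+\eta_m\partial^2_{\theta\theta}Q\right)\dot\theta_m^2 .
\]

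To get \eqref{Eq:LEDdotJ} we bound each term uniformly in $m\in\mathcal M$. By Lemma \ref{Le:SwitchFunction}, $\eta_m\geq\delta$, so the first term is at least $2\delta\int|\n\dot\theta_m|^2$, and we take $\alpha=2\delta$. For the second term we need $\theta_m$ to lie in a compact interval uniformly in $m$. We get this from the maximum principle applied to \eqref{Eq:LDE3} with $0\le m\le1$ and \eqref{Eq:AssQ}, together with elliptic $W^{2,p}$ estimates; these bounds also make $\eta_m$ uniformly bounded in $L^\infty$. Since $j$ and $Q$ are $\mathscr C^2$ in $\theta$ with $L^\infty$ dependence on $x$, the coefficient of $\dot\theta_m^2$ is bounded by some $\beta$ independent of $m$. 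Obtaining these uniform-in-$m$ bounds is the main technical point, but it is routine.

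For \eqref{Eq:DdotJ2} we compare $\dot\theta_m$ with $h$ in negative Sobolev norms. From \eqref{Eq:DotTheta}, $h=L^*\dot\theta_m$, where $L^*=-\Delta-2\langle\cdot,\n\theta_m\rangle-\partial_\theta Q$ has coefficients bounded uniformly ($\n\theta_m\in L^\infty$ by the $W^{2,p}$ bounds). Therefore $\|h\|_{W^{-1,2}}\lesssim\|\dot\theta_m\|_{W^{1,2}}$. In the other direction, $L^*$ is invertible with uniform bounds, by \eqref{Eq:AssQ2} and a compactness argument over $\mathcal M$ (which is weak-$*$ compact, with $\theta_m$ depending continuously on $m$). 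Duality with the adjoint $L$ on $W^{2,2}$ then gives $\|\dot\theta_m\|_{L^2}\lesssim\|h\|_{W^{-2,2}}$. Combining the two estimates,
\[
\alpha\int|\n\dot\theta_m|^2-\beta\int\dot\theta_m^2\ \ge\ \alpha'\|\dot\theta_m\|_{W^{1,2}}^2-\beta'\|\dot\theta_m\|_{L^2}^2\ \ge\ \alpha''\|h\|_{W^{-1,2}}^2-\beta''\|h\|_{W^{-2,2}}^2 ,
\]
which is \eqref{Eq:DdotJ2} after renaming the constants.
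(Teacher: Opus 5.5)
Your proposal is correct and is essentially the paper's proof: the same adjoint computation (your factor $2$ in $2\int_\T\eta_m|\n\dot\theta_m|^2$ is actually the right one; the paper drops it, harmlessly), the bound $\eta_m\geq\delta$ from Lemma~\ref{Le:SwitchFunction}, and the duality estimates $\Vert h\Vert_{W^{-1,2}(\T)}\lesssim\Vert\dot\theta_m\Vert_{W^{1,2}(\T)}$ and $\Vert\dot\theta_m\Vert_{L^2(\T)}\lesssim\Vert h\Vert_{W^{-2,2}(\T)}$ of Appendix~\ref{Ap:Fredholm}. One correction to the uniformity step you add, which the paper leaves implicit: the maximum principle only bounds $\theta_m$ from above, because at a minimum it gives $m+Q\leq 0$, which is vacuous since \eqref{Eq:AssQ} forces $Q<0$; the uniform lower bound instead uses $\int_\T m=m_0>0$, e.g.\ integrate \eqref{Eq:LDE3} to get $\int_\T Q(x,\theta_m)\leq -m_0$, which bounds $\max_\T\theta_m$ from below, and then apply Harnack to $\Theta_m=e^{\theta_m}$, which solves $-\Delta\Theta_m=(m+Q(x,\theta_m))\Theta_m$ with a potential bounded uniformly in $m$.
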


\begin{proof}[Proof of Lemma \ref{Pr:OC2}]
From Lemma \ref{Le:Differentiability}, $J$ is twice Gateaux differentiable and, for any $m\in \mathcal M$, for any $h\in L^\infty(\T)$, we have
\[ \ddot J(m)[h,h]=\int_{\T}(\dot\theta_m)^2 \partial^2_{\theta\theta}j(x,\theta_m)+\int_{\T} \ddot\theta_m \partial_\theta j(x,\theta_m).\]
Using \eqref{Eq:C2Switch1} and \eqref{Eq:DdotTheta} we deduce that 
\begin{equation}\label{Eq:DdotJ}
\ddot J(m)[h,h]=\int_{\T} \eta_m|\n\dot\theta_m|^2+\int_{\T} W \dot\theta_m^2
\end{equation}
with 
\[W:=\partial_{\theta\theta}^2Q(x,\theta_m)\eta_m+\partial^2_{\theta\theta}j(x,\theta_m)\in L^\infty(\T).\]
From \eqref{Eq:MinSwitch}, \eqref{Eq:AssJ} and the fact that $\theta_m\in L^\infty$, we deduce that there exist $\alpha,\, \beta>0$ such that 
\[ \ddot J(m)[h,h]\geq \alpha \int_\T |\n \dot\theta_m|^2-\beta\int_\T \left(\dot\theta_m\right)^2.\]  In order to derive \eqref{Eq:DdotJ2}, it suffice  to show that for any $m\in \mathcal M$ there holds
\begin{equation}\label{Eq:WK2Est}
\text{For $k=0,1$, for any $h\in L^\infty(\T)$,} \Vert \n^k \dot\theta_m\Vert_{L^2(\T)}\lesssim \Vert h\Vert_{W^{k-2,2}(\T)},
\end{equation}
and 
\begin{equation}
\Vert h\Vert_{W^{-1,2}(\T)}\lesssim \Vert \dot\theta_m\Vert_{W^{1,2}(\T)}.
\end{equation}
This is a standard consequence of the stability assumption \eqref{Eq:AssQ2} and of elliptic regularity; we refer to Appendix \ref{Ap:Fredholm} (see Proposition \ref{Th:NegativeSobolev}).
\end{proof}

We now use the following technical proposition:
\begin{proposition}\label{Pr:CompW12W22}
Let $d\geq 2$. We let 
\[ \mathcal Y_r:=\left\{\mu \in W^{-1,2}(\T),\, \mu\text{ measure, } \mathrm{supp}(\mu)\subset \B(0;r)\right\}.\]
Then there holds 
\begin{equation}\label{Eq:CompW12W22}
\lim_{r\to 0^+}\sup_{\mu\in \mathcal Y_r\setminus\{0\}}\frac{\Vert \mu\Vert_{W^{-2,2}(\T)}^2}{\Vert \mu\Vert_{W^{-1,2}(\T)}^2}=0.
\end{equation}
\end{proposition}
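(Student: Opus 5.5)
The plan is a duality argument in which the small support of $\mu$ lets us replace a $W^{2,2}$ test function by a localised test function of small $W^{1,2}$ norm. Write
\[\Vert\mu\Vert_{W^{-2,2}(\T)}=\sup\left\{\langle\mu,\varphi\rangle:\ \Vert\varphi\Vert_{W^{2,2}(\T)}\leq1\right\}.\]
For $r$ small, fix a cutoff $\chi_r\in\mathscr C^\infty_c(\B(0;\rho(r)))$ with $0\leq\chi_r\leq1$ and $\chi_r\equiv1$ on a neighbourhood of $\overline{\B(0;r)}$. Here $\rho(r)\to0$ is a radius to be chosen, $\rho(r)=2r$ when $d\geq3$.

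Since $\mathrm{supp}(\mu)\subset\B(0;r)$ and $\chi_r\equiv 1$ near this support, $(1-\chi_r)\varphi$ vanishes near the support of $\mu$, so $\langle\mu,\varphi\rangle=\langle\mu,\chi_r\varphi\rangle$. The duality $W^{-1,2}$--$W^{1,2}$ then gives
\[|\langle\mu,\varphi\rangle|\leq\Vert\mu\Vert_{W^{-1,2}(\T)}\,\Vert\chi_r\varphi\Vert_{W^{1,2}(\T)}.\]
Only the support of $\mu$ enters, so the sign of $\mu$ is irrelevant. This matters because in the application $\mu=m'-m$ is signed. It therefore suffices to prove
\[\varepsilon(r):=\sup_{\Vert\varphi\Vert_{W^{2,2}}\leq1}\Vert\chi_r\varphi\Vert_{W^{1,2}(\T)}\underset{r\to0^+}{\longrightarrow}0,\]
since then the ratio in \eqref{Eq:CompW12W22} is at most $\varepsilon(r)^2$.

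Next we bound $\Vert\chi_r\varphi\Vert_{W^{1,2}}\leq\Vert\chi_r\varphi\Vert_{L^2}+\Vert\chi_r\n\varphi\Vert_{L^2}+\Vert\varphi\n\chi_r\Vert_{L^2}$. The first two terms are handled by Hölder on the shrinking ball $\B(0;\rho(r))$. Indeed, $\varphi$ and $\n\varphi$ are bounded in $L^{q}$ for some $q>2$ by the Sobolev embedding of $W^{2,2}$ and $W^{1,2}$. Hence each of these terms is at most $C|\B(0;\rho(r))|^{1/2-1/q}\to0$.

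The remaining term $\Vert\varphi\n\chi_r\Vert_{L^2}$ is the delicate one and requires a case distinction in $d$. For $d\geq3$ take $|\n\chi_r|\lesssim 1/r$, supported in $\B(0;2r)$, so that
\[\Vert\varphi\n\chi_r\Vert_{L^2}\lesssim r^{-1}|\B(0;2r)|^{1/2-1/q}\Vert\varphi\Vert_{L^q}.\]
The admissible exponent $q$ depends on the dimension.
\begin{itemize}
\item For $d=3$, $W^{2,2}\subset L^\infty$, so $q=\infty$ and the bound is $r^{-1}r^{3/2}=r^{1/2}$.
\item For $d=4$, any $q<\infty$ is allowed, and the bound is $r^{1-4/q}\to0$ once $q>4$.
\item For $d>4$, $1/q=1/2-2/d$, and the bound is $r^{-1}r^{2}=r$.
\end{itemize}

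The main obstacle is $d=2$, where the linear cutoff only gives $r^{-1}\cdot r=O(1)$. There I would use $\varphi\in L^\infty$ (from $W^{2,2}\subset\mathscr C^0$) together with the logarithmic capacity cutoff. Set $\chi_r(x)=\min\left(1,\frac{\log(\sqrt r/|x|)}{\log(1/\sqrt r)}\right)_+$, suitably mollified. It equals $1$ on $\B(0;r)$, is supported in $\B(0;\sqrt r)$, and satisfies
\[\Vert\n\chi_r\Vert_{L^2}^2\simeq\frac{2\pi}{\log(1/\sqrt r)}.\]
Hence $\Vert\varphi\n\chi_r\Vert_{L^2}\lesssim\Vert\varphi\Vert_{L^\infty}\,|\log r|^{-1/2}\to0$. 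The first two terms are still controlled, with $\rho(r)=\sqrt r\to0$. This gives $\varepsilon(r)\to0$ in every dimension $d\geq2$ and concludes the proof.
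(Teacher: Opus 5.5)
Your proof is correct and follows essentially the same route as the paper. You localise the $W^{2,2}$ test function with a cutoff equal to $1$ on the support of $\mu$, apply $W^{-1,2}$--$W^{1,2}$ duality, and show $\Vert \chi_r\varphi\Vert_{W^{1,2}}\to 0$ uniformly: for $d\geq 3$ with a $1/r$ cutoff and the same dimension-dependent Sobolev embeddings ($d=3$, $d=4$, $d>4$), and for $d=2$ with the logarithmic cutoff supported in $\B(0;\sqrt r)$.
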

To prove it, we use the following lemma which follows from a standard localisation argument.

\begin{lemma}\label{Pr:SobolevComparison}
Let $j\in \N\setminus\{0\}$. The following holds: for any $r>0$ small enough, for any $f\in L^\infty(\T)$ supported in $\B(0;r)$, there holds 
\begin{equation}\label{Eq:SobolevComparison}
 \Vert f\Vert_{W^{-j,2}(\R^d)}^2\lesssim \Vert f\Vert_{W^{-j,2}(\T)}^2\lesssim \Vert f\Vert_{W^{-j,2}(\R^d)}^2.
\end{equation}
\end{lemma}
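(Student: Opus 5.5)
The plan is to compare the two norms through Fourier multipliers. On $\T$ one has $\Vert f\Vert_{W^{-j,2}(\T)}^2=\sum_{k\in\mathbb Z^d}(1+|k|^2)^{-j}|\hat f(k)|^2$. On $\R^d$ the analogous formula is $\Vert f\Vert_{W^{-j,2}(\R^d)}^2=\int(1+|\xi|^2)^{-j}|\hat f(\xi)|^2\,d\xi$. Here we identify $\B(0;r)\subset\T$ with a ball in $\R^d$, extend $f$ by zero, and use a fixed cutoff $\chi\in\mathscr C^\infty_c(\B(0;1/4))$ with $\chi\equiv1$ on $\B(0;1/8)$, taking $r<1/8$. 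The key fact is the dual characterisation
\[ \Vert f\Vert_{W^{-j,2}}=\sup_{\Vert \varphi\Vert_{W^{j,2}}\le1}\int f\varphi. \]
Since $f=\chi f$, we have $\int f\varphi=\int f(\chi\varphi)$. So in either space we may restrict the supremum to test functions supported in $\B(0;1/4)$, after paying the constant in $\Vert\chi\varphi\Vert_{W^{j,2}}\lesssim_\chi\Vert\varphi\Vert_{W^{j,2}}$.

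The argument then runs in three steps.

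\textbf{Step 1.} Let $\varphi\in W^{j,2}(\R^d)$. Then $\chi\varphi$ is supported in $\B(0;1/4)$, so it can be viewed as a function on $\T$, and $\Vert\chi\varphi\Vert_{W^{j,2}(\T)}=\Vert\chi\varphi\Vert_{W^{j,2}(\R^d)}$. This holds because $W^{j,2}$ norms, for integer $j$, are sums of $L^2$ norms of derivatives up to order $j$, and these are local quantities. Hence
\[ \int f\varphi=\int f\chi\varphi\le\Vert f\Vert_{W^{-j,2}(\T)}\Vert\chi\varphi\Vert_{W^{j,2}(\T)}\lesssim\Vert f\Vert_{W^{-j,2}(\T)}\Vert\varphi\Vert_{W^{j,2}(\R^d)}. \]
Taking the supremum gives the left inequality of \eqref{Eq:SobolevComparison}.

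\textbf{Step 2.} Symmetrically, let $\varphi\in W^{j,2}(\T)$. Lift it to a periodic function on $\R^d$ and multiply by $\chi$. This gives an element of $W^{j,2}(\R^d)$ with norm $\lesssim\Vert\varphi\Vert_{W^{j,2}(\T)}$, and it coincides with $\varphi$ on the support of $f$. The same duality argument then gives the right inequality.

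\textbf{Step 3.} Check the constants. They depend only on $\chi$, $j$ and $d$, and they are uniform in $r\le 1/8$ and in $f$.

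The only delicate point is the equivalence between the Fourier-defined negative norms and the dual norms. On both $\T$ and $\R^d$, $W^{-j,2}$ is by definition (or by Plancherel) the dual of $W^{j,2}$, and the Bessel-potential norm $(1+|\xi|^2)^{j/2}$ is equivalent to the sum over derivatives of order at most $j$. So this point is routine, and I expect no real obstacle. If the paper instead defines the norms via $(-\Delta+1)^{-j/2}$, the same equivalence covers that case as well.
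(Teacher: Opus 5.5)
Your proof is correct, and it is exactly the ``standard localisation argument'' that the paper invokes without giving details. You use the duality $W^{-j,2}=(W^{j,2})'$ on both $\T$ and $\R^d$ and transfer test functions between the two spaces with a fixed cutoff $\chi\equiv 1$ near $\B(0;r)$, so the constants depend only on $\chi$, $j$ and $d$, uniformly in $r\le 1/8$ and in $f$. (Your opening mention of Fourier multipliers is not actually needed; the duality argument carries the whole proof.)
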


\begin{proof}[Proof of Proposition \ref{Pr:CompW12W22}]
We need to distinguish between the cases $d=2$ and $d\geq 3$.
\begin{enumerate}
\item When $d\geq 3$ we take for any $r>0$, $\phi_r$  a smooth cut-off:
\begin{equation}\label{Eq:CutOff}
\begin{cases}\text{$\phi_r$ is radially symmetric, decreasing,}
\\\text{ $\phi_r\equiv 1$ in $\B(0;r)$, $\phi\equiv 0$ in $\B(0;2r)^c$,}
\\\text{ $\Vert \n\phi_r\Vert_{L^\infty(\R^d)}\lesssim \frac1r$.}
\end{cases}\end{equation}
Let $u\in W^{2,2}(\R^d)$. As $\mu$ is  supported in $\B(0;r)$, there holds
\begin{align*}
\int_{\R^d}ud\mu&=\int_{\R^d} (\phi_r u)d\mu
\\&\leq \Vert \phi_r u\Vert_{W^{1,2}(\R^d)}\cdot \Vert \mu\Vert_{W^{-1,2}(\R^d)}.
\end{align*}
It thus suffices to show that 
\begin{equation}\label{Eq:CompGoal}
 \Vert \phi_r u\Vert_{W^{1,2}(\R^d)}\lesssim \omega(r) \cdot \Vert  u\Vert_{W^{2,2}(\R^d)},
\end{equation} with $\omega(r)\underset{r\to 0^+}\to 0$.
To prove \eqref{Eq:CompGoal} observe that 
\begin{align*}
\Vert \phi_r u\Vert_{W^{1,2}(\R^d)}^2&\lesssim \int_{\R^d} \left(\phi_r^2+|\n \phi_r|^2\right) u^2+\int_{\R^d} \phi_r^2|\n u|^2
\\&\lesssim\frac1{r^2} \int_{\B(0;2r)} u^2+\int_{\B(0;2r)}|\n u|^2.
\end{align*}
 $\nabla u  \in L^{2^*}(\R^d)$ where $2^* = \frac{2d}{d-2}$.  We obtain \[ \|\nabla u \|_{L^2(\B(0;2r))}^2 \lesssim \left( \int_{\B(0;2r)} |\nabla u |^{2^*} dx \right)^{\frac2{2^*}}\cdot |\B(0;2r)|^{1 - \frac2{2^*}} \] so that
\begin{equation}\|\nabla u \|_{L^2(\B(0;2r))}^2 \leq \|\nabla u \|_{L^{2^*}(\R^d)}^{2} \cdot |\B(0;2r)|^{\frac2d} \lesssim  \|u \|_{W^{2,2}}^2 \cdot r^2 \end{equation} and, finally,
\begin{equation}\label{Eq:o} \|\nabla u \|_{L^2(\B(0;2r))}\lesssim r \| u\|_{W^{2,2}(\R^d)}.\end{equation}
To handle the other term, we use Sobolev embeddings: if $d=3$, we obtain, from the Sobolev embedding $W^{2,2}\hookrightarrow L^\infty$, 
\[ \frac1{r^2}\int_{\B(0;2r)}u^2\lesssim \Vert u\Vert_{W^{2,2}(\T)}^2\cdot r.\]  
If $d=4$, we take any $q\in [1;+\infty)$, and obtain from the Sobolev embedding $W^{2,2}(\T)\hookrightarrow L^{q}(\T)$\[ \frac1{r^2}\int_{\B(0;2r)}u^2\lesssim \Vert u\Vert_{W^{2,2}(\T)}^2\cdot r^{4\left(1-\frac2q\right)-2}\] and it suffices to take $q$ large enough.  

If $d> 4$, we let $q^*:=\frac{2d}{d-4}$ so that $W^{2,2}\hookrightarrow L^{q^*}$ and we obtain, similarly
\[ \frac1{r^2}\int_{\B(0;2r)}u^2\lesssim \Vert u\Vert_{W^{2,2}(\T)}\cdot r^2.
\]
Thus, in any case, we derive 
\begin{equation}\label{Eq:o1}
\frac1{r^2}\int_{\B(0;2r)}u^2\lesssim r^\epsilon\end{equation} for some $\epsilon>0$.  \eqref{Eq:o}--\eqref{Eq:o1} give \eqref{Eq:CompGoal}.
\item When $d=2$, we take another cut-off: introduce the function $\phi_r$ defined as 
\begin{equation}\label{Eq:CutOff2}\phi_r:x\mapsto
\begin{cases}1\text{ if $|x|\leq r$},
\\\frac{ \ln\left(\frac{\sqrt{r}}{\| x\|}\right)}{\ln\left(\frac1{\sqrt{r}}\right)}\text{ if $r\leq \| x\| \leq \sqrt{r}$}
\\ 0\text{ else.}
\end{cases}\end{equation} We claim that 
\begin{equation}\label{Eq:LogEst}\int_{\B(0;\sqrt{r})}u^2|\n \phi_r|^2\lesssim \Vert u\Vert_{W^{2,2}(\T)}^2\cdot \frac{1}{{|\ln(r)|}}, \end{equation} which is sufficient to conclude using once more the estimate
\[\Vert \phi_r u\Vert_{W^{1,2}(\R^d)}^2\lesssim \int_{\R^d} \left(\phi_r^2+|\n \phi_r|^2\right) u^2+\int_{\R^d} \phi_r^2|\n u|^2\] and \eqref{Eq:o}.
 To establish \eqref{Eq:LogEst},  we first observe that 
\[ |\n \phi_r|^2\lesssim \frac1{|x|^2\ln\left(\frac1{\sqrt{r}}\right)^2}\] and simply bound this integral using the Sobolev embedding $W^{2,2}\hookrightarrow L^\infty$ to obtain
\begin{align*}
\int_{\B(0;\sqrt{r})}u^2|\n \phi_r|^2&\lesssim \Vert u\Vert_{W^{2,2}(\T)}^2\cdot \int_r^{\sqrt{r}}\frac{s}{s^2\ln\left(\frac1{\sqrt{r}}\right)^2}ds
\\&\lesssim \frac{ \Vert u\Vert_{W^{2,2}(\T)}^2}{\ln\left(\frac1{\sqrt{r}}\right)^2}\ln\left(\frac1{\sqrt{r}}\right)
\\&\lesssim  \frac{ \Vert u\Vert_{W^{2,2}(\T)}^2}{\ln\left(\frac1{\sqrt{r}}\right)},
\end{align*} which concludes the proof with $\omega(r)=\frac1{|\ln(r)|}$.
\end{enumerate} 

\end{proof}

\begin{proof}[Proof of Proposition \ref{Pr:LocalMinimality}]
This proposition is a direct consequence of Lemma \ref{Pr:OC2} and of Proposition \ref{Pr:CompW12W22}: there holds, for two positive $\alpha,\, \beta$,
\[ \ddot J(m)[h,h]\geq \alpha \Vert h\Vert_{W^{-1,2}(\T)}^2-\beta \Vert h\Vert_{W^{-2,2}(\T)}^2.\] Now, let $h=m'-m$ for some fixed $m$. If $h$ is supported in  $\B(x_0;r)$ for $r>0$ small enough, Proposition \ref{Pr:CompW12W22} guarantees 
\[ \ddot J(m)[h,h]\gtrsim \left(\alpha+\underset{r\to 0^+}o(1)\right)\Vert h\Vert_{W^{-1,2}(\T)}^2,\] where $\underset{r\to 0^+}o(1)$ does not depend on $h$. Taking $r>0$ small enough allows to conclude.
\end{proof}

\subsubsection{Proof of Points $(1)$ and $(2)$ of Theorem \ref{Th:ReformConstrained}}
Proving Point $(2)$ first provides an approach to Point $(1)$.\begin{proof}[Proof of Point $(2)$ of Theorem \ref{Th:ReformConstrained}]
We let $m\in \mathcal M$ and we set $h:=m-m^*$. From the mean-value theorem, there exists $s\in [0;1]$ such that 
\begin{align*}
0&\geq J(m)-J(m^*)
\\& =\dot J(m^*)[h]+\frac12\ddot J(m^*+s h)[h,h].\end{align*}
From Proposition \ref{Pr:LocalMinimality}, if $\{m\neq m^*\}\subset \B(x_0;r_0)$ for $r_0$ small enough, this gives \[ 0\geq \int_{\T}\eta_{m^*}(m-m^*)+\alpha \Vert m-m^*\Vert_{W^{-1,2}(\T)}^2.\]

\end{proof}
\begin{proof}[Proof of Point $(1)$ of Theorem \ref{Th:ReformConstrained}]
We first prove that $m^*=\mathds 1_{E^*}$ for some $E^*$.
Argue by contradiction and assume that $\omega^*:=\{0<m^*<1\}$ satisfies 
\begin{equation}\label{Eq:Abnormal}
|\omega^*|>0\end{equation} for some optimiser $m^*$. It follows from Proposition \ref{Pr:OC1} that for any $h\in L^\infty(\T)$  supported in $\omega^*$ and satisfying $\int_{\omega^*}h=0$ we have 
\[ \int_{\omega^*}\eta_{m^*}h=0,\]  so that there exists $c_{m^*}\in \R$ such that
\[ \omega^*\subset\{\eta_{m^*}=c_{m^*}\}.\] 
Take $\alpha,\, r_0$ given by Proposition \ref{Pr:LocalMinimality} and let $h=m-m^*\not\equiv 0$ be supported in $\B(x_0;r_0)\cap \omega^*$ for some $x_0$. To obtain the existence of such an $h$, it suffices to take $h=\mathds 1_{F}-m^*$ where $F\subset \B(x_0;r_0)\cap \omega^*$ satisfies $|F|=\int_{\B(x_0;r_0)\cap \omega^*}m^*$, so that $m=m^*+h$ satisfies $m\in \mathcal M$.
Proposition \ref{Pr:LocalMinimality} gives 
\[ 0\geq \underbrace{\int_{\omega^*}\eta h}_{=0}+\alpha \Vert h\Vert_{W^{-1,2}(\T)}^2>0,\] a contradiction, and thus $m^*=\mathds 1_{E^*}$.

Now, from Proposition \ref{Pr:OC1}, it follows that there exists $c_{m^*}$ such that
 \[\{\eta_{m^*}>c_{m^*}\}\subset E^*\subset \{\eta_{m^*}\geq c_{m^*}\}.\] 
To conclude, it suffices to show that, if
\[ F:=E^*\cap \{\eta_{m^*}= c_{m^*}\}\] has positive measure, then 
\[ F=\{\eta_{m^*}=c_{m^*}\}.\] Assume that $|F|>0$ and, arguing by contradiction, let $G\subset  \{\eta_{m^*}= c_{m^*}\}\setminus F$ have positive measure. Let $x_F,\, x_G$ be Lebesgue points of $F$ and $G$ respectively. Consider 
\[ h:=\mathds 1_{\B(x_G;r_G)\cap G}-\mathds 1_{\B(x_F;r_F)\cap F}\] where $r_F, r_G$ are chosen to ensure that $\int h=0$, and $r_F,\, r_G\in (0;r_0)$ where $r_0$ is given by Proposition \ref{Pr:LocalMinimality}. As $\eta_{m^*}$ is constant on $F\cup G$, we deduce  
\[ 0\geq \underbrace{\int_{\omega^*}\eta h}_{=0}+\alpha \Vert h\Vert_{W^{-1,2}(\T)}^2>0,\] a contradiction.

 Now, let $m^*=\mathds 1_{E^*}$ be a solution of \eqref{Eq:PvNonEnergetic}. Expanding \eqref{Eq:FBNE},  $\eta_{m^*}$ solves 
\begin{align*}
-\Delta \eta_{m^*}&=-2\n\cdot\left(\eta_{m^*}\n\theta_{m^*} \right)+\partial_\theta Q(x,\theta_{m^*})\eta_{m^*}+\partial_\theta j(x,\theta_{m^*})
\\&={\partial_\theta Q(x,\theta_{m^*})\eta_{m^*}+\partial_\theta j(x,\theta_{m^*})-2\langle \n \eta_{m^*},\n\theta_{m^*}\rangle}-2\eta_{m^*}\Delta\theta_{m^*}
\\&=\left({\partial_\theta Q(x,\theta_{m^*})\eta_{m^*}+\partial_\theta j(x,\theta_{m^*})-2\langle \n \eta_{m^*},\n\theta_{m^*}\rangle} +2\eta_{m^*}(|\n \theta_{m^*}|^2+Q(x,\theta_{m^*}))\right)
\\&+2\eta_{m^*}m^*
\\&=f_{m^*}\mathds 1_{E^*}-g_{m^*}\mathds 1_{(E^*)^c}, 
\end{align*}
with \[\begin{cases}f_{m^*}:={\partial_\theta Q(x,\theta_{m^*})\eta_{m^*}+\partial_\theta j(x,\theta_{m^*})-2\langle \n \eta_{m^*},\n\theta_{m^*}\rangle} 2\eta_{m^*}(|\n \theta_{m^*}|^2+Q(x,\theta_{m^*}))+2\eta_{m^*},\\
g_{m^*}=-f_{m^*}+2\eta_{m^*}.\end{cases}\] Thus, 
\[ f_{m^*}+g_{m^*}=2\eta_{m^*}\geq \delta>0\] for some $\delta>0$ by Lemma \ref{Le:SwitchFunction}. By elliptic regularity 
\[ \forall p\in [1;+\infty), f_{m^*},\, g_{m^*}\in W^{1,p}(\T).\]
\end{proof}

\subsection{Second-order optimality conditions in the sense of shapes}\label{Se:Shape}
The last part of this section is devoted to Point $(3)$ of Theorem \ref{Th:ReformConstrained}; although it is possible to derive Point $(3)$ using a shape derivative framework, we obtain it directly from \eqref{Eq:QBT1}. We use the following definition:
\begin{definition}\label{De:AdmissibleTriplet}
Let $m^*=\mathds 1_{E^*}$ be an optimiser of \eqref{Eq:PvNonEnergetic}. Let $F$ be an open set with smooth boundary and  $\Gamma\subset \partial E^*$ be a smooth $\mathscr C^{2,\gamma}$  hypersurface such that $F\cap \partial E^*=\Gamma$. For $\Phi\in \mathscr C^\infty_c(\T)$, we say that the triplet $(F, \Gamma,\Phi)$ is admissible if $\Phi$ is supported in $F$ and satisfies 
\[ \int_\Gamma \langle \Phi,\nu\rangle=0,\]where $\nu$ denotes the outer normal to $\Gamma.$ 
\end{definition}
We begin with the following result:
\begin{proposition}\label{Pr:LocalMinimalityShape}
There exists a positive $\alpha,\, r_0>0$ such that the following holds: for any admissible triplet $(F,\Gamma,\Phi)$ such that $\mathrm{supp}(\Phi)\subset \B(x_0;r_0)$ for some $x_0\in \T$, 
\begin{equation}\label{Eq:Tada} \alpha \Vert \mathcal H_{\Phi,\Gamma}\Vert_{W^{-1,2}(\T)}^2\leq \int_\Gamma |\n \eta_{m^*}|\langle \Phi,\nu\rangle^2,\end{equation} where $\mathcal H_{\Phi,\Gamma}$ is the measure supported on $\Gamma$ defined as
\[ \mathcal H_{\Phi,\Gamma}=\left.\langle \Phi,\nu\rangle d\mathscr H^{d-1}\right\llcorner \Gamma.\]
\end{proposition}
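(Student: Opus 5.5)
We plan to obtain \eqref{Eq:Tada} directly from the quantitative Hardy--Littlewood inequality \eqref{Eq:QBT1}, by testing it against competitors $m_t=\mathds 1_{E_t}$ where $E_t$ is $E^*$ deformed by the flow of $\Phi$, and then letting $t\to 0$. The admissibility condition $\int_\Gamma\langle\Phi,\nu\rangle=0$ is what lets us keep the volume constraint, up to a correction of order $t^2$.

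\textbf{Construction of the competitor.} Let $T_t:=\mathrm{Id}+t\Phi$ and $\tilde E_t:=T_t(E^*)$. Since $\Phi$ is supported in $F\cap\B(x_0;r_0)$ and $F\cap\partial E^*=\Gamma$ is $\mathscr C^{2,\gamma}$, the symmetric difference $\tilde E_t\Delta E^*$ is a thin layer around $\Gamma$ of width $O(t)$. Its volume satisfies
\[
|\tilde E_t|-|E^*|=t\int_\Gamma\langle\Phi,\nu\rangle+O(t^2)=O(t^2).
\]
We correct this volume error by adding or removing a set $A_t$ of measure $O(t^2)$, located in a thin neighbourhood of $\Gamma$ inside $\B(x_0;r_0)$. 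For instance, we can use a normal perturbation $t^2 c\,\xi\nu$ with $\xi$ a fixed bump, $\int_\Gamma\xi\ne0$, and $c=c(t)$ bounded, chosen by the intermediate value theorem. This gives $E_t$ with $|E_t|=m_0$, $m_t:=\mathds 1_{E_t}\in\mathcal M$ and $\{m_t\ne m^*\}\subset\B(x_0;r_0)$. We then apply \eqref{Eq:QBT1} to $m_t$.

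\textbf{Expansion of the two terms.} Write $\eta=\eta_{m^*}-c_{m^*}$, so $\eta=0$ on $\Gamma$ and $\eta>0$ inside $E^*$. On $\Gamma$ we have $\nabla\eta=-|\nabla\eta|\nu$ wherever $\nabla\eta\neq 0$; in general we only know $\eta\ge0$ on the inner side, and the formula below is robust to this. Using a tubular coordinate $x=y+s\nu(y)$, the layer point $y+s\nu$ with $0<s<t\langle\Phi,\nu\rangle$ is added, and symmetrically points are removed when $\langle\Phi,\nu\rangle<0$. Since $\eta(y+s\nu)=-s|\nabla\eta(y)|+O(s^2)$, we get
\[
\int_\T\eta(m_t-m^*)=-\frac{t^2}{2}\int_\Gamma|\nabla\eta|\langle\Phi,\nu\rangle^2+o(t^2).
\]
The correction set $A_t$ contributes only $O(t^2)\cdot\sup_{A_t}|\eta|=O(t^3)$, since $A_t$ lies within distance $O(t)$ of $\{\eta=0\}$.

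For the quadratic term we show that
\[
t^{-1}(m_t-m^*)\rightharpoonup\mathcal H_{\Phi,\Gamma}\quad\text{in }W^{-1,2}(\T).
\]
Testing against $u\in W^{1,2}$, the layer integral $t^{-1}\int_{\text{layer}}u$ converges to $\int_\Gamma u\langle\Phi,\nu\rangle$; the correction is $O(t)$ in $L^1$ but must also be small in $W^{-1,2}$. We will bound $\|t^{-1}(m_t-m^*)\|_{W^{-1,2}}$ uniformly, for example via the trace inequality on each parallel surface $\Gamma_s$ with uniform constants. Weak lower semicontinuity then gives
\[
\|\mathcal H_{\Phi,\Gamma}\|_{W^{-1,2}}^2\le\liminf_{t\to0} t^{-2}\|m_t-m^*\|^2_{W^{-1,2}}.
\]

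\textbf{Conclusion and main obstacle.} Dividing \eqref{Eq:QBT1} by $t^2$ and letting $t\to0$ yields
\[
0\ge-\tfrac12\int_\Gamma|\nabla\eta|\langle\Phi,\nu\rangle^2+\alpha\|\mathcal H_{\Phi,\Gamma}\|^2_{W^{-1,2}},
\]
which is \eqref{Eq:Tada} with $\alpha$ replaced by $2\alpha$. We expect the delicate points to be the following. First, the uniform $W^{-1,2}$ control of $t^{-1}(m_t-m^*)$, including the volume correction, where we must check that the correction term is $o(1)$ in $W^{-1,2}$ after division by $t$. Second, the second-order Taylor expansion of $\eta$ across $\Gamma$. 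For the latter, the regularity $\eta\in W^{2,p}$, hence $\mathscr C^{1,\beta}$, suffices, since the error $O(s^{1+\beta})$ integrated over the layer gives $o(t^2)$. This regularity holds here, so no nondegeneracy of $\nabla\eta$ on $\Gamma$ is needed.
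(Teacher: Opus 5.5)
Your proposal is correct and follows essentially the paper's route: you test \eqref{Eq:QBT1} against $\mathds 1_{(\mathrm{Id}+t\Phi)(E^*)}$, corrected to preserve the volume, show that $t^{-1}(m_t-m^*)\rightharpoonup\mathcal H_{\Phi,\Gamma}$ weakly in $W^{-1,2}(\T)$ and use lower semicontinuity, and expand $\int_\T\eta(m_t-m^*)$ to second order. The differences are technical: the paper restores the volume by scaling $\Phi_\pm$ by different amounts $\delta,\delta_-(\delta)$ and cites the second-order Hadamard formula after a density reduction, whereas your $O(t^2)$ normal bump and direct tubular-coordinate computation avoid that reduction and give the correct factor $\tfrac12$, which the paper's formula omits and which only changes the constant $\alpha$.
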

\begin{proof}[Proof of Proposition \ref{Pr:LocalMinimalityShape}]
As a first step, observe that it suffices to show \eqref{Eq:Tada} for vector fields that satisfy $\Phi\mathds 1_{\langle \Phi,\nu\rangle>0},\, \Phi\mathds 1_{\langle \Phi,\nu\rangle<0}\in  W^{2,p}(\Gamma)$ for any $p$ large enough, as a standard mollification argument gives the density of
\[ \left\{ \Phi\text{ compactly supported in $F$ },\, \int_\Gamma \langle \Phi,\nu\rangle=0,\,  \Phi\mathds 1_{\{\langle \Phi,\nu\rangle<0\}},\,  \Phi\mathds 1_{\{\langle \Phi,\nu\rangle>0\}}\in W^{2,p}(\Gamma)\right\}\] in 
\[ \left\{ \Phi\text{ compactly supported in $F$ },\, \int_\Gamma \langle \Phi,\nu\rangle=0,\, \Phi \in W^{2,p}(\Gamma)\right\}\] for the strong $L^2(\Gamma)$ and $W^{-1,2}(\T)$ topologies. We let $\delta>0$ be fixed, and we choose $\delta_-=\delta_-(\delta)>0$ such that, letting 
\[ \Phi_\pm:=\Phi\mathds 1_{\langle \pm\langle \Phi,\nu\rangle>0\}},\]  setting 
\[ \tilde\Phi:=\delta \Phi_++\delta_-\Phi_-,\,  m_\delta:=m^*\circ \left(\mathrm{Id}+\delta\Phi_++\delta_-\Phi_-\right)^{-1}\] there holds
\[ \int_{\B(0;1)} m_\delta=\int_{\B(0;1)}m^*.\] Observe that
\[ \int_\Gamma \langle \Phi,\nu\rangle=0\Rightarrow \frac{\delta_-(\delta)}{\delta}\underset{\delta\to 0^+}\rightarrow 1.\]
Finally, we set $h_\delta:=m_\delta-m^*$. On the one-hand, we have \begin{equation}\label{Eq:ConvMeasure1}
\frac{h_\delta}{\delta}\underset{\delta \to 0}\rightarrow \mathcal H_{\Phi,\Gamma} \text{ weakly in $W^{-1,2}(\B(0;1))$ and in the sense of measures.}\end{equation}
This follows from a change of variables: for any $v\in W^{1,2}(\B_1)$, 
\begin{align*}
\int_{\B(0;1)} vh_\delta&=\delta \int_{\B(0;1)} m^* \left(\langle \n v, \Phi_+\rangle+v\n\cdot \Phi_+\right)
\\&+\delta_- \int_{\B(0;1)} m^* \left(\langle \n v, \Phi_-\rangle+v\n\cdot \Phi_-\right)+\underset{\delta \to 0}o(\delta)
\\&=\delta \int_\Gamma v\langle \Phi,\nu\rangle+\underset{\delta \to 0}o(\delta).
\end{align*}
On the other hand, it follows from the second-order Hadamard formula  \cite[Proposition 5.4.18]{zbMATH06838450} (applicable here since $\Phi_\pm$ are $W^{2,p}$ for any $p$ large enough) and the fact that $\eta=0$ on $\Gamma$ that 
\begin{equation}\label{Eq:Hadamard2} \int_{\B(0;1)}\eta h_\delta=-\delta^2\int_\Gamma |\n \eta_0|\cdot \langle \Phi,\nu\rangle^2+\underset{\delta \to 0^+}o(\delta^2).\end{equation}
Thus, from \eqref{Eq:QBT1}
\[ \delta ^2\left(\alpha \Vert \mathcal H_{\Phi,\Gamma}\Vert_{W^{-1,2}(\B(0;1))}^2- \int_\Gamma |\n \eta_{0}|\langle \Phi,\nu\rangle^2\right)+\underset{\delta\to 0^+}o(\delta^2)\leq 0\] and we deduce
  \eqref{Eq:SOSD1}. 
\end{proof}

We can now conclude the proof of Theorem \ref{Th:ReformConstrained}.

\begin{proof}[Proof of Point $(3)$ of  Theorem \ref{Th:ReformConstrained}]
Consider a $\mathscr C^{2,\gamma}$ hypersurface $\Gamma\subset \{\partial \eta_{m^*}>c_{m^*}\}$   satisfying $\Gamma\subset \B(x_0;r_0)$ for some $x_0\in \T$ and where $r_0$ is given by Proposition \ref{Pr:LocalMinimalityShape}.
We first observe that for any $\Phi$ such that $(F,\Gamma,\Phi)$ is admissible (see Definition \ref{De:AdmissibleTriplet}) for some $F$,
\[ 
 \Vert \mathcal H_{\Phi,\Gamma}\Vert_{W^{-1,2}(\T)}^2=\int_\T |\n v_\Phi|^2\] where 
 \[\begin{cases}-\Delta v_\Phi=0&\text{ in }\T,\\ \left\llbracket \partial_\nu v_\Phi\right\rrbracket=-\langle \Phi,\nu\rangle&\text{ on }\Gamma, \\ \int_\T v_\Phi=0.\end{cases}\] This relies on the condition $\int_\Gamma \langle \Phi,\nu\rangle=0$.
 Consider now the solution $u_\Phi$ to 
 \begin{equation}\label{Eq:uphi}
 \begin{cases}
 -\Delta u_\Phi=0&\text{ in }\T,\\ \left\llbracket \partial_\nu u_\Phi\right\rrbracket=-\langle \Phi,\nu\rangle&\text{ on }\Gamma, \\ \int_\Gamma\frac{  u_\Phi}{|\n \eta_{m^*}|}=0.
 \end{cases}
 \end{equation}
 As $v_\Phi-u_\Phi$ is constant, we deduce that we also have 
 \[ 
 \Vert \mathcal H_{\Phi,\Gamma}\Vert_{W^{-1,2}(\T)}^2=\int_\T |\n u_\Phi|^2,\] so that Proposition \ref{Pr:LocalMinimalityShape} rewrites: for a constant $\alpha$ (independent of $\Gamma$ and $\Phi$) there holds 
 \begin{equation}\label{Eq:Ki} \alpha \int_\T|\n u_\Phi|^2\leq \int_\Gamma |\n \eta_{m^*}|\cdot \langle \Phi,\nu\rangle^2.\end{equation}
 Now, consider 
 the lowest eigenvalue 
\begin{equation}\label{Eq:SigmaRayleigh} \sigma_1(\Gamma):=\min_{v\in W^{1,2}(\T),\, v\not\equiv 0\text{ on }\Gamma,\,\int_\Gamma \frac{v}{|\n \eta_{m^*}|}=0  }\frac{\int_\T |\n v|^2}{\int_\Gamma \frac{v^2}{|\n\eta_{m^*}|}}\end{equation} and an associated eigenfunction 
$\psi_1$. The associated eigen-equation reads 
\begin{equation}\label{Eq:EigenEquation}
\begin{cases}
-\Delta \psi_1=0&\text{ in }\T,\, 
\\ \llbracket \partial_\nu \psi_1\rrbracket=-\frac{\sigma_1(\Gamma)}{|\n \eta_{m^*}|} \psi_1&\text{ on }\Gamma,\, 
\\\int_\Gamma \frac{\psi_1}{|\n \eta_{m^*}|}=0.
\end{cases}
\end{equation}
Take the vector field
\[ \Phi:=\frac{\psi_1}{|\n \eta_{m^*}|}\nu\text{ on }\Gamma,\] so that 
\[ u_\Phi=\frac1{\sigma_1(\Gamma)}\psi_1.\] It follows from \eqref{Eq:Ki} that 
\[ \frac{\alpha}{\sigma_1(\Gamma)}\int_\Gamma \frac{\psi_1^2}{|\n \eta_{m^*}|}\leq \int_\Gamma |\n \eta_{m^*}|\langle \Phi,\nu\rangle^2=\int_\Gamma \frac{\psi_1^2}{|\n \eta_{m^*}|}\] and, consequently, 
\[ \sigma(\Gamma)\geq \alpha.\] The variational formulation \eqref{Eq:SigmaRayleigh} allows to conclude.
\end{proof}

\begin{remark}[Comments on these stability criteria]\label{Re:Stability}
The eigenvalue problem associated with $\sigma_1(\Gamma)$ was introduced in \cite{zbMATH08079247} in order to prove stability of certain optimal controls.
The stability condition \eqref{Eq:SOSD1}  is similar to the one derived (without volume constraint) by Monneau \& Weiss  \cite[Lemma 8.4]{zbMATH05129536} for the inverse obstacle problem
\[ \min_{u}\int_{\T}|\n u|^2-\int_{\T} u_+.\] However,  \cite[Lemma 8.4]{zbMATH05129536} relies in a crucial capacity on the variational formulation of the problem, and their approach can not be applied here. We note that  Chanillo, Kenig \& To \cite[Lemma 2.2]{zbMATH05505659} recovered the Monneau \& Weiss criterion  using a second variation approach but, likewise, their analysis relies on the fact that the (volume constrained in their case) composite membrane problem admits a variational formulation.
\end{remark}

\subsection{A stability criterion in the two-dimensional case}\label{Se:ASCConstrained}
The main result of this section is a consequence of Point $(3)$ of Theorem \ref{Th:ReformConstrained}, and is a (minor) adaptation of \cite[Lemma 3.2]{zbMATH05505659}, which itself gave a precise formulation to \cite[Proof of Theorem 8.1]{zbMATH05129536}:
\begin{theorem}\label{Th:ASCConstrained}
Assume $d=2$. Let $x_0\in \partial^eE^*$, $r_0>0$ be given by Proposition \ref{Pr:LocalMinimality} and define, for any $k\in \N$, $r_k:=\frac{r_0}{2^k}$. If, for any $k\in \N$, there exists a regular curve $\gamma_k:[0;1]\to \partial^e E^*,\, \gamma_k\in  W^{2,p}$ for any $p\in [1;+\infty)$ such that 
\[\begin{cases}
\gamma_k([0;1])\subset \partial^eE^*\cap\{\n \eta_{m^*}\neq 0\}\cap \B(x_0;r_k)\setminus \B(x_0;r_{k+1}),
\\ \mathscr H^1 \left(\gamma_k([0;1)]\right)\gtrsim \frac1{2^k}.
\end{cases}\]
Then 
\[ |\n \eta_{m^*}|(x_0)>0.\]
\end{theorem}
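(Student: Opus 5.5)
We argue by contradiction, assuming $\n\eta_{m^*}(x_0)=0$ (recall $\eta=\eta_{m^*}-c_{m^*}$, and $\eta(x_0)=0$ since $x_0\in\partial^eE^*$ and $\eta$ is continuous). The contradiction will come from the stability inequality \eqref{Eq:SOSD1} of Theorem \ref{Th:ReformConstrained}, Point (3), applied on a single curve $\Gamma$ made of a suitable portion of the arcs $\gamma_k$. The heuristic of \cite[Lemma 3.2]{zbMATH05505659} and \cite[Theorem 8.1]{zbMATH05129536} is that \eqref{Eq:SOSD1} forces $\int_\Gamma\frac{1}{|\n\eta|}$ to be controlled by a capacity-type quantity. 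Near a critical point, however, $|\n\eta|$ is small along $\gamma_k$, so the weight $\frac1{|\n\eta|}$ becomes too large to be compensated by the Dirichlet energy of a logarithmic test function in two dimensions.

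\textbf{Step 1: smallness of the gradient on the arcs.} Since $\eta\in W^{2,p}$ for all $p$, $\n\eta$ is $\mathscr C^{0,\beta}$, and in fact Lipschitz-type bounds hold. We therefore have $|\n\eta|\lesssim r_k^{\beta}$ on $\B(x_0;r_k)$; with $W^{2,p}$ for $p$ large, $\beta$ can be taken close to $1$. I would aim for the cruder bound $|\n\eta|\le \omega(r_k)\,r_k^{\beta}$, which is the only consequence of $\n\eta(x_0)=0$ that we use. Combined with $\mathscr H^1(\gamma_k)\gtrsim 2^{-k}\sim r_k$, this gives
\[ \int_{\gamma_k}\frac{1}{|\n\eta|}\gtrsim r_k^{1-\beta}, \]
which does not decay fast. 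If a sharper bound $|\n\eta|=o(r_k)$ is available (for instance via a $\mathscr C^{1,1}$-type estimate, which I would try to extract from the equation $-\Delta\eta=(f+g)\mathds 1-g$ with bounded right-hand side), we even get $\int_{\gamma_k}\frac1{|\n\eta|}\to\infty$, i.e. a contribution of order at least $1$ per dyadic annulus.

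\textbf{Step 2: test functions.} We fix $N$ large and take $\Gamma=\bigcup_{k=K}^{K+2N}\gamma_k$. This set lies inside $\B(x_0;r_0)$ and consists of $\mathscr C^{2,\gamma}$ pieces of $\partial\{\eta>0\}$, since $\n\eta\ne0$ there and $\eta\in W^{2,p}$ solves an equation with $W^{1,p}$ data locally on each side; elliptic regularity then gives the required smoothness. As test function $v$ we take a two-scale logarithmic cut-off in the style of \eqref{Eq:CutOff2}: $v=+1$ on the annuli $k\in[K,K+N)$ and $v=-1$ on $k\in[K+N,K+2N]$, with logarithmic transitions across scales. We then multiply the negative part by a constant $\lambda$ so that $\int_\Gamma\frac{v}{|\n\eta|}=0$. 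Two-dimensional scale invariance gives $\int|\n v|^2\lesssim \frac{1}{N}(1+\lambda^2)$, which is bounded for fixed profile width. Alternatively, we use a piecewise-constant profile with transitions over one dyadic annulus each; this makes $\int|\n v|^2\lesssim 1+\lambda^2$ uniformly in $K$.

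\textbf{Step 3: contradiction.} By \eqref{Eq:SOSD1},
\[ \sigma\int_\Gamma\frac{v^2}{|\n\eta|}\le\int|\n v|^2\lesssim 1+\lambda^2. \]
On the other hand, by Step 1 the left-hand side is at least $\sum_k\int_{\gamma_k}\frac{v^2}{|\n\eta|}$, which is at least of order $N\cdot\inf_k\int_{\gamma_k}\frac1{|\n\eta|}\cdot\min(1,\lambda^2)$. Letting $K\to\infty$ makes this blow up when $|\n\eta|=o(r)$, or, with $N$ large, makes it exceed the bound in the weaker regime. The balancing constant $\lambda$ must stay comparable to $1$; this can be ensured by choosing the split point between the positive and negative blocks so that the two weighted masses are comparable. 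This is possible since each block's mass is bounded below.

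\textbf{Main obstacle.} The main difficulty is Step 1: obtaining the quantitative smallness $|\n\eta|(x)=o(|x-x_0|)$, or enough decay to make $\sum_k\int_{\gamma_k}|\n\eta|^{-1}$ outrun the Dirichlet energy. This is not immediate, because $\eta$ is only known to be $W^{2,p}$ and need not be $\mathscr C^{1,1}$. I would first try to prove a $\mathscr C^{1,1}$ bound near $x_0$, using that $\Delta\eta\in L^\infty$ together with the level-set structure. Failing that, I would exploit the freedom in $N$ with the exponent $\beta<1$, by choosing many annuli so that the accumulated mass $\sum r_k^{1-\beta}$ is compared with the logarithmic energy. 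A secondary technical point is checking that the union $\Gamma$ is an admissible hypersurface for Point (3).
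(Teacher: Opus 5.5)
\textbf{There is a genuine gap in Step 1.} Your Steps 2--3 are in substance the paper's Lemma \ref{Le:CKT}: testing \eqref{Eq:SOSD1} with sign-changing functions of bounded two-dimensional Dirichlet energy, balanced to have zero weighted mean on $\Gamma$, shows that stability forces $\sum_k\int_{\gamma_k}|\n\eta|^{-1}<\infty$. The contradiction therefore has to come from Step 1, and neither of your routes supplies it.

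\emph{Your fallback fails.} The H\"older bound $|\n\eta|\lesssim r_k^{\beta}$ only gives $\int_{\gamma_k}|\n\eta|^{-1}\gtrsim r_k^{1-\beta}$. This decays geometrically and is therefore summable, contrary to your remark that it ``does not decay fast''. A summable lower bound is consistent with the conclusion of Lemma \ref{Le:CKT}, which is precisely summability. So varying $N$, the split point or the logarithmic profile cannot extract a contradiction from it.

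\emph{Your primary target is not available.} A $\mathscr C^{1,1}$ bound, or $|\n\eta|=o(|x-x_0|)$, cannot be obtained at this stage. First, $\Delta\eta\in L^\infty$ does not imply $\mathscr C^{1,1}$: the solutions of Andersson \& Weiss \cite{Andersson2006CrossshapedAD} are not $\mathscr C^{1,1}$. Second, in the paper Theorem \ref{Th:C11Regularity} is deduced from the very statement you are proving, so that route is circular.

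\textbf{The missing ingredient is the Zygmund (log-Lipschitz) estimate.} This is exactly what $\Delta\eta\in L^\infty$ does give: $\n\eta$ is log-Lipschitz. Together with $\eta(x_0)=0$ and $\n\eta(x_0)=0$, this yields
\[ |\n\eta(x)|\lesssim |x-x_0|\,\bigl|\ln|x-x_0|\bigr|. \]
This is weaker than $o(|x-x_0|)$, but it suffices. On $\gamma_k$ it gives $|\n\eta|\lesssim k2^{-k}$, hence $\int_{\gamma_k}|\n\eta|^{-1}\gtrsim 2^{-k}/(k2^{-k})=1/k$. The divergence of the harmonic series then contradicts Lemma \ref{Le:CKT}.

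\textbf{Your Step 3 counting must also change.} With this bound, your count is too crude: $N\cdot\inf_{K\le k\le K+2N}\int_{\gamma_k}|\n\eta|^{-1}\gtrsim N/(K+2N)$ stays below $\frac12$ for every $N$. You must use the divergence of $\sum 1/k$ itself. For example:
\begin{itemize}
\item take the blocks $K\le k<K^2$ and $K^2<k<K^4$, whose weighted masses $M_+, M_-$ are both $\gtrsim\ln K$;
\item put $v=a$ on the first block and $v=-b$ on the second, with one-annulus logarithmic transitions;
\item choose $aM_+=bM_-$ and $\max(a,b)=1$.
\end{itemize}
Then $\int|\n v|^2=O(1)$, while the left-hand side of \eqref{Eq:SOSD1} is at least $\sigma\min(M_+,M_-)\to\infty$. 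No comparability of the two masses is needed.
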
 In other words, a point that is surrounded by a large enough amount of a regular part of the free boundary can not be critical. The proof of this result relies on the following lemma (see \cite[Lemma 3.1]{zbMATH05505659}):
\begin{lemma}\label{Le:CKT}
Let $d=2$. Let $x_0\in \partial^eE^*$, $r_0>0$ and $\{r_k\}_{k\in \N}$ be a decreasing sequence converging to 0. If, for any $k\in \N$, there exists a regular curve $\gamma_k:[0;1]\to \partial^e E^*,\, \gamma_k\in  W^{2,p}$ for any $p\in [1;+\infty)$ such that 
\[
\gamma_k([0;1])\subset \partial^eE^*\cap\{\n \eta_{m^*}\neq 0\}\cap \B(x_0;r_k)\setminus \B(x_0;r_{k+1}),
\]
then 
\begin{equation}\label{Eq:ASCConstrained}\sum_{k=0}^\infty \int_{\gamma_k([0;1])}\frac1{|\n \eta_{m^*}|}<+\infty.\end{equation}
\end{lemma}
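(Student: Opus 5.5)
We would follow the strategy of \cite[Lemma 3.1]{zbMATH05505659}: feed the stability condition \eqref{Eq:SOSD1} of Theorem \ref{Th:ReformConstrained} with test functions that are two-dimensional logarithmic cut-offs, exploiting that in $d=2$ such cut-offs have arbitrarily small Dirichlet energy while staying equal to $1$ on a large annulus. Write $\Gamma_k:=\gamma_k([0;1])$ and $I_k:=\int_{\Gamma_k}|\n\eta_{m^*}|^{-1}$; each $I_k$ is finite since $\n\eta_{m^*}\neq 0$ on the compact curve $\Gamma_k$, and $\Gamma_k$ is $\mathscr C^{2,\gamma}$ there by the implicit function theorem. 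Up to discarding finitely many $k$, we may assume $r_0$ is the radius of Theorem \ref{Th:ReformConstrained}(3), so all curves lie in $\B(x_0;r_0)$.

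For $N>M$ large, we take $\Gamma:=\bigcup_{k=M}^{N}\Gamma_k$, a finite disjoint union of $\mathscr C^{2,\gamma}$ pieces of $\partial\{\eta>0\}$ inside $\B(x_0;r_0)$, which is admissible in \eqref{Eq:SOSD1}. We choose $v$ radial about $x_0$, piecewise logarithmic: $v=0$ outside $\B(x_0;r_0)$, $v$ increases like $\ln(r_0/|x-x_0|)/\ln(r_0/r_M)$ on $\B(x_0;r_0)\setminus\B(x_0;r_M)$, $v\equiv1$ on the annulus $r_{N+1}\le|x-x_0|\le r_M$, then a sign-changing logarithmic profile inward. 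Since $\int\,|\n v|^2$ of such a profile on an annulus of ratio $R$ is $2\pi/\ln R$, the energy is bounded by a constant independent of $N$. The difficulty is the orthogonality $\int_\Gamma v/|\n\eta_{m^*}|=0$: a nonnegative $v$ equal to $1$ on most of $\Gamma$ cannot satisfy it.

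To fix this we split the dyadic annuli into two groups and give $v$ value $+1$ on one and $-1$ on the other, i.e.\ $v=\lambda_1 v_1-\lambda_2 v_2$ with $v_1,v_2$ logarithmic bumps of bounded energy supported on disjoint radial ranges $[r_{N_1},r_M]$ and $[r_N,r_{N_1+1}]$ (separated by one dyadic annulus serving as a log transition). Choosing $\lambda_1,\lambda_2\ge0$, not both zero, so that $\lambda_1 A_1=\lambda_2 A_2$ with $A_i=\int_\Gamma v_i/|\n\eta_{m^*}|$, \eqref{Eq:SOSD1} gives
\[\sigma\bigl(\lambda_1^2 S_1+\lambda_2^2 S_2\bigr)\le \int_\T|\n v|^2\lesssim \lambda_1^2+\lambda_2^2,\]
where $S_i\ge\sum_{k\in\text{group }i}I_k$, since $v_i^2=1$ on those curves. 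Normalising $\max(\lambda_1,\lambda_2)=1$ bounds one of the partial sums by $C/\sigma$. Now suppose $\sum_k I_k=+\infty$. Then we can pick $M<N_1<N$ so that both $\sum_{k=M}^{N_1-1}I_k$ and $\sum_{k=N_1+2}^{N}I_k$ exceed $2C/\sigma$, noting that $A_i\ge$ the corresponding partial sum $>0$. This contradicts the bound, and the scale-invariant (conformal) energy bound of log cut-offs is exactly what makes the argument two-dimensional.

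The main obstacle is the bookkeeping near the transitions. The curves $\Gamma_{N_1}$ and $\Gamma_{N_1+1}$ lie where $v$ is interpolating, so their contributions must simply be dropped from the lower bounds, which is harmless since we only need partial sums to diverge. We also need to check that $v\in W^{1,2}$ is supported in $\B(x_0;r_0)$ and that each transition layer, of fixed ratio $2$, costs only $O(1)$ energy; scaling $v_1,v_2$ independently keeps this uniform in $M,N_1,N$.
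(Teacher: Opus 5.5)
Your proof is correct. It follows the same overall strategy as the paper, namely the Chanillo--Kenig--To argument: contradiction, a radial sign-changing test function fed into \eqref{Eq:SOSD1}, and the two-dimensional fact that the Dirichlet energy of a radial transition across an annulus depends only on the ratio of its radii. The two key steps are implemented differently, however.

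\emph{Orthogonality.} You enforce $\int_\Gamma v/|\n\eta|=0$ by weighting two disjointly supported plateaus, $v=\lambda_1v_1-\lambda_2v_2$ with $\lambda_1A_1=\lambda_2A_2$, and conclude that one of two divergent partial sums is bounded. The paper instead keeps the profile fixed and balances the integral by trimming the inner curves to a suitable sub-arc $\Gamma_{1,k}\subset\bigcup_{j>k}\gamma_j([0;1])$. As intended, this gives the uniform bound $\int_{\Gamma_{0,k}}|\n\eta|^{-1}\lesssim\sigma^{-1}$ for every $k$, and hence the whole sum by letting $k\to\infty$. Your weighting avoids the intermediate-value choice of a sub-arc, at the price of a slightly less direct conclusion.

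\emph{The test function.} This is the more important difference. Your plateaus are identically $1$ on an unbounded number of consecutive annuli at bounded energy, which is exactly what is needed to control a \emph{sum} of the $I_k$. The paper's test function $v_k=V(\cdot/r_k)$ is a single rescaled profile supported in $\B(x_0;r_k)$. It therefore vanishes on $\gamma_j([0;1])$ for $j<k$ and cannot satisfy $v_k^2\equiv1$ on $\Gamma_{0,k}$. A plateau construction like yours is what actually makes that step work.

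\emph{One bookkeeping correction.} The lemma allows any decreasing sequence $\{r_k\}$, not a dyadic one. A single annulus $\B(x_0;r_{N_1})\setminus\B(x_0;r_{N_1+1})$ may then have ratio close to $1$, and any unit-height radial transition across it costs at least $2\pi/\ln(r_{N_1}/r_{N_1+1})$, which is unbounded. Let each transition run over a block of consecutive annuli of total ratio at least $2$; this includes the outer transition, for which you should take $r_M\le r_0/2$. Then drop the finitely many curves in these blocks from your lower bounds. Since the series is assumed divergent, nothing else changes.
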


\begin{remark}[Regarding the dimensional condition]
Lemma \ref{Le:CKT}--Theorem \ref{Th:ASCConstrained} are true in any dimension (up to replacing the condition $\mathscr H^1 \left(\gamma_k([0;1)]\right)\gtrsim \frac1{2^k}$ with $\mathscr H^{d-1} \left(\gamma_k(\B_{\R^{d-1}}(0;1)\right)\gtrsim \frac1{2^{k(d-1)}}$), but their implications are not as clear-- the condition is  too weak, which can be explained by scaling properties of the torsion function, see Section \ref{Se:Why2D}.
\end{remark}

\begin{proof}[Proof of Lemma \ref{Le:CKT}]
We follow \cite[Lemma 3.1]{zbMATH05505659} and rely on the stability criterion \eqref{Eq:SOSD1}. Argue by contradiction and assume that 
\begin{equation}\label{Eq:Testard}\sum_{k=0}^\infty \int_{\gamma_k([0;1])}\frac1{|\n \eta_{m^*}|}=+\infty.\end{equation} We introduce a function $V:\R^d\to \R$ that satisfies 
\begin{enumerate}
\item $V$ is smooth and radially symmetric, 
\item $V(0)=1$, $V\in (0;1)$ in $\B\left(0;\frac12\right)$, 
\item $V\in (-1;0)$ in $\B(0;1)\setminus\B\left(0;\frac12\right)$, 
\item $V=0$ in $\B\left(0;\frac12\right)^c$.\end{enumerate} We then set, for any $k\in \N$, $v_k:=V\left(\frac{\cdot}{r_k}\right)$, and note that 
\[ \int_\T |\n v_k|^2\leq \int_{\R^d}|\n V|^2.\]
Now, let $\Gamma_{0,k}:=\cup_{j=0}^k \gamma_k((0;1))$.
From \eqref{Eq:Testard} and the sign of $V$, one can easily find a smooth subpart $\Gamma_{1,k}$ of $\cup_{j>k}\gamma_k((0;1))$ such that, setting $\Gamma_k:=\Gamma_{0,k}\cup \Gamma_{1,k}$, there holds 
\[ \int_{\Gamma_k}\frac{v_k}{|\n \eta_{m^*}|}=0\] In particular, from \eqref{Eq:SOSD1}, 
\[ \sigma\int_{\Gamma_k}\frac{v_k^2}{|\n \eta_{m^*}|}\lesssim 1.\] Observing that $\Gamma_{0,k}\subset \Gamma_k$ and that $v_k^2\equiv 1$ on $\Gamma_{0,k}$, we deduce that 
\[ \int_{\Gamma_{0,k}}\frac1{|\n \eta_{m^*}|^2}\lesssim 1\]and the conclusion follows from letting $k\to \infty$.
\end{proof}

\begin{proof}[Proof of Theorem \ref{Th:ASCConstrained}]
We follow \cite[Lemma 3.2]{zbMATH05505659}: argue by contradiction and assume that $\n \eta_{m^*}(x_0)=0$. Recall the Zygmund class inequality \cite[Proposition 2.3.7]{zbMATH01201583}: as $\eta_{m^*}(x_0)=0\,, \n\eta_{m^*}(x_0)=0$,  
\begin{equation}\label{Eq:Zygmund0} |\n \eta_{m^*}(x)|\lesssim \Vert x-x_0\Vert \cdot \ln\left( \Vert x-x_0\Vert \right). \end{equation}Consequently, we obtain from Lemma \ref{Le:CKT} that 
\[ \sum_{k=0}^\infty \int_{\gamma_k((0;1))}\frac1{ \Vert x-x_0\Vert \cdot \ln\left( \Vert x-x_0\Vert \right)}<\infty.\]
However, given the fact that $r_k=\frac{r_0}{2^k}$ and the condition $\mathscr H^1 \left(\gamma_k([0;1)]\right)\gtrsim \frac1{2^k}$, this implies
\[ \sum_{k=0}^\infty\frac1k<\infty,\] a contradiction.

\end{proof}

\subsection{A comment on energy minimising solutions}\label{Se:LocalMinimality}
Consider, as in the introduction, the energy functional 
\begin{equation}\mathscr E:\mathcal M\times W^{1,2}(\T)\ni (m,\eta)\mapsto \frac12\int_{\T}|\n \eta|^2-\int_{\T}\left((f+g)m\eta-g\eta\right).\end{equation}Then it is fairly straightforward to deduce from Proposition \ref{Pr:LocalMinimality} that $m^*:=\mathds 1_{\{\eta_{m^*}>c_{m^*}\}}$ is a local minimiser of the functional 
\[ m\mapsto \mathscr E(m,\eta_{m^*})\] in the sense that 
\[ \exists r_0>0,\,, \forall m\in \mathcal M,\, \{m\neq m'\}\subset \B(x_0;r_0),\, \mathscr E(m^*,\eta_{m^*})\leq \mathscr E(m,\eta_{m^*})\] and, as already observed in the introduction, $\eta_{m^*}$ is a critical point (in the sense of shapes) of $\mathscr E(m^*,\cdot)$. One could hope that the second-order stability condition provided by Proposition \ref{Pr:LocalMinimalityShape} would yield that $(m^*,\eta_{m^*})$ is a local minimiser (or at least a minimiser for perturbations with small enough support). First observe that this would still not get rid of the difficulties coming from the volume constraint. Second, and this is more problematic, Proposition \ref{Pr:LocalMinimalityShape} only provides information at regular parts of the free boundary. However, satisfying second order optimality conditions on regular parts is not enough to derive regularity--this is already the case when considering minimal surfaces.
\section{ The case of penalised problems}\label{Se:UFBP2}

\subsection{Main result}
The main result is the following:
\begin{theorem}\label{Th:ReformPenalised}
Let $m^*$ solve \eqref{Eq:PvNonEnergetic2}. Let $\eta_{m^*}$ denote the associated switch function, that is, the unique solution of 
\begin{equation}\label{Eq:C2IMNonEnergetic}
-\Delta \eta_{m^*}+2\n\cdot\left(\eta_{m^*}\n \theta_{m^*}\right)-\partial_\theta Q(x,\Theta_{m^*})\eta_{m^*}=\partial_\theta j(x,\theta_{m^*})\text{ in }\T.
\end{equation}
Then:
\begin{enumerate}
\item \underline{Reformulation as a free boundary problem:} $m^*=\mathds 1_{E^*}$ for some $E^*$ and either
\[ E^*={\{\eta_{m^*}>c\}}\]
or 
\[ E^*={\{\eta_{m^*}\geq c\}}.\] In particular, $(\theta_{m^*},\eta_{m^*})$ solves the free boundary system 
\begin{equation}\label{Eq:FBNE2}
\begin{cases}
-\Delta \theta_{m^*}-|\n \theta_{m^*}|^2=\mathds 1_{\{\eta_{m^*}> c\}}+Q(x,\theta_{m^*})&\text{ in }\T,\, 
\\ -\Delta \eta_{m^*}+2\n\cdot\left(\eta_{m^*}\n \theta_{m^*}\right)-\partial_\theta Q(x,\theta_{m^*})\eta_{m^*}=\partial_\theta j(x,\theta_{m^*})&\text{ in }\T,
\end{cases}
\end{equation}Setting $\eta:=\eta_{m^*}-c$,  there exist $f,\, g$ such that
\[\forall p\in [1;+\infty),\, f,\, g\in W^{1,p}(\T),\, \min_\T\left(f+g\right)>0\] and 
\[ -\Delta \eta=(f+g)\mathds 1_{\{\eta>0\}}-g.\]
\item\underline{Second-order optimality conditions I:} with the same notation for $\eta$ there exist constants $\alpha>0$, $r_0>0$ such that the following holds: for any $m_0\in \mathcal M_0$ satisfying 
\[ \{m\neq m^*\}\subset \B(x_0;r_0)\] for some $x_0\in \T$, there holds
\begin{equation}\label{Eq:QBT2} 0\geq \int_{\T}\eta(m-m^*)+\alpha \Vert m-m^*\Vert_{W^{-1,2}(\T)}^2.\end{equation}
\item\underline{Second-order optimality conditions II:} there exists $r_0,\,\sigma>0$ such that, for any $\Gamma\subset \partial\{\eta>0\}$ of class $\mathscr C^{2,\gamma}$ (for any $\alpha \in (0;1)$), for any $v\in W^{1,2}(\T)$, \begin{equation}\label{Eq:SOSD2}\sigma\int_\Gamma\frac{v^2}{|\n \eta|}\leq \int_\T |\n v|^2+\int_\T v^2.\end{equation}
\end{enumerate}
\end{theorem}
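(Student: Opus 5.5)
The plan mirrors the proof of Theorem \ref{Th:ReformConstrained}. The only changes come from the penalisation: the admissible class is $\mathcal M_0$ instead of $\mathcal M$, and the functional now carries the linear term $-c\int_\T m$.

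\textbf{Points (1) and (2).} Since $m\mapsto -c\int_\T m$ is linear, it contributes nothing to the second Gateaux derivative, while its first derivative in the direction $h$ is $-c\int_\T h$.

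The first-order condition (the analogue of Proposition \ref{Pr:OC1}) therefore reads
\[
\forall m\in\mathcal M_0,\qquad \int_\T(\eta_{m^*}-c)(m-m^*)\leq 0 .
\]
This holds without any mass constraint on $m$, so $m^*=1$ a.e. on $\{\eta_{m^*}>c\}$ and $m^*=0$ a.e. on $\{\eta_{m^*}<c\}$. Here the multiplier is exactly the penalisation constant $c$, not an unknown Lagrange multiplier.

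For point (2), I apply the mean value theorem to $J(m)-J(m^*)$ exactly as in the constrained case, with $\eta:=\eta_{m^*}-c$. Proposition \ref{Pr:LocalMinimality} applies verbatim, since it only requires $\{m\neq m^*\}\subset\B(x_0;r_0)$ and no volume condition. This yields \eqref{Eq:QBT2}.

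For the bang-bang property, on $\omega^*=\{0<m^*<1\}$ the first-order condition forces $\eta_{m^*}=c$ a.e. Any perturbation $h$ supported in $\B(x_0;r_0)\cap\omega^*$ then gives $0\geq \alpha\Vert h\Vert^2_{W^{-1,2}}>0$, a contradiction. Here $h$ may be taken to be $\mathds 1_{F}-m^*$ for any $F$, since mass conservation is no longer needed.

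For the dichotomy $E^*=\{\eta>0\}$ or $\{\eta\geq0\}$, I argue as before with $F=E^*\cap\{\eta=0\}$ and $G=\{\eta=0\}\setminus E^*$. It is even simpler here: adding $\mathds 1_{G\cap \B}$ alone, or removing $\mathds 1_{F\cap\B}$ alone, already contradicts \eqref{Eq:QBT2}. In fact this shows $|\{\eta=0\}|=0$. The computation of $f,g$ from \eqref{Eq:FBNE2} is identical, with $f+g=2\eta_{m^*}\geq\delta$ by Lemma \ref{Le:SwitchFunction}. The only adaptation is that the uniform lower bound \eqref{Eq:MinSwitch} must be stated over $\mathcal M_0$; the maximum principle argument is unchanged.

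\textbf{Point (3).} I repeat Proposition \ref{Pr:LocalMinimalityShape} without the constraint $\int_\Gamma\langle\Phi,\nu\rangle=0$. Any smooth $\Phi$ supported in a small ball is admissible, because $m^*\circ(\mathrm{Id}+\delta\Phi)^{-1}\in\mathcal M_0$ with no mass adjustment, so we can take $\delta_-=\delta$. The second-order Hadamard formula and \eqref{Eq:QBT2} give
\[
\alpha\Vert\mathcal H_{\Phi,\Gamma}\Vert_{W^{-1,2}}^2\leq\int_\Gamma|\n\eta|\langle\Phi,\nu\rangle^2 .
\]

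The main obstacle is that $\mathcal H_{\Phi,\Gamma}$ no longer has zero mean. Its $W^{-1,2}(\T)$ norm is therefore the full norm dual to $\Vert v\Vert^2_{W^{1,2}}=\int|\n v|^2+\int v^2$, rather than the Dirichlet seminorm. This is exactly why the right-hand side of \eqref{Eq:SOSD2} contains $\int_\T v^2$ and there is no orthogonality condition.

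Concretely, I would define $u_\Phi$ by
\[
-\Delta u_\Phi+u_\Phi=0 \text{ in } \T\setminus\Gamma,\qquad \llbracket\partial_\nu u_\Phi\rrbracket=-\langle\Phi,\nu\rangle \text{ on } \Gamma,
\]
so that $\Vert\mathcal H_{\Phi,\Gamma}\Vert_{W^{-1,2}}^2=\int|\n u_\Phi|^2+u_\Phi^2$. I then consider the eigenvalue
\[
\sigma_1=\min_{v}\frac{\int_\T|\n v|^2+v^2}{\int_\Gamma v^2/|\n\eta|},
\]
with eigenfunction $\psi_1$. Choosing $\Phi=\frac{\psi_1}{|\n\eta|}\nu$ gives $u_\Phi=\psi_1/\sigma_1$, and hence $\sigma_1\geq\alpha=:\sigma$.

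One technical point remains. The localisation $\mathrm{supp}\,\Phi\subset\B(x_0;r_0)$ is needed to apply \eqref{Eq:QBT2}, while \eqref{Eq:SOSD2} is stated for arbitrary $v$. I would first prove it for $v$ supported in small balls, and then patch together with a partition of unity subordinate to balls of radius $r_0$, absorbing the cutoff gradients into $\int v^2$ at the cost of shrinking $\sigma$.
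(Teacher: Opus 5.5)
Your proposal is correct and follows essentially the same route as the paper. Points (1)--(2) come from the first- and second-order sensitivity analysis, in which the linear term $-c\int m$ contributes nothing at second order, exactly as in the constrained case. Point (3) comes from unconstrained deformations $m^*\circ(\mathrm{Id}+\delta\Phi)^{-1}$, the second-order Hadamard formula, and the Rayleigh quotient built on the full $W^{1,2}$ norm, which is the paper's $\tau_1(\Gamma)$. Your extra remarks are sound refinements of points the paper leaves implicit: one-sided perturbations actually give $|\{\eta=0\}|=0$, \eqref{Eq:MinSwitch} has to be made uniform over $\mathcal M_0$, and a partition of unity removes the localisation requirement in \eqref{Eq:SOSD2}.
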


\subsection{Reformulation as a free boundary problem}
Points $(2)$ and $(3)$ of Theorem \ref{Th:ReformPenalised} are proved exactly as Points $(1)$ and $(2)$ of Theorem \ref{Th:ReformConstrained}. Indeed, the proof of the latter follows from a first and second-order sensitivity analysis of $J$ and, introducing $I:m\mapsto \int_\T m$, we see that, for the penalised problem \eqref{Eq:PvNonEnergetic2}, the optimality conditions read: for any $m\in \mathcal M_0$,
\begin{enumerate}
\item $\dot J(m^*)[m-m^*]-c\dot I(m^*)[h]\leq 0$, which rewrites:
\[ \forall m\in \mathcal M_0,\, \int_\T (\eta_{m^*}-c)\left(m-m^*\right)\leq 0.\]  
\item For any $m,\, m'\in \mathcal M_0$, keeping in mind that $\ddot I(m)=0$, we still have (the proof is the same here)
\[ \left(\ddot J-c\ddot I\right)(m)[m'-m,m'-m]\gtrsim \Vert m-m'\Vert_{W^{-1,2}(\T)}^2.\]
\end{enumerate}
These information allow to conclude that $m^*=\mathds 1_{E^*}$ with $E^*=\{\eta_{m^*}>c\}$ or $\{\eta_{m^*}\geq c\}$ and, similarly, that there exists a constant $\alpha>0$ such that 
\[ \forall m\in \mathcal M_0,\, \{m\neq m^*\}\subset \B(x_0;r_0),\, 0\geq \int_{\T}\eta(m-m^*)+\alpha \Vert m-m^*\Vert_{W^{-1,2}(\T)}^2.\]
The rewriting of the equation under the form $ -\Delta \eta=(f+g)\mathds 1_{\{\eta>0\}}-g$ is identical.

\subsection{Second-order optimality conditions in the sense of shapes}
\subsubsection{The key lower estimate}
The main result is the following:
\begin{proposition}\label{Pr:LocalMinimalityShape2}
There exists a positive $\alpha,\, r_0>0$ such that the following holds: for any  $(F,\Gamma)$ such that $\Gamma$ is $\mathscr C^{2,\gamma}$ and $F$ is an open set such that  $F\cap  \{\partial \eta_{m^*}>c\}=\Gamma$,  for any smooth vector field $\Phi$ satisfying $\mathrm{supp}(\Phi)\subset \B(x_0;r_0)$ for some $x_0\in \T$, 
\[ \alpha \Vert \mathcal H_{\Phi,\Gamma}\Vert_{W^{-1,2}(\T)}^2\leq \int_\Gamma |\n \eta_{m^*}|\langle \Phi,\nu\rangle^2,\]
where we still use the notation 
\[ \mathcal H_{\Phi,\Gamma}:=\langle \Phi,\nu\rangle d\mathscr H^{d-1}\llcorner\Gamma.\]
\end{proposition}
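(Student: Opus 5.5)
This is the penalised analogue of Proposition \ref{Pr:LocalMinimalityShape}, and I would copy that proof, dropping the volume-balancing step. There is no mass constraint in \eqref{Eq:PvNonEnergetic2}, so any smooth perturbation of $E^*$ supported near $\Gamma$ is admissible in $\mathcal M_0$. Given $\Phi$ supported in $\B(x_0;r_0)$ (and in $F$, so that only $\Gamma$ is moved), set $m_\delta:=m^*\circ(\mathrm{Id}+\delta\Phi)^{-1}=\mathds 1_{(\mathrm{Id}+\delta\Phi)(E^*)}$ for small $\delta>0$, and $h_\delta:=m_\delta-m^*$. For $\delta$ small, $\{m_\delta\neq m^*\}\subset\B(x_0;r_0)$, so Point $(2)$ of Theorem \ref{Th:ReformPenalised}, i.e. \eqref{Eq:QBT2}, applies:
\[ 0\geq \int_\T \eta h_\delta+\alpha\Vert h_\delta\Vert_{W^{-1,2}(\T)}^2,\qquad \eta=\eta_{m^*}-c.\]
Since $\int_\Gamma\langle\Phi,\nu\rangle$ is no longer forced to vanish, no splitting into $\Phi_\pm$ with different speeds $\delta,\delta_-$ is needed.

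The two expansions are as before. First, by the change of variables already used in \eqref{Eq:ConvMeasure1},
\[ \int_\T v\,h_\delta=\delta\int_\Gamma v\langle\Phi,\nu\rangle+o(\delta)\quad\text{for } v\in W^{1,2}(\T).\]
Hence $h_\delta/\delta\rightharpoonup\mathcal H_{\Phi,\Gamma}$ weakly in $W^{-1,2}(\T)$, and by weak lower semicontinuity of the norm,
\[ \liminf_{\delta\to0}\delta^{-2}\Vert h_\delta\Vert_{W^{-1,2}}^2\geq\Vert\mathcal H_{\Phi,\Gamma}\Vert_{W^{-1,2}}^2.\]
Second, the second-order Hadamard formula \cite[Proposition 5.4.18]{zbMATH06838450} applies because $\eta\in W^{2,p}$ and $\Gamma\in\mathscr C^{2,\gamma}$. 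The first-order term $\delta\int_\Gamma\eta\langle\Phi,\nu\rangle$ vanishes since $\eta=0$ on $\Gamma$. In the second-order term, only the normal derivative of $\eta$ survives, and it has a sign: $\eta>0$ inside $E^*$, so $\partial_\nu\eta=-|\n\eta|$ on $\Gamma$. This gives
\[ \int_\T\eta h_\delta=-\frac{\delta^2}{2}\int_\Gamma|\n\eta|\langle\Phi,\nu\rangle^2+o(\delta^2),\]
where the constant $\tfrac12$ (or $1$) can be absorbed into $\alpha$. Dividing the optimality inequality by $\delta^2$ and letting $\delta\to0$ yields the claim.

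The main obstacle is the second-order expansion of $\int\eta h_\delta$ when $\Phi$ is merely smooth and $\Gamma$ is only $\mathscr C^{2,\gamma}$; the curvature terms must not contribute. I would handle this directly. Write $\int\eta h_\delta$ as an integral over the thin region swept by $\Gamma$: in normal coordinates $x=y+t\nu(y)$ with $y\in\Gamma$ and $|t|\leq\delta\langle\Phi,\nu\rangle(y)+O(\delta^2)$, and use $\eta(y+t\nu)=-t|\n\eta(y)|+O(t^2)$. Integrating in $t$ gives exactly $-\tfrac{\delta^2}{2}\int_\Gamma|\n\eta|\langle\Phi,\nu\rangle^2+O(\delta^3)$. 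The Jacobian, the curvature and the tangential components of $\Phi$ enter only at order $\delta^3$, because $\eta$ vanishes on $\Gamma$. If $\n\eta$ vanishes somewhere on $\Gamma$ the estimate still holds, since the Taylor remainder is controlled by $\Vert\eta\Vert_{\mathscr C^{1,\gamma}}$. Finally, $r_0$ and $\alpha$ are the constants from Point $(2)$, hence independent of $\Gamma$ and $\Phi$.
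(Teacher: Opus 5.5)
Your proposal is correct and follows the paper's proof essentially verbatim. The paper uses the same unconstrained perturbation $m_\delta=m^*\circ(\mathrm{Id}+\delta\Phi)^{-1}$, the same convergence $h_\delta/\delta\rightharpoonup\mathcal H_{\Phi,\Gamma}$, the second-order Hadamard expansion with $\eta=0$ on $\Gamma$, and then divides \eqref{Eq:QBT2} by $\delta^2$. Your additional details are correct and make the argument more precise: the weak lower semicontinuity step, the factor $\tfrac12$ in the expansion (which the paper drops), and the requirement $\mathrm{supp}(\Phi)\subset F$ (which the statement leaves implicit).
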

The proof is identical (and simpler) than that of Proposition \ref{Pr:LocalMinimalityShape}.
\begin{proof}[Proof of Proposition \ref{Pr:LocalMinimalityShape2}]
We consider, for any $\delta>0$ small enough, 
\[ m_\delta:=m^*\circ (\mathrm{Id}+\delta \Phi)^{-1}
\] and $h_\delta:=m_\delta-m^*$.   We already observed in the proof of Proposition \ref{Pr:LocalMinimalityShape} that
\begin{equation}\label{Eq:ConvMeasure2}
\frac{h_\delta}{\delta}\underset{\delta \to 0}\rightarrow \mathcal H_{\Phi,\Gamma} \text{ weakly in $W^{-1,2}(\B(0;1))$ and in the sense of measures.}\end{equation}
Once more, it follows from the second order Hadamard formula  \cite[Proposition 5.4.18]{zbMATH06838450} and the fact that $\eta=0$ on $\Gamma$ that 
\begin{equation}\label{Eq:Hadamard22} \int_{\B(0;1)}\eta h_\delta=-\delta^2\int_\Gamma |\n \eta|\cdot \langle \Phi,\nu\rangle^2+\underset{\delta \to 0^+}o(\delta^2).\end{equation}
We thus derive from \eqref{Eq:QBT2} that 
\[ \delta ^2\left(\alpha \Vert \mathcal H_{\Phi,\Gamma}\Vert_{W^{-1,2}(\B(0;1))}^2- \int_\Gamma |\n \eta_{0}|\langle \Phi,\nu\rangle^2\right)+\underset{\delta\to 0^+}o(\delta^2)\leq 0\] and so
  \eqref{Eq:SOSD2} holds. 

\end{proof}

\subsubsection{Exploiting second-order optimality conditions: proof of Point $(3)$ of Theorem \ref{Th:ReformPenalised}}
\begin{proof}[Proof of Point $(3)$ of Theorem \ref{Th:ReformPenalised}]
The proof follows from a similar spectral argument. We let $\Gamma,\, \Phi$ be in the conditions of Proposition \ref{Pr:LocalMinimalityShape2}. Introduce the solution $w_\Phi$ to 
\begin{equation}\label{Eq:wPhi}
-\Delta w_\Phi+w_\Phi:=\mathcal H_{\Phi,\Gamma}\end{equation} so that 
\[ \Vert \mathcal H_{\Phi,\Gamma}\Vert_{W^{-1,2}(\T)}^2=\int_\T |\n w_\Phi|^2+\int_\T w_\Phi^2.\]
Introduce the eigenvalue
\begin{equation}\label{Eq:TauRayleigh} \tau_1(\Gamma):=\min_{w\in W^{1,2}(\T),\, w\not\equiv 0\text{ on }\Gamma }\frac{\int_\T |\n w|^2+\int_\T w^2}{\int_\Gamma \frac{w^2}{|\n\eta_{m^*}|}}\end{equation} and an associated eigenfunction 
$\omega_1$. The associated eigen-equation reads 
\begin{equation}\label{Eq:EigenEquation1}
\begin{cases}
-\Delta \omega_1+\omega_1=0&\text{ in }\T,\, 
\\ \llbracket \partial_\nu \omega_1\rrbracket=-\frac{\tau_1(\Gamma)}{|\n \eta_{m^*}|} \psi_1&\text{ on }\Gamma,\, 
\\\int_\Gamma \frac{\psi_1}{|\n \eta_{m^*}|}=0.
\end{cases}
\end{equation}
Take the vector field\[ \Phi:=\frac{\omega_1}{|\n \eta_{m^*}|}\nu\text{ on }\Gamma,\] so that (in a fashion similar to Proposition \ref{Pr:LocalMinimalityShape}) \[ \tau_1(\Gamma)\geq \alpha.\] The variational formulation \eqref{Eq:TauRayleigh} allows to conclude.
\end{proof}

\part{Blow-up analysis and non-degeneracy}\label{Se:BU}

\section*{Plan of the part}
We lay the groundwork for the regularity of the free boundary $\partial^eE^*$ for \eqref{Eq:PvNonEnergetic}--\eqref{Eq:PvNonEnergetic2} (where the measure theoretic boundary $\partial^eE^*$ is defined in \eqref{De:Boundary}).
In Section \ref{Se:Hodograph}, we show that the free boundary is regular around points where $\n \eta_{m^*}\neq 0$--this follows from a standard hodograph transform. This reduces the study of regularity to critical points, which we set to do in Section \ref{Se:Weiss}  with the Weiss functional \cite{zbMATH01161229}.
We  use the Weiss functional in the spirit of Chanillo, Kenig \& To \cite{zbMATH05505659} to show the $\mathscr C^{1,1}$ regularity of $\eta$ in the two-dimensional case--this is Section \ref{Se:2dReg}. Finally, in Section \ref{Se:NonDegeneracy}, we study the non-degeneracy of blow-up limits, establishing it at points of intermediate density for the constrained problem \eqref{Eq:PvNonEnergetic}, and at all points for the penalised problem \eqref{Eq:PvNonEnergetic2}. In these last two sections, the second-order optimality conditions provided by Points $(2)$ and $(3)$ of Theorems \ref{Th:ReformConstrained}--\ref{Th:ReformPenalised} are instrumental.

\section{Regularity at non-critical points}\label{Se:Hodograph}
In this section and in Section \ref{Se:Weiss}, we consider a solution $\eta$ to 
\begin{equation}\label{Eq:Criticality}
-\Delta \eta=(f+g)\mathds 1_{\underbrace{\{\eta>0\}\text{ or }\{\eta\geq 0\}}_{=E^*}}-g\end{equation}
where 
\begin{equation}\label{Eq:C3Regularity} \forall p\in [1;+\infty),\, f,\, g\in W^{1,p}(\T),\, \min_{\T}(f+g)>0.\end{equation}
 
We begin with a study of regularity around so-called ``regular points":
\begin{definition}\label{De:RegularPoint}
Let $x_0\in \partial^eE^*$. We say that $x_0$ is a regular free boundary point if 
\[ \n \eta(x_0)\neq 0.\]
\end{definition}

\begin{theorem}\label{Th:Hodograph}
Let $x_0$ be a regular free boundary point. Then, for any $\gamma\in (0;1)$, there exist
    \begin{itemize}
        \item an open set $V\subset\R^d,\, x_0\in V$,
        \item a $(d-1)$-dimensional open set $V'\subset \R^{d-1}$,
        \item a $\mathscr C^{2,\gamma}$ mapping $\varphi:V'\to\R$,
    \end{itemize}
such that, up to a rotation, 
\[
\{\eta>0\}\cap V=\{(x',x_d)\in V'\times\R, x_d>\varphi(x')\}, \]
\[
\{\eta=0\}\cap V=\{(x',\varphi(x')), x'\in V'\}.
\]
In particular, the free boundary $\partial^eE^*$ is $\mathscr C^{2,\gamma}$ around regular points. 
\end{theorem}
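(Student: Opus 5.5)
The plan is to apply the implicit function theorem to $\eta$, then bootstrap the regularity of the level set through a hodograph (partial Legendre) transform, in the spirit of Kinderlehrer--Nirenberg.

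\textbf{Step 1: initial $\mathscr C^{1,\gamma}$ regularity.} Since $f,g\in W^{1,p}(\T)$ for every $p$, the right-hand side of \eqref{Eq:Criticality} is in $L^\infty$. Calder\'on--Zygmund estimates then give $\eta\in W^{2,p}$ for all $p<\infty$, hence $\eta\in\mathscr C^{1,\gamma}$ for every $\gamma\in(0;1)$.

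Since $\n\eta(x_0)\neq 0$, we rotate so that $\partial_d\eta(x_0)>0$. The implicit function theorem gives a neighbourhood $V$ and a $\mathscr C^{1,\gamma}$ map $\varphi$ with $\{\eta=0\}\cap V=\{x_d=\varphi(x')\}$ and $\{\eta>0\}\cap V=\{x_d>\varphi(x')\}$.

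Because $\partial_d\eta>0$ in $V$, the zero set has measure zero there. So the two options $\{\eta>0\}$ and $\{\eta\geq 0\}$ for $E^*$ agree up to a null set in $V$, and the equation reads
\[-\Delta\eta=(f+g)\mathds 1_{\{x_d>\varphi(x')\}}-g \quad\text{in }V.\]
This already shows $x_0\in\partial^eE^*$ is locally a graph.

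\textbf{Step 2: upgrade to $\mathscr C^{2,\gamma}$ via the hodograph transform.} A direct Schauder argument fails because the right-hand side jumps across the free boundary. Instead, introduce the change of variables $y=(x',\eta(x))$, which is a $\mathscr C^{1,\gamma}$ diffeomorphism near $x_0$, and the partial Legendre function $w(y)$ defined by $x_d=w(y',y_d)$, i.e.\ $\eta(y',w(y))=y_d$. The free boundary becomes the flat hyperplane $\{y_d=0\}$, and $\varphi(y')=w(y',0)$.

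In these variables, $w$ satisfies a quasilinear, uniformly elliptic equation (ellipticity holds because $\partial_{y_d}w=1/\partial_d\eta>0$):
\[ \frac{1+|\n' w|^2}{w_d^2}\,\partial_{dd}w-\frac{2}{w_d}\n' w\cdot\n'\partial_dw+\Delta' w=w_d\,F(y',w),\]
where $F=-(f+g)\mathds 1_{\{y_d>0\}}+g$. The right-hand side now jumps only across the flat hyperplane.

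\textbf{Step 3: regularity on each side of the flat interface.} Each of $\{y_d>0\}$ and $\{y_d<0\}$ carries a right-hand side that is $W^{1,p}$. The leading coefficients, however, only have the regularity of $\n w$, i.e.\ $\mathscr C^{0,\gamma}$.

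To bootstrap, I would differentiate tangentially, in the directions $y_i$ with $i<d$. These directions do not cross the discontinuity, so difference quotients $\partial_i w$ solve a divergence-form linear equation. Its coefficients are $\mathscr C^{0,\gamma}$, and its right-hand side involves $\partial_i F$, which is $L^p$ on each side with no jump term, since the jump surface is invariant under tangential translations.

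Transmission/Schauder estimates for divergence-form equations then give $\partial_i w\in\mathscr C^{1,\gamma}$ up to $\{y_d=0\}$ from both sides, with Morrey-type control from $W^{1,p}$ data when $p>d$. In particular $\n'\varphi=\n' w(\cdot,0)\in\mathscr C^{1,\gamma}$, so $\varphi\in\mathscr C^{2,\gamma}$. Undoing the rotation yields the statement.

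\textbf{Main obstacle.} The hard part is Step 3: rigorously justifying the tangential differentiation with only $W^{1,p}$ forcing and a jump in the equation. The required tool is piecewise-smooth (transmission-type) Schauder theory across a flat interface.

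A cleaner alternative I would try first avoids the hodograph transform. Subtract from $\eta$ a function $\zeta$ solving $-\Delta\zeta=(f+g)\mathds 1_{\{x_d>\varphi\}}$; then $\eta-\zeta$ is $W^{3,p}$. Next, use that the Newtonian potential of $h\mathds 1_\Omega$, with $h\in W^{1,p}$ and $\partial\Omega\in\mathscr C^{1,\gamma}$, is $\mathscr C^{1,\gamma}$-piecewise in its second derivatives. This gives $\eta\in\mathscr C^{2,\gamma}$ on each side of the interface, i.e.\ $\n\eta\in\mathscr C^{1,\gamma}$ from each side. Since $\varphi$ is defined implicitly by $\eta=0$, this implies $\varphi\in\mathscr C^{2,\gamma}$. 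Iterating once more with the improved $\varphi$ closes the argument.
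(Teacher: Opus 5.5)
Your proposal is correct and follows the same route as the paper's appendix proof: $W^{2,p}$ regularity and the implicit function theorem give a $\mathscr C^{1,\gamma}$ graph, the hodograph change of variables $y=(x',\eta(x))$ flattens the free boundary to $\{y_d=0\}$, and tangential elliptic regularity for the transformed quasilinear equation gives $\varphi=w(\cdot,0)\in\mathscr C^{2,\gamma}$; the paper only invokes that last step as ``standard elliptic estimates'', and your tangential difference-quotient argument makes it explicit.

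The step you flag as the main obstacle is in fact harmless: the transformed equation can be written in divergence form, and tangential translations leave $\mathds 1_{\{y_d>0\}}$ invariant, so the right-hand side of the equation for $\partial_i w$ is an honest $L^p$ function across $\{y_d=0\}$; ordinary $\mathscr C^{1,\gamma}$ estimates for divergence-form equations with H\"older coefficients then suffice, with no transmission theory needed (and your potential-theoretic alternative likewise needs no second iteration).
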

The proof is done through a standard hodograph transform and follows from an adaptation of \cite[Lemma 3]{zbMATH01588525}. For the sake of completeness, we give it in Appendix \ref{Ap:Hodograph}.

\section{The Weiss monotonicity formula \& existence of blow-up at critical points}\label{Se:Weiss}
Theorem \ref{Th:Hodograph} allows to focus on critical points of $\partial^eE^*$, that is, on points $x_0\in \partial^eE^*$ such that 
\[ \eta(x_0)=0,\, \n \eta(x_0)=0.\] Without loss of generality, we assume that 
\[x_0=0.\]
The natural scaling associated with \eqref{Eq:Criticality} is quadratic, leading to considering the rescaled functions
 \[ \eta_r:=\frac{\eta(r\cdot)}{r^2}\,, f_r:=f(r\cdot)\,, g_r:=g(r\cdot).\]

 We now use the  Weiss boundary adjusted energy introduced in \cite{zbMATH01161229}, and a central tool for unstable free boundary problems \cite{zbMATH06021980,Andersson2006CrossshapedAD,zbMATH06179931,zbMATH05311233,zbMATH05505659,zbMATH05129536,zbMATH06062397,zbMATH05204068,zbMATH06912136,zbMATH07826622}: for $r>0$ small enough, set
\begin{equation}\label{Eq:C3Weiss}
\Psi(r):=\int_{\B(0;1)} \vert \nabla \eta_r \vert^2 - 2 \int_{\B(0;1)} f_r (\eta_r)_++g_r (\eta_r)_- - 2 \int_{\partial \B(0;1)} \eta_r^2
\end{equation}

\subsection{Quasi-monotonicity of the Weiss functional}
We begin with the following quasi-monotonicity result:

\begin{theorem}\label{Pr:C3Weiss} 
There exist two constants $C\,, \beta>0$ that only depend on $f$ and $g$ such that 
\[W: r\mapsto \Psi(r)+Cr^\beta\] is non-decreasing. More precisely, we have, for some constant $C$, 
\begin{equation}\label{Eq:C3QuantifiedWeiss}
\forall r\geq 0\,, \forall 2r>s>r\,, W(r)-W(s)\geq C\int_{\partial \B(0;1)} \left(\eta_s-\eta_r\right)^2.
\end{equation}
\end{theorem}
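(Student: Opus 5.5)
The plan is the classical Weiss computation: differentiate $\Psi$ in $r$ and identify a nonnegative square term plus an error coming from the variable coefficients $f,g$. First, at the critical point $x_0=0$ we have $\eta(0)=|\n\eta(0)|=0$ and $\eta\in W^{2,p}$ for every $p$. By Sobolev embedding $\eta\in\mathscr C^{1,\gamma}$, so $|\eta(x)|\lesssim |x|^{1+\gamma}$ and $|\n\eta(x)|\lesssim|x|^{\gamma}$. We will need slightly better control: quadratic bounds $\Vert\eta_r\Vert_{W^{1,2}(\B(0;1))}\lesssim r^{-\epsilon}$, or at least bounds of the form $\Vert\eta_r\Vert_{W^{1,2}}\lesssim r^{\gamma-1}$. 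The coefficient oscillation is measured through $f,g\in W^{1,p}$ with $p>d$, which gives $|f(rx)-f(0)|\lesssim r^{1-d/p}$.

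The key step is the derivative formula. Since $\eta_r(x)=r^{-2}\eta(rx)$, we have $\frac{d}{dr}\eta_r=\frac1r\left(\langle x,\n\eta_r\rangle-2\eta_r\right)$. Differentiating each term of $\Psi$ then gives:
\begin{itemize}
\item the Dirichlet term contributes $2\int_{\B_1}\langle\n\eta_r,\n\partial_r\eta_r\rangle$, which by integration by parts equals $2\int_{\B_1}(-\Delta\eta_r)\partial_r\eta_r+2\int_{\partial\B_1}\partial_\nu\eta_r\,\partial_r\eta_r$;
\item the boundary term contributes $-4\int_{\partial\B_1}\eta_r\partial_r\eta_r$;
\item the potential term: with $F(x,s)=f(x)s_++g(x)s_-$, this is Lipschitz in $s$ and $\partial_sF(x,s)=f\mathds 1_{s>0}-g\mathds 1_{s<0}$. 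Because of \eqref{Eq:Criticality}, $-\Delta\eta_r=\partial_sF(rx,\eta_r)$ a.e., including on $\{\eta=0\}$ where $\Delta\eta=0$ a.e. Hence $\frac{d}{dr}\int F(rx,\eta_r)=\int\partial_sF\,\partial_r\eta_r+\int\langle x,\n_xF\rangle(rx,\eta_r)$.
\end{itemize}
The $\int\partial_sF\,\partial_r\eta_r$ terms cancel against $2\int(-\Delta\eta_r)\partial_r\eta_r$ after multiplying by 2, which is exactly the criticality structure. The remaining boundary terms combine to $\frac2r\int_{\partial\B_1}(\partial_\nu\eta_r-2\eta_r)^2 = 2r\int_{\partial\B_1}(\partial_r\eta_r)^2$. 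This yields
\[\Psi'(r)=2r\int_{\partial\B(0;1)}(\partial_r\eta_r)^2-2\int_{\B(0;1)}\left(\langle x,\n f(rx)\rangle(\eta_r)_++\langle x,\n g(rx)\rangle(\eta_r)_-\right).\]
The ambiguity $\{\eta>0\}$ versus $\{\eta\ge0\}$ is harmless, since the potential only sees $\eta_\pm$. To justify the computation rigorously I would mollify or integrate in $r$, using $W^{2,p}$ regularity.

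The main obstacle is the error term, because in the unstable setting $\eta_r$ need not be bounded as $r\to0$; degenerate and harmonic points can have subquadratic decay. The error has size $r\int_{\B_1}|\n f|(rx)\,|\eta_r|$. Writing $\eta_r=r^{-2}\eta(r\cdot)$ and using $|\eta|\lesssim|x|^{1+\gamma}$ gives $|\eta_r|\lesssim r^{\gamma-1}$. Then Hölder with $\n f\in L^p$ bounds the error by $r\cdot r^{\gamma-1}\cdot r^{-d/p}\Vert\n f\Vert_{L^p}=r^{\gamma-d/p}$. Choosing $p$ large and $\gamma$ close to $1$ (both allowed, since $\eta\in W^{2,p}\subset\mathscr C^{1,1-d/p}$) makes the exponent $\beta-1>-1$, so the error is integrable: it is bounded by $C\beta r^{\beta-1}$. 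Hence $W=\Psi+Cr^\beta$ is non-decreasing. If this exponent bookkeeping fails, the fallback is an a priori quadratic-growth estimate (as in Chanillo–Kenig–To) to bound $\eta_r$ uniformly.

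Finally, for \eqref{Eq:C3QuantifiedWeiss} I integrate the square term from $r$ to $s$ and apply Cauchy–Schwarz:
\[\int_{\partial\B_1}(\eta_s-\eta_r)^2\le (s-r)\int_r^s\int_{\partial\B_1}(\partial_t\eta_t)^2\,dt\le \frac{s-r}{2r}\int_r^s 2t\int_{\partial\B_1}(\partial_t\eta_t)^2\,dt.\]
For $s<2r$ the prefactor $\frac{s-r}{2r}$ is at most $\frac12$, so the right-hand side is controlled by $W(s)-W(r)$. This gives the stated inequality with the sign convention of increasing $W$.
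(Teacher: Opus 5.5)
Your argument is essentially the paper's: you split $\Psi'$ (the paper works with the integrated version $I_1+I_2$) into the boundary square term $2r\int_{\partial\B(0;1)}(\partial_r\eta_r)^2$ and the coefficient error $\int_{\B(0;1)}\langle x,\nabla f(rx)\rangle(\eta_r)_{+}+\langle x,\nabla g(rx)\rangle(\eta_r)_{-}$, control the error by a pointwise bound on $\eta_r$ and H\"older with $\nabla f,\nabla g\in L^p$, and finish with the same Cauchy--Schwarz step; your $\mathscr C^{1,\gamma}$ bound $|\eta_r|\lesssim r^{\gamma-1}$ is exactly the paper's Zygmund-based bound $|\eta_t|\lesssim t^{-\gamma'}$ once its logarithm is discarded, with $\gamma'=1-\gamma$. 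One slip: the error term carries no extra factor $r$ (your own formula for $\Psi'$ shows this), so the correct bound is $r^{\gamma-1-d/p}$ rather than $r^{\gamma-d/p}$; this is still integrable whenever $\gamma>d/p$, which your choice of $\gamma$ close to $1$ and $p$ large ensures, and it gives $\beta=\gamma-d/p>0$.
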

This result (with varying $f$ and $g$) is similar to that of Shahgholian \cite{zbMATH05204068}. Our proof differs from his in that we rely on the Sobolev regularity of $f$ and $g$ directly, rather than on their H\"{o}lder continuity; this allows for a simpler proof, which we give for the sake of completeness in Appendix \ref{Ap:QuasiMonotonicity}.

\subsection{Existence of blow-up limits}\label{Se:Existence}
We can, without loss of generality, assume that 
\begin{equation}\label{Eq:MW6}f(0)>0.\end{equation} Indeed, should $f(0)\leq 0$, Assumption \eqref{Eq:C3Regularity} entails $g(0)>0$, which in turn allows to work on $-\eta$. Under Assumption \eqref{Eq:MW6}, we are in a situation similar to that of Monneau \& Weiss \cite{zbMATH05129536}, and we follow the analysis of \cite[Proposition 5.1]{zbMATH05129536} (see also \cite[Theorem 3.1]{zbMATH05311233}). 

From Theorem \ref{Pr:C3Weiss}, we know that $W$ has a limit $W(0^+)\in [-\infty;+\infty)$ at $r=0$, so we can define 
 \[\Psi(0^+):=\lim_{r\to 0^+}\Psi(r)\in[-\infty,+\infty).\]
 We define, for any $r>0$, 
\[ S(r):=\int_{\partial \B(0;1)} \eta_r^2.\]
The main result of this section is the following:

\begin{theorem}\label{Th:MW1} 
The following dichotomy holds:
\begin{enumerate}
\item If $\Psi(0^+)>-\infty$, the sequence $\{S(r)\}_{r\to0^+}$ is bounded. Furthermore, for any $\gamma\in (0;1)$,  $\{\eta_r\}_{r\to 0^+}$ is bounded in $\mathscr C^{1,\gamma}(\B(0;1))$. Finally, any  $\mathscr C^{1,\gamma}\,, W^{1,2}$ limit $\eta_0$ of any subsequence $\{\eta_{r_k}\}_{k\in \N}$, $r_k\underset{k\to \infty}\to 0^+$ is two-homogeneous and satisfies
\begin{equation}\label{Eq:MW1}-\Delta \eta_0=(f(0)+g(0))\underline{m}_0-g(0),\end{equation} where 
\[ \underline{m}_0\in\mathrm{argmax}\left( L^\infty(\B(0;1);[0;1])\ni m\mapsto \int_{\B(0;1)} m\eta_0\right).\]
\item If $\Psi(0^+)=-\infty$, then \[ S(r)\underset{r\to 0+}\rightarrow +\infty.\]  Setting
\[ \tilde \eta_r:=\frac{\eta_r}{\sqrt{S(r)}},\] the sequence $\left\{\tilde \eta_r\right\}_{r\to 0^+}$ is bounded in $\mathscr C^{1,\gamma}(\B(0;1))$ for all $\gamma\in(0,1)$, and any  $\mathscr C^{1,\gamma}$ limit $\tilde \eta_0$ of any subsequence $\left\{\tilde\eta_{r_k}\right\}_{k\in \N}$ where $r_k\underset{k\to\infty}\to0^+$ is two-homogeneous, harmonic and non identically zero. 
\end{enumerate}
\end{theorem}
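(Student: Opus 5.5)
We follow the scheme of Monneau \& Weiss \cite[Proposition 5.1]{zbMATH05129536} (see also \cite[Theorem 3.1]{zbMATH05311233}), using Theorem \ref{Pr:C3Weiss} as the basic tool. First we rewrite $\Psi$ to relate it to $S$. Multiplying \eqref{Eq:Criticality} (rescaled: $-\Delta\eta_r=(f_r+g_r)\mathds 1_{E_r}-g_r$) by $\eta_r$ and integrating over $\B(0;1)$ gives
\[ \int_{\B(0;1)}|\n\eta_r|^2=\int_{\partial\B(0;1)}\eta_r\partial_\nu\eta_r+\int_{\B(0;1)}\big((f_r+g_r)\mathds 1_{E_r}-g_r\big)\eta_r.\]
Since $\mathds 1_{E_r}\eta_r=(\eta_r)_+$ up to the set $\{\eta_r=0\}$, this becomes
\[\Psi(r)=\int_{\partial\B(0;1)}\eta_r(\partial_\nu\eta_r-2\eta_r)-\int_{\B(0;1)}(f_r(\eta_r)_++g_r(\eta_r)_-).\]
In parallel, $S'(r)=\frac2r\int_{\partial\B(0;1)}\eta_r(\partial_\nu\eta_r-2\eta_r)$. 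The key identity is therefore
\[ \frac r2S'(r)=\Psi(r)+\int_{\B(0;1)}(f_r(\eta_r)_++g_r(\eta_r)_-),\]
and we combine it with interior estimates. Writing $\eta_r=h_r+w_r$ with $h_r$ harmonic, equal to $\eta_r$ on $\partial\B(0;1)$, and $w_r$ the bounded Newtonian-type part, $W^{2,p}$ estimates give
\[\|\eta_r\|_{\mathscr C^{1,\gamma}(\B(0;1/2))}\lesssim 1+\sqrt{S(r)}.\]
Scaling between radii $r$ and $2r$ (using $\|\eta_r\|_{L^2(\B(0;1))}\lesssim \sup_{s\in[r,2r]}\sqrt{S(s)}$ type bounds) upgrades this to $\mathscr C^{1,\gamma}(\B(0;1))$. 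Thus everything reduces to controlling $S$.

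\textbf{Case $\Psi(0^+)>-\infty$.} Quasi-monotonicity gives $\Psi(r)\le\Psi(r_1)+C$ for $r\le r_1$, hence $\Psi$ is bounded. Moreover \eqref{Eq:C3QuantifiedWeiss}, summed over dyadic scales, gives $\sum_k\|\eta_{2^{-k}}-\eta_{2^{-k-1}}\|^2_{L^2(\partial \B)}<\infty$. This alone does not bound $S$, so we argue by contradiction. Suppose $S(r_k)\to\infty$ and normalise $\tilde\eta=\eta_{r_k}/\sqrt{S(r_k)}$; the right-hand side vanishes in the limit, so $\tilde\eta$ tends to a harmonic limit. Dividing $\Psi$ by $S$ shows that the limit has vanishing Weiss energy, and the Weiss quasi-monotonicity (the $\partial_r$ of homogeneity defect is controlled by $\Psi$-oscillations divided by $S\to\infty$) forces it to be a nonzero $2$-homogeneous harmonic function. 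One then checks that $S$ cannot blow up. Indeed, from the identity $\frac r2 S'=\Psi+O(\sqrt S)$ with $\Psi$ bounded, we get $|(\sqrt S)'|\lesssim 1/r$ only logarithmically, which is the delicate point. Monneau--Weiss close it by showing that a harmonic $2$-homogeneous part $p$ satisfies, via orthogonality $\int_{\B}\nabla p\cdot\nabla(\cdot)$ and the boundary term, that $\Psi(r)\to-\infty$ like $-cS(r)$ up to lower order. This contradicts $\Psi(0^+)>-\infty$.

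With $S$ bounded, compactness in $\mathscr C^{1,\gamma}$ and weak $W^{2,p}$ yields limits $\eta_0$. Passing to the limit in \eqref{Eq:C3QuantifiedWeiss} gives $\eta_s=\eta_r$ for the limit along $r_k, sr_k$, i.e. $\eta_0(\lambda x)=\lambda^2\eta_0(x)$. The indicators $\mathds 1_{E_{r_k}}$ converge weakly-$*$ to some $\underline m_0\in[0,1]$, and $f_{r_k}\to f(0)$, $g_{r_k}\to g(0)$ strongly in $L^p$ (Sobolev continuity), which gives \eqref{Eq:MW1}. Finally, $\underline m_0=1$ on $\{\eta_0>0\}$ and $0$ on $\{\eta_0<0\}$, since locally uniform convergence fixes the sign of $\eta_{r_k}$ there; this is exactly the argmax property.

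\textbf{Case $\Psi(0^+)=-\infty$.} From the identity, $\frac r2 S'\le\Psi+C(1+\sqrt S)$, which is eventually $\le -1$. Hence $S$ is decreasing in $r$ near $0$, so $S(r)$ increases as $r\to0$. If $S$ stayed bounded, then $\Psi\ge -\int|\n\eta_r|^2\cdot 0-C$ by the $\mathscr C^{1,\gamma}$ bounds, i.e. $\Psi$ would be bounded, a contradiction; thus $S\to\infty$. Normalising, $\tilde\eta_r$ is bounded in $\mathscr C^{1,\gamma}$, with $\|\tilde\eta_r\|_{L^2(\partial\B)}=1$, so any limit is nonzero. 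The right-hand side is $O(S^{-1/2})\to0$, so the limit is harmonic. For $2$-homogeneity, dividing \eqref{Eq:C3QuantifiedWeiss} by $S(r)$ gives $\int_{\partial\B}(\tilde\eta_s-\tilde\eta_r\sqrt{S(r)/S(s)})^2\lesssim (W(r)-W(s))/S$. The main obstacle is controlling this ratio, together with $S(s)/S(r)\to1$ for $s\in(r,2r)$. I would handle it with the identity above, which gives $r S'/S\to$ bounded and $(W(r)-W(s))/S(r)\to0$ along a subsequence by a Monneau--Weiss style averaging argument.
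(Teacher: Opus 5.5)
\textbf{Main gap: bounding $S$ when $\Psi(0^+)>-\infty$.} You propose to rule out $S(r_k)\to\infty$ by showing $\Psi(r)\approx -cS(r)$. This cannot work. Write $\tilde\eta_k=\eta_{r_k}/\sqrt{S(r_k)}$ and $F(r):=\int_{\B(0;1)}(f_r\eta_{r,+}+g_r\eta_{r,-})=O(1+\sqrt{S(r)})$. Weak lower semicontinuity gives
\[ \liminf_{k\to\infty}\frac{\Psi(r_k)}{S(r_k)}\ \geq\ \int_{\B(0;1)}|\n p|^2-2\int_{\partial\B(0;1)}p^2 . \]
For the $2$-homogeneous harmonic limit $p$ you obtain, the right-hand side is $0$, because $\partial_\nu p=2p$ on $\partial\B(0;1)$. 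So there is no order-$S$ negativity to exploit.

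What you discard is the sign of the term you treat as an $O(\sqrt S)$ error in $\frac r2 S'=\Psi+F$. When $S$ is large, $F$ is positive and of exact order $\sqrt S$. This is the core of the paper's Steps 1--3:
\begin{enumerate}
\item \eqref{eq:fg_below}, proved by the mean value inequality for the subharmonic function $(a-\zeta)\eta_{r,+}+b\eta_{r,-}+M|x|^2$, using $\eta_r(0)=0$ and $f+g>0$;
\item \eqref{Eq:MW3}, that is $\int\eta_{r,+}\to\infty$ when $S(r)\to\infty$, via Lemma \ref{lemma:tech_b_up} and a comparison of $S$ across scales;
\item since $\Psi$ is bounded below, $S(r)$ large forces $S'(r)>0$, so $S$ cannot grow as $r\downarrow 0$.
\end{enumerate}

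Within your compactness framework the repair is short and avoids Steps 1--2. Suppose $S$ were unbounded near $0$. Choose $r_k\downarrow0$ with $S(r_k)=\max_{[r_k,r_1]}S\to\infty$. Then $S'(r_k)\le 0$, hence $\Psi(r_k)\le -F(r_k)$. The $\tilde\eta_k$ are bounded in $W^{1,2}(\B(0;1))$, because $\Psi\le C$ gives $\int|\n\eta_{r_k}|^2\le C+2F+2S$. They are also bounded in $\mathscr C^{1,\gamma}(\B(0;1/2))$ by your harmonic decomposition. So they converge to a harmonic $p$ with $\|p\|_{L^2(\partial\B(0;1))}=1$ and $p(0)=0$. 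The mean value property gives $\int_{\B(0;1)}p_+=\int_{\B(0;1)}p_->0$. Hence $F(r_k)/\sqrt{S(r_k)}\to (f(0)+g(0))\int_{\B(0;1)}p_+>0$, so $\Psi(r_k)\to-\infty$, a contradiction. Homogeneity of $p$ plays no role here.

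\textbf{Part (2).} Your claim that $\Psi+C(1+\sqrt S)$ is eventually $\le -1$ is unfounded, since $\sqrt S$ can be comparable to $|\Psi|$. It is also unnecessary: along any sequence with $S(r_k)$ bounded, $\Psi(r_k)\ge -2S(r_k)-C(1+\sqrt{S(r_k)})$, contradicting $\Psi(r)\to-\infty$.

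Homogeneity of the normalised limits, however, is not established. You need $(\Psi(sr_k)-\Psi(r_k))/S(r_k)\to0$ along the \emph{given} sequence. Your averaging idea yields this only along some sequence of scales, which does not cover every blow-up as the statement requires. The paper obtains homogeneity from the scale invariance \eqref{Eq:Fre} of the normalised limiting energy together with \eqref{Eq:CdP1}, and that invariance amounts to exactly this same limit.

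Your other deviations are sound. The decomposition $\eta_r=h_r+w_r$ for the $\mathscr C^{1,\gamma}$ bounds is one. Reading off homogeneity in part (1) directly from \eqref{Eq:C3QuantifiedWeiss} at scales $\lambda r_k$ and $r_k$, instead of using the paper's Step 6, is the other.
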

We note that an immediate corollary of Theorem \ref{Th:MW1} is the following:
\begin{corollary}\label{Co:Te}
If, for a given sequence $\{r_k\}_{k\in \N}$ going to $0^+$, 
\[ \underset{k\to\infty}{\lim\sup}\left\Vert \eta_{r_k}\right\Vert_{L^\infty(\B(0;1))}=+\infty\] then 
\[ \Psi(0^+)=-\infty.\]
\end{corollary}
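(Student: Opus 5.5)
The plan is to argue by contraposition. Assume $\Psi(0^+)>-\infty$; I will show that $\sup_k\Vert \eta_{r_k}\Vert_{L^\infty(\B(0;1))}<\infty$ for every sequence $r_k\to0^+$, which contradicts the hypothesis. Point (1) of Theorem \ref{Th:MW1} already gives, when $\Psi(0^+)>-\infty$, that the family $\{\eta_r\}_{r\to 0^+}$ is bounded in $\mathscr C^{1,\gamma}(\B(0;1))$. That family is uniform in $r$ on a whole interval $(0,r_1)$, so in particular it is bounded in $L^\infty(\B(0;1))$ along any sequence $r_k\to0^+$. Hence $\limsup_k\Vert\eta_{r_k}\Vert_{L^\infty(\B(0;1))}<+\infty$, the desired contradiction, and therefore $\Psi(0^+)=-\infty$.

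The only point to check is that the boundedness in Theorem \ref{Th:MW1}(1) is genuinely a statement about the family $\{\eta_r\}_{r\to0^+}$ and not merely about some subsequence. This is how it is phrased, and it is also what its proof delivers. The bound on $S(r)$ comes from the quasi-monotonicity of $W$ (Theorem \ref{Pr:C3Weiss}) together with the quantified estimate \eqref{Eq:C3QuantifiedWeiss}. Summing that estimate over dyadic scales controls $\sqrt{S}$ uniformly in $r$.

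If one wanted to be more careful, I would retrace this argument explicitly. From a uniform bound on $S(r)=\int_{\partial\B(0;1)}\eta_r^2$ and the equation $-\Delta\eta_r=(f_r+g_r)\mathds 1_{E^*/r}-g_r$, whose right-hand side is bounded in $L^\infty$ uniformly in $r$, interior and boundary elliptic estimates give $L^\infty$ bounds on $\B(0;1)$. To do this, apply the estimates on a slightly larger ball, using $S(2r)$, or note that $\eta_r$ on $\B(0;1)$ is a rescaling of $\eta_{2r}$ on $\B(0;1/2)$. Thus the only nontrivial input is Theorem \ref{Th:MW1}(1), and the corollary follows with no real obstacle.
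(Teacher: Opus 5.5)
Your argument is the paper's: the corollary is the contrapositive of Theorem \ref{Th:MW1}(1), whose $\mathscr C^{1,\gamma}$ bound holds along the whole family $r\to0^+$ and so bounds $\Vert\eta_{r_k}\Vert_{L^\infty(\B(0;1))}$ along every sequence. Your aside on why that bound is uniform is wrong, however, and should be dropped: summing \eqref{Eq:C3QuantifiedWeiss} over dyadic scales controls only $\sum_k\bigl(W(2^{-k})-W(2^{-k-1})\bigr)$, so Cauchy--Schwarz yields just $\sqrt{S(2^{-K})}\lesssim 1+\sqrt{K}$, whereas the paper gets $\limsup_{r\to0^+}S(r)<\infty$ in Step 3 of Appendix \ref{Ap:WeissBU} by combining \eqref{Eq:ExpressionDI} with \eqref{eq:fg_below}--\eqref{Eq:MW3} to show that $S$ is increasing whenever it exceeds a fixed threshold.
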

The proof is lengthy and technical, but mostly follows from standard arguments. We defer it to Appendix \ref{Ap:WeissBU}. Nevertheless, we isolate a part of the proof in the following remark for future reference:
\begin{remark}[Regarding the equation satisfied by blow-up limits]
Assume we have $\mathscr C^{1,\gamma}$ bounds on the sequence $\{\eta_r\}_{r\to 0^+}$ and let $\eta_0$ be a closure point. Rewrite the equation on $\eta_r$ as 
\[-\Delta \eta_r=(f_r+g_r)m_r-g_r\] where $m_r:=\mathds 1_{\{\eta_r>0\}}$, In particular, $m_r$ is a solution of 
\[\text{ Maximise }T(\cdot,\eta_r):L^\infty(\B(0;1));[0;1])\ni m\mapsto \int_{\B(0;1)} m\eta_r.\] Letting $\underline{m}_0$ denote a weak $L^\infty-*$ closure point of the sequence $\{m_r\}_{r\to 0^+}$, we deduce first that 
\begin{equation}\label{Eq:MaxT} \underline{m}_0\in \mathrm{arg max}\,T(\cdot,\eta_0)\end{equation} and, passing to the limit in the equation, that 
\[-\Delta \eta_0=(f(0)+g(0))\underline{m}_0-g(0).\]
We will show in Section \ref{Se:NonDegeneracy} that non-trivial blow-up limits $\eta_0$ satisfy 
\[-\Delta \eta_0=f(0)\mathds 1_{\{\eta_0>0\}}-g(0)\mathds 1_{\{\eta_0\leq 0\}},\] but this requires a  finer analysis.\end{remark}

\section{$\mathscr C^{1,1}$ regularity in two dimensions}\label{Se:2dReg}
The main result of this section is the following:
\begin{theorem}\label{Th:C11Regularity}
Let $d=2$. Let $m^*$ be a solution of either the volume constrained or volume penalised problem \eqref{Eq:PvNonEnergetic}--\eqref{Eq:PvNonEnergetic2}.  Then $\eta_{m^*}\in \mathscr C^{1,1}(\mathbb{T}^2)$.
\end{theorem}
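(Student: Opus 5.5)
The plan is to follow Chanillo, Kenig \& To and Monneau \& Weiss: reduce the question to quadratic growth at critical points, and rule out the only alternative, namely harmonic blow-ups, with the two-dimensional stability criterion. Since $-\Delta\eta\in L^\infty$, we already have $\eta\in W^{2,p}$ for all $p<\infty$. Near regular points and away from $\{\eta=0\}$, Theorem \ref{Th:Hodograph} and elliptic regularity give local $\mathscr C^{2,\gamma}$ bounds. On the interior of $\{\eta=0\}$ the Hessian vanishes. The remaining task is a uniform bound $\sup_{\B(x_0;r)}|\eta|\lesssim r^2$ at critical free boundary points $x_0\in\{\eta=|\n\eta|=0\}$, with a constant independent of $x_0$. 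Given this bound, the standard argument concludes: for $y$ at distance $\rho$ from the critical set, rescale $\eta$ on $\B(y;\rho/2)$ and combine interior $W^{2,p}$ estimates with regularity at regular points to get a bounded $D^2\eta$. A uniform quadratic bound at the critical points is therefore equivalent to $\mathscr C^{1,1}$.

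\textbf{Step 1 (dichotomy).} By Theorem \ref{Th:MW1} and Corollary \ref{Co:Te}, at each critical point either $\Psi(0^+)>-\infty$, so that $\eta_r$ is bounded, or $\Psi(0^+)=-\infty$. In the second case the normalised blow-ups $\tilde\eta_r$ converge to a nonzero $2$-homogeneous harmonic polynomial. In $d=2$ this polynomial is, up to rotation, $\tilde\eta_0=a(x_1^2-x_2^2)$ with $a\neq0$, so its zero set is two lines crossing transversally at the origin. To make the bound uniform in $x_0$, I would use that $W$ is monotone with constants depending only on $f,g$. For critical points with $\Psi(0^+)>-\infty$, the value $\Psi(r_0)$ is bounded uniformly, and I would argue, as in \cite{zbMATH05129536}, that $S(r)$ then stays bounded with a constant controlled by $\Psi(r_0)$ and $\|\eta\|_{W^{2,p}}$. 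The substantive case is therefore the exclusion of $\Psi(0^+)=-\infty$.

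\textbf{Step 2 (excluding harmonic blow-ups).} Suppose $\Psi(0^+)=-\infty$ at $x_0$. Since $\tilde\eta_{r}\to\tilde\eta_0$ in $\mathscr C^{1,\gamma}$, and $|\n\tilde\eta_0|\gtrsim 1$ on the annulus $\B(0;1)\setminus\B(0;\frac12)$ near its nodal lines, the implicit function theorem gives, for $k$ large and $r_k=r_0 2^{-k}$, arcs of $\{\eta=0\}$ in $\B(x_0;r_k)\setminus\B(x_0;r_{k+1})$ close to the rescaled lines. These arcs have length $\gtrsim 2^{-k}$, and $\n\eta\neq0$ on them. Because $\eta$ changes sign across these arcs, they lie in $\partial^eE^*$, and they are $\mathscr C^{2,\gamma}$ by Theorem \ref{Th:Hodograph}. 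Theorem \ref{Th:ASCConstrained} in the constrained case, or the analogous criterion obtained from \eqref{Eq:SOSD2} in the penalised case, then gives $|\n\eta(x_0)|>0$. This contradicts the assumption that $x_0$ is critical, so $\Psi(0^+)>-\infty$ at every critical point.

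\textbf{Main obstacle.} I expect the delicate part to be Step 2 together with the uniformity in Step 1. The convergence of $\tilde\eta_{r}$ holds only along subsequences, whereas Theorem \ref{Th:ASCConstrained} needs an arc in every dyadic annulus. I would first try to show that the frequency, or $\Psi(r)\to-\infty$ itself, forces every blow-up to be harmonic with the same rate $S(r)\to\infty$. Then every small scale is close to some nonzero harmonic quadratic, and the arcs exist for all large $k$. Failing that, I would exploit the quantified estimate \eqref{Eq:C3QuantifiedWeiss} to control $\tilde\eta_{s}-\tilde\eta_r$ between consecutive dyadic scales. For the uniform bound, I would argue by compactness: take critical points $x_j$ and radii $\rho_j$ with $\rho_j^{-2}\sup_{\B(x_j;\rho_j)}|\eta|\to\infty$. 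Normalising gives a limit that is a nonzero harmonic quadratic, hence still crossing nodal lines. The same stability argument, applied at the approximating points with constants of Theorem \ref{Th:ASCConstrained} independent of the point, yields the contradiction.
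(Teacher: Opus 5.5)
Your Step 2 is the paper's proof. The paper works at a single critical point $x_0$ with $\limsup_{r\to0}r^{-2}\sup_{\B(x_0;r)}|\eta|=+\infty$ and uses Theorem \ref{Th:MW1}(2) to obtain a harmonic $2$-homogeneous blow-up, which is a rotation of $x_1x_2$. Its nodal lines give regular arcs of length $\gtrsim 2^{-k}$ in each dyadic annulus, and Theorem \ref{Th:ASCConstrained} then contradicts criticality. Your worry about subsequences is settled by Theorem \ref{Th:MW1}(2) itself: $S(r)\to+\infty$ along the whole family, and every closure point of $\tilde\eta_r$ is a $2$-homogeneous harmonic function with unit $L^2(\partial\B(0;1))$ norm. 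In two dimensions these functions form the compact orbit $\{P\circ R\}_{R\in\mathrm{SO}(2)}$ of a single polynomial $P$. Hence for every small $r$, $\tilde\eta_r$ is $\mathscr C^1$-close to some rotation of $P$. This is what the paper means by working with $\tilde\eta_{r_k}\circ R_k$, and it gives arcs in every dyadic annulus.

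The gap is in the part that goes beyond the paper: uniformity of the quadratic bound in $x_0$. The paper does not address this; it passes directly from $\eta\notin\mathscr C^{1,1}$ to a single critical point with infinite $\limsup$. Neither of your two arguments for it works as written.

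First, your Step 1 claim is false. The bound on $S(r)$ in Step 3 of the proof of Theorem \ref{Th:MW1} comes from $S'(r)=\frac2r\bigl(\Psi(r)+\int_{\B(0;1)}(f_r\eta_{r,+}+g_r\eta_{r,-})\bigr)$. It therefore needs a \emph{lower} bound on $\Psi(r)$, that is, on $\Psi_{x_0}(0^+)$. Monotonicity of $W$ turns a bound on $\Psi(r_0)$ only into an \emph{upper} bound on $\Psi(r)$ for $r<r_0$. Moreover, $x\mapsto\Psi_x(0^+)=\inf_rW_x(r)$ is merely upper semicontinuous, so pointwise finiteness gives no uniform lower bound.

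Second, the compactness fallback does not close with Theorem \ref{Th:ASCConstrained}. That result has no single-scale quantitative content: its contradiction comes from $\sum_k 1/k=+\infty$ over all dyadic scales at one fixed point, via the Zygmund bound. At your approximating points $x_j$, the nodal arcs at scale $\rho_j$ carry $|\n\eta|\approx\rho_j\sqrt{S_{x_j}(\rho_j)}$. They therefore contribute only $O\bigl(S_{x_j}(\rho_j)^{-1/2}\bigr)\to0$ to $\int|\n\eta|^{-1}$, which is fully compatible with the bound given by \eqref{Eq:SOSD1}. To close this you would need a multi-scale argument around each $x_j$. For instance, use the uniform upper bound on $\Psi_{x_j}$ and the formula for $S'$ to control on how many consecutive scales the harmonic picture persists. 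Your proposal contains no such argument.
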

As we explained in the introduction (see in particular Section \ref{Se:Bibliography}), such a regularity result necessarily relies on second-order optimality conditions, as it was shown by Andersson \& Weiss \cite{Andersson2006CrossshapedAD} that there are non $\mathscr C^{1,1}$ solutions to the criticality (or first-order) equation 
\[ -\Delta u=\mathds 1_{\{u>0\}}.\] It should also be noted that in higher dimensions, there are also non $\mathscr C^{1,1}$ solutions that are close to being energy minimisers \cite{zbMATH06021980,zbMATH06179931}.

\begin{proof}[Proof of Theorem \ref{Th:C11Regularity}] This proof is a simplified version of \cite[Corollary 4.2]{zbMATH05505659}. Recall that $\eta=\eta_{m^*}-c_{m^*}$ or $\eta_{m^*}-c$ depending on the problem we study. First observe that $\eta$ is locally $\mathscr C^{1,1}$ around regular points, see Theorem \ref{Th:Hodograph}. Second, $\eta$ is $\mathscr C^{2,\gamma}$ in $\{\eta\neq 0\}$ so that, if $\eta$ is not $\mathscr C^{1,1}$,  there exists a point $x_0\in \partial^eE^*$ with $\eta(x_0)=0,\, \n \eta(x_0)=0$, and \[ \underset{r\to 0^+}{\lim\sup}\sup_{\Vert x\Vert \leq 1}\frac{|\eta(rx)|}{r^2}=+\infty.\] In other words, using the notation $\eta_r:=\frac{\eta(r\cdot)}{r^2}$, Theorem \ref{Th:MW1} guarantees that the Weiss functional $\Psi$ satisfies 
\[ \Psi(0^+)=-\infty,\] and that any $\mathscr C^{1,\gamma}$ closure point of the sequence
\[ \tilde \eta_r:=\frac{\eta_r}{\Vert \eta_r\Vert_{L^2(\partial \B(0;1))}}\] is a non-trivial, 2-homogeneous harmonic function in $\mathbb B(0;1)$. In dimension 2, this means that, up to a rotation and a multiplicative constant, the only closure point of $\{\tilde \eta_r\}_{r\to 0^+}$ is 
\begin{equation}\label{Eq:ISle}  \tilde\eta_0:(x_1,x_2)\mapsto x_1x_2.\end{equation} 
Let us show that this implies that $x_0=0$ satisfies the assumptions of Theorem \ref{Th:ASCConstrained}, which would immediately contradict $\n \eta(0)=0$.  To this end, consider the sequence $\left\{r_k:=\frac1{2^k}\right\}_{k\in \N}$. Up to working on the sequence $\{\tilde\eta_{r_k}\circ R_k\}_{k\in \N}$ for a suitable sequence of rotations $\{R_k\}_{k\in \N}$, we assume without loss of generality that 
\begin{equation}\label{Eq:eep} \tilde\eta_{r_k}\underset{k\to \infty}\rightarrow \tilde \eta_0\text{ in }\mathscr C^{1,\gamma}.\end{equation}
\begin{remark}Of course, we say we can make this assumption without loss of generality, but this is because we only consider here the length of the nodal set $\{\eta=0\}$. In general, the question of \emph{a priori} uniqueness of blow-up limits remains open.\end{remark}
Now, consider $\Sigma:=\left(\frac12;\frac34\right)\times \left(-\delta_0;\delta_0\right)$ where $\delta_0$ is small enough. By \eqref{Eq:eep} and the fact that \[ \min_{\Sigma}|\n \tilde\eta_0|>0,\] we deduce successively that:
\begin{enumerate}
\item For any $x_1\in \left(\frac12;\frac34\right)$, $\tilde\eta_{r_k}(x_1,-\delta)\lesssim-\delta,\, \tilde\eta_{r_k}(x_1,+\delta)\gtrsim \delta$.
\item This, combined with $|\nabla_{x_2}\tilde\eta_0|>0$ in $\Sigma$, entails that, for any $k$ large enough, for any $x_1\in \left(\frac12;\frac34\right)$, there exists a unique $y_k(x_1)\in (-\delta;\delta)$ such that $\tilde\eta_{r_k}(x_1,y_k(x_1))=0$. Furthermore, $|\n \tilde\eta_{r_k}(x_1,y_k(x_1))|>0$.
\end{enumerate}
\begin{figure}
\includegraphics[width=0.5\textwidth]{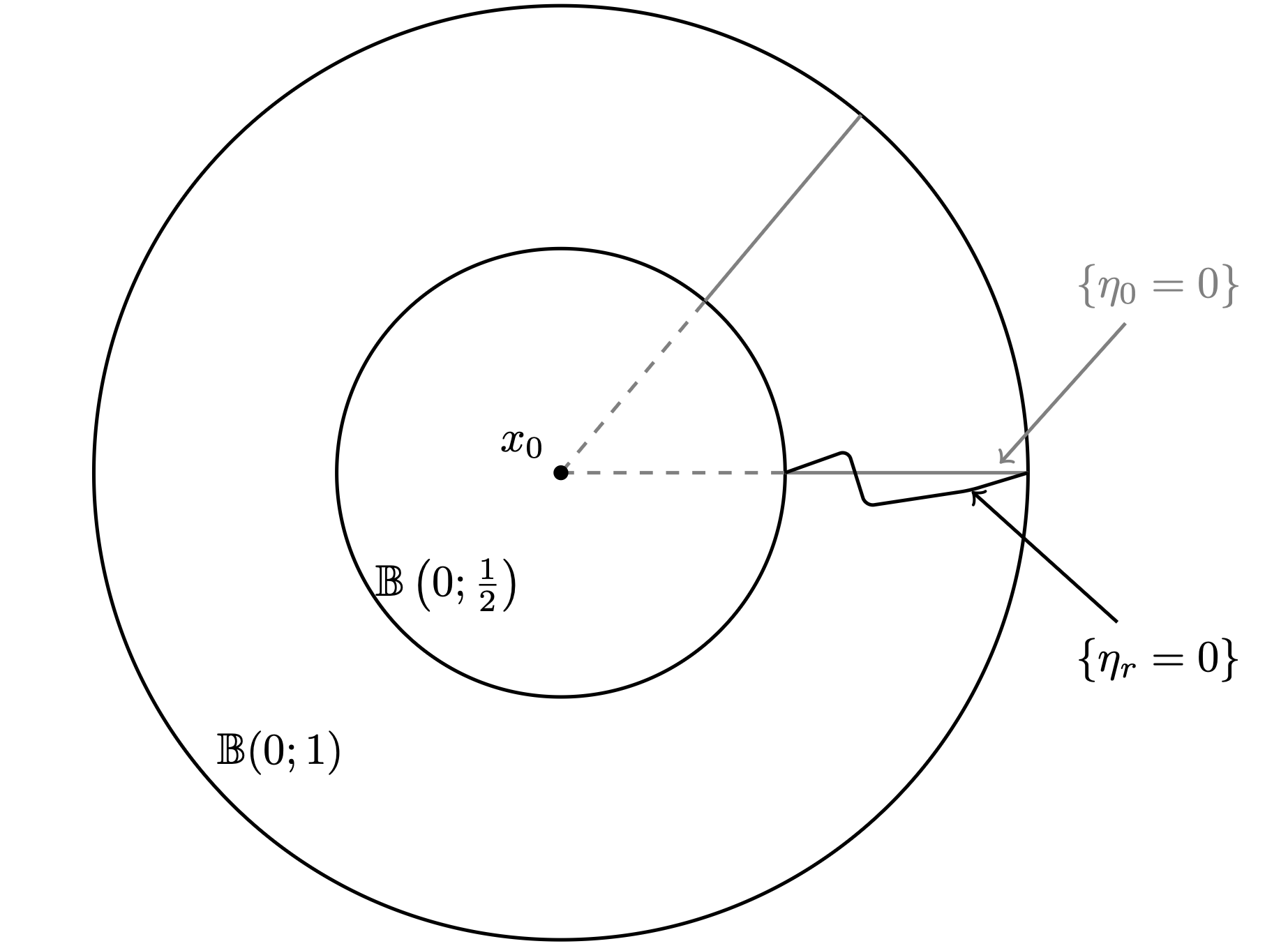}
\caption{Illustration of the construction of a ``long" part of the nodal set.}
\end{figure}
This implies that 
\begin{equation}\label{Eq:Deceived} \mathscr H^1\left(\{\eta_{r_k}=0\}\cap\{|\n \eta_{r_k}|\neq 0\}\cap \left(\B(0;1)\setminus \B\left(0;\frac12\right)\right)\right)\gtrsim 1.\end{equation} We then apply Theorem \ref{Th:ASCConstrained} to derive a contradiction.

\end{proof}

\section{(Non-)Degeneracy of blow-up limits and limit equations}\label{Se:NonDegeneracy}
We now investigate the non-degeneracy of blow-up limit.
This section is where the first distinction between penalised and constrained problems first materialises: in the case of volume constrained problems, we can only derive this non-degeneracy at points of intermediate density, while, for penalised problems, we obtain it for any free boundary point.

\subsection{The case of volume constrained problems}
We let, for any $x_0\in \T$ and any $r>0$ small enough, 
\begin{equation}\label{Eq:Density}
D_E(x_0;r):=\frac{|E\cap \B(x_0;r)|}{|\B(x_0;r)|}.
\end{equation}
We use the notations of Theorem \ref{Th:MW1}, recalling in particular that $\Psi$ is the Monneau-Weiss functional.
\begin{theorem}\label{Th:NonDegeneracyConstrained}
Let $m^*(=\mathds 1_{E^*}\text{ by Theorem \ref{Th:ReformConstrained}})$ be a solution of \eqref{Eq:PvNonEnergetic}. Let $x_0\in \T$ be a critical free boundary point. If 
\begin{equation}\label{Eq:NDAssumption}
0<\underset{r\to 0^+}{\lim \inf}D_E(x_0;r)\leq \underset{r\to 0^+}{\lim \sup}D_E(x_0;r)< 1
\end{equation}
then the blow-ups at $x_0$ are non-degenerate in the following sense: if $\Psi(0^+)>-\infty$, any closure point $\eta_0$ of 
$\{\eta_r\}_{r\to 0^+}$ provided by Theorem \ref{Th:MW1} is not identically zero and solves 
\[-\Delta\tilde\eta_0=(f(0)+g(0))\mathds 1_{\{\eta_0>0\}}-g(0).\]
\end{theorem}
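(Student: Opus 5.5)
Assume \(\Psi(0^+)>-\infty\), so that by Theorem \ref{Th:MW1} the rescalings \(\eta_r\) are bounded in \(\mathscr C^{1,\gamma}\). Take a closure point \(\eta_0\) along \(r_k\to0^+\). Along a further subsequence, \(m_{r_k}:=\mathds 1_{E^*_{r_k}}\) converges weakly-\(*\) in \(L^\infty\) to some \(\underline m_0\) satisfying \eqref{Eq:MW1} and \eqref{Eq:MaxT}. The plan has two parts: first show \(\eta_0\not\equiv0\) by contradiction, then identify \(\underline m_0=\mathds 1_{\{\eta_0>0\}}\).

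\textbf{Step 1: nondegeneracy.} Suppose \(\eta_0\equiv0\). Then \eqref{Eq:MW1} gives \((f(0)+g(0))\underline m_0=g(0)\), so \(\underline m_0\equiv\kappa:=g(0)/(f(0)+g(0))\) is constant. By the density assumption \eqref{Eq:NDAssumption}, \(\int_{\B(0;1)}\underline m_0\in(0,|\B(0;1)|)\), hence \(\kappa\in(0,1)\). So the sets \(E^*\) oscillate at small scales, and they converge weakly to a genuinely intermediate density. I would contradict this with the quantitative second-order condition \eqref{Eq:QBT1}.

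Rescaled to \(\B(0;1)\), \eqref{Eq:QBT1} with \(m=\mathds 1_F\), \(F\) a volume-preserving rearrangement of \(E^*\) inside \(\B(0;r)\), reads
\begin{equation*}
0\geq \int_{\B(0;1)}\eta_r(\mathds 1_{F_r}-m_r)+\alpha\,\Vert \mathds 1_{F_r}-m_r\Vert^2_{W^{-1,2}},
\end{equation*}
where both terms are computed at scale \(r\). The quadratic scaling is exact here: the \(W^{-1,2}\) norm of a function supported in a ball of radius \(r\) scales like \(r^{2+d}\) after extracting the factor \(r^{d}\), matching \(\int\eta\,h\sim r^{2+d}\int\eta_r h_r\). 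The norm comparison across scales would use Lemma \ref{Pr:SobolevComparison} and Proposition \ref{Pr:CompW12W22}.

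Choose the test function \(h_r=\mathds 1_{F_r}-m_r\) with \(F_r:=E^*_r\triangle\) a swap that adds mass in a fixed region \(A\) and removes it in a fixed region \(B\). Take \(A,B\) disjoint sub-balls of \(\B(0;1)\) of equal volume, and fixed \(\phi\ge0\) with \(\phi\) supported in \(A\). Concretely, \(h_r=\phi\)-weighted replacements built from \((1-m_r)\mathds 1_A\) and \(m_r\mathds 1_B\), normalised so that \(\int h_r=0\). The weak limit is then \(h_0=(1-\kappa)\mathds 1_A-\kappa\mathds 1_B\neq0\), up to normalisation.

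Since \(\eta_r\to0\) uniformly, the first term tends to \(0\). If \(h_r\) converged strongly in \(W^{-1,2}\), its \(W^{-1,2}\) norm would tend to \(\Vert h_0\Vert_{W^{-1,2}}>0\). That is the main difficulty, because \(h_r\) only converges weakly. Two routes are available. The first is to use weak lower semicontinuity, \(\liminf\Vert h_r\Vert_{W^{-1,2}}\ge\Vert h_0\Vert_{W^{-1,2}}>0\), which is exactly the needed direction. This yields \(0\ge \alpha\Vert h_0\Vert^2>0\), a contradiction. Fourier analysis would only be needed to make the lower bound uniform; I would first try lower semicontinuity and keep a Fourier-multiplier estimate as backup. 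The constraint \(0\le m\le1\) is respected because \(A\cap B=\emptyset\) and the added mass lives in \(\{m_r=0\}\). This contradicts \(\eta_0\equiv0\).

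\textbf{Step 2: limit equation.} With \(\eta_0\not\equiv0\), two-homogeneous, and \(\mathscr C^{1,\gamma}\), \eqref{Eq:MaxT} forces \(\underline m_0=1\) on \(\{\eta_0>0\}\) and \(\underline m_0=0\) on \(\{\eta_0<0\}\). It remains to show \(|\{\eta_0=0\}|=0\). On the interior of \(\{\eta_0=0\}\), \eqref{Eq:MW1} gives \(\underline m_0=\kappa\in(0,1)\) a.e. (by Stampacchia, \(\Delta\eta_0=0\) a.e. there). Suppose \(\{\eta_0=0\}\) had positive measure, so that it has a Lebesgue point \(y\). Repeat Step 1 with \(A,B\) two small disjoint balls near \(y\) inside this set. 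Since \(\eta_r\to0\) uniformly there as well, the same argument gives a contradiction. Hence \(\underline m_0=\mathds 1_{\{\eta_0>0\}}\) a.e., which yields \(-\Delta\eta_0=(f(0)+g(0))\mathds 1_{\{\eta_0>0\}}-g(0)\).
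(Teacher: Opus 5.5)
\textbf{Verdict: correct, by a different route for the non-degeneracy step.} Your argument is correct in substance.

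\textbf{How the paper proves non-degeneracy.} The paper proves the quantitative estimate of Theorem \ref{Th:NDC}. At each scale $r_j$ it replaces $E^*\cap\B(0;r_j)$ by a centred ball or a centred annulus of the same volume. An explicit Fourier computation shows these two competitors are at squared $W^{-1,2}(\T)$-distance $\gtrsim (r_j^dD_E(0;r_j))^{\frac{d+2}{d}}$. The triangle inequality then makes one of them far from $m^*$. Plugging it into \eqref{Eq:QBT1} and applying Cauchy--Schwarz gives $\Vert\eta_{r_j}\Vert_{L^2(\B(0;1))}\gtrsim D_E(0;r_j)^{\frac{d+4}{2d}}$, with no passage to the limit.

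\textbf{How your route differs.} You argue by contradiction at the blown-up level.
\begin{enumerate}
\item A degenerate blow-up forces, via \eqref{Eq:MW1}, $\underline m_0\equiv\kappa=g(0)/(f(0)+g(0))$.
\item The density assumption forces $\kappa\in(0,1)$.
\item A mass swap between two fixed disjoint regions produces admissible $h_r$ with weak limit $h_0\neq0$.
\item Rescaling \eqref{Eq:QBT1} via Lemma \ref{Pr:SobolevComparison} and the elementary bound $\Vert h(\cdot/r)\Vert^2_{W^{-1,2}(\R^d)}\geq r^{d+2}\Vert h\Vert^2_{W^{-1,2}(\R^d)}$ (for $r\leq1$) gives $0\geq\alpha\Vert h_0\Vert^2>0$.
\end{enumerate}
This is softer than the paper's argument. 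It avoids the explicit computation of Lemma \ref{Le:Construction}, and Proposition \ref{Pr:CompW12W22} is not needed. What it gives up is the rate in terms of the density.

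The ``main difficulty'' you flag is not one. The $h_r$ are bounded in $L^\infty$ and supported in $\B(0;1)$, so the compact embedding $L^2(\B(0;1))\hookrightarrow W^{-1,2}(\R^d)$ makes the convergence strong. Lower semicontinuity already suffices anyway.

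Your Step 2 is essentially the paper's argument in localised form. The paper passes the second-order condition to the limit for all $\underline m$ of the limiting mass, reruns the bang-bang argument, and then discards $\{\eta_0=0\}\cap E_0$ via \eqref{Eq:MW1} and $f(0)>0$.

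\textbf{Details to fix.}
\begin{enumerate}
\item \emph{Swap normalisation.} Keep both coefficients in $[0,1]$. With $|A|=|B|$ and $\kappa<\frac12$, you must scale down the added part $(1-m_r)\mathds 1_A$ rather than scale up the removed part $m_r\mathds 1_B$.
\item \emph{The value of $\kappa$ in Step 2.} Once $\eta_0\not\equiv0$, nothing forces $\kappa\in(0,1)$. Since $f(0)>0$ you have $\kappa<1$, but $\kappa\leq0$ whenever $g(0)\leq0$.
\begin{itemize}
\item If $\kappa<0$, this forces $|\{\eta_0=0\}|=0$.
\item If $\kappa=0$, you get $\underline m_0=0$ a.e.\ on $\{\eta_0=0\}$.
\end{itemize}
For $g(0)=0$ your swap cannot prove $|\{\eta_0=0\}|=0$: there is no mass to remove. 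So the right target is $\underline m_0=\mathds 1_{\{\eta_0>0\}}$ a.e., which is all the statement requires.
\item \emph{Swap regions.} A Lebesgue point of $Z:=\{\eta_0=0\}$ does not give balls contained in $Z$. Use $A\cap Z$ and $B\cap Z$ (of positive measure) as swap regions instead. This is harmless, since competitors in $\mathcal M$ need not be characteristic functions.
\end{enumerate}
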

\begin{remark}\label{Re:Reeb}Note that if $\Psi(0^+)=-\infty$, any closure point of $\{\tilde\eta_r\}_{r\to 0^+}$ is non-zero by Theorem \ref{Th:MW1}, whence we shall allow ourselves to write that ``all blow-ups are non-degenerate". \end{remark}
In fact,  Theorem \ref{Th:NonDegeneracyConstrained} is a direct corollary of the following optimal non-degeneracy estimate:
\begin{theorem}\label{Th:NDC}
With the same notations as in Theorems \ref{Th:MW1}, if $\Psi(0^+)>-\infty$, then, as $r\to 0^+$,
\begin{equation}\label{Eq:NonDegeneracyGeneral}
\left\Vert \eta_r\right\Vert_{L^2(\B(0;1))}\gtrsim D_E(x_0;r_j)^{\frac{d+4}{2d}}.
\end{equation}
\end{theorem}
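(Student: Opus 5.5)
The plan is to test the second-order optimality condition \eqref{Eq:QBT1} of Theorem \ref{Th:ReformConstrained} against a localised, volume-preserving perturbation that removes part of $E^*$ near $x_0$. The linear term $\int\eta(m-m^*)$ will then be controlled by $\Vert\eta_r\Vert_{L^2}$. The quadratic term $\alpha\Vert m-m^*\Vert^2_{W^{-1,2}}$ will be bounded from below by an explicit power of the density $D_E(x_0;r)$. Comparing the two gives \eqref{Eq:NonDegeneracyGeneral}. Throughout we take $x_0=0$.

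\emph{Construction of the perturbation.} Fix $r<r_0/2$ and a far-away ball $\B(y;r)\subset\B(0;r_0)$ on which the density of $E^*$ is not one; such a ball exists since $E^*\neq\T$. I would rather keep everything inside $\B(0;r_0)$, so as to use a single localisation. Concretely, pick a set $G\subset\B(0;r_0)\setminus E^*$ at distance $\gtrsim r_0$ from $0$, on which $\eta\le -c_1<0$ (to be checked), and set
\[ h:=\mathds 1_{G'}-\mathds 1_{E^*\cap \B(0;r)},\qquad G'\subset G,\quad |G'|=|E^*\cap\B(0;r)|=D_E(0;r)|\B_r|. \]
Then $m=m^*+h\in\mathcal M$ and $\{m\neq m^*\}\subset\B(0;r_0)$. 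Applying \eqref{Eq:QBT1},
\[ \alpha\Vert h\Vert_{W^{-1,2}}^2\le \int_{E^*\cap\B_r}\eta-\int_{G'}\eta . \]
The term $-\int_{G'}\eta$ is positive and is a nuisance. A cleaner choice is to move mass only inside $\B_r$: take $h=\mathds 1_{A}-\mathds 1_{B}$ with $B\subset E^*\cap\B_r$ and $A\subset\B_r\setminus E^*$, and $|A|=|B|\simeq \min(D_E,1-D_E)|\B_r|$. This is exactly where the intermediate density enters. Then
\[ \int\eta h\ \ge -\int_{\B_r}|\eta|\ \ge -r^{2}\,r^{d}\,|\B_1|^{1/2}\Vert\eta_r\Vert_{L^2(\B_1)} \]
after rescaling. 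Hence \eqref{Eq:QBT1} yields $\alpha\Vert h\Vert^2_{W^{-1,2}}\lesssim r^{d+2}\Vert\eta_r\Vert_{L^2(\B_1)}$.

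\emph{Lower bound on the negative norm.} This is the main obstacle: $h$ has zero mean, so cancellations can make $\Vert h\Vert_{W^{-1,2}}$ small. I would choose $A$ and $B$ to be separated, e.g.\ $B$ the part of $E^*$ in one half of $\B_r$ and $A$ the complement part in the other half, chosen by a pigeonhole/Lebesgue-point argument. Then I would test $h$ against a fixed Lipschitz function $\phi(x/r)$, equal to $-1$ near $B$ and $1$ near $A$, with $\Vert\n\phi(\cdot/r)\Vert_{L^2}\simeq r^{(d-2)/2}$. This gives
\[ \Vert h\Vert_{W^{-1,2}}\gtrsim \frac{|A|+|B|}{r^{(d-2)/2}}\simeq D\, r^{\frac{d+2}{2}}, \]
using Lemma \ref{Pr:SobolevComparison} to pass between torus and $\R^d$ norms. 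If separation fails, I would fall back on the Fourier/rearrangement bound $\Vert h\Vert_{W^{-1,2}}\gtrsim |\mathrm{supp}\,h|^{\frac12+\frac1d}\cdot(\ldots)$, which minimises the norm over sets of given measure. It is this route that produces the exponent $\frac{d+4}{2d}$: since $\Vert \mathds 1_S\Vert_{\dot W^{-1,2}}^2\gtrsim |S|^{1+2/d}$, with $|S|\simeq D r^d$ this gives $\Vert h\Vert^2\gtrsim D^{(d+2)/d}r^{d+2}$.

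\emph{Conclusion.} Combining the two bounds gives $D^{(d+2)/d}\lesssim\Vert\eta_r\Vert_{L^2(\B_1)}$. To reach the stated exponent $\frac{d+4}{2d}$, I would instead bound the linear term by Cauchy–Schwarz on $B$ only: $\int_B|\eta|\le r^{2+d}|B_1|^{1/2}D^{1/2}\Vert\eta_r\Vert_{L^2}$. I would then track the constants so that the density powers combine to $D^{(d+4)/(2d)}$. Under \eqref{Eq:NDAssumption}, the right-hand side of \eqref{Eq:NonDegeneracyGeneral} stays bounded below, so $\eta_0\not\equiv0$. The limit equation then follows from \eqref{Eq:MaxT} and from $|\{\eta_0=0\}\cap\{-\Delta\eta_0\ne\text{const}\}|=0$, giving Theorem \ref{Th:NonDegeneracyConstrained}.
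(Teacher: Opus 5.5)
\textbf{There is a genuine gap in the lower bound for $\Vert h\Vert_{W^{-1,2}}$.} Your architecture is otherwise the paper's:
\begin{itemize}
\item test \eqref{Eq:QBT1} with a mass-preserving $h$ supported in $\B(0;r)$;
\item bound $-\int\eta h$ by Cauchy--Schwarz on $\mathrm{supp}\,h$, a set of measure $\lesssim D|\B(0;r)|$, which gives $r^{d+2}D^{1/2}\Vert\eta_r\Vert_{L^2(\B(0;1))}$;
\item bound $\Vert h\Vert^2_{W^{-1,2}}$ from below by $(r^dD)^{\frac{d+2}{d}}$, so that $\frac{d+2}{d}-\frac12=\frac{d+4}{2d}$.
\end{itemize}
The gap is in the last step.

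The inequality you invoke, $\Vert\mathds 1_S\Vert^2_{\dot W^{-1,2}}\gtrsim|S|^{1+2/d}$, "because rearrangement minimises the norm at fixed measure", is backwards. The kernel of $(1-\Delta)^{-1}$ is radially decreasing, so Riesz's rearrangement inequality says the ball \emph{maximises} $\Vert\mathds 1_S\Vert_{W^{-1,2}}$ among sets of given measure. Concretely, for $d\ge3$, take a finely spread dust $S\subset\B(0;r)$ of density $D$. Then $\Vert\mathds 1_S\Vert^2_{W^{-1,2}}\approx D^2\Vert\mathds 1_{\B(0;r)}\Vert^2_{W^{-1,2}}\sim D^2r^{d+2}$, which is much smaller than $(Dr^d)^{1+2/d}$ as $D\to0$. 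Moreover your $h$ is signed with zero mean, so no bound in terms of $|\mathrm{supp}\,h|$ alone can hold at all.

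What your argument does establish is the separated-sets bound. It always applies: a pigeonhole over cubes of a fixed small size gives the separation, so the fallback is never needed. But it only yields $\Vert h\Vert^2_{W^{-1,2}}\gtrsim\min(D,1-D)^2r^{d+2}$, hence $\Vert\eta_r\Vert_{L^2(\B(0;1))}\gtrsim\min(D,1-D)^{3/2}$. Given $\limsup D<1$, this matches the statement when $d=2$ (where $\frac{d+4}{2d}=\frac32$), and it suffices for Theorem \ref{Th:NonDegeneracyConstrained}. For $d\ge3$ it is strictly weaker than $D^{\frac{d+4}{2d}}$ as $D\to0$, and tracking constants cannot change an exponent.

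\textbf{The missing idea is Lemma \ref{Le:Construction}.} Do not build $h$ as a swap of subsets dictated by $E^*$. Instead, replace $m^*$ on all of $\B(0;r)$ by an explicit radial configuration of the same mass: either the centred ball of volume $D|\B(0;r)|$, or the outer annulus $\A(\e_1 r;r)$ of the same volume. An explicit computation gives
$\Vert\mathds 1_{\mathrm{ball}}-\mathds 1_{\mathrm{annulus}}\Vert^2_{W^{-1,2}}\gtrsim(r^dD)^{\frac{d+2}{d}}$,
with constants depending on $1-\limsup D>0$. By the triangle inequality, at least one of the two competitors $m$ then satisfies $\Vert m-m^*\Vert^2_{W^{-1,2}}\gtrsim(r^dD)^{\frac{d+2}{d}}$, whatever $E^*\cap\B(0;r)$ looks like. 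The support of $m-m^*$ still has measure at most $2D|\B(0;r)|$, so your Cauchy--Schwarz step then yields exactly $D^{\frac{d+4}{2d}}$.

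Separately, your closing sketch of the limit equation is incomplete. \eqref{Eq:MaxT} together with $\Delta\eta_0=0$ a.e.\ on $\{\eta_0=0\}$ only forces $\underline m_0=\frac{g(0)}{f(0)+g(0)}$ there. This is why the paper passes \eqref{Eq:QBT1} to the limit to show that $\underline m_0$ is bang-bang.
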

Theorem \ref{Th:NDC} relies on the optimality conditions \eqref{Eq:QBT1}. We use the following lemma:
 \begin{lemma}\label{Le:Construction}
Assume without loss of generality that $x_0=0$ is a critical free boundary point. Let $\{r_j\}_{j\to 0^+}$ be a sequence converging to 0 and assume 
\begin{equation}\label{Eq:NoFullDensity}
\underset{j\to \infty}{\lim\sup} D_E(0;r_j)<1 \end{equation}. There exists a sequence of radially symmetric sets $\{F_j\}_{j\in \N},\, F_j\subset \B(0;1),\, |F_j|=D_E(0;r_j)$ that can be taken to be either centred balls or annuli   such that:
\begin{enumerate}
\item $m_j:=m^*\mathds 1_{\B(0;r_j)^c}+\mathds 1_{F_j}\left(\frac\cdot{r_j}\right)\in \mathcal M$, 
\item There holds 
\begin{equation}\label{Eq:CompW12} \Vert m^*-m_j\Vert_{W^{-1,2}(\T)}^2\gtrsim \left(r_j^dD_E(0;r_j)\right)^{\frac{d+2}d}.\end{equation} 
\end{enumerate}
\end{lemma}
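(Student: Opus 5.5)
We write $D_j:=D_E(0;r_j)$ and $B_j:=\B(0;r_j)$. Since $m_j=m^*$ outside $B_j$, the competitor $m_j$ lies in $\mathcal M$ as soon as
\[ \int_{B_j} \mathds 1_{F_j}\!\left(\tfrac{\cdot}{r_j}\right)=r_j^d|F_j|=|E^*\cap B_j| .\]
With the normalisation of $|F_j|$ relative to $|\B(0;1)|$, this is exactly the choice $|F_j|=D_j|\B(0;1)|$. Assumption \eqref{Eq:NoFullDensity} guarantees that $|F_j|$ stays bounded away from $|\B(0;1)|$, so $F_j$ fits strictly inside $\B(0;1)$. This settles point (1) for any radial set of the right measure.

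For point (2), we set $h_j:=m_j-m^*=\mathds 1_{F_j}(\cdot/r_j)-\mathds 1_{E^*\cap B_j}$. This function is supported in $B_j$, has zero mean, and is bounded by one in absolute value. By Lemma \ref{Pr:SobolevComparison} it suffices to bound $\Vert h_j\Vert_{W^{-1,2}(\R^d)}$ from below. Rescaling by $r_j$ gives
\[ \Vert h_j\Vert_{\dot W^{-1,2}(\R^d)}^2=r_j^{d+2}\Vert \tilde h_j\Vert_{\dot W^{-1,2}}^2, \qquad \tilde h_j:=\mathds 1_{F_j}-\mathds 1_{\tilde E_j},\quad \tilde E_j:=r_j^{-1}(E^*\cap B_j)\subset \B(0;1). \]
Since $h_j$ has zero mean and compact support, the homogeneous and inhomogeneous norms are comparable at small scale. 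The target is therefore
\[ \Vert \tilde h_j\Vert_{\dot W^{-1,2}}^2\gtrsim D_j^{\frac{d+2}d}, \]
because $r_j^{d+2}D_j^{(d+2)/d}=(r_j^dD_j)^{(d+2)/d}$.

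To get this lower bound, we test $\tilde h_j$ against a function $\varphi$. Duality gives
\[ \Vert \tilde h_j\Vert_{\dot W^{-1,2}}\ge \frac{\int \tilde h_j\varphi}{\Vert\n\varphi\Vert_{L^2}}. \]
We choose $F_j$ adapted to a radial test function, which is where the two options in the statement (balls or annuli) come from.

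\textbf{Balls.} Take $F_j=\B(0;\rho_j)$ with $\rho_j\sim D_j^{1/d}$. Take $\varphi$ radial, equal to $\rho_j^2-|x|^2$ on $\B(0;\rho_j)$ and $0$ outside. This is the torsion function of the ball, with $\Vert\n\varphi\Vert_2^2\sim\rho_j^{d+2}$. The function $\varphi$ is non-increasing in $|x|$. By the bathtub principle, among sets of the same measure, the centred ball $F_j$ maximises $\int\mathds 1_F\varphi$. Hence
\[ \int\tilde h_j\varphi=\int_{F_j}\varphi-\int_{\tilde E_j}\varphi\ge0, \]
but equality can occur if $\tilde E_j$ is itself essentially that ball.

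\textbf{Annuli.} This case must use the other competitor. Let $F_j$ be a thin annulus of the same measure near $\partial\B(0;1)$, which is available because $D_j$ is bounded away from $1$. Test with $\varphi=\rho_j^2-|x|^2$ truncated at $0$, but without restricting to the support of $F_j$. Then $\int_{\tilde E_j}\varphi\ge\int_{\B(0;\rho_j)}\varphi$ in the bathtub sense: the rearrangement inequality makes the ball extremal, so the quantity $\int_{\tilde E_j}\varphi$ is at most the ball value. Meanwhile $\int_{F_j}\varphi=0$. So $F_j$ should be chosen as the ball or the annulus according to which of $\int_{B}\varphi$ and $\int_{\tilde E_j}\varphi$ is larger.

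A cleaner unified choice is the following. Let $\Phi$ solve $-\Delta\Phi=\mathds 1_{\tilde E_j}$, the Newtonian potential of $\tilde E_j$, and compare it with the potential of the ball of equal volume. By Talenti's comparison, the symmetrised potential dominates, and strictly so unless $\tilde E_j$ is a ball. In either case we pick whichever of the centred ball or the outer annulus makes $\int\tilde h_j\varphi\gtrsim\rho_j^{d+2}$. A gap of order $\int_{\B(0;\rho_j)}\varphi\sim\rho_j^{d+2}$ is always realised by one of the two choices: in the worst case $\tilde E_j$ is the ball, and then the annulus, whose contribution $\int_{F_j}\varphi$ vanishes, gives exactly this gap.

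Dividing $\int\tilde h_j\varphi\gtrsim\rho_j^{d+2}$ by $\Vert\n\varphi\Vert_2\sim\rho_j^{(d+2)/2}$ gives
\[ \Vert\tilde h_j\Vert_{\dot W^{-1,2}}\gtrsim\rho_j^{(d+2)/2}\sim D_j^{\frac{d+2}{2d}}, \]
which is \eqref{Eq:CompW12}.

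The main obstacle is the dichotomy step: showing that, for an arbitrary $\tilde E_j$, one of the two radial competitors produces a gap of the full order $\rho_j^{d+2}$ with a constant uniform in $j$. The plan is to do the case split explicitly on the sign of
\[ \int_{\tilde E_j}\varphi-\tfrac12\int_{\B(0;\rho_j)}\varphi. \]
If this quantity is nonnegative, use the annulus, whose integral against $\varphi$ is $0$ as long as $\rho_j$ stays below the annulus' inner radius. Here \eqref{Eq:NoFullDensity} and $\rho_j\sim D_j^{1/d}$ with a small constant are needed. If it is negative, use the ball. The condition $\liminf<1$ is needed precisely so that the annulus fits.
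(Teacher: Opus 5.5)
Your final argument, the explicit case split, is correct. It is essentially the paper's proof written in dual form. The paper uses the same two radial competitors $m_{j,B}$ (ball of radius $D_E(0;r_j)^{1/d}$ at unit scale) and $m_{j,A}$ (outer annulus of the same measure). It asserts, via ``explicit computations using the Fourier transform'', that these two are at $W^{-1,2}(\T)$-distance of order $r_j^{(d+2)/2}D_E(0;r_j)^{\frac{d+2}{2d}}$. The triangle inequality then shows that one of them is at least half that far from $m^*$. Your split on the sign of $\int_{\tilde E_j}\varphi-\frac12\int\varphi$ is exactly this triangle inequality, read through the pairing with one test function that sees the ball fully and the annulus not at all. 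What you gain is that the ball--annulus lower bound, which the paper only asserts, is actually proved with a single explicit test function. You also make visible that $\limsup_j D_E(0;r_j)<1$ is used precisely to fit a ball of radius $\gtrsim D_E(0;r_j)^{1/d}$ inside the ball competitor and disjoint from the annulus. The paper's version is shorter but leaves that computation implicit.

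There is one slip to fix, plus some cleanup.

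\textbf{The slip: two radii are conflated.} The ball competitor must have radius exactly $D_j^{1/d}$ because of the volume constraint. The annulus has inner radius $(1-D_j)^{1/d}$. So $\varphi=(\rho_j^2-|x|^2)_+$ with $\rho_j=D_j^{1/d}$ vanishes on the annulus only when $D_j\leq\frac12$. Shrinking $\rho_j$ by a small constant, as you suggest at the end, is not allowed for the ball itself. The fix is to decouple the radii:
\begin{itemize}
\item keep $F_j=\B(0;D_j^{1/d})$;
\item take $\varphi:=(s_j^2-|x|^2)_+$ with $s_j:=\min\{D_j^{1/d},(1-D_j)^{1/d}\}$;
\item note that $s_j\geq(1-\bar D)^{1/d}D_j^{1/d}$, where $\bar D:=\sup_{j\geq j_0}D_j<1$.
\end{itemize}
Then $\int_{F_j}\varphi=\int\varphi\sim s_j^{d+2}$, $\int_{A_j}\varphi=0$ and $\Vert\n\varphi\Vert_{L^2}^2\sim s_j^{d+2}$. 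Your case split gives $|\int\tilde h_j\varphi|\geq\frac12\int\varphi$ in both cases. Use the absolute value: in the annulus case the pairing is negative.

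\textbf{Cleanup.}
\begin{itemize}
\item Delete the middle of the proposal. The bathtub principle gives $\int_{\tilde E_j}\varphi\leq\int_{\B(0;\rho_j)}\varphi$, not the reverse inequality you first write. The Talenti comparison is not needed, and being non-quantitative it gives no uniform gap anyway.
\item Test directly on $\T$ with $\varphi(\cdot/r_j)$ instead of passing through $\dot W^{-1,2}(\R^d)$ and an asserted comparability of norms. Since $\varphi\in W^{1,2}_0(\B(0;1))$, Poincar\'e gives $\Vert\varphi(\cdot/r_j)\Vert_{W^{1,2}(\T)}^2\lesssim r_j^{d-2}\Vert\n\varphi\Vert_{L^2}^2$. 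Also $\int_\T h_j\varphi(\cdot/r_j)=r_j^d\int\tilde h_j\varphi$. Together these produce the factor $r_j^{d+2}$ directly.
\item As in the paper, the estimate only holds for $j$ large. If $D_j=1$ for some $j$, then $h_j=0$.
\end{itemize}
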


\begin{proof}[Proof of Lemma \ref{Le:Construction}]
We take, for $F_j$, either a centred ball or a centred annulus. We use the notation (for $0<a\leq b$)
\[ \A(a;b):=\left\{a\leq \Vert \cdot\Vert\leq b\right\}.\]We introduce, at the blown-up scale, the sets 
\[ B_j:=\B\left(0;D_E(0;r_j)^{\frac1d}\right),\, A_j:=\A(\e_1(r_j),1)\] where $\e_1(r_j)$ is chosen to ensure that 
\[ |A_j|=D_E(0;r_j).\] Finally, we define 
\[ m_{j,B}:=m^*\mathds1_{\B(0;r_j)^c}+\mathds 1_{B_j}\left(\frac{\cdot}{r_j}\right),\, m_{j,A}:=m^*\mathds1_{\B(0;r_j)^c}+\mathds 1_{A_j}\left(\frac{\cdot}{r_j}\right).\] By construction, $m_{j,A}, m_{j,B}\in \mathcal M$. Furthermore, \eqref{Eq:NoFullDensity} yields, by explicit computations using the Fourier transform,
\begin{equation}\label{Eq:PetitClaim}
\Vert m_{j,A}-m_{j,B}\Vert_{W^{-1,2}(\T)}^2\underset{j\to \infty}\sim D_E(0;r_j)^{\frac{d+2}d}.
\end{equation} We thus deduce from the triangle inequality that 
\begin{equation}\label{Eq:Construction}
\underset{j\to \infty}{\lim\inf}\max\left(\Vert m^*-m_{A,j}\Vert_{W^{-1,2}(\T)}^2,\Vert m^*-m_{B,j}\Vert_{W^{-1,2}(\T)}^2\right)\gtrsim D_E(0;r_j)^{\frac{d+2}d}
\end{equation}and this concludes the proof.\end{proof}

\begin{proof}[Proof of Theorem \ref{Th:NDC}]
Let $\{r_j\}_{j\in \N}$ be a sequence converging to $0$ and let, for any $j\in \N$, $m_j:=m^*\mathds 1_{\B(0;r_j)^c}+\mathds 1_{F_j}\left(\frac{\cdot}{r_j}\right)$ be provided by Lemma \ref{Le:Construction}. We set $h_j:=m_j-m^*$. By the optimality conditions \eqref{Eq:QBT1}, we have 
\[-\int_\T \eta h_j\gtrsim \Vert h_j\Vert_{W^{-1,2}(\T)}^2\] for some constant independent of the sequence $\{r_j\}_{j\to \infty}$.
Recalling that $\eta_{r_j}=\frac{\eta(r_j\cdot)}{r_j^2}$ we obtain, after a change of variables and from \eqref{Eq:CompW12}
\[-r_j^{d+2}\int_{\B(0;1)} \eta_{r_j}h_j\left(\frac{\cdot}{r_j}\right)\gtrsim r_j^{d+2} D_E(0;r_j)^{\frac{d+2}2}.\]  As $h_j=m_j-m^*\in \{0;1\}$ a.e. the triangle inequality yields, 
\[ \left\Vert h_j\left(\frac{\cdot}{r_j}\right)\right\Vert_{L^2(\B(0;1)}\leq 2\sqrt{D_E(0;r_j)}.\] The Cauchy-Schwarz inequality gives
\[ D_E(0;r_j)^{\frac{d+2}{2}}\lesssim  \left\Vert \eta_r\right\Vert_{L^2(\B(0;1))}\sqrt{D_E(0;r_j)},\] and the non-degeneracy of $\eta_0$ follows.
\end{proof}

\begin{proof}[Proof of Theorem \ref{Th:NonDegeneracyConstrained}]The only point that remains to be studied is the limit equation. Let $\eta_0$ be the limit of some sequence $\{\eta_{r_j}\}_{j\in \N}$, where $r_j\underset{j\to \infty}\longrightarrow 0^+$ and, up to extracting a further subsequence, assume that 
\[ D_E(0;r_j)\underset{j\to\infty}\longrightarrow \underline{D}_0\in (0;1).\] Let
\[\underline{\mathcal M}:=\left\{m\in L^\infty(\B(0;1)):\, 0\leq m\leq 1\text{ a.e., }\int_{\B(0;1)}m=\underline{D}_0\right\}\] and, for any $r>0$, 
\[ \underline{\mathcal M}_r:=\left\{m\in L^\infty(\B(0;1)):\, 0\leq m\leq 1\text{ a.e., }\int_{\B(0;1)}m=D(0;r)\right\}.\]  Note that Assumption \eqref{Eq:NDAssumption} ensures that $\underline{\mathcal M}$ is infinite dimensional (in the sense that it generates, as a convex set, an infinite dimensional cone). For any $\underline m\in \underline{\mathcal M}$, fix a sequence $\{\underline{m}_j\}_{j\in \N}\in \prod_{j\in \N} \underline{\mathcal M}_{r_j}$ such that 
\[ \underline{m}_j\underset{j\to \infty}\longrightarrow \underline m\text{ weakly in $L^\infty-*$}\] and, finally, let $\underline{m}_0$ be a closure point (here again up to a further subsequence) of $\left\{m_r:=\mathds 1_{E^*\cap \B(0;r_j)}\left(\frac{\cdot}{r_j}\right)\right\}_{j\in \N}$. From \eqref{Eq:QBT1} applied to $h_j:=(\underline{m}_j-m_r)\left(\frac{\cdot}{r_j}\right)$ and Lemma \ref{Pr:SobolevComparison} we obtain, for any $j\in \N$, 
\[ 0\geq \int_{\B(0;1)} \eta_{r_j}h_j+\alpha \Vert h_j\Vert_{W^{-1,2}(\R^d)}^2.\] Passing to the limit, we deduce that for any $\underline{m}\in\underline{\mathcal M}$, there holds
\[ 0\geq \int_{\B(0;1)} \eta_0\left(\underline{m}-\underline{m}_0\right)+\alpha\left\Vert \underline{m}-\underline{m}_0\right\Vert_{W^{-1,2}(\R^d)}^2.\] We conclude, as in the proof of Theorem \ref{Th:ReformConstrained}, that this entails, first that $\underline{m}_0$ is bang-bang, say $\underline{m}_0=\mathds 1_{E_0}$, and second that $E_0$ is equal to either $\{\eta_0\geq 0\}$ or $\{\eta_0>0\}$. Now, recall that Assumption \eqref{Eq:MW6} is still in force, and so, from the fact that $\eta_0$ solves \eqref{Eq:MW1}, $|\{\eta_0=0\}\cap E_0|=0$. Thus, $E_0=\{\eta_0>0\}$ and the conclusion follows.
\end{proof}

\subsection{The case of penalised problems}

The situation is easier for penalised problems, as we have more freedom regarding the choice of competitors (and we do not need a positive density assumption).\begin{theorem}\label{Th:NonDegeneracyPenalised}
Let $m^*(=\mathds 1_{E^*}\text{ by Theorem \ref{Th:ReformPenalised}})$ be a solution of \eqref{Eq:PvNonEnergetic2}. Let $x_0\in \T$ be a critical free boundary point. The blow-ups at $x_0$ are non-degenerate in the following sense: if $\Psi(0^+)>-\infty$, any closure point $\eta_0$ of 
$\{\eta_r\}_{r\to 0^+}$ provided by Theorem \ref{Th:MW1} is not identically zero and solves 
\begin{equation}\label{Eq:LimitEquation}-\Delta\eta_0=(f(0)+g(0))\mathds 1_{E_0^*}-g(0)\end{equation} with 
\[ E_0^*=\{\eta_0>0\}\text{ or }E_0^*=\{\eta_0\geq 0\}.\]

\end{theorem}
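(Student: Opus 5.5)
We mirror the proof of Theorem \ref{Th:NonDegeneracyConstrained}, using the extra freedom of the penalised optimality condition \eqref{Eq:QBT2}: competitors need not preserve the mass. Place the critical point at $x_0=0$, assume \eqref{Eq:MW6}, and suppose $\Psi(0^+)>-\infty$. By Theorem \ref{Th:MW1}, along a sequence $r_j\to 0^+$ we have $\eta_{r_j}\to\eta_0$ in $\mathscr C^{1,\gamma}$, and $m_{r_j}:=\mathds 1_{E^*}(r_j\cdot)\to\underline m_0$ weakly-$*$ in $L^\infty(\B(0;1))$.

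\textbf{Non-degeneracy.} We first prove a quantitative lower bound
\[\Vert \eta_r\Vert_{L^2(\B(0;1))}\gtrsim 1,\]
playing the role of Theorem \ref{Th:NDC} but with no density factor. Since $0\in\partial^eE^*$, at every scale $r$ either $|E^*\cap\B(0;r)|\geq\frac12|\B(0;r)|$ or $|\B(0;r)\setminus E^*|\geq\frac12|\B(0;r)|$. In the first case we take the competitor $m_j:=m^*\mathds 1_{\B(0;r_j)^c}$, which removes $E^*$ inside the ball. In the second we take $m_j:=m^*\mathds 1_{\B(0;r_j)^c}+\mathds 1_{\B(0;r_j)}$, which fills the ball. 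Both lie in $\mathcal M_0$, since no volume constraint must be respected.

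Let $h_j:=m_j-m^*$. Then $h_j$ has constant sign and satisfies $|h_j|=\mathds 1_{G_j}$ with $|G_j|\gtrsim r_j^d$. For a signed measure of this kind, testing the $W^{-1,2}$ norm against the cut-off $\phi_{r_j}$ of \eqref{Eq:CutOff} (respectively \eqref{Eq:CutOff2} when $d=2$) gives
\[\Vert h_j\Vert_{W^{-1,2}}\geq \frac{\int h_j\phi_{r_j}}{\Vert\phi_{r_j}\Vert_{W^{1,2}}}\gtrsim \frac{r_j^d}{r_j^{(d-2)/2}},\]
so $\Vert h_j\Vert^2_{W^{-1,2}}\gtrsim r_j^{d+2}$. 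In $d=2$ there may be a logarithmic loss here; if so, we instead use Lemma \ref{Pr:SobolevComparison} together with the Fourier computation from Lemma \ref{Le:Construction} at the rescaled level.

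Inserting this into \eqref{Eq:QBT2}, rescaling, and applying Cauchy--Schwarz exactly as in the proof of Theorem \ref{Th:NDC} yields
\[r_j^{d+2}\Vert\eta_{r_j}\Vert_{L^2}\gtrsim r_j^{d+2}.\]
Hence $\eta_0\not\equiv 0$.

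\textbf{Limit equation.} For any $\underline m\in L^\infty(\B(0;1);[0,1])$, set $\underline m_j:=m^*\mathds 1_{\B(0;r_j)^c}+\underline m(\cdot/r_j)$, which is admissible in $\mathcal M_0$. Applying \eqref{Eq:QBT2}, rescaling by $r_j^{d+2}$, using Lemma \ref{Pr:SobolevComparison} and weak lower semicontinuity of the $W^{-1,2}$ norm, we pass to the limit and obtain
\[0\geq \int_{\B(0;1)}\eta_0(\underline m-\underline m_0)+\alpha\Vert \underline m-\underline m_0\Vert^2_{W^{-1,2}(\R^d)}\quad\text{for all admissible }\underline m .\]
The argument of Point (1) of Theorem \ref{Th:ReformConstrained} then applies, and is simpler here because no Lagrange multiplier appears. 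On $\{0<\underline m_0<1\}$, perturbations of either sign supported in small balls force $\eta_0=0$ and then contradict the strict quadratic term. So $\underline m_0=\mathds 1_{E_0^*}$ with $\{\eta_0>0\}\subset E_0^*\subset\{\eta_0\geq0\}$. The same two-ball exchange argument, now with arbitrary (non-balanced) perturbations, shows that $E_0^*\cap\{\eta_0=0\}$ is either null or all of $\{\eta_0=0\}$. Passing to the limit in $-\Delta\eta_{r_j}=(f_{r_j}+g_{r_j})m_{r_j}-g_{r_j}$ gives \eqref{Eq:LimitEquation}.

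\textbf{Main obstacle.} The hardest step is the sharp lower bound $\Vert h_j\Vert^2_{W^{-1,2}}\gtrsim r_j^{d+2}$ for a one-signed indicator of a set of positive density in $\B(0;r_j)$, uniformly in $j$, especially when $d=2$. The torus $W^{-1,2}$ norm has to be compared to the $\R^d$ one via Lemma \ref{Pr:SobolevComparison}, and on $\R^2$ the zero-mean issue makes the homogeneous norm degenerate. The first fix we would try is to test against a fixed smooth bump $\phi(\cdot/r_j)$, using that $\int h_j\phi(\cdot/r_j)\gtrsim r_j^d$ with $\Vert\phi(\cdot/r_j)\Vert_{W^{1,2}(\T)}\lesssim r_j^{(d-2)/2}+r_j^{d/2}$. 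This gives the bound directly, even in $d=2$, where $\Vert\phi(\cdot/r_j)\Vert_{W^{1,2}}\lesssim 1$ and the bound reads $r_j^4$.
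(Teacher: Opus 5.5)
Your proof is correct. The limit-equation part is the paper's argument (the paper just refers back to Theorem~\ref{Th:NonDegeneracyConstrained}), made simpler by allowing every $\underline m\in L^\infty(\B(0;1);[0,1])$. In fact, removing $E_0^*\cap\{\eta_0=0\}$ or adding $\{\eta_0=0\}\setminus E_0^*$ even shows $|\{\eta_0=0\}|=0$.

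The non-degeneracy step follows a different route from the paper. The paper argues by contradiction in two stages. It first invokes the constrained estimate of Theorem~\ref{Th:NDC}, which is legitimate here since \eqref{Eq:QBT2} covers the mass-preserving competitors of Lemma~\ref{Le:Construction}; this forces $D_E(0;r_j)\to0$ along a degenerate sequence. Only then does it test with $m^*\mathds 1_{\B(0;r_j)^c}+\mathds 1_{\B(0;r_j/2)}$. The resulting $h_j$ changes sign, so it is the vanishing density that secures $\Vert h_j\Vert^2_{W^{-1,2}(\T)}\gtrsim r_j^{d+2}$.

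You instead choose, at each scale, whichever of ``remove'' or ``fill'' acts on at least half of the ball. Then $h_j$ is one-signed, and duality against a cut-off gives the lower bound at once. You need no density information, no Theorem~\ref{Th:NDC}, and no Fourier computation from Lemma~\ref{Le:Construction}. What this buys:
\begin{itemize}
\item a uniform bound $\Vert\eta_r\Vert_{L^2(\B(0;1))}\gtrsim1$ at every small scale, not just along a subsequence;
\item a symmetric treatment of $D_E\to0$ and $D_E\to1$. The paper only writes out the first case; the second needs exactly your removal competitor.
\end{itemize}
The paper's version is shorter only because Theorem~\ref{Th:NDC} is already at hand.

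Your announced ``main obstacle'' is not one, for three reasons:
\begin{itemize}
\item The cut-off \eqref{Eq:CutOff} gives $\Vert\phi_r\Vert_{W^{1,2}(\T)}\lesssim r^{(d-2)/2}$ also for $d=2$.
\item The logarithmic cut-off \eqref{Eq:CutOff2} produces a gain rather than a loss; in fact $\Vert\mathds 1_{\B(0;r)}\Vert^2_{W^{-1,2}(\mathbb T^2)}\sim r^4|\ln r|$.
\item The zero-mean degeneracy of $\dot W^{-1,2}(\R^2)$ is irrelevant, because you test directly on $\T$ with the inhomogeneous norm.
\end{itemize}
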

Remark \ref{Re:Reeb} also applies here.

\begin{proof}[Proof of Theorem \ref{Th:NonDegeneracyPenalised}]
We use \eqref{Eq:QBT2} again. Argue by contradiction and assume that for some sequence $\{r_j\}_{j\to 0^+}$ the blow-up limit provided by Theorem \ref{Th:MW1} is degenerate and, up to working on $-\eta$, assume that \eqref{Eq:MW6} holds. It follows from Theorem \ref{Th:NonDegeneracyConstrained} that (taking a further subsequence if necessary)
\[ D_E(0;r_j)\underset{j\to \infty}\rightarrow 0.\] Take $m_j:=m^*\mathds 1_{\B(0;r_j)^c}+\mathds 1_{\B\left(0;\frac{r_j}2\right)}$ and $h_j:=m_j-m^*$. Then we have, on the one-hand
\[ -\int_\T \eta h_j=-r_j^{d+2}\int_{\B(0;1)}\eta_{r_j}h_j\left(\frac{\cdot}{r_j}\right)=\underset{j\to\infty}o \left(r_j^{d+2}\right)\] by the degeneracy assumption and, on the other hand
\[ \Vert h_j\Vert_{W^{-1,2}(\T)}^2\underset{j\to \infty}\sim r_j^{d+2}\] by construction as well. Passing to the limit in 
\[ -\int_\T \eta h_j\gtrsim  \Vert h_j\Vert_{W^{-1,2}(\T)}^2\] yields the contradiction: $\eta_0$ is not degenerate. Obtaining the limit equation is done exactly in the same way as in Theorem \ref{Th:NonDegeneracyConstrained}.
\end{proof}

\part{Regularity results for the free boundary}\label{Pa:Constrained}

\section*{Plan of the part}
We can finally prove Theorem \ref{Th:Main}.  The structure of the section is as follows:
\begin{enumerate}
\item First, we proceed to classifying the blow-up limits in the two-dimensional case, which we do in Section \ref{Se:Classification}. We then establish the regularity of the free boundary around points of intermediate density, see Section \ref{Se:IntermediateDensity}.
\item Building on this classification we study the regularity of the free boundary in the two-dimensional case. Here, although our approach shares similarities with the analysis of Chanillo, Kenig \& To \cite{zbMATH05505659} we need to substantially modify it in order to use points of intermediate density as our basis. This is done in Section \ref{Se:2DCKT}.
\item Regarding the higher dimensional case, which we recall we only study for penalised problems, we simply use the Federer dimension-reduction principle in the spirit of Weiss \cite{zbMATH01161229}. This is done in Section \ref{Se:Reduction}
\end{enumerate}
Throughout, the points of intermediate density play a crucial role. For further reference, we write their definition here:
\begin{definition}\label{De:IntermediateDensity}
Let $A\subset \T$ be such that $|A|,\, |\T\setminus A|>0$. We say that $x\in \T$ is a point of intermediate density for $A$ if 
\[ 0<\underset{r\to 0}{\lim\inf}D_A(x;r) \le \underset{r\to 0}{\lim\sup}D_A(x;r)<1\] where we recall that 
\[D_A(x;r)=\frac{|A\cap \B(x;r)|}{|\B(x;r)|}.\]
\end{definition}

\section{Classification of  blow-ups at intermediate density points in two dimensions}\label{Se:Classification}
The main result is the following:
\begin{proposition}\label{Pr:Classification}
Let $x_0\in \partial^eE^*$ be a point of intermediate density for $E^*$, in the sense of Definition \ref{De:IntermediateDensity}. Assume that $\n \eta(x_0)=0$.
Then there exists $N_0\in \N^*$ and a family $\{\eta_{0,1},\dots,\eta_{0,N_0}\}$($\subset \mathscr C^{1,\gamma}(\B(0;1))$ for any $\gamma \in (0;1)$) such that any blow-up limit $\eta_0$ satisfies, up to composition with a rotation, 
\[ \eta_0\in \{\eta_{0,1},\dots,\eta_{0,N_0}\}\] and each $\eta_{0,i}$ ($i=1,\dots, N_0$) satisfies:
\begin{enumerate}
\item $\eta_{0,i}$ changes sign in $\B(0;1)$, 
\item $\{\eta_{0,i}=0\}\cap \{ \n \eta_{0,i}=0\}=\{0\}$,
\item $|\n \eta_{0,i}(\cdot)|\gtrsim |\cdot|$.
\end{enumerate}
\end{proposition}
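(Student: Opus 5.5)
Throughout, $d=2$, $x_0=0$, and (up to replacing $\eta$ by $-\eta$) \eqref{Eq:MW6} holds. Set $a:=f(0)>0$, $b:=g(0)$, so that $a+b>0$. The plan is to reduce the statement to an ODE on the circle and to classify its solutions.

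\textbf{Step 1: reduction to homogeneous, bang-bang limits.} By Theorem \ref{Th:C11Regularity}, $\eta\in\mathscr C^{1,1}$. Hence $\{\eta_r\}$ is bounded in $L^\infty(\B(0;1))$, and Corollary \ref{Co:Te} together with Theorem \ref{Th:MW1} gives $\Psi(0^+)>-\infty$. So every blow-up $\eta_0$ is a $\mathscr C^{1,\gamma}$ limit which is $2$-homogeneous. Since $0$ is a point of intermediate density, Theorem \ref{Th:NonDegeneracyConstrained} (or its penalised analogue, Theorem \ref{Th:NonDegeneracyPenalised}) applies. It gives $\eta_0\not\equiv0$ and
\[
-\Delta\eta_0=a\mathds 1_{\{\eta_0>0\}}-b\mathds 1_{\{\eta_0\le 0\}} .
\]
The proof of that theorem also gives two further facts:
\begin{itemize}
\item the weak limit $\underline m_0$ of the rescaled characteristic functions equals $\mathds 1_{\{\eta_0>0\}}$;
\item $|\{\eta_0>0\}\cap\B(0;1)|/|\B(0;1)|=\underline D_0\in(0;1)$.
\end{itemize}

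\textbf{Step 2: $\eta_0$ changes sign.} Since $\underline D_0\in(0;1)$, we have $|\{\eta_0>0\}|>0$ and $|\{\eta_0\le0\}|>0$. I will show that $|\{\eta_0=0\}|=0$, which then forces $|\{\eta_0<0\}|>0$, i.e.\ point (1).

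Suppose instead that $|\{\eta_0=0\}|>0$. Then I apply the limiting second-order condition
\[
0\ge\int\eta_0(\underline m-\underline m_0)+\alpha\Vert\underline m-\underline m_0\Vert^2_{W^{-1,2}}
\]
obtained in the proof of Theorem \ref{Th:NonDegeneracyConstrained}. The competitor $\underline m$ moves a small amount of mass from $\{\eta_0>0\}$ into $\{\eta_0=0\}$; its contribution to the linear term is $O(\text{mass})$ with a favourable sign. I expect the cleanest route to instead use a volume-preserving swap inside $\{\eta_0=0\}\cap\{\eta_0\ge0\}$-adjacent regions, exactly as in the bang-bang proof of Theorem \ref{Th:ReformConstrained}: there the linear term vanishes and the strictly positive $W^{-1,2}$ term gives a contradiction. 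This step excludes half-parabola profiles such as $\frac a2(x_1)_+^2$ with $b=0$.

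\textbf{Step 3: the angular ODE.} Write $\eta_0=r^2\varphi(\theta)$. Then $\varphi\in\mathscr C^{1,1}(\mathbb S^1)$ and
\[
\varphi''+4\varphi=-a\ \text{ on }\{\varphi>0\},\qquad \varphi''+4\varphi=b\ \text{ on }\{\varphi<0\}.
\]
On each positivity arc, $\varphi=-\frac a4+C\cos2(\theta-\theta_i)$, and on each negativity arc, $\varphi=\frac b4+C'\cos2(\theta-\theta_j')$.

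First I check that every zero of $\varphi$ is non-degenerate, i.e.\ $\varphi'\neq0$ there. If $\varphi=\varphi'=0$ at an endpoint of a positive arc, ODE uniqueness gives $\varphi=\frac a4(\cos2(\theta-\theta_0)-1)\le0$ on that arc, which is impossible. The same argument rules this out on a negative arc when $b>0$. When $b\le0$, a negative arc with a degenerate endpoint is $\varphi=\frac b4(1-\cos 2(\theta-\theta_0))$. This vanishes only at isolated points, and $\mathscr C^1$-gluing to a neighbouring positive arc would require $\varphi'=0$ at an endpoint of that positive arc, which was just excluded.

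Second, the conserved quantity $E:=\varphi'^2+4\varphi^2\pm 2a\varphi$ (resp.\ $\mp 2b\varphi$) reduces to $\varphi'^2$ at the zeros and is continuous across them. Hence $|\varphi'|$ takes the same value at all zeros. By ODE uniqueness, all positive arcs are then congruent, of some length $\ell_+(a,b)$, and all negative arcs are congruent, of length $\ell_-(a,b)$.

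Periodicity then imposes $k(\ell_++\ell_-)=2\pi$ for some $k\in\N^*$, with the common value $|\varphi'|$ at the zeros as the only free parameter. Each $\ell_\pm$ is an explicit monotone function of that parameter: $\ell_+<\pi/2$, and $\ell_-$ is bounded below by a positive constant. So only finitely many $k$, and finitely many parameter values, are admissible. Up to rotation this yields the finite family $\{\eta_{0,1},\dots,\eta_{0,N_0}\}$.

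\textbf{Step 4: the gradient bound.} Point (3) follows from $|\n\eta_0|^2=r^2(4\varphi^2+\varphi'^2)$. By Step 3, $4\varphi^2+\varphi'^2>0$ on $\mathbb S^1$, so by compactness it is bounded below by a positive constant. Point (2) is immediate from (3).

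\textbf{Main obstacle.} I expect the delicate points to be Step 2, namely ruling out a zero set of positive measure with only the weak limiting optimality condition available, and the degenerate-zero analysis in Step 3 when $b\le0$.
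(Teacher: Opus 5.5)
Your overall route is the paper's: reduce to the angular ODE for $\varphi$, show the zeros are simple, show that arcs of a given sign are congruent, then count them. Your conserved quantity $\varphi'^2+4\varphi^2+2a\varphi_++2b\varphi_-$ is a cleaner way to get congruence than the paper's tangent-ratio iteration. However, the finiteness claim in Step 3 is false as stated, because $\ell_-$ is bounded below only when $b=g(0)\le 0$.

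\textbf{The gap: finiteness fails when $b>0$.} Let $\kappa$ be the common value of $|\varphi'|$ at the zeros. The explicit arcs give $\ell_+=\arctan(2\kappa/a)$ and $\ell_-=\arctan(2\kappa/b)$. So $\ell_++\ell_-$ increases continuously from $0$ to $\pi$ as $\kappa$ runs over $(0,\infty)$. Hence for every $k\ge 3$ there is a $\mathscr C^{1,1}$, $2$-homogeneous, sign-changing solution of the limit equation with $k$ positive and $k$ negative arcs. Its positivity density $\ell_+/(\ell_++\ell_-)$ lies in $(0;1)$, and $\kappa_k\to0$, $\Vert\varphi_k\Vert_\infty=O(\kappa_k^2)\to 0$.

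The ODE alone therefore allows infinitely many profiles. The missing ingredient is a \emph{uniform} non-degeneracy of blow-ups at $x_0$; your Step 1, which only says that each $\eta_0\not\equiv0$, does not provide it. The paper closes this step as follows. Such profiles would converge to $0$ in $\mathscr C^{1,\gamma}$, and the set of blow-up limits at $x_0$ is closed. So $0$ would itself be a blow-up, contradicting Theorem \ref{Th:NonDegeneracyConstrained}. Equivalently, Theorem \ref{Th:NDC} together with $\liminf_{r\to0}D_E(x_0;r)>0$ bounds $\Vert \eta_0\Vert_{L^2(\B(0;1))}$ from below. That bounds $\kappa$ from below and hence $k$ from above.

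\textbf{Step 2 does not work as described.} Moving mass from $\{\eta_0>0\}$ into $\{\eta_0=0\}$ makes the linear term equal to $-\int_R\eta_0<0$, where $R$ is the region you remove mass from. That is the \emph{unfavourable} sign. A volume-preserving swap with vanishing linear term, as in the bang-bang proof, is impossible here: $\underline m_0=\mathds 1_{\{\eta_0>0\}}$ vanishes a.e.\ on $\{\eta_0=0\}$, so there is no mass on the zero set to exchange. (Also, $\frac a2(x_1)_+^2$ does not solve the limit equation.)

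The step is easy to repair with your own tool. The quantity $\varphi'^2+4\varphi^2+2a\varphi_++2b\varphi_-$ is Lipschitz on the circle, and its a.e.\ derivative is $2b\varphi'\mathds 1_{\{\varphi=0\}}=0$, so it is constant. If $\varphi$ vanished on an arc, the equation would force $b=0$. The constant would then vanish on that arc while dominating $\varphi'^2+4\varphi^2$, so $\varphi\equiv0$, a contradiction.

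\textbf{A smaller omission.} In the degenerate-zero analysis for $b<0$, you should also treat a negative arc whose neighbour is again a negative arc. Both arcs then have length $\pi$, so $\varphi\le0$ on the whole circle, which is excluded by $|\{\eta_0>0\}|>0$.

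With these repairs, the rest of Step 3 goes through, and Step 4's derivation of (2)--(3) from $|\nabla\eta_0|^2=r^2(4\varphi^2+\varphi'^2)$ is correct.
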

This result should be compared for instance to \cite[Section 5]{zbMATH07826622} where a similar type of classification is obtained (but for a different equation, and where $\eta_0$ is superharmonic).
\begin{proof}[Proof of Proposition \ref{Pr:Classification}]
As always, we assume that $x_0=0$ and, up to working on $-\eta$, we assume that \eqref{Eq:MW6} holds ($f(0)>0$).  We split the proof in several steps.

\emph{Preliminary step: every blow-up limit changes sign. } We first prove that any blow-up limit $\eta_0$ satisfies
\begin{equation}\label{Eq:SignChange}
|\{\eta_0> 0\}|\cdot |\{\eta_0<0\}|>0.
\end{equation}Argue by contradiction and assume that a given blow-up limit has constant sign, say $\eta_0\geq 0$ or $\eta_0\leq 0$. By the intermediate density assumption, we deduce that 
\begin{equation}
|\{\eta_0=0\}|>0
\end{equation} and, with the notations of \eqref{Eq:LimitEquation}, that 
\[E_0^*=\{\eta_0>0\}.\]%Indeed, should that not be the case, we would deduce that either $D_{E^*}(0;r)\underset{r\to 0^+}\rightarrow 0$ or $D_{E^*}(0;r)\underset{r\to 0^+}\rightarrow 1$, in contradiction with the intermediate density assumption. 
We deduce from \eqref{Eq:LimitEquation} that $g(0)=0$, whence 
\[-\Delta \eta_0\geq 0\text{ in }\B(0;1).\]In particular, $\eta_0$ is superharmonic and, from the maximum principle, there holds $\eta_0\equiv 0$, in contradiction with Theorem \ref{Th:NonDegeneracyConstrained}.

Now recall that $\eta_0$ is 2-homogeneous, so that we can write 
\[\eta_0(r,\theta)=r^2\p(\theta),\] where $\p$ solves
\begin{equation}\label{Eq:phi}\begin{cases} -\p''-4\p=f(0)\mathds 1_{\{\p>0\}}-g(0)\mathds 1_{\{\p\leq 0\}}\text{ in }(0;2\pi),\\ \p(0)=\p(2\pi).\end{cases}\end{equation}

We consider $\mathcal I:=\left\{I_s\right\}_{s\in S}$ the collection of disjoint connected sign components of $\p$ (meaning that each $\p$ has constant sign on each $I_s$, and that $\p=0$ on $\partial I_s$). Our goal is to show that $S$ is bounded independently of $\eta_0$. 

Observe that \eqref{Eq:phi} has explicit solutions: if $I_s=(\theta_{0,s};\theta_{1,s}) \in \mathcal I$ is such that $\p>0$ in $I_s$ (in which case we say that $I_s$ is a \emph{positivity component of $\p$}), then there exists a constant $B_s$ such that 
\begin{equation}\label{Eq:ExplicitBlowup+} \p(\theta)=-\frac{f(0)}2 \sin^2(\theta-\theta_{0,s})+B_s \sin\left(2(\theta-\theta_{0,s})\right),\end{equation} while if $\p\leq 0$ in $I_s$ (in which case we say that $I_s$ is a \emph{negativity component of $\p$}) there exists a constant $B_s$ such that 
\begin{equation}\label{Eq:ExplicitBlowup-} \p(\theta)=\frac{g(0)}2 \sin^2(\theta-\theta_{0,s})+B_s \sin\left(2(\theta-\theta_{0,s})\right),\end{equation}

We now  distinguish between three cases, depending on the number of connected components $I_s$ satisfying $|I_s|=\pi$.

\emph{Case 1: $\#\{s \in S,\, |I_s|=\pi\}=0$.} We first show that in that case, for any $s\in S$, 
\begin{equation}\label{Eq:NonZeroDerivativeIs}\p'(\theta_{0,s})\cdot\p'(\theta_{1,s})\neq 0.\end{equation} Without loss of generality, assume that $\p'(\theta_{0,s})=0$. It follows from the explicit expressions \eqref{Eq:ExplicitBlowup+}-\eqref{Eq:ExplicitBlowup-} that $B_s=0$.  If $I_s$ is a positivity component of $\p$, the contradiction follows from the fact that $f(0)>0$. If $I_s$ is a negativity component of $\p$, we deduce that $g(0)>0$ (from Step 1) and, from $\sin^2(\theta_{1,s}-\theta_{0,s})=0$, that either $\theta_{1,s}-\theta_{0,s}=\pi$ or $\theta_{1,s}-\theta_{0,s}=2\pi$. The former is prohibited by $|I_{0,s}|\neq \pi$, and the latter by \eqref{Eq:SignChange}.

We now distinguish two subcases of Case 1:
\begin{enumerate}
\item \underline{If $\sin(2|I_s|)=0$ for some $s\in S$: }without loss of generality, assume that $\sin(2|I_0|)=0$. Let us first show that in this case 
\begin{equation}\label{Eq:LengthI0}
|I_0|=\frac{\pi}2.
\end{equation}To prove \eqref{Eq:LengthI0} first note that the assumption of Case 1 implies
\[ |I_0|=\frac{\pi}2\text{ or }|I_0|=\frac{3\pi}2.\] Now,  it follows from \eqref{Eq:ExplicitBlowup+}--\eqref{Eq:ExplicitBlowup-} and $f(0)>0$ that  $I_0$ is a negativity component of $\p$ and that $g(0)=0$. In this case we obtain
\[\p(\theta)=B_s \sin(2(\theta-\theta_{0,0}))\text{ in }I_0.\] Consequently, if $|I_0|=\frac{3\pi}2$, $\p$ changes sign in $I_0$, in contradiction with the definition of $I_0$. This establishes \eqref{Eq:LengthI0}.Observe now that for any negativity component $I_s$ of $\p$ there holds 
\begin{equation}\label{Eq:LengthIs}|I_s|=\frac{\pi}2.\end{equation} Indeed, since we already know that $g(0)=0$, \eqref{Eq:LengthIs} follows from the explicit expression \eqref{Eq:ExplicitBlowup-}. 

We now study $I_1$, the first positivity component of $\p$. To alleviate notations we let $I_0=(\theta_0;\theta_1), \, I_1=(\theta_1;\theta_2)$. As $\p'(\theta_1)=-2B_0$ from \eqref{Eq:ExplicitBlowup-} and $|I_0|=\frac{\pi}2$, we know that $I_1$ is well defined. We now show that 
\begin{equation}\label{Eq:Equality02}
|I_1|<\frac{\pi}2,\, B_2=B_0.\end{equation} Indeed, should \eqref{Eq:Equality02} hold, it would follow that 
\begin{equation} B_1=B_3\end{equation} and that there are exactly three negativity components $I_0,\, I_2,\, I_4$, all of size $\frac{\pi}2$, and three positivity components $I_1,\, I_3,\, I_5$, all of size $\frac{\pi}6$, which  suffices for our needs. To prove \eqref{Eq:Equality02} first note that, as $\p$ is $\mathscr C^{1}$, there holds, taking the derivatives at $\theta_1$, 
\[-2B_0=2B_1.
\]Furthermore, as $f(0)>0$, it follows from \eqref{Eq:ExplicitBlowup+} and $f(0)>0$ that $|I_1|<\frac{\pi}2$. Iterating and using the continuity of $\p'$ at $\theta_2$, we deduce that $B_0=B_2$, thereby establishing \eqref{Eq:Equality02}. Finally, from $\p(\theta_1)=0$ we deduce that 
\[ B_1=\frac{f(0)\sin^2\left(\frac{\pi}6\right)}{2\sin\left(\frac\pi3\right)}\] and $B_0=-B_1$. In particular, in this subcase, there is exactly one possible blow-up profile up to a rotation.
\item\underline{If $\sin(2|I_s|)\neq 0$ for any $s\in S$: }  without loss of generality, assume that $I_0=(\theta_{0,0};\theta_{1,0})=(\theta_0;\theta_1)$ for the sake of notational convenience is a negativity component for $\p$. By \eqref{Eq:ExplicitBlowup-} and $\sin(2|I_0|)\neq 0$ we obtain $g(0)\neq 0$ so that 
\[ B_0=\frac{g(0)\sin^2(\theta_{1}-\theta_{0})}{2\sin(2(\theta_{1}-\theta_{0}))}=\frac{g(0)}4\tan\left(\theta_{1}-\theta_{0}\right).\] From \eqref{Eq:NonZeroDerivativeIs}, there exists a positivity component $I_1=(\theta_{0,1};\theta_{1,1})=(\theta_1;\theta_2)$. From \eqref{Eq:ExplicitBlowup+} we similarly deduce that $f(0)\neq 0$ and that 
\[ B_1=-\frac{f(0)}4\tan\left(\theta_{2}-\theta_{1}\right).\] Matching the derivative of $\p$ at $\theta_{1}$, we obtain 
\[ \frac{\tan\left(\theta_{2}-\theta_{1}\right)}{\tan\left(\theta_{1}-\theta_{0}\right)}=-\frac{g(0)}{f(0)}.\] If $\theta_{2}=2\pi$, we conclude that there are exactly two phases. Else, iterating the procedure we construct a sequence of alternating sign components $I_k=(\theta_{0,k};\theta_{1,k})$ with $\theta_{1,k}=\theta_{0,k+1}$ and such that
\[
\begin{cases}
 \frac{\tan\left(\theta_{1,k}-\theta_{0,k}\right)}{\tan\left(\theta_{1,k-1}-\theta_{0,k-1}\right)}=-\frac{g(0)}{f(0)}\text{ if $k$ is odd }
 \\ \frac{\tan\left(\theta_{1,k}-\theta_{0,k}\right)}{\tan\left(\theta_{1,k-1}-\theta_{0,k-1}\right)}=-\frac{f(0)}{g(0)}\text{ if $k$ is even }.
\end{cases}
\]We thus deduce that for any odd $k$ there holds $|I_k|=|I_1|$ and that, for any even $k$, $|I_k|=|I_0|$.  We refer to Fig. \ref{Fig:BU}. Consequently, there are only finitely many sign components. Finally, observe that the number $N$ of sign components is necessarily even, and that 
\begin{equation}\label{Eq:Ops} 2\pi=\frac{N}2\left(|I_0|+|I_1|\right)=\frac{N}2 \left(|I_0|+\tan^{-1}\left(-\frac{g(0)}{f(0)}|I_0|\right)\right).\end{equation} Consequently, $|I_0|$ can only take four different values, and we thus have, for each finite number $N$ of connected components, at most four different solutions.

We now have to prove that there are at most $N_0$ sign components for some $N_0\in \N$. This however follows once more from \eqref{Eq:ExplicitBlowup+}-\eqref{Eq:ExplicitBlowup-} and the previous analysis: indeed, observe that, from the previous argument, if a solution has exactly $N$ sign components, then 
\[ B_0=\frac{g(0)}{4}\tan(\theta_{1,0}-\theta_{0,0}),\, B_1=-\frac{g(0)}{4}\tan(\theta_{1,0}-\theta_{0,0}).\] If there were a sequence $\{\p_N\}_{n\in \N}$ of admissible solutions such that $\p_N$ has at least $N$ connected components, it follows that, letting $I_{0,N}$ denote the length of (one) negativity component of $\p_N$ and $I_{1,N}$ the length of (one) positivity component of $\p_N$, \eqref{Eq:Ops} entails $|I_{0,N}|+|I_{1,N}|\underset{N\to \infty}\rightarrow 0$, and \eqref{Eq:ExplicitBlowup+}--\eqref{Eq:ExplicitBlowup-} implies 
\[ \Vert \p_N\Vert_{L^\infty(\B(0;1))}\underset{N\to \infty}\rightarrow 0.\] By a diagonal argument, this implies that $\bar \eta\equiv 0$ is a blow-up at $x_0=0$, in contradiction with Theorem \ref{Th:NonDegeneracyConstrained}.
\end{enumerate}
This concludes the analysis of the first case.
\begin{figure}\begin{tikzpicture}[rotate=90, scale=1.5]

    % Coordinates
    \coordinate (O) at (0,0);
    
    % Angle definitions: Large is double the Small
    \def\smallangle{22}
    \def\largeangle{44}
    \def\startangle{42}

    % Ray positions
    \coordinate (R1) at (\startangle:2.5);                            
    \coordinate (R2) at ({\startangle-\largeangle}:2.5);             
    \coordinate (R3) at ({\startangle-\largeangle-\smallangle}:2.5);  
    \coordinate (R4) at ({\startangle-2*\largeangle-\smallangle}:2.5);
    \coordinate (R5) at ({\startangle-2*\largeangle-2*\smallangle}:2.5);

    % Draw outer boundary and rays
    \draw[thick] (O) circle (2.5cm);
    \foreach \p in {R1,R2,R3,R4,R5} {\draw[thick] (O) -- (\p);}

    % --- Angle Marks with different radii ---
    
    % Sector 1 (Large, eta > 0): Double Arc at LARGER radius
    \draw ($(O)!0.8cm!(R2)$) arc ({\startangle-\largeangle}:\startangle:0.8cm);
    \draw ($(O)!0.88cm!(R2)$) arc ({\startangle-\largeangle}:\startangle:0.88cm);

    % Sector 2 (Small, eta < 0): Single Arc at SMALLER radius
    \draw ($(O)!0.55cm!(R3)$) arc ({\startangle-\largeangle-\smallangle}:{\startangle-\largeangle}:0.55cm);

    % Sector 3 (Large, eta > 0): Double Arc at LARGER radius
    \draw ($(O)!0.8cm!(R4)$) arc ({\startangle-2*\largeangle-\smallangle}:{\startangle-\largeangle-\smallangle}:0.8cm);
    \draw ($(O)!0.88cm!(R4)$) arc ({\startangle-2*\largeangle-\smallangle}:{\startangle-\largeangle-\smallangle}:0.88cm);

    % Sector 4 (Small, eta < 0): Single Arc at SMALLER radius
    \draw ($(O)!0.55cm!(R5)$) arc ({\startangle-2*\largeangle-2*\smallangle}:{\startangle-2*\largeangle-\smallangle}:0.55cm);

    % Labels
    \filldraw (O) circle (1.2pt) node[ below left] {$x_0$};
    
    \node at ({\startangle-\largeangle/2}:1.7) {$\eta > 0$};
    \node at ({\startangle-\largeangle-\smallangle/2}:1.7) {$\eta < 0$};
    \node at ({\startangle-\largeangle-\smallangle-\largeangle/2}:1.7) {$\eta > 0$};
    \node at ({\startangle-2*\largeangle-\smallangle-\smallangle/2}:1.7) {$\eta < 0$};

\end{tikzpicture}

\caption{Illustration of the repeating patterns.}\label{Fig:BU}
\end{figure}

\emph{Case 2: $\#\{s \in S,\, |I_s|=\pi\}=1$.} We show that this case can not occur. Assume without loss of generality that $|I_0|=\pi$. From \eqref{Eq:ExplicitBlowup+}--\eqref{Eq:ExplicitBlowup-} we obtain 
\[ \p'(0)=2B_0=\p'(\pi)\] whence, as $I_0$ is a sign component of $\p$, we deduce that 
\begin{equation}\label{EqK} \p'(0)=\p'(\pi)=0.\end{equation} In particular, $B_0=0$. As $I_0$ is the only sign component of length $\pi$, we can use \eqref{Eq:SignChange} and apply the analysis of Case 1 to $\mathbb S(0;1)\setminus I_0$ to obtain a finite  sequence of sign components $\{I_k\}_{1\leq k\leq N}$ such that $I_k=(\theta_{0,k};\theta_{1,k})$ with 
\[\pi=\theta_{0,1}<\theta_{1,1}<\dots<\theta_{1,N}=2\pi,\] and the analysis of Case 1 entails 
\[ \p'\neq 0\text{ on }\cup_{k=1}^N \partial I_k.\] In particular, $\p'(0)\neq 0$, in contradiction with \eqref{EqK}.

\emph{Case 3:  $\#\{s \in S,\, |I_s|=\pi\}=2 $.} We show that this case can not  occur. Assume without loss of generality that $I_0=(0;\pi)$ and  $I_1=(\pi;2\pi)$, and that $I_0$ is a negativity component of $\p$, while $I_1$ is a positivity component of $\p$. In the same way that we derived \eqref{EqK} we have, in this case,
\begin{equation}\label{Eq:Pasta}
\p'(0)=\p'(\pi)=2B_0=0.
\end{equation}
It follows that $B_1=0$ as well, whence we deduce that 
\[ \p(\theta)=\frac{g(0)}2 \sin^2(\theta)\mathds 1_{(0;\pi)}-\frac{f(0)}2\sin^2(\theta)\mathds 1_{(\pi;2\pi)}.
\] This is in contradiction with the fact that $I_1$ is a positivity component of $\p$, and concludes the analysis of this case.
\end{proof}

\section{Regularity of the free boundary around points of intermediate density}\label{Se:IntermediateDensity}

\subsection{Density of intermediate density points}
As we explained, we use intermediate density points in the sense of Definition \ref{De:IntermediateDensity} as the basis for our regularity analysis. The measure theoretical boundary $\partial^e A$ is defined analogously to \eqref{De:Boundary}. The following proposition is  due to Pegon \cite[Lemma 4.4.2]{pegon:tel-01661457}:
\begin{proposition}\label{Pr:DensityIntermediateDensity}
Let $A\subset \T$ be such that $|A|,|\T\setminus A|>0$. Let $x_0\in \partial^eA$. For any $\e>0$, there exists $x_1\in \B(x_0;\e)$  of intermediate density for $A$ in the sense of Definition \ref{De:IntermediateDensity}.
\end{proposition}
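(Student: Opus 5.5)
The plan is to argue by continuity of the density function in the point variable together with a connectedness argument. Fix $x_0\in\partial^eA$ and $\e>0$. By definition of $\partial^eA$, the ball $\B(x_0;\e/2)$ meets both $A$ and $\T\setminus A$ in positive measure. By the Lebesgue density theorem, there is a point $y\in\B(x_0;\e/2)$ of density $1$ for $A$, and a point $z\in\B(x_0;\e/2)$ of density $0$ for $A$. Consider the segment $[y,z]$, which is contained in $\B(x_0;\e/2)$. For every fixed $r>0$, the map $x\mapsto D_A(x;r)$ is continuous; indeed it is $1$-Lipschitz up to a factor $C/r$, since
\[|D_A(x;r)-D_A(x';r)|\leq \frac{|\B(x;r)\triangle\B(x';r)|}{|\B(0;r)|}\lesssim \frac{|x-x'|}{r}.\]

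The core of the construction is a nested-interval scheme producing a point whose densities at all small scales stay in a fixed window, say $[\frac13,\frac23]$ after adjusting radii. Fix thresholds $0<a<b<1$ (e.g. $a=\frac14$, $b=\frac34$). Start with $r_1$ small so that $D_A(y;r_1)>b$ and $D_A(z;r_1)<a$. By continuity along the segment, there is a point $x_1$ with $D_A(x_1;r_1)=\frac12$. Then we choose a small closed ball $K_1$ around $x_1$, of radius $\rho_1\ll r_1$, on which $D_A(\cdot;r_1)\in(a,b)$; by the Lipschitz estimate this holds for $\rho_1\lesssim r_1$. Inside $K_1$ (which still has positive measure intersection with both $A$ and its complement, unless one of them has full density there, a case handled below) we again pick density-one and density-zero points and a radius $r_2\ll\rho_1$, and repeat to get $x_2\in K_1$ and $K_2\subset K_1$ with $D_A(\cdot;r_2)\in(a,b)$ on $K_2$. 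The limit point $x_1^\infty:=\bigcap K_j$ satisfies $D_A(x_1^\infty;r_j)\in(a,b)$ for all $j$.

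The main obstacle is that the definition requires control of $\liminf$ and $\limsup$ over \emph{all} $r\to0$, not just along $r_j$. To handle this, I would strengthen each step: rather than a single radius, I would use the bound $D_A(x;r)\geq (r'/r)^dD_A(x;r')$ for $r'\le r$, together with the analogous bound for the complement. This shows that if $D_A(x;r_j)\in(a,b)$ and $r_{j+1}\geq \kappa r_j$ for a fixed $\kappa\in(0,1)$, then for every $r\in[r_{j+1},r_j]$ we get $D_A(x;r)\geq \kappa^d a$ and $1-D_A(x;r)\geq\kappa^d(1-b)$. So I would take a geometric sequence $r_j=\kappa^j r_1$ and need the nested balls to keep all densities at scales $r_j$ in $(a,b)$. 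That is not automatic, because the density-one and density-zero points inside $K_j$ only work at scales much smaller than $\rho_j$. The fix is to not require new density extremes. Instead, we note that if for some $x\in K_j$ and all small $r$ the density $D_A(x;r)$ stays in $(a,b)$ we are done. Otherwise we run a scale-by-scale intermediate value argument in the two-variable map $(x,r)\mapsto D_A(x;r)$, which is continuous on $K\times(0,r_1]$. Concretely, let $U:=\{(x,r): D_A(x;r)\in(a,b)\}$, which is open. By the previous construction, its projection contains points at arbitrarily small scales near every point of $\partial^eA$. The hard step is then to extract a point $x$ with $[x]\times(0,r_1]\subset\{\kappa^d a\le D\le 1-\kappa^d(1-b)\}$. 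I would attempt this with a Cantor/compactness argument on the closed sets $F_j:=\{x\in\overline{\B(x_0;\e/2)}: D_A(x;\kappa^i r_1)\in[a,b]\ \forall i\le j\}$, showing that each $F_j$ is nonempty. Nonemptiness would come from an induction using the intermediate value theorem at the single new scale restricted to a connected component of $F_{j-1}$, which I expect to be the delicate point. The intersection $\bigcap_j F_j$ is then nonempty and consists of intermediate density points lying in $\B(x_0;\e)$.
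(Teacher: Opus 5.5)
\textbf{There is a genuine gap: the step you leave open is the whole content of the proposition.} Your reduction is fine as far as it goes. The sets $F_j$ are closed and nested, so if every $F_j$ is nonempty you get a point of $\bigcap_j F_j$. The comparison $D_A(x;r)\geq (r'/r)^dD_A(x;r')$, together with its analogue for $\T\setminus A$, then correctly upgrades control along geometric radii to the $\liminf$/$\limsup$ condition.

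What is missing is a proof that each $F_j$ is nonempty. The mechanism you suggest does not close the induction:
\begin{itemize}
\item nothing prevents a connected component of $F_{j-1}$ from being a single point;
\item nothing forces $D_A(\cdot;\kappa^jr_1)$ to take values on both sides of, or inside, $[a,b]$ on such a component.
\end{itemize}
The single-scale intermediate value argument only produces, scale by scale, \emph{some} point with intermediate density at that scale. It gives no link between the points obtained at different scales.

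Your first nested-ball scheme fails for a related reason. Re-selecting density-one and density-zero points inside $K_j$ requires $|K_j\cap A|\cdot|K_j\setminus A|>0$, which is not guaranteed: at scale $\rho_j\ll r_j$ the set $K_j$ may lie a.e.\ in $A$. Even when it holds, the radii decay super-geometrically, as you observe yourself.

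\textbf{The missing idea is an exact averaging property that lets a stronger inductive hypothesis propagate.} Balls lack it, since a ball cannot be tiled by smaller balls, but cubes have it. The paper (following Pegon) uses cube densities $D_{A,\square}(x;s)=|A\cap Q(x;s)|/|Q(x;s)|$. It builds cubes with $D_{A,\square}(x_k;s_k)=\frac12$ exactly and with $Q(x_{k+1};s_k/2)\subset Q(x_k;s_k)$.

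The first cube comes from your Lebesgue-point and intermediate-value step, done with cubes. For the inductive step:
\begin{itemize}
\item split $Q(x_k;s_k)$ into $2^d$ subcubes of side $s_k/2$; their densities average to exactly $\frac12$;
\item so either one subcube has density $\frac12$, or one has density $>\frac12$ and another $<\frac12$;
\item slide a cube of side $s_k/2$ along the segment joining those two centres; by convexity it stays inside $Q(x_k;s_k)$, and the intermediate value theorem gives $x_{k+1}$.
\end{itemize}
No new density extremes are needed: the value $\frac12$ at one scale forces values on both sides at the next scale, inside the same cube, at a fixed ratio $\frac12$ of scales.

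The nested closed cubes shrink to a point $x_\infty$. For each small $r$, some cube of the sequence contains $x_\infty$, lies in $\B(x_\infty;r)$, and has volume comparable to $r^d$. Since $A$ and its complement each fill half of that cube, both $D_A(x_\infty;r)$ and $1-D_A(x_\infty;r)$ are bounded below by a dimensional constant for all small $r$.
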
We give the proof for the sake of completeness.
\begin{proof}[Proof of Proposition \ref{Pr:DensityIntermediateDensity}] We  reproduce the main arguments of  \cite[Proof of Lemma 4.4.2]{pegon:tel-01661457}, and we recall that one of the ideas of \cite{pegon:tel-01661457} is to compute the density of points by considering squares rather than balls. Namely, for any $x\in \T,\, r>0$, we let $Q(x;r)$ be the cube centred at $x$ with side length $r$, and we set, for any measurable set $\O$, 
\[ D_{\O,\square}(x;r):=\frac{|Q(x;r)\cap \O|}{|Q(x;r)|}.\]
Let $\e>0$. As $|A\cap \B(x_0;\e)|\cdot|(\T\setminus A)\cap \B(x_0;\e)|>0$ we let $y_0,\, y_1\in \B(x_0;\e)$ be Lebesgue points for $A$ and $A^c$ respectively. Let $\e_1>0$ be such that 
\[ \B(y_0;\e_1),\, \B(y_1,\e_1)\subset \B(x_0;\e),\, D_{A,\square}(y_0;\e_1)>\frac12,\, D_{A^c,\square}(y_1;\e_1)>\frac12.\] The goal is to construct a Cauchy sequence $\{x_k\}_{k\in \N}\subset \B(x_0;\e_1)$ such that 
\begin{equation}\label{Eq:Densityk}
\forall k \in \N,\, D_{A,\square}\left(x_k;2^{-k}\e_1\right),D_{A^c,\square}\left(x_k;2^{-k}\e_1\right)=\frac12.\end{equation} This sequence is constructed inductively:
by the intermediate value theorem, there exists $t_1\in (0;1)$ such that 
\[ D_{A,\square}\left((1-t_1)y_0+t_1y_1;\e_1\right)=\frac12.\] Set $x_1:=(1-t_1)y_0+t_1y_1$. To construct $x_2$,  divide $Q(x_1;\e_1)$ into $2^d$ cubes of side length $\e_2:=\frac{\e_1}2$ with centers $(y_{1,1},\dots,y_{1,2^d})$. If for some $j$ there holds $D_{A,\square}(y_j;\e_2)=\frac12$, we set $x_2=y_j$. Else, as 
\[ D_{A,\square}(x_0;\e_1)=\frac12=\frac{|Q(0;\e_2)|}{|Q(x_0;\e_1)|}\sum_{j=1}^{2^d} D_{A,\square}(y_j;\e_2)\] there exist two cubes $Q(y_i;\e_2),\, Q(y_j;\e_2)$ such that $D_{A,\square}(y_i;\e_2)>\frac12$, $D_{A^c,\square}(y_j;\e_2)>\frac12$ and we repeat the initial argument, yielding a satisfactory $x_2$. This gives a sequence $\{x_k\}_{k\in \N}$ in the conditions of \eqref{Eq:Densityk}, which thus converges to some $x_\infty$. Observe that $x_\infty \in \cap_{k\in \N} Q(x_k;\e_0 2^{-k})$. Now, let $r\in (0;\e_0)$ and $k\in \N$ such that $r\in (2\sqrt{d}\e2^{-(k+1)};2\sqrt{d}\e2^{-k})$. As $x_\infty \in\cap_{k\in \N} Q(x_k;\e_0 2^{-k})$, $Q(x_{k+1};\e_02^{-(k+1)})\subset \B(x_\infty;r)$ so that 
\[D_A(x_\infty;r)\geq \frac12\frac{|Q(x_{k+1};\e_02^{-(k+1)})|}{|\B(x_\infty;r)|}\gtrsim 1,\] and the same holds for $A^c$, thereby concluding the proof. \end{proof}

\subsection{Regularity at intermediate density points}\label{section:RegularityAtIntermediateDensityPoints2D}
We are now ready to prove the following regularity result around points of intermediate density:
\begin{proposition}\label{Pr:RegularityIntermediateDensity}
Let $d = 2$ and $x_0$ be a point of intermediate density for $E^*$ in the sense of Definition \ref{De:IntermediateDensity}. Locally around $x_0$, $\nabla \eta(x_0) \neq 0$ and $\partial^eE^*$ is a $\mathscr C^{2,\gamma}$ curve.
\end{proposition}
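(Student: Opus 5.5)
We argue by contradiction: assume $\n\eta(x_0)=0$ (taking $x_0=0$), and show that this is incompatible with the stability criterion of Theorem \ref{Th:ASCConstrained}. Once $\n\eta(x_0)\neq 0$ is known, Theorem \ref{Th:Hodograph} gives that $\partial^eE^*$ is a $\mathscr C^{2,\gamma}$ curve near $x_0$. The case $\Psi(0^+)=-\infty$ is already excluded: by Theorem \ref{Th:MW1} (2), every normalised blow-up $\tilde\eta_0$ is a nontrivial two-homogeneous harmonic function, hence $x_1x_2$ up to rotation and scaling. The argument in the proof of Theorem \ref{Th:C11Regularity} then produces regular nodal arcs of length $\gtrsim 2^{-k}$ in every dyadic annulus, contradicting Theorem \ref{Th:ASCConstrained}. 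So we may assume $\Psi(0^+)>-\infty$.

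In this case Theorem \ref{Th:NonDegeneracyConstrained} applies, since $x_0$ has intermediate density. Every blow-up $\eta_0$ along $r_k=r_02^{-k}$ is nontrivial and solves the limit equation. By Proposition \ref{Pr:Classification}, up to a rotation $R_k$, $\eta_0$ belongs to the finite family $\{\eta_{0,1},\dots,\eta_{0,N_0}\}$. Each element of this family changes sign, has $\{\eta_{0,i}=0\}\cap\{\n\eta_{0,i}=0\}=\{0\}$, and satisfies $|\n\eta_{0,i}|\gtrsim|\cdot|$. Consequently, on the annulus $\A(\tfrac12;1)$, each $\eta_{0,i}$ has at least one nodal ray $\{\theta=\theta_i\}$ along which $|\n\eta_{0,i}|\ge c_0>0$. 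The constant $c_0$ is uniform over the family, because the family is finite and invariant under rotations.

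Next we transfer this structure to the rescalings. We claim that for all $k$ large, $\eta_{r_k}\circ R_k$ is $\mathscr C^{1,\gamma}$-close to some $\eta_{0,i(k)}$ on $\overline{\B(0;1)}$. Suppose not. Then some subsequence stays at a fixed positive distance from every rotated element of the family. But this subsequence is bounded in $\mathscr C^{1,\gamma}$ by Theorem \ref{Th:MW1} (1), so it has a further convergent subsequence, whose limit is a blow-up and therefore lies in the rotated family, a contradiction. With the claim in hand, we repeat the implicit-function argument from the proof of Theorem \ref{Th:C11Regularity} on a thin rectangle $\Sigma$ transversal to the nodal ray $\theta_{i(k)}$ inside $\A(\tfrac12;\tfrac34)$. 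This yields a $\mathscr C^{1}$ (indeed $W^{2,p}$, by Theorem \ref{Th:Hodograph}) graph contained in $\{\eta_{r_k}=0,\ \n\eta_{r_k}\neq 0\}$, of length $\gtrsim 1$.

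We must also check that this graph lies in $\partial^eE^*$ rather than merely in $\{\eta=0\}$. This holds because $\eta_{r_k}$ changes sign across the graph: $\eta>0$ on one side and $\eta<0$ on the other, so both $E^*$ and its complement have positive density there. Scaling back, we obtain curves $\gamma_k\subset\partial^eE^*\cap\{\n\eta\neq0\}\cap\B(0;r_k)\setminus\B(0;r_{k+1})$ with $\mathscr H^1(\gamma_k)\gtrsim 2^{-k}$. Theorem \ref{Th:ASCConstrained} then gives $|\n\eta|(x_0)>0$, the desired contradiction.

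The main obstacle is the uniformity in $k$. Blow-ups need not be unique, so the sub-sequential compactness must be upgraded to the statement that every large $k$ is close to the finite rotated family. The nondegeneracy lower bounds, namely the transversality constant $c_0$ and the rectangle size, must be chosen depending only on this finite family. The finiteness given by Proposition \ref{Pr:Classification}, together with the uniform $\mathscr C^{1,\gamma}$ bounds, is exactly what makes this work.
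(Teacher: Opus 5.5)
Your proposal is correct and follows the paper's own proof. Both argue by contradiction, use the finite classification of Proposition \ref{Pr:Classification} to produce regular nodal arcs of length $\gtrsim 2^{-k}$ in each dyadic annulus exactly as in \eqref{Eq:Deceived}, and conclude with Theorem \ref{Th:ASCConstrained} and then Theorem \ref{Th:Hodograph}. Three details you spell out are left implicit in the paper: the compactness argument giving uniformity in $k$ despite non-unique blow-ups, the separate handling of the case $\Psi(0^+)=-\infty$, and the check that the arcs lie in $\partial^eE^*$.
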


\begin{proof}[Proof of Proposition \ref{Pr:RegularityIntermediateDensity}] We show that at a point of intermediate density, we have $\n \eta(x_0)\neq 0$. To do this, we follow the proof of Theorem \ref{Th:C11Regularity}: from Proposition \ref{Pr:Classification} we only have a finite number of admissible blow-ups (up to rotations) and, furthermore, with the notations of Proposition \ref{Pr:Classification},
\[ \min_{i\in \{1,\dots,N_0\}} \min_{\B\left(0;1\right)\setminus \B\left(0;\frac12\right)} |\n \eta_{0,i}|>0.\] From these two information we obtain in the same way that \eqref{Eq:Deceived} that 
\[ \mathscr H^1\left(\{\eta=0\}\cap\{|\n \eta|\neq 0\}\right)\gtrsim \frac1{2^k}.\] The conclusion follows by Theorem \ref{Th:ASCConstrained}.
\end{proof}

\section{Regularity in the two-dimensional case}\label{Se:2DCKT}
\subsection{Main result}
This section is dedicated to the proof of Theorem \ref{Th:Main} (1). The proof is based on the following ingredients:
\begin{enumerate}
    \item Non-degeneracy of the gradient of $\eta$ at intermediate density points (see Proposition \ref{Pr:RegularityIntermediateDensity} above).

    \item The smoothness of the connected components of the free boundary $\partial E^{\ast}$ where at least one point satisfies $\nabla \eta \neq 0$ (see Lemma \ref{Le:NonDegenerateConnectedComponentsAreSmooth} below).

    \item The impossibility of the accumulation of smooth connected components to singular points (see Proposition \ref{Pr:NonAccumulationToSingularPoints} below).

    \item An inductive argument, to iteratively count the connected components of the measure theoretic free boundary $\partial^e E^{\ast}$ (defined in \eqref{De:Boundary}).
    
    \item Finally, in Section \ref{Se:Why2D}, we explain heuristically why the arguments presented here can not be adapted to the higher dimensional setting.
\end{enumerate}
Although some points are similar to the analysis of Chanillo, Kenig \& To \cite{zbMATH05505659}, a significant difference is that we do not have non-degeneracy at every free boundary points, meaning that we have to rely on the non-degeneracy at intermediate density points. More precisely, we transfer the regularity from points with a nontrivial density condition (namely a geometric condition at the blown-up limit), to all points in the measure theoretic free boundary (where the condition holds at the original scale).

We begin with a lemma, to be compared with \cite[Lemma 5.6]{zbMATH05505659}, which allows to pass from the local analysis of the free boundary near density points from Section \ref{section:RegularityAtIntermediateDensityPoints2D}, to a global one.
    \begin{lemma}[]\label{Le:NonDegenerateConnectedComponentsAreSmooth}
        Let $d=2$ and $x_0 \in \partial^e E^{\ast}$ be such that $\n \eta(x_0)\neq 0$. Then, there exists a unique curve $\Gamma \subset \T$ such that:
        \begin{itemize}
            \item[(i)] $x_0 \in \Gamma$ and $\Gamma \subseteq \partial E^{\ast}$,
        
            \item[(ii)] $\Gamma$ is simple, closed and $\mathscr C^{2,\gamma}$ for any $\gamma\in (0;1)$,

            \item[(iii)] $\nabla \eta\neq 0$ on  $\Gamma$. In particular, $\Gamma \subseteq \partial^e E^{\ast}$
        \end{itemize}
    \end{lemma}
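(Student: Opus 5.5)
We propose to construct $\Gamma$ as the maximal continuation of the regular curve through $x_0$ given by Theorem \ref{Th:Hodograph}, and to show that this continuation never meets a critical point. By Theorem \ref{Th:Hodograph}, near $x_0$ the set $\{\eta=0\}$ is a $\mathscr C^{2,\gamma}$ graph along which $\n\eta\neq0$. Consider the unit tangent field $\tau:=\n\eta^\perp/|\n\eta|$, which is $\mathscr C^{1,\gamma}$ on $\{\n\eta\neq 0\}$, and the maximal integral curve $\gamma:(t_-,t_+)\to\T$ of $\tau$ with $\gamma(0)=x_0$, parametrised by arclength. Along $\gamma$ we have $\frac{d}{dt}\eta(\gamma(t))=\langle\n\eta,\tau\rangle=0$, so $\gamma\subset\{\eta=0\}$. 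Near each of its points, $\gamma$ coincides with the local $\mathscr C^{2,\gamma}$ graph of Theorem \ref{Th:Hodograph}, on either side of which $\eta$ takes opposite signs. Hence $\gamma\subset\partial E^*$, and in fact $\gamma\subset\partial^eE^*$. Uniqueness follows from the same local graph description: any curve satisfying (i)--(iii) must coincide with $\gamma$ near every point where they meet, so a connectedness argument identifies them.

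The core step is to show that $\gamma$ stays at positive distance from the critical set $\{\eta=0,\,\n\eta=0\}$. Once this holds, $|\n\eta|$ is bounded below along $\gamma$ by compactness of $\T$ and continuity of $\n\eta$, so $t_\pm=\pm\infty$. Moreover, a uniform tubular neighbourhood on which $\{\eta=0\}$ is a single graph forces $\gamma$ to be periodic and injective on one period, so $\Gamma:=\gamma(\R)$ is a simple closed $\mathscr C^{2,\gamma}$ curve. We also need to rule out non-periodic dense orbits on the torus. This uses the local graph structure: the $\omega$-limit set would lie in $\{\eta=0,\n\eta\neq0\}$, and near one of its points the zero set is a single arc, which forces the orbit to return to that same arc and close up.

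To prove the core step, suppose instead that $\gamma(t_k)\to y$ with $\n\eta(y)=0$ (say $t_k\to t_+$, possibly with $t_+=\infty$). This is the main obstacle, and we would handle it as in \cite[Lemma 5.6]{zbMATH05505659}, using the stability estimate of Lemma \ref{Le:CKT}. Since $\eta\in\mathscr C^{1,1}$ by Theorem \ref{Th:C11Regularity}, we have $|\n\eta(x)|\lesssim|x-y|$. The curve $\gamma$ has positive length, namely at least the distance from $x_0$ to $y$, inside each annulus it must cross to reach $y$. If $\gamma$ approaches $y$ along a single branch, it crosses every dyadic annulus $\B(y;r_k)\setminus\B(y;r_{k+1})$ with length $\gtrsim 2^{-k}$. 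Then $\int 1/|\n\eta|\gtrsim\sum_k 1=\infty$ on these pieces, which contradicts \eqref{Eq:ASCConstrained}, or equivalently Theorem \ref{Th:ASCConstrained} applied at $y\in\partial^eE^*$. If instead $\gamma$ oscillates, approaching $y$ and leaving again infinitely often, each excursion still crosses annuli and yields the same divergence after choosing disjoint pieces $\gamma_k$.

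We expect the delicate point to be exactly this bookkeeping: extracting, from a possibly wildly winding $\gamma$, disjoint $W^{2,p}$ pieces in each dyadic annulus with length $\gtrsim 2^{-k}$, all on the regular part, as required by Theorem \ref{Th:ASCConstrained}. These pieces exist by continuity of $\gamma$ and $|\gamma(t)-y|\to0$ along $t_k$, since each sub-arc going from radius $r_k$ to radius $r_{k+1}$ has length at least $r_k-r_{k+1}$. A final subtlety is that $y$ might lie in $\partial E^*\setminus\partial^eE^*$. However, $y$ is a limit of points of $\gamma\subset\partial^eE^*$, and $\partial^eE^*$ is closed, so $y\in\partial^eE^*$ and Theorem \ref{Th:ASCConstrained} applies.
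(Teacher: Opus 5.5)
Your proposal is correct and follows essentially the same route as the paper. You continue the regular arc through $x_0$ from Theorem \ref{Th:Hodograph} maximally, use the two-dimensional stability criterion (Lemma \ref{Le:CKT}, Theorem \ref{Th:ASCConstrained}) at any accumulation point $y$ to rule out approaching a critical point, and close the curve up by compactness and the single-arc structure of $\{\eta=0\}$ near regular points. Your observation that $y$ lies in the closed set $\partial^eE^*$ is a point the paper leaves implicit. Your single-branch/oscillation case split is unnecessary, since any path from $x_0\neq y$ whose closure contains $y$ already crosses every small dyadic annulus around $y$.
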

    \begin{proof}[Proof of Lemma \ref{Le:NonDegenerateConnectedComponentsAreSmooth}]
        From Theorem \ref{Th:Hodograph}, we know that, for some $\e>0$, there exists a unique $\mathscr C^{2,\gamma}$ curve passing through $x_0$ such that $\partial^e E^*\cap \B(x_0;\e)=\Gamma$. Let $s$ be the arc length parametrisation of $\Gamma$ starting from $x_0$. Since $\Gamma$ can be extended uniquely  as long as  $\left|\nabla \eta\left( \Gamma(s) \right)\right|>0$, it suffices to show that 
        \begin{itemize}
            \item[(a)] $ L := \sup\left\{s > 0 :\,\left| \nabla \eta\left( \Gamma(s) \right)\right|>0 \text{ and } \Gamma(s) \neq x_0 \right\} < +\infty$,

            \item[(b)] $\Gamma(L) = x_0$.
        \end{itemize}
        We first show that $L<\infty$. Assume by contradiction that $L=+\infty$. Let $\omega_\Gamma$ be  the set of closure points of $\{\Gamma(s)\}_{s\to \infty}$. $\omega_\Gamma$ is compact. We claim that, for any $\bar x \in \omega_\Gamma$, there exists $\bar \e>0$ such that $\mathscr H^1(\B(\bar x;\bar \e)\cap \Gamma)<+\infty$. Indeed, should this not be the case, there exists $\bar x$ such that $\bar x$ is in the closure of $\Gamma$ and $ \mathscr H^1(\B(\bar x;\bar \e)\cap \Gamma)=+\infty$ for any $\e>0$. In particular, as $\Gamma$ is continuous, Theorem \ref{Th:ASCConstrained} applies and $\n \eta(\bar x)\neq 0$, whence $\Gamma$ has finite length locally around $\bar x$, a contradiction. As $\omega_\Gamma$ is compact, we can cover $\omega_\Gamma$ with finitely many such balls $\B(\bar x;\bar \e)$. Consequently, $\Gamma$ has finite length, in contradiction with $L=+\infty$. Thus, $L<+\infty$.  Now if $\Gamma(L)\neq x_0$, then there exists $\bar x$ in the closure of $\Gamma$ such that, by the definition of $L$, $\n \eta(\bar x)=0$. However, as $\Gamma$ is $\mathscr C^{2,\gamma}$, Theorem \ref{Th:ASCConstrained} implies $\n \eta(\bar x)\neq 0$, a contradiction. The fact that $\n \eta\neq 0$ on $\Gamma$ similarly follows from Theorem \ref{Th:ASCConstrained}.
         \end{proof}

    Now our aim is to bound the number of regular curves $\Gamma$ appearing in Lemma \ref{Le:NonDegenerateConnectedComponentsAreSmooth}. To this aim, we introduce the following notion:
    \begin{definition}\label{def:OuterInnerBoundaryOfOptimalSet}
        Let $d = 2$, $E^{\ast}$ the optimal set and $\eta_{m^{\ast}}$ the associated solution. We say that a $C^1$ simple and closed curve $\Gamma$ is a nondegenerate boundary if
        \begin{equation*}
            \Gamma \subset \partial^e E^{\ast} \quad \text{and} \quad \nabla \eta \neq 0 \text{ on } \Gamma.
        \end{equation*}
        Moreover, we say that $\Gamma$ is
        \begin{itemize} 
        
            \item an outer boundary for $E^{\ast}$ if 
        \begin{equation*}
            \partial_{\nu_{\Gamma}} \eta_{m^{\ast}} < 0,  \text{ on } \Gamma,
        \end{equation*}

        \item an inner boundary for $E^{\ast}$ if 
        \begin{equation*}
            \partial_{\nu_{\Gamma}} \eta_{m^{\ast}} > 0,  \text{ on } \Gamma.
        \end{equation*}
        \end{itemize}

        where $\text{int}(\Gamma)$ denotes the interior of the curve $\Gamma$ in the sense of Jordan and $\nu_{\Gamma}$ the outer normal to $\text{int}(\Gamma)$.

          \end{definition}
    In the next proposition, to be compared with \cite[Lemmas 6.2, 6.3]{zbMATH05505659}, we show that only a finite number of (nondegenerate) outer and inner boundaries can occur in $\Omega$. 
    \begin{proposition}\label{Pr:NonAccumulationToSingularPoints}
       Let $d = 2$ and consider a sequence $\{ \Gamma_i\}_{i \in \N}$ of nondegenerate outer and inner boundaries, in the sense of Definition \ref{def:OuterInnerBoundaryOfOptimalSet}. Then, there exists $\N \ni N_0 = N_0(\eta)$ such that
       \begin{equation*}
           \#\{ \Gamma_i\}_{i \in \N} \le N_0.
       \end{equation*}
    \end{proposition}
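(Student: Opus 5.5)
The plan is to argue by contradiction, following \cite[Lemmas 6.2, 6.3]{zbMATH05505659}. Suppose there are infinitely many distinct nondegenerate boundaries $\{\Gamma_i\}_{i\in\N}$. By Lemma \ref{Le:NonDegenerateConnectedComponentsAreSmooth}, distinct nondegenerate boundaries are disjoint: each is the maximal continuation of the regular free boundary through any of its points. The first step is a uniform lower bound on their size. For each $i$, we test the second-order condition \eqref{Eq:SOSD1} (localised to balls of radius $r_0$, using Theorem \ref{Th:ASCConstrained} or directly Lemma \ref{Le:CKT}) with a function $v$ harmonic-like near $\Gamma_i$ that takes the values $\pm1$ on two arcs of $\Gamma_i$ and is balanced so that $\int_{\Gamma_i} v/|\n\eta|=0$. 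Since $|\n \eta|\lesssim 1$, we get $\int_{\Gamma_i} 1/|\n\eta|\gtrsim\mathscr H^1(\Gamma_i)$. If $\Gamma_i$ has small diameter $\rho_i$, we can instead take $v$ to be a rescaled cut-off whose Dirichlet energy is $O(1)$ in two dimensions, by scale invariance. This yields $\sigma\,\mathscr H^1(\Gamma_i)\lesssim 1$, but more importantly it allows us to compare the curves with each other.

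The second step shows that small curves cannot exist near regular points. Since $\Gamma_i$ is a closed curve on which $\eta=0$, the Jordan interior $\mathrm{int}(\Gamma_i)$ (chosen with small diameter) carries a solution of $-\Delta\eta=(f+g)\mathds 1_{E^*}-g$ vanishing on its boundary. If $\mathrm{diam}(\Gamma_i)\to0$, the points of $\Gamma_i$ accumulate at some $\bar x$. By the $\mathscr C^{1,1}$ bound of Theorem \ref{Th:C11Regularity}, $|\n\eta|\lesssim \mathrm{diam}(\Gamma_i)$ on $\Gamma_i$, because $\n\eta$ vanishes at some interior critical point of $\eta$ inside $\mathrm{int}(\Gamma_i)$. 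Hence
\[\int_{\Gamma_i}\frac{1}{|\n\eta|}\gtrsim \frac{\mathscr H^1(\Gamma_i)}{\mathrm{diam}(\Gamma_i)}\gtrsim 1.\]

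In the general case, infinitely many disjoint curves force an accumulation point $\bar x\in\partial^eE^*$: by compactness of $\T$ and the Hausdorff convergence of the $\Gamma_i$ or of points on them, $\bar x$ lies in $\overline{\cup_i\Gamma_i}$. We then split into two cases. If $\n\eta(\bar x)\neq0$, Theorem \ref{Th:Hodograph} says that near $\bar x$ the free boundary is a single $\mathscr C^{2,\gamma}$ graph, so at most one $\Gamma_i$ meets a small ball around $\bar x$, which is a contradiction. If $\n\eta(\bar x)=0$, the Zygmund estimate \eqref{Eq:Zygmund0} gives $|\n\eta|\lesssim |x-\bar x|\,|\ln|x-\bar x||$ near $\bar x$. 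We then build, for each $k$, a test function $v_k=V(\cdot/r_k)$ as in Lemma \ref{Le:CKT}, using the union of the pieces of the $\Gamma_i$ in the annuli $\B(\bar x;r_k)\setminus\B(\bar x;r_{k+1})$ together with the outward pieces needed for balancing. Summing the contributions, each curve contributes $\gtrsim 1$ to $\int 1/|\n\eta|$. This can be seen from the first bound, or by noting that a curve passing within $r$ of $\bar x$ with diameter $\gtrsim r$ contributes $\gtrsim 1/|\ln r|$ per dyadic annulus, which sums to infinity. This contradicts the uniform bound $\int_{\Gamma_k}v_k^2/|\n\eta|\lesssim \sigma^{-1}\int|\n V|^2$ coming from \eqref{Eq:SOSD1}.

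I expect the main obstacle to be the balancing constraint $\int_\Gamma v/|\n\eta|=0$ in \eqref{Eq:SOSD1} when the mass of infinitely many curves concentrates near $\bar x$, together with the requirement that all test curves lie in a ball of radius $r_0$. I would handle this as in Lemma \ref{Le:CKT}: the positive part of $V$ is placed on the inner curves and the negative part on a suitable subarc of the outer curves, using divergence of the series to choose the subarc. If the curves are nested around $\bar x$, I would also use the outer/inner sign dichotomy of Definition \ref{def:OuterInnerBoundaryOfOptimalSet}. Between two consecutive nested curves of the same type, $\eta$ has a fixed sign, and the equation with $f+g>0$ forces an interior extremum of size $\gtrsim$ (gap)$^2$. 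Combined with Theorem \ref{Th:NDC}/\ref{Th:NonDegeneracyConstrained} at intermediate density points, this should prevent the curves from collapsing without contributing a definite amount to $\int 1/|\n\eta|$.
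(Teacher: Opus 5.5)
Your argument is essentially the paper's. The key estimate is your Step 2, $\int_{\Gamma_i}|\nabla\eta|^{-1}\geq 2/\|\eta\|_{\mathscr C^{1,1}}$, obtained from an interior critical point of $\eta$ in $\mathrm{int}(\Gamma_i)$ (it holds for every $\Gamma_i$, not only small ones, which is what makes it uniform); this is followed by an accumulation point $\bar x$ with $\nabla\eta(\bar x)=0$ and a contradiction with Lemma \ref{Le:CKT} and Theorem \ref{Th:ASCConstrained}. In the critical case, organise the summation as the paper does: either every ball $\B(\bar x;r)$ contains a whole $\Gamma_i$, so one selects curves in nested disjoint annuli each contributing $\gtrsim 1$, or all curves approaching $\bar x$ cross a fixed sphere, so every dyadic annulus contains a crossing arc and Theorem \ref{Th:ASCConstrained} applies; your Step 1 and the final appeal to non-degeneracy at intermediate density points are not needed.
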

    \begin{remark}
        The bound $N_0$ on the number of nondegenerate inner/outer boundaries is independent of 
        \begin{equation*}
            \inf_{i \in \N} \inf_{x \in \Gamma_i} \left\vert \nabla \eta(x) \right\vert.
        \end{equation*}
        More precisely, Proposition \ref{Pr:NonAccumulationToSingularPoints} remains valid even if
        \begin{equation*}
            \inf_{i \in \N} \inf_{x \in \Gamma_i} \left\vert \nabla \eta(x) \right\vert = 0.
        \end{equation*}
    \end{remark}
    \begin{proof}[Proof of Proposition \ref{Pr:NonAccumulationToSingularPoints}]
        Without loss of generality, and up to following the same argument on  $\left( E^{\ast} \right)^c$ and $-\eta$, we can  assume that $\{ \Gamma_i\}_{i \in \N}$ are outer boundaries of $E^*$. For any $i \in \N$, let $x_0 \in \text{int}(\Gamma_i)$ be such that
        \begin{equation*}
            \eta(x_0) = \max_{x \in \text{int}(\Gamma_i) \cup \Gamma_i} \eta(x).
        \end{equation*}
        The point $x_0$ satisfies the above property $x_0 \in \text{int}(\Gamma_i)$, since by assumption $\nabla \eta \neq 0$ on $\Gamma_i$. Denoting
        \begin{equation*}
            D_i := \sup_{x \in \Gamma_i} \left\vert x - x_0 \right\vert,
        \end{equation*}
        it follows from Theorem \ref{Th:C11Regularity} that
        \begin{align*}
            \begin{split}
                \left\vert \nabla \eta(x) \right\vert & = \left\vert \nabla \eta(x) - \nabla \eta(x_0) \right\vert \\
                & \le \| \nabla \eta \|_{\mathscr C^{1,1}(\mathbb{T}^2)} \vert x \vert \\
                & \le \| \nabla \eta \|_{\mathscr C^{1,1}(\mathbb{T}^2)} D_i.
            \end{split}
        \end{align*}
        Furthermore, since we are working in two dimensions, we have 
        \[ |\Gamma_i|\gtrsim D_i.\]
Consequently,        %
        \begin{align*}
            \int_{\Gamma_i} \frac{1}{\left\vert \nabla \eta \right\vert} & \ge \frac{1}{D_i \| \nabla \eta \|_{\mathscr C^{1,1}(\mathbb{T}^2)}}  \left\vert \Gamma_i \right\vert \\
            & \ge \frac{C(d)}{\| \nabla \eta \|_{\mathscr C^{1,1}(\mathbb{T}^2)}}.
        \end{align*}
        The contradiction follows from the second order optimality conditions (Lemma \ref{Le:CKT}): suppose by contradiction that $\#\{ \Gamma_i\}_{i \in \N} = +\infty$. Let $\{ x_i \}_{i \in \N}$ a sequence of points such that $x_i \in \Gamma_i$ for all $i \in \N$ and let $x_{\infty}$ be a closure point of $\{ x_i \}_{i \in \N}$. Clearly $x_{\infty} \in \partial E^{\ast}$. Moreover, since the curves $\Gamma_i$ are mutually disjoint and for all $r > 0$ there exists $i = i(r) \in \N$ such that $\Gamma_i \cap \B(x_{\infty};r) \neq 0$ and $i(r) \to +\infty$ as $r \to 0^+$, we obtain from Lemma \ref{Le:NonDegenerateConnectedComponentsAreSmooth} that $\nabla \eta(x_{\infty}) = 0$. Let us now show the existence of a vanishing sequence of radii $\{ r_k \}_{k \in \N}$ such that (up to a non relabeled subsequence) $\Gamma_k \subset \B(x_{\infty};r_k) \setminus \B(x_{\infty};r_{k+1})$, for all $k \in \N$. As a consequence, the thesis will follow directly from Lemma \ref{Le:CKT}. Since by Lemma \ref{Le:NonDegenerateConnectedComponentsAreSmooth} we have $x_{\infty} \notin \cup_i \Gamma_i$, it follows that for any $i\in \N$ there holds $\mathrm{dist}\left( x_{\infty}, \Gamma_i \right) > 0$. In particular, it suffices to prove that 
        \begin{equation}\label{Eq:HpBoundNumberConnectedComponents}
            \text{For any $r > 0$ there exists $i \in \N$ such that $\Gamma_i \subset \B(x_{\infty};r)$.}
        \end{equation}
        If this is the case, we can choose the radii $\{ r_k \}_{k \in \N}$ iteratively as follows:
        \begin{itemize}
            \item[1.] $r_0 = \mathrm{dist}\left(x_{\infty}, \Gamma_0  \right) + \mathrm{diam}\left( \Gamma_0 \right)$,

            \item[2.] $r_1 = \mathrm{dist}\left(x_{\infty}, \Gamma_0  \right)$,

            \item[3.] for all $k \ge 1$, $r_{k+1} = \mathrm{dist}\left(x_{\infty}, \Gamma_k  \right)$, where $\Gamma_k$ satisfies $\Gamma_k \subset \B(x_{\infty};r_k)$.
        \end{itemize}
        We are only left to prove \eqref{Eq:HpBoundNumberConnectedComponents}. Suppose by contradiction that \eqref{Eq:HpBoundNumberConnectedComponents} is false. Hence, there exists $r>0$ such that $\Gamma_i \cap \partial \B(x_{\infty};r) \neq 0$ for $i \in \N$ sufficiently large. For all $j \in \N$ sufficiently large (up to a non relabeled subsequence) consider the curves $\gamma_j = \Gamma_j \cap \left( \B\left(x_{\infty}, 2^{-j} \right) \setminus \B\left(x_{\infty}, 2^{-(j+1)} \right) \right)$. Since the curves $\Gamma_i$ are continuous and $x_{\infty} \notin \cup_i \Gamma_i$ is an accumulation point for $\{x_i\}_{i\in \N}$, each curve $\gamma_j$ admits a connected component $\widetilde{\gamma}_j$ with endpoints belonging to $\partial \B\left(x_{\infty}, 2^{-j} \right)$ and $\partial \B\left(x_{\infty}, 2^{-(j+1)} \right)$. In particular, $\mathscr H^1(\B(\bar x;\bar \e)\cap \widetilde{\gamma}_j) \gtrsim 2^{-j}$. Theorem \ref{Th:ASCConstrained} is now in contradiction with $\nabla \eta(x_{\infty}) = 0$. This concludes the proof.
    \end{proof}
We can now prove the main theorem.

    \begin{proof}[Proof of Theorem \ref{Th:Main} (1)]
        We proceed inductively: consider the sets $\mathcal{G} = \T$ and define $\mathcal{C} = \emptyset$. If $\partial^e E^{\ast} \neq \emptyset$, then by Proposition \ref{Pr:DensityIntermediateDensity} there exists a point of intermediate density $x_0$ of $E^{\ast}$ in $\mathcal{G}$. By Proposition \ref{Pr:RegularityIntermediateDensity} $\nabla \eta(x_0) \neq 0$ so that, thanks to Lemma \ref{Le:NonDegenerateConnectedComponentsAreSmooth}, there exists a (unique) $\mathscr C^{2,\gamma}$, simple and closed curve $\Gamma_{x_0}$ (with $\nabla \eta \neq 0$ on $\Gamma_{x_0}$) such that $x_0 \in \Gamma_{x_0}$. We set $\mathcal{C} \rightarrow \mathcal{C} \cup \Gamma_{x_0}$ and $\mathcal{G} \rightarrow \mathcal{G} \setminus \Gamma_{x_0}$. By Proposition \ref{Pr:NonAccumulationToSingularPoints} we can iterate the above argument at most $N_0$ times. We are only left to prove that after $N_0$ steps, $\partial^e E^{\ast} \cap \mathcal{G} = \emptyset$ or, equivalently, that $\partial^eE^*=\mathcal C$. By contradiction, assume that there exists $x \in \partial^e E^{\ast}$ such that $x \in \mathcal{G}$. By the definition of measure theoretic free boundary \eqref{De:Boundary} and since  $\mathcal C$ is a finite union of $N_0$ closed curves, there exists $r>0$ such that:
        \begin{itemize}
            \item[(a)] $|E^*\cap \B(x;r)| > 0$ and  $|\left( \mathbb{T}^2 \setminus E^* \right) \B(x;r)|>0$,

            \item[(b)] $\B(x;r) \subset \mathcal{G}$.
        \end{itemize}
        Hence, we can apply once again Proposition \ref{Pr:DensityIntermediateDensity}, Proposition \ref{Pr:RegularityIntermediateDensity} and Lemma \ref{Le:NonDegenerateConnectedComponentsAreSmooth}, deducing the existence of a further regular curve $\Gamma_x$ such that $\Gamma_x \notin \mathcal{C}$, contradicting the fact that the iterative procedure concludes after $N_0$ steps.    \end{proof}

Of course, Theorem \ref{Th:ReformPenalised} follows, in the two dimensional case, from Theorem \ref{Th:ReformConstrained}.

\subsection{Why is 2d so particular?}\label{Se:Why2D}
We briefly comment on the main difference between the two and $d$ dimensional settings, $d>2$, namely, that Proposition \ref{Pr:NonAccumulationToSingularPoints} has no chance of following from second-order optimality conditions. This can be seen from the point of view of torsion functions. Assume for the sake of simplicity that the second-order optimality conditions entail 
\[ \int_{\{\eta=0\}}\frac{1}{|\n \eta|}<+\infty\] and that 
\[ f \equiv 1,\, g=0.\] Assume further that, around a critical free boundary point $x_0$ $E^*$ contains an accumulation of small connected components (see Fig. \ref{Fig:Ill2d}) which, for the sake of simplicity, we take to be balls, say $\sqcup_{k\in \N} \B(y_k;\delta_k)\subset E^*\cap B(x_0;1)$, where each $\B(y_k;\delta_k)$ is a connected component of $E^*$.  Given the equation satisfied by $\eta$, $\eta|_{‘\B(y_k;\delta_k)}$ is the torsion function of each $\B(y_k;\delta_k)$. Letting $w_\delta$ denote the torsion function on $\B(0;\delta)$, we obtain 
\[ \sum_{k=0}^\infty \int_{\partial \B(0;\delta)} \frac{1}{|\n w_\delta|}<+\infty.\] However, $w_\delta$ has an explicit expression leading (up to dimensional constant $C(d)$) to 
\[  \int_{\partial \B(0;\delta)} \frac{1}{|\n w_\delta|}\sim C(d)\delta^{d-2},\] whence we deduce 
\[ \sum_{k=0}^\infty \delta_k^{d-2}<\infty.\] In two dimensions, this would allow to conclude that there exists only a finite number of connected components. In higher dimensions, this does not allow to conclude.

\begin{figure}
\begin{tikzpicture}[rotate=90]

    % Define a style for the blobs with a dashed pattern fill
    \tikzset{
        blob/.style={
            thick, 
            draw=black, 
            pattern={Lines[angle=45, distance=2pt, line width=0.5pt]},
            pattern color=black!70
        }
    }

    % Central point x_0
    \filldraw (0,0) circle (1.5pt) node[left=3pt] {$x_0$};

    % Small blobs near x_0 (Left cluster)
    \draw[blob] plot [smooth cycle, tension=1] coordinates {(-0.5,0.5) (-0.3,0.7) (-0.6,0.8)};
    \draw[blob] plot [smooth cycle, tension=1] coordinates {(-0.8,-0.2) (-0.6, -0.4) (-0.9,-0.5)};
    \draw[blob] plot [smooth cycle, tension=0.8] coordinates {(-0.2,-0.8) (0.1, -1) (-0.1,-1.2)};

    % Mid-range blobs
    \draw[blob] plot [smooth cycle, tension=0.7] coordinates {(1, 0.5) (1.4, 0.8) (1.2, 1.1) (0.8, 0.9)};
    \draw[blob] plot [smooth cycle, tension=0.8] coordinates {(0.8, -0.5) (1.3, -0.6) (1.1, -0.9)};
    \draw[blob] plot [smooth cycle, tension=0.9] coordinates {(1.5, -1.5) (1.8, -1.8) (1.4, -2)};

    % Larger blobs on the right
    % Top right blob
    \draw[blob] plot [smooth cycle, tension=0.7] coordinates {(2.5, 1.5) (3.2, 1.8) (3, 2.5) (2.3, 2.2)};
    
    % Middle right blob
    \draw[blob] plot [smooth cycle, tension=0.7] coordinates {(2.2, 0) (2.8, 0.2) (2.6, -0.3)};
    
    % Large bottom right blob with label C_k
    \draw[blob] plot [smooth cycle, tension=0.6] coordinates {(2.5, -1.5) (3.5, -1.2) (3.8, -2.2) (2.8, -2.5) (2.3, -2)};
    \node[fill=white, inner sep=1.5pt, rounded corners=1pt, scale=0.7] at (3.1, -1.8) {$\{\eta>0\}$};

    % A few extra stray small ones for texture
    \draw[blob] plot [smooth cycle, tension=1] coordinates {(-1.2, 1) (-1, 1.2) (-1.3, 1.3)};
    \draw[blob] plot [smooth cycle, tension=1] coordinates {(1.5, 2.8) (1.8, 3) (1.6, 3.2)};

\end{tikzpicture}\caption{In two dimensions, such an accumulation of smaller and smaller components can not happen.}\label{Fig:Ill2d}\end{figure}

\section{Regularity in higher dimensions for penalised problems}\label{Se:Reduction}
This section is devoted to the proof of the only part of Theorem \ref{Th:Main} that remains to be dealt with, the higher dimensional regularity for penalised problems. We follow the approach pioneered by Federer \cite{zbMATH03309325}.
\begin{proof}[Proof of Theorem \ref{Th:Main}, point (2)-(b)]
We follow the Federer dimension reduction principle, which requires a bit more information than previously derived regarding blow-ups.  For the time being, let $x_0$ be a critical free boundary point in $\T$, $d\geq 3$, and let $\eta_0$ be a blow-up at $x_0$. If the Weiss functional $\Psi$ satisfies $\Psi(0^+)=-\infty$, the result follows from the dimension of critical points for homogeneous harmonic functions.  Else, we know from Theorem \ref{Th:NonDegeneracyPenalised}  and its proof that the following holds: letting 
\[ \mathcal M_0(\B(0;1)):=\left\{m\in L^\infty(\B(0;1)):\, 0\leq m\leq 1\text{ a.e.}\right\}\] and $m^*:=\mathds 1_{E_0^*}$ we have, for any $m\in \mathcal M_0(\B(0.1))$, 
\begin{equation}\label{Eq:DRShape}\int_{\B(0;1)}\eta_0(m-m^*)+\alpha \Vert m-m^*\Vert_{W^{-1,2}(\B(0;1))}^2\leq 0.\end{equation}
Finally, $\eta_0$ is 2-homogeneous from Theorem \ref{Th:MW1}. It follows from the same proof as Proposition \ref{Pr:LocalMinimalityShape2} that $\eta_0$ satisfies strong second-order shape optimality conditions in the sense that \eqref{Eq:SOSD1} is satisfied.

We can now proceed with dimension reduction: arguing by contradiction and assuming that $\mathscr H^{d-2+\e}(\{\n \eta=0\}\cap \{\eta=0\})\in (0;+\infty]$ for some $\e>0$. We let $x_0\in \{\eta=0\}\cap \{\n \eta=0\}$ be such that 
\begin{equation}\label{Eq:d-2}\underset{r\to 0^+}{\lim\sup}\frac{ \mathscr H^{d-2+\e}\left(\{\n \eta=0\}\cap \{\eta=0\}\cap \B(x_0;r)\right)}{r^{d-2+\e}}>0\end{equation} and we fix a sequence $\{r_k\}_{k\in \N}$ converging to 0 such that this $\lim\sup$ becomes a limit. We let $\eta_0$ be a blow-up limit along this sequence (which is non-degenerate), and $x_{00}\in \B(0;1)\setminus\{0\}$ be such that 
\begin{equation}\label{Eq:d-22}\underset{r\to 0^+}{\lim\sup}\frac{ \mathscr H^{d-2+\e}\left(\{\n \eta_0=0\}\cap \{\eta_0=0\}\cap \B(x_0;r)\right)}{r^{d-2+\e}}>0.\end{equation}
Observe that $\eta_1:\mathbb S(0;1)\ni x\mapsto \eta_0(x)$ solves
\[-\Delta_{\mathbb S(0;1)} \eta_1=2d\eta_1+(f(0)+g(0))\mathds 1_{\{\eta_1>0\}}-g(0)=(f_1+g_1)\mathds 1_{\{\eta_1>0\}}-g_1,\]
where $(f_1,g_1)=(f(0)+2d\eta_1,g(0)-2d\eta_1)$ still satisfy the assumption \eqref{Eq:C3Regularity}. Furthermore, it is clear (taking $h$ homogeneous) that $\eta_1$ satisfies \eqref{Eq:QBT2}. Finally, from \eqref{Eq:d-22}, there exists a point $x_1\in \mathbb S(0;1)$ such that 
\begin{equation}\label{Eq:d-222}\underset{r\to 0^+}{\lim\sup}\frac{ \mathscr H^{d-2+\e}\left(\{\n \eta_1=0\}\cap \{\eta_1=0\}\cap \B_{\mathbb S(0;1)}(x_0;r)\right)}{r^{d-2+\e}}>0\end{equation} where $\mathbb B_{\mathbb S(0;1)}$ denotes geodesic balls on $\mathbb S(0;1)$. 
Now, introducing $\tilde \eta_1:=\eta_1\circ \psi_1,\, \tilde f_1:=f_1\circ\psi_1,\, \tilde g_1:=g\circ\psi_1$ where $\psi_1:\B(0;1)\to \mathbb S^{d-1}$ is a local analytic chart, we see that $\tilde \eta_1$ is a solution of 
\[-\Delta \tilde\eta_1+g(\psi_1,\n \eta_1,\eta_1\circ\psi,\n\eta_1\circ\psi)=(\tilde f_1+\tilde g_1)\mathds 1_{\{\tilde\eta_1>0\}}-\tilde g_1,\] which rewrites 
\[-\Delta \tilde\eta_1=(f_1'+g_1')\mathds 1_{\{\tilde\eta_1>0\}}-g_1',\] where $(\tilde\eta_1,\tilde g_1)$ still satisfy \eqref{Eq:C3Regularity}. Besides, $\tilde \eta_1$ still satisfies \eqref{Eq:QBT2}. Iterating this procedure $(d-2)$ times, we would obtain a solution $\hat\eta$ of 
\[-\Delta\hat \eta=(\hat f+\hat g) \mathds 1_{\{\hat \eta>0\}}-\hat g\text{ in }\R^2\] that satisfies \eqref{Eq:QBT2}, where $(\hat f,\hat g)$ satisfy \eqref{Eq:C3Regularity}, with $\mathscr H^\e(\{\n \hat \eta=0\}\cap \{\hat \eta=0\})>0$. The conclusion follows from the 2 dimensional case.

    \end{proof}

\bibliographystyle{abbrv}
\bibliography{BiblioCaffarelli.bib}

\appendix
\part*{Appendices}
\section{Proof of some technical results}\label{Ap:Technical}
\subsection{Proof of Lemma \ref{Le:Differentiability}}\label{Ap:Differentiability}

\begin{proof}[Proof of Lemma \ref{Le:Differentiability}]
We simply give the main elements: consider $m\in \mathcal M$ and $h\in L^\infty(\O)$; define, for any $t\in (-1;1)$ small enough, $m_t:=m+th\,, \theta_t:=\theta_{m_t}$. Finally, set $z_t:=\frac{\theta_t-\theta_0}t$. Direct computations show that there exists a function $\xi_t$ such that $\xi_t\in [\theta_t;\theta_0]$ and
\begin{equation}\label{Eq:C2WFZt} -\Delta z_t-2\langle \n z_t,\n\theta_t+\n\theta_0\rangle=h+Q'(x,\xi_t)z_t.\end{equation}
Note that by elliptic regularity, $\theta_t\underset{t\to 0}\rightarrow \theta_0$ strongly in $W^{1,2}(\T)$. We can thus assume from \eqref{Eq:AssQ} 
that for any $|t|$ small enough $Q'(x,\xi_t)\leq -\delta$ for some fixed $\delta$ whence the maximum principle gives 
\[ \Vert z_t\Vert_{L^\infty(\T)}\leq \frac1\delta \Vert h\Vert_{L^\infty(\T)}.\] This implies (using $z_t$ as a test function) that 
\[ \underset{t\to 0}{\lim\sup}\Vert z_t\Vert_{W^{1,2}(\T)}<+\infty.\] Finally, let $z_0$ be a closure point of $\{z_t\}_{t\to 0}$. Passing to the limit in \eqref{Eq:C2WFZt} shows that $z_0$ solves 
\begin{equation}\label{Eq:C2WFZ0}
-\Delta z_0-2\langle \n z_0,\n \theta_0\rangle=h+\partial_\theta Q(x,\theta_0)z_0.\end{equation} It remains to show that \eqref{Eq:C2WFZ0} has a unique solution. If it has two distinct solutions, it follows that the equation 
\[ -\Delta z-2\langle \n z,\n \theta_0\rangle-\partial_\theta Q(x,\theta_0)z=0\] has a non-trivial solution. Once again by \eqref{Eq:AssQ}, looking at the equation around points of maximum and minimum, this implies $z=0$. Thus $z_t\underset{t\to 0}\rightarrow z_0$ weakly in $W^{1,2}(\T)$, strongly in $L^2(\T)$, thereby concluding the proof for the first-order differentiability. The second-order differentiability follows along the same lines.
\end{proof}

\subsection{Proof of Estimate \eqref{Eq:WK2Est}}\label{Ap:Fredholm}
We prove a more general result and consider an elliptic operator of the form\[ \mathcal L:=-\Delta+\langle b,\n\cdot\rangle +c\] with $b\,, \n\cdot b\in L^\infty(\T)$ and $c\in L^\infty(\T)$. We assume that the principal eigenvalue $\Lambda(\mathcal L)$ satisfies 
\begin{equation}\label{Eq:SpectralGap} \Lambda(\mathcal L)>0.\end{equation}

We have the following proposition:
\begin{proposition}\label{Th:Solvability}
For any $f\in W^{-2,2}(\T)$, there exists a unique $u_f\in L^2(\T)$ such that 
\begin{equation}\label{Eq:C2Sept}\mathcal L u=f\text{ in }W^{-2,2}(\T)\end{equation} in the sense that 
\[ \forall v\in W^{2,2}(\T)\,, \int_{\T} u \mathcal L^* v=\int_{\T}fv\] where $\mathcal L^*$ is the adjoint of $\mathcal L$. Furthermore, there exists a constant $C$ such that
\[ \Vert u\Vert_{L^2(\T)}\leq C\Vert f\Vert_{W^{-2,2}(\T)}.\]
\end{proposition}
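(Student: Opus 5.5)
We argue by duality, so the whole task reduces to a priori estimates for the adjoint operator $\mathcal L^*v=-\Delta v-\nabla\cdot(bv)+cv=-\Delta v-\langle b,\nabla v\rangle-(\nabla\cdot b)v+cv$. Its coefficients are in $L^\infty$ by the assumptions $b,\nabla\cdot b,c\in L^\infty(\T)$. The plan is to prove that $\mathcal L^*:W^{2,2}(\T)\to L^2(\T)$ is an isomorphism with
\[ \Vert v\Vert_{W^{2,2}(\T)}\leq C\Vert \mathcal L^*v\Vert_{L^2(\T)}.\]
Given this, for $f\in W^{-2,2}(\T)$ we define $u_f$ by
\[ \int_\T u_f\,\varphi:=\langle f,(\mathcal L^*)^{-1}\varphi\rangle \quad\text{for every }\varphi\in L^2(\T).\]
This is a bounded linear functional on $L^2$, with norm at most $C\Vert f\Vert_{W^{-2,2}}$. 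The Riesz representation theorem then gives $u_f\in L^2$ with $\Vert u_f\Vert_{L^2}\leq C\Vert f\Vert_{W^{-2,2}}$. Taking $\varphi=\mathcal L^*v$ yields exactly the weak formulation \eqref{Eq:C2Sept}.

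Uniqueness follows from the same isomorphism. If $\int u\,\mathcal L^*v=0$ for all $v\in W^{2,2}$, then surjectivity of $\mathcal L^*$ onto $L^2$ forces $u=0$.

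It remains to establish the isomorphism. We use the Fredholm alternative. On the torus, $\mathcal L^*+\lambda$ is coercive on $W^{1,2}$ for $\lambda$ large, by G\r{a}rding's inequality, since the first-order term is bounded by $\Vert b\Vert_\infty\Vert\nabla v\Vert\Vert v\Vert$. Its inverse is compact on $L^2$ by Rellich's theorem. Hence $\mathcal L^*$ is Fredholm of index zero, and injectivity implies surjectivity. The $W^{2,2}$ estimate then comes from standard $L^2$ elliptic regularity:
\[ \Vert v\Vert_{W^{2,2}}\lesssim \Vert\Delta v\Vert_{L^2}+\Vert v\Vert_{L^2}\lesssim \Vert\mathcal L^*v\Vert_{L^2}+\Vert v\Vert_{W^{1,2}}+\Vert v\Vert_{L^2},\]
together with interpolation to absorb the $W^{1,2}$ term. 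The remaining $\Vert v\Vert_{L^2}$ term is removed by the usual compactness-contradiction argument, which again relies only on injectivity.

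The main obstacle is injectivity of $\mathcal L^*$, i.e.\ that $\mathcal L^* v=0$ forces $v=0$. We would use \eqref{Eq:SpectralGap} together with Krein--Rutman theory, as in \cite{zbMATH06458598}. The principal eigenvalue of $\mathcal L$ coincides with that of $\mathcal L^*$, and the latter has a positive eigenfunction $\phi^*>0$ with $\mathcal L^*\phi^*=\Lambda\phi^*$.

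Suppose $\mathcal L^*v=0$ with $v\not\equiv0$. Assume, after replacing $v$ by $-v$, that $v$ takes a positive value somewhere. Let $t^*=\min\{t>0:\ t\phi^*\geq v\}$, which exists since $\phi^*>0$ is continuous and bounded below. Then $w:=t^*\phi^*-v\geq 0$ vanishes somewhere, and
\[ \mathcal L^*w=\Lambda t^*\phi^*>0.\]
The strong maximum principle applied to $\mathcal L^*+M$, with $M$ large so that the zeroth-order coefficient is nonnegative, gives $(\mathcal L^*+M)w\geq 0$. Hence $w\equiv0$. But then $0=\mathcal L^*w=\Lambda t^*\phi^*>0$, a contradiction.

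The delicate point is the regularity needed to apply the maximum principle. The coefficient $\nabla\cdot b$ is only in $L^\infty$, so $v$ and $w$ lie only in $W^{2,p}$ for every $p<\infty$. We would therefore use the strong maximum principle in the $W^{2,d}_{loc}$ (Bony/Aleksandrov) form, which covers exactly this setting.
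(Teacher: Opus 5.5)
Your proof is correct and rests on the same duality with the adjoint problem as the paper's. The paper first solves for $L^2$ data by the Fredholm alternative, then bounds $\Vert u_f\Vert_{L^2(\T)}$ by testing against the solution $\varphi_f$ of $\mathcal L^*\varphi_f=u_f$, and extends to $W^{-2,2}(\T)$ by density; you instead define $u_f$ directly by transposition through the isomorphism $\mathcal L^*:W^{2,2}(\T)\to L^2(\T)$, which also makes explicit two points the paper leaves implicit: uniqueness in the very weak class (via surjectivity of $\mathcal L^*$), and where $\Lambda(\mathcal L)>0$ enters (injectivity of $\mathcal L^*$ via the positive adjoint eigenfunction and the strong maximum principle).
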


\begin{proof}[Proof of Proposition \ref{Th:Solvability}]
We begin with the case $f\in L^2(\T)$.  In that case, the Fredholm alternative yields the existence and uniqueness of a solution $u_f$ to \eqref{Eq:C2Sept}. Furthermore, by elliptic estimates, if $f_k\underset{k\to \infty}\rightarrow f$ in $L^2(\T)$, it follows that $u_{f_k}\underset{k\to\infty}\rightarrow u_{f_\infty}$. Consequently, the linear map $S:f\mapsto u_f$ is continuous in $L^2(\T)$, whence the existence of a constant $C$ such that 
\begin{equation}\label{Eq:L2Est} \forall f \in L^2(\T)\,, \Vert u_f\Vert_{L^2(\T)}\leq C\Vert f\Vert_{L^2(\T)}.\end{equation}  Now, let $f\in L^2(\T)$ and $u_f$ solve \eqref{Eq:C2Sept}. Let $\p_f$ solve 
\[ \mathcal L^*\p_f=u_f.\] By elliptic regularity, 
\[ \Vert \p_f\Vert_{W^{2,2}(\T)}\leq C\Vert u_f\Vert_{L^2(\T)}\] for some (other) constant $C$. Thus
\begin{align*}
\int_{\T} u_f^2&=\int_{\T} (\mathcal L u_f)\p_f
\\&=\int_{\T} f\p_f
\\&\leq \Vert f\Vert_{W^{-2,2}(\T)} \cdot\Vert \p_f\Vert_{W^{2,2}(\T)}
\\&\lesssim \Vert f\Vert_{L^2(\T)}\cdot \Vert u_f\Vert_{L^2(\T)}
\end{align*} which gives the existence of a constant $C$ such that for any $f\in L^2(\T)$
\[ \Vert u_f\Vert_{L^2(\T)}\leq C\Vert f\Vert_{W^{-2,2}(\T)}.\] However, $L^2(\T)$ is dense in $W^{-2,2}(\T)$

We can then conclude by approximating any $f\in W^{-2,2}(\T)$ by a sequence of $L^2$ functions.

\end{proof}

The newt proposition is the following:
\begin{proposition}\label{Th:NegativeSobolev}
Let $\mathcal L$ satisfy \eqref{Eq:SpectralGap}. Then there exist two constants $c\,, C$ such that, for any $f\in L^2(\T)$, if we let $u_f$ solve
\[ \mathcal Lu_f=f\text{ in }\T\] in a weak $W^{1,2}(\T)$ sense, there holds 
\begin{equation}\label{Eq:L2Est1}
c \Vert u_f\Vert_{L^2(\T)}\leq \Vert f\Vert_{W^{-2,2}(\T)}\leq C\Vert u_f\Vert_{L^2(\T)}\end{equation} and 
\begin{equation}\label{Eq:SobEst2}
c \Vert  u_f\Vert_{W^{1,2}(\T)}\leq \Vert f\Vert_{W^{-1,2}(\T)}\leq C\Vert u_f\Vert_{W^{1,2}(\T)}.\end{equation}\end{proposition}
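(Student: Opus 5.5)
The plan is to derive both equivalences by duality from Proposition \ref{Th:Solvability} and standard elliptic regularity for $\mathcal L$ and its adjoint $\mathcal L^*=-\Delta-\n\cdot(b\,\cdot)+c$. Both operators are invertible by \eqref{Eq:SpectralGap}: the principal eigenvalue of $\mathcal L^*$ coincides with that of $\mathcal L$. The upper bounds $\Vert f\Vert_{W^{-k,2}}\lesssim \Vert u_f\Vert_{W^{2-k,2}}$ for $k=1,2$ are the easy direction. For any test function $v$ we write
\[ \int_\T fv=\int_\T u_f\,\mathcal L^*v. \]
When $k=2$, we bound $|\int_\T u_f\mathcal L^*v|\leq \Vert u_f\Vert_{L^2}\Vert \mathcal L^*v\Vert_{L^2}\lesssim \Vert u_f\Vert_{L^2}\Vert v\Vert_{W^{2,2}}$, using $b,\n\cdot b,c\in L^\infty$. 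When $k=1$, we instead use the weak form
\[ \int_\T \langle \n u_f,\n v\rangle+\langle b,\n u_f\rangle v+cu_fv, \]
which is bounded by $\Vert u_f\Vert_{W^{1,2}}\Vert v\Vert_{W^{1,2}}$. Taking the supremum over $v$ gives both upper bounds.

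The lower bound in \eqref{Eq:L2Est1} is precisely the estimate of Proposition \ref{Th:Solvability}, since for $f\in L^2$ the weak $W^{1,2}$ solution coincides with the very weak solution by uniqueness. The content is the duality argument given there. We solve $\mathcal L^*\p=u_f$, use $\Vert\p\Vert_{W^{2,2}}\lesssim\Vert u_f\Vert_{L^2}$, which follows from invertibility of $\mathcal L^*$ and $W^{2,2}$ regularity, and then test. This gives
\[ \Vert u_f\Vert_{L^2}^2=\int_\T f\p\leq\Vert f\Vert_{W^{-2,2}}\Vert\p\Vert_{W^{2,2}}, \]
which yields the bound after dividing by $\Vert u_f\Vert_{L^2}$.

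The lower bound in \eqref{Eq:SobEst2} is the main step. We test the equation with $u_f$ itself, which gives
\[ \int_\T|\n u_f|^2+\int_\T\langle b,\n u_f\rangle u_f+\int_\T c u_f^2=\int_\T f u_f\leq \Vert f\Vert_{W^{-1,2}}\Vert u_f\Vert_{W^{1,2}}. \]
The drift term equals $-\frac12\int_\T(\n\cdot b)u_f^2$ after integration by parts, so the left-hand side is at least $\Vert\n u_f\Vert_{L^2}^2-K\Vert u_f\Vert_{L^2}^2$ with $K$ depending on $\Vert\n\cdot b\Vert_\infty$ and $\Vert c\Vert_\infty$. It remains to absorb the $L^2$ term. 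For this we combine with the already proven bound $\Vert u_f\Vert_{L^2}\lesssim\Vert f\Vert_{W^{-2,2}}\leq\Vert f\Vert_{W^{-1,2}}$. Writing $X=\Vert u_f\Vert_{W^{1,2}}$ and $F=\Vert f\Vert_{W^{-1,2}}$, we get
\[ X^2\lesssim FX+F^2, \]
and hence $X\lesssim F$ by Young's inequality.

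The only point needing care is the $W^{2,2}$ regularity for $\mathcal L^*$, whose drift appears in divergence form. It follows by expanding $\n\cdot(b\p)=\langle b,\n\p\rangle+(\n\cdot b)\p$, which is legitimate because $\n\cdot b\in L^\infty$, and then applying standard $W^{2,2}$ estimates together with invertibility. Positivity of the principal eigenvalue enters exactly there, to exclude a kernel; the Fredholm alternative then gives the bounded inverse.
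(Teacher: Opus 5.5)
Your proof is correct. The two upper bounds and the lower bound in \eqref{Eq:L2Est1} are obtained exactly as in the paper: duality against $\mathcal L^*v$, the weak form, and the adjoint problem $\mathcal L^*\p=u_f$ from Proposition \ref{Th:Solvability}.

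The genuine difference is in the lower bound of \eqref{Eq:SobEst2}. The paper stays within duality. It solves the auxiliary adjoint problem $\mathcal L^*v=-\Delta u_f$ and asserts $\Vert v\Vert_{W^{1,2}(\T)}\lesssim \Vert \n u_f\Vert_{L^2(\T)}$ ``by similar duality arguments''. It then writes $\int_\T |\n u_f|^2=\int_\T fv\lesssim \Vert f\Vert_{W^{-1,2}(\T)}\Vert \n u_f\Vert_{L^2(\T)}$ and concludes with the $L^2$ bound.

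You instead test the equation with $u_f$ itself. This gives a G\aa rding-type inequality $\Vert \n u_f\Vert_{L^2}^2-K\Vert u_f\Vert_{L^2}^2\leq \Vert f\Vert_{W^{-1,2}}\Vert u_f\Vert_{W^{1,2}}$. You then absorb the lower-order term using the already established bound $\Vert u_f\Vert_{L^2}\lesssim \Vert f\Vert_{W^{-2,2}}\leq \Vert f\Vert_{W^{-1,2}}$, followed by Young's inequality.

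Your route is more self-contained. The paper's auxiliary estimate on $v$ is the $W^{-1,2}\to W^{1,2}$ bound for $(\mathcal L^*)^{-1}$, which is the adjoint version of the very statement being proved. A duality argument as in Proposition \ref{Th:Solvability} only controls $\Vert v\Vert_{L^2}$; the gradient part needs an energy estimate of exactly the kind you perform. The paper's presentation has the appeal of a uniform duality framework for both estimates, while yours makes explicit where the coercivity comes from.

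Your step also does not need $\n\cdot b\in L^\infty$. The bound $\bigl|\int_\T \langle b,\n u_f\rangle u_f\bigr|\leq \frac12\Vert \n u_f\Vert_{L^2}^2+\frac12\Vert b\Vert_{L^\infty}^2\Vert u_f\Vert_{L^2}^2$ would replace the integration by parts.
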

\begin{proof}[Proof of Proposition \ref{Th:NegativeSobolev}]
We already established that there holds $\Vert u_f\Vert_{L^2(\T)}\lesssim \Vert f\Vert_{W^{-2,2}(\T)}$, see Eq. \eqref{Eq:L2Est}. To prove the converse estimate, let $v\in W^{2,2}(\T)$. Then
\begin{align*}
\left|\int_{\T} fv\right|&=\left|\int_{\T} v\mathcal L u_f\right|
\\&=\left|\int_{\T} (\mathcal L^* v)u_f\right|
\\&\leq \Vert \mathcal L^* v\Vert_{L^2(\T)}\cdot\Vert u_f\Vert_{L^2(\T)}.
\end{align*}
However, as 
\[ \mathcal L^*v=-\Delta v-v\n\cdot b-\langle \n v,b\rangle+cv\] we deduce that 
\[ \left|\int_{\T}fv\right|\lesssim \Vert u_f\Vert_{L^2(\T)}\Vert v\Vert_{W^{2,2}(\T)}\] and \eqref{Eq:L2Est1} follows.

To prove \eqref{Eq:SobEst2} we begin by observing that, letting $v$ solve 
\[ \mathcal L^*v=-\n\cdot(\n u)\] we have by similar duality arguments
\[ \Vert v\Vert_{W^{1,2}(\T)}\lesssim \Vert \n u\Vert_{L^2(\T)}.\]We thus obtain, 
\[ \int_{\T}|\n u_f|^2=\int_{\T} fv\lesssim \Vert f\Vert_{W^{-1,2}(\T)}\Vert \n u\Vert_{L^2(\T)}\] and so, in combination with \eqref{Eq:L2Est}, this gives 
\[ \Vert u_f\Vert_{W^{1,2}(\T)}\lesssim \Vert f\Vert_{W^{-1,2}(\T)}.\] Besides,  using the fact that $c\,, b\,, \n\cdot b\in L^\infty(\T)$, we deduce that for any $v\in W^{1,2}(\T)$ there holds
\begin{align*}
\left|\int_{\T} fv\right|&=\left|\int_{\T}( \mathcal L u_f)v\right|
\\&\lesssim \Vert u_f\Vert_{W^{1,2}(\T)}\cdot\Vert v\Vert_{W^{1,2}(\T)},
\end{align*}
thereby establishing \eqref{Eq:SobEst2}.

\end{proof}

\subsection{Proof of Theorem \ref{Th:Hodograph}}\label{Ap:Hodograph}
The key to Theorem \ref{Th:Hodograph} is the following result, from which the conclusion follows immediately.
\begin{proposition}\label{prop:hod_transform}
    Let $\gamma\in(0,1)$, and $h:(t,x)\in\R\times\R^n\mapsto h(t,x)$ be locally bounded such that $h(t,\cdot)\in \mathscr C^{0,\alpha}_{\text{loc}}(\R^n)$ for any $t\in\R$. Assume that $\eta\in \mathscr C^{0}(\R^n)\cap W^{1,2}_{\text{loc}}(\R^d)$ satisfies 
    \begin{equation}
        \label{eq:eta_hod}
    -\Delta \eta(x)=h(\eta(x),x) \text{ in }\B(0;1)
    \end{equation}
    in the weak sense. 
    
    Let  $x_0\in \B(0;1)$ be such that $\nabla\eta(x_0)\neq0$. Then the level set $\{\eta>\eta(x_0)=:c\}$ is $C^{2,\gamma}$ around $x_0$, in the sense that there exists 
    \begin{itemize}
        \item an open set $V\subset\R^d$,
        \item a $(d-1)$-dimensional open set $V'$,
        \item a $\mathscr  C^{2,\gamma}$ mapping $\varphi:V'\to\R$,
    \end{itemize}
such that up to rotation 
\[
\{\eta>c\}\cap V=\{(x',x_d)\in V'\times\R, x_d>\varphi(x')\}, \]
\[
\{\eta=c\}\cap V=\{(x',\varphi(x')), x'\in V'\}.
\]
\end{proposition}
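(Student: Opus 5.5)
We prove Proposition \ref{prop:hod_transform} in two stages. First we upgrade $\eta$ to $\mathscr C^{1,\gamma}$ near $x_0$. Then we flatten the level set with a partial hodograph transform and run a bootstrap, exploiting that the right-hand side is regular along the level set. Throughout, $h$ is only H\"older in $x$ and merely bounded in $t$; the possible discontinuity in $t$ of the form $\mathds 1_{\{\eta>c\}}$ is the relevant case for us.

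\emph{Step 1: initial regularity.} Since $\eta$ is continuous and $h$ is locally bounded, the function $x\mapsto h(\eta(x),x)$ lies in $L^\infty_{\mathrm{loc}}$. Calder\'on--Zygmund estimates then give $\eta\in W^{2,p}_{\mathrm{loc}}$ for every $p<\infty$, hence $\eta\in\mathscr C^{1,\beta}_{\mathrm{loc}}$ for every $\beta<1$. Since $\n\eta(x_0)\neq 0$, we rotate so that $\partial_d\eta(x_0)>0$. Shrinking to a small ball $V$, we may assume $\partial_d\eta\geq \kappa>0$ on $V$. The implicit function theorem then yields a $\mathscr C^{1,\beta}$ graph $\{x_d=\varphi(x')\}=\{\eta=c\}\cap V$, with $\{\eta>c\}\cap V$ lying above the graph. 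This already gives the statement with $\mathscr C^{1,\beta}$ in place of $\mathscr C^{2,\gamma}$.

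\emph{Step 2: hodograph transform.} Set $y=(x',\eta(x))$. This map is a $\mathscr C^{1,\beta}$ diffeomorphism of $V$ onto its image, with inverse $x_d=u(y',y_d)$, and $\{\eta=c\}$ becomes the flat piece $\{y_d=c\}$. Differentiating $\eta(y',u(y))=y_d$ gives $\partial_d\eta=1/\partial_{y_d}u$ and $\partial_i\eta=-\partial_{y_i}u/\partial_{y_d}u$. Equation \eqref{eq:eta_hod} then becomes the quasilinear divergence-form equation
\begin{equation*}
\sum_{i,j}\partial_{y_i}\big(a_{ij}(\n u)\big)=-\,h\big(y_d,(y',u(y))\big)\,\partial_{y_d}u ,
\end{equation*}
where $a(\n u)$ is smooth and uniformly elliptic as long as $\partial_{y_d}u\geq c_0>0$. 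The point of this change of variables is that the right-hand side now depends on $y_d$ only through the first slot of $h$, which may be discontinuous, but only across the flat hyperplane $\{y_d=c\}$. In the tangential directions $y'$ it is $\mathscr C^{0,\alpha}$.

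\emph{Step 3: bootstrap by tangential difference quotients.} This is the main obstacle, because the right-hand side is not H\"older in $y_d$, so Schauder theory cannot be applied directly. We handle it as follows.
\begin{enumerate}
\item Take difference quotients of the equation in the $y'$ directions only. The equation is preserved in divergence form, and the tangential H\"older regularity of the right-hand side gives $\partial_{y'}u\in \mathscr C^{1,\gamma}$ on each side of $\{y_d=c\}$ up to the hyperplane. We use the standard transmission estimates for divergence-form equations whose coefficients and data are piecewise regular across a flat interface.
\item Solve the equation algebraically for $\partial_{y_dy_d}u$. This shows that $\partial_{y_dy_d}u$ is bounded and piecewise $\mathscr C^{0,\gamma}$, so $u$ is piecewise $\mathscr C^{2,\gamma}$ up to the interface.
\item Restricting to $y_d=c$, the trace $u(y',c)=\varphi(y')$ is $\mathscr C^{2,\gamma}$ in $y'$, since its second derivatives are tangential.
\end{enumerate}
This yields $\varphi\in\mathscr C^{2,\gamma}$, which is the claim. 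An alternative to the tangential quotients is to cite the known regularity for two-phase transmission problems. The mapping back to $x$ variables is routine.
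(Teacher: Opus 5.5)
Your skeleton matches the paper's proof. You get $W^{2,p}\cap\mathscr C^{1,\beta}$ regularity from the boundedness of $x\mapsto h(\eta(x),x)$, apply the implicit function theorem and the partial hodograph map $x\mapsto(x',\eta(x))$ with inverse $x_d=u(y)$ (the paper's $F$), derive an elliptic equation for $u$, and prove regularity of $u$ in the tangential variables. The paper writes that equation in nondivergence form, $LF=h(y_d,y',F)$, and concludes in one line by ``standard elliptic estimates'' using only that $h(t,\cdot)$ is H\"older. Implicitly, this is a partial Schauder estimate for data that are H\"older in $y'$ and only bounded measurable in $y_d$.

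You instead use the divergence structure plus transmission estimates, and this is where your argument is narrower than the statement. In Step 2 you assert that $G=h(y_d,y',u)\,\partial_{y_d}u$ is discontinuous only across $\{y_d=c\}$. The hypotheses do not give this: $h$ is only locally bounded in $t$, as you note yourself at the start. So $t\mapsto h(t,x)$ may jump along a sequence $t_k\to c$, or everywhere, and then $G$ is discontinuous across hyperplanes $\{y_d=t_k\}$ accumulating on $\{y_d=c\}$. Estimates for piecewise H\"older data across finitely many interfaces do not apply in that case. As written, you prove the proposition when $h$ is H\"older in $(t,x)$ on each side of $t=c$. This does cover the only use in the paper, $h(t,x)=(f+g)(x)\mathds 1_{\{t>0\}}-g(x)$, so for Theorem~\ref{Th:Hodograph} your argument suffices.

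To reach the stated generality, keep your differentiated equation but replace the transmission estimate by a partial Schauder estimate (in the spirit of Fife, or Dong--Kim).
\begin{itemize}
\item Since $u\in W^{2,p}$, you may differentiate directly and get $\mathrm{div}\,(D_pa(\nabla u)\,\nabla\partial_{y_i}u)=\partial_{y_i}G=\mathrm{div}\,(G\,e_i)$ for $i<d$.
\item Writing the right-hand side in divergence form is essential. A first-order tangential difference quotient of a function that is only $\mathscr C^{0,\alpha}$ in $y'$ is not uniformly bounded, and your Step 3(1) is vague on exactly this point.
\item With H\"older coefficients and a tangential field $Ge_i$ that is H\"older in $y'$ only, the partial estimate gives $\nabla\partial_{y'}u\in\mathscr C^{0,\gamma}$. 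Your Steps 3(2)--(3) then go through unchanged.
\end{itemize}

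Finally, both your route and the paper's use that the $\mathscr C^{0,\alpha}$ bound of $h(t,\cdot)$ is uniform for $t$ near $c$, and both yield only $\gamma\le\alpha$. The statement should be read with these provisos, which are harmless in the application since $f,g\in W^{1,p}$ for every $p$.
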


This type of result is classicaly shown by hodograph transform, which roughly amounts to using $(x,\eta(x))$ as new system of coordinates. For the presentation of the proof we follow closely \cite[Lemma 3]{zbMATH01588525}. 

\begin{proof}[Proof of Proposition \ref{prop:hod_transform}]
    As $h$ is locally bounded, standard elliptic estimates  ensure that $\eta\in W^{2,p}(\B(0;1))\cap C^{1,\gamma}(\B(0;1))$ for any $p\in(1,\infty),$ $\gamma\in(0,1)$. By the implicit function theorem, the assumption $\nabla\eta(x_0)\neq0$  ensures that $\{\eta>c\}$ is a $\mathscr C^{2,\gamma}$ hypersurface around $x_0$ for any $\gamma\in (0;1)$. 
    Up to rotating we can assume that 
    \[
    \begin{cases}
    \partial_{x_i}\eta(x_0)=0, \text{ for } i=1,\ldots, d-1,\\
        \partial_{x_d}\eta(x_0)\neq0.
    \end{cases} \]
    We consider
    \[
    \Phi:(x',x_d)\in B\mapsto (x',\eta(x',x_d)),
    \]
    where $B\subset \B(0;1)$ is a ball centred at $x_0$ taken sufficiently small to ensure  $\Phi$ is a $\mathscr C^{1,\gamma}$-diffeomorphism, and write
    \[\Phi^{-1}:(y',y_d)\in \Phi(B)\mapsto (y',F(y',y_d)),
    \]
    where $F\in  \mathscr C^{1,\gamma}(\Phi(B),\R)$. In particular,
       \[
\{\eta>c\}\cap V=\{(x',x_d)\in V'\times\R, x_d>\varphi(x')\}, \]
\[
\{\eta=c\}\cap V=\{(x',\varphi(x')), x'\in V'\}.
\]
with $V:=\Phi(B)$, $V'$ is the $(d-1)$-dimensional projection onto the $(d-1)$ first coordinates and $\varphi:=F(\cdot,c)\in  \mathscr  C^{1,\gamma}(V',\R)$. It only remains to improve the regularity of $F(\cdot,c)$ to $ \mathscr C^{2,\gamma}$.

    By definition of $F$, one has 
    \[
    F(x',\eta(x',x_d))=x_d \text{ on } B,
    \]
which yields, letting $y=\Phi(x)$,
\[
\begin{cases}
    \partial_{y_i}F(y)+\partial_{y_d}F(y)\partial_{x_i}\eta(x)=0, \text{ for }i=1,\ldots, d-1,\\
    \partial_{y_d}F(y)\partial_{x_d}\eta(x)=1.
\end{cases}
\]
Thus $\partial_{y_d}F\neq0$ on $\Phi(B)$, and we can rewrite these identities as
\[
\begin{cases}
    \partial_{x_i}\eta=-\frac{\partial_{y_i}F}{\partial_{y_d}F}, \text{ for }i=1,\ldots, d-1,\\
    \partial_{x_d}\eta=\frac{1}{\partial_{y_d}F}
\end{cases}
\]
on $B$. We deduce 
\begin{align*}
\partial_{x_dx_d}\eta(x)&=\partial_{x_d}\left(\frac{1}{\partial_{y_d}F(y)}\right), \\
&=\partial_{y_d}\left(\frac{1}{\partial_{y_d}F}\right)(y)\partial_{x_d}\eta(x),\\
&=-\frac{\partial_{y_dy_d}F(y)}{(\partial_{y_d}F)^3(y)},
\end{align*}
and likewise 
\[
\partial_{x_ix_i}\eta(x)=\left(\partial_{y_i}-\left(\frac{\partial_{y_i}F}{\partial_{y_d}F}\right)(y)\partial_{y_d}\right)
\left(\frac{-\partial_{y_i}F}{\partial_{y_d}F}\right)(y).\]
Denoting by $a:=\frac{\partial_{y_i}F}{\partial_{y_d}F}$ on $\Phi(B)$, one has
\begin{align*}
    \partial_{x_ix_i}\eta=\frac{-\partial_{y_iy_i}F}{\partial_{y_d}F}+\frac{\partial_{y_i}F}{(\partial_{y_d}F)^2}\partial_{y_iy_d}F-a\left(-\frac{\partial_{y_dy_i}F}{\partial_{y_d}F}+\frac{\partial_{y_i}F}{(\partial_{y_d}F)^2}\partial_{y_dy_d}F\right),
\end{align*}
We can hence rewrite  \eqref{eq:eta_hod} as
\[
-\Delta\eta=LF,
\]
where $L$ is a quasi-linear second-order elliptic operator. In addition, denoting $y_0:=\Phi(x_0)$ we have
\[
LF(y_0)=-\frac{1}{\partial_{y_d}F(y_0)}\left(\sum_{i=1}^{d-1}\partial_{y_iy_i}F+\frac{1}{(\partial_{y_d}F)^2}\partial_{y_dy_d}F\right)(y_0),
\]
since $\partial_{y_i}F(y_0)=0$. Thus, we can rewrite  \eqref{eq:eta_hod} as
\[
LF(y)=h(y_d,y',F(y',y_d)) \text{ on } \Phi(B).
\]
Now since $h(t,\cdot)$ is $\mathscr C^{0,\alpha}$ for each $t\in\R$, by standard elliptic estimates we obtain that $F$ is $ \mathscr C^{2,\gamma}$ in $y'=(y_1,\ldots,y_{d-1})$. This concludes the proof.

\end{proof}

%\subsection{Proof of the density result (Proposition \ref{Pr:LocalMinimalityShape})}\label{Ap:Density}

\subsection{Proof of Theorem \ref{Pr:C3Weiss}}\label{Ap:QuasiMonotonicity}

\begin{proof}[Proof of Theorem \ref{Pr:C3Weiss}]
The function $\eta_r$ solves
    \[
    -\Delta \eta_r = f_r \mathds 1_{\{\eta_r> 0\}} - g_r 1_{\{\eta_r\leq 0\}}.
    \]
Define, for any $t>0$, 
\[
E(t):=\int_{\B(0;1)}|\nabla \eta_t|^2, \ G(t):=\int_{\partial \B(0;1)}\eta_t^2.
\]
Following \cite{zbMATH05204068}, we decompose $\Psi(s)-\Psi(r)$ as follows: for any $0<r<s$,
\[
\Psi(s)-\Psi(r)=I_1+I_2
\]
where we have set
\[
I_1=\int_r^s\left(\frac{dE}{dt}-2\frac{dG}{dt}-\left(2\int_{\B(0;1)}\frac{\partial\eta_t}{\partial t}\left(f_t \mathds 1_{\{\eta_t> 0\}} - g_t \mathds 1_{\{\eta_t\leq0\}}\right)\right)dt\right),
\] and
\begin{multline*}
I_2:=2\int_r^s\left(\int_{\B(0;1)}\frac{\partial\eta_t}{\partial t}\left(f_t \mathds 1_{\{\eta_t> 0\}} - g_t \mathds 1_{\{\eta_t\leq0\}}\right)\right)ds\\+2\int_{\B(0;1)}\left(f_r (\eta_r)_++g_r (\eta_r)_-\right)-\left(f_s (\eta_s)_++g_s (\eta_s)_-\right).\end{multline*}
We estimate $I_1,\, I_2$ separately, 
\begin{enumerate}
\item\underline{Estimating $I_1$:} our goal is to show the following estimate:
\begin{equation}\label{Eq:MonotI1}
I_1=\int_r^sdt\int_{\partial \B(0;1)}\frac1t\left(\frac{\partial\eta_t}{\partial\nu}-2\eta_t\right)^2=\int_r^s dt\int_{\partial \B(0;1)}t\left(\frac{\partial\eta_t}{\partial t}\right)^2.
\end{equation}
 From \eqref{Eq:Criticality}, we obtain\begin{align*}
\frac{dE}{dt}&=2\int_{\B(0;1)} \left\langle\n\frac{\partial\eta_t}{\partial t},\nabla \eta_t\right\rangle
\\&= -2\int_{\B(0;1)}\frac{\partial\eta_t}{\partial t}\Delta \eta_t+2\int_{\partial \B(0;1)}\frac{\partial\eta_t}{\partial t}\cdot\frac{\partial\eta_t}{\partial \nu}\\
&= 2\int_{\B(0;1)}\frac{\partial\eta_t}{\partial t}\left(f_t \mathds 1_{\{\eta_t>0\}} - g_t \mathds 1_{\{\eta_t\leq0\}}\right)
\\&+2\int_{\partial \B(0;1)}\frac{\partial\eta_t}{\partial t}\cdot\frac{\partial\eta_t}{\partial \nu}.\end{align*}
Likewise, 
\begin{align*}
-2\frac{dG}{dt}&=-4\int_{\partial\B(0;1)}\eta_t\frac{\partial \eta_t}{\partial t}.
\end{align*}
Consequently,
\begin{align*}
I_1=2\int_r^sdt\int_{\partial \B(0;1)}\frac{\partial \eta_t}{\partial t}\left(\frac{\partial \eta_t}{\partial\nu}-2\eta_t\right).
\end{align*} But we now observe that 
\begin{align}\label{Eq:JuanTabo}
\frac{\partial\eta_t}{\partial t}&=\frac{\langle\n \eta(t\cdot),\cdot\rangle}{t}-\frac{2}{t^3}\eta(t\cdot)
\\&=\frac1t\left(\langle\n\eta_t(\cdot),\cdot\rangle-\frac2t\eta_t \right) 
\end{align}
so that, as $\partial_\nu\eta_t(x)=\langle \n\eta_t(x),x\rangle$ on $\partial \B(0;1)$, we finally get \eqref{Eq:MonotI1}:
\begin{equation*}
I_1=\int_r^sdt\int_{\partial \B(0;1)}\frac1t\left(\frac{\partial\eta_t}{\partial\nu}-2\eta_t\right)^2=\int_r^s dt\int_{\partial \B(0;1)}t\left(\frac{\partial\eta_t}{\partial t}\right)^2.
\end{equation*}
\item\underline{Estimating $I_2$:} our goal is to show 
the existence of $C\,, \beta>0$ that only depend on the $W^{1,p}$ norms of $f$ and $g$ (for $p$ large enough) such that\begin{equation}\label{Eq:MonotI2}\left| I_2\right|\leq C\left(s^\beta-r^\beta\right).\end{equation}
To alleviate notations, we split $I_2$ in two parts by setting 
\[ I_2=I_2(f)+I_2(g)\] with 
\[I_2(f)=2\int_r^s\left( \int_{\B(0;1)}\frac{\partial\eta_t}{\partial t}f_t \mathds 1_{\{\eta_t> 0\}}\right)dt +2\int_{\B(0;1)}f_r (\eta_r)_+-f_s (\eta_s)_+
\]
\[ I_2(g)=-2\int_r^s\left( \int_{\B(0;1)}\frac{\partial\eta_t}{\partial t}g_t \mathds 1_{\{\eta_t\leq0\}}\right)dt +2\int_{\B(0;1)}g_r (\eta_r)_--g_s (\eta_s)_-,
\]
and we will show the existence of $C,\, \beta>0$ such that
\begin{equation}\label{Eq:MonotI2F} |I_2(f)|\leq C\left(s^\beta-r^\beta\right),\end{equation} the same proof yielding a similar bound for $I_2(g)$. Observe that 
\[ \frac{\partial\eta_{t,+}}{\partial t}=\frac{\partial\eta_t}{\partial t}\mathds 1_{\{\eta_t> 0\}}.\]Letting $\sigma=\frac{s}r$, we obtain
\begin{align*}
\frac12I_2(f)&=\int_r^s\left( \int_{\B(0;1)}\frac{\partial\eta_t}{\partial t}f_t \mathds 1_{\{\eta_t> 0\}}\right)dt +\int_{\B(0;1)}f_r (\eta_r)_+-f_s (\eta_s)_+
\\&=\int_r^s\left( \int_{\B(0;1)}f_t\frac{\partial\eta_{t,+}}{\partial t}\right)dt+\int_{\B(0;1)}f_r (\eta_r)_+-f_s (\eta_s)_+
\\&=-\int_r^s \left( \int_{\B(0;1)} \frac{\partial f_t}{\partial t} \eta_{t,+}\right)dt
\\&=-\int_s^r \left(\int_{\B(0;1)} \langle \n f(tx),x\rangle \eta_{t,+}\right)dt.
\end{align*}
Recall the Zygmund class inequality \cite[Proposition 2.3.7]{zbMATH01201583}: as $\eta(0)=0\,, \n\eta(0)=0$, 
\begin{equation}\label{Eq:Zygmund}\forall x \in \B(0;1)\,, |\eta(tx)|\leq C t^2\Vert x\Vert^2|\log(t\Vert x\Vert)|.\end{equation} As, for any $\gamma<1$ we have \[ |\log(t\Vert x\Vert)|\lesssim t^{-\gamma}\Vert x\Vert^{-\gamma},\] we deduce
\begin{equation}\label{Eq:Zygmund2} | \eta_t(x)|\lesssim \Vert x\Vert^{2-\gamma}t^{-\gamma}\lesssim t^{-\gamma}\text{ in }\B(0;1).\end{equation} We will fix the parameter $\gamma$ later on. From the H\"{o}lder inequality, we obtain, for any $p>1$ (whose Lebesgue conjugate exponent is denoted $p'$)
\begin{align*}
|I_2(f)|&\leq\int_r^s \left(\int_{\B(0;1)}t^{-\gamma}|\n f(tx)|dx\right)dt
\\&\lesssim \int_s^r t^{-\gamma}t^{-d}\int_{\B(0;t)}|\n f(\xi)|d\xi dt
\\&\lesssim \int_r^s t^{-\gamma-d}|\B(0;t)|^{\frac1{p'}}\cdot\Vert\n f\Vert_{L^p(\B(0;1))}dt
\\&\lesssim \Vert \n f\Vert_{L^p(\B(0;1))}\int_r^s t^{-\gamma-d+\frac{d}{p'}}dt
\\&= \Vert \n f\Vert_{L^p(\B(0;1))}\left(s^{1-\gamma+d\left(\frac1{p'}-1\right)}-r^{1-\gamma+d\left(\frac1{p'}-1\right)}\right)
\\&= \Vert \n f\Vert_{L^p(\B(0;1))}\left(s^{1-\gamma-\frac{d}{p}}-r^{1-\gamma-\frac{d}p}\right).
\end{align*}
Taking  $p$ large enough to ensure that $\beta=1-\gamma-\frac{d}{p}$ satisfies $\beta>0$ yields \eqref{Eq:MonotI2F} and applying the same chain of inequalities to bound $I_2(g)$  finally leads to \eqref{Eq:MonotI2}.
\end{enumerate}
We can now conclude:
\[ \Psi(s)-\Psi(r)\geq \int_r^s \left(\int_{\partial \B(0;1)}t\left(\frac{\partial\eta_t}{\partial t}\right)^2\right)dt-C(s^\beta-r^\beta).\] Thus, $W:r\mapsto \Psi(r)+Cr^\beta$ is non-decreasing. Finally, to obtain \eqref{Eq:C3QuantifiedWeiss}, let $r>0$ and $s\in (r;2r)$. Then 
\begin{align*}
 \int_{\partial \B(0;1)}(\eta_s-\eta_r)^2&=\int_{\partial\B(0;1)}\left(\int_r^s \frac{\partial \psi}{\partial t}dt\right)^2
 \\&\leq (s-r)  \int_r^s\int_{\partial \B(0;1)}\left(\frac{\partial\eta_t}{\partial t}\right)^2
 \\&\leq   r\int_r^s\int_{\partial \B(0;1)}\left(\frac{\partial\eta_t}{\partial t}\right)^2
 \\&\leq   \int_r^st\int_{\partial \B(0;1)}\left(\frac{\partial\eta_t}{\partial t}\right)^2
 \\&\leq I_1.
\end{align*}
This concludes the proof.
\end{proof}

\section{Proof of Theorem \ref{Th:MW1}}\label{Ap:WeissBU}

\subsubsection{Preliminary results}
The proof of Theorem \ref{Th:MW1} is lengthy, and we rely on the two following technical lemmas. The first one gathers some results regarding sub and super-harmonicity of auxiliary functions related to the problem;  this is akin to \cite[Lemma 3.6]{zbMATH05311233}. We use the notations of Theorem \ref{Th:MW1}.
\begin{lemma}\label{lemma:tech_sub_suph}
The following holds:
    \begin{itemize}
    \item
    There exists 
    \[M:=M(\|f\|_{L^\infty(\B(0;1))},\|g\|_{L^\infty(\B(0;1))})>0\]  such that, for any $r\in(0;1)$,
    \[
    \eta_{r,+}+M|\cdot|^2 \text{ and } \eta_{r,-}+M|\cdot|^2
    \]
    are sub-harmonic.
    \item Let $a,b\in\R$ be such that $a+b>0$. Then there exists 
    \[M:=M(a,b,\|f\|_{L^\infty(\B(0;1))},\|g\|_{L^\infty(\B(0;1))})>0\] such that, for any $r\in(0;1)$,    \[
    a\eta_{r,+}+b\eta_{r,-}+M|\cdot|^2
    \]
    is sub-harmonic.
    \end{itemize}
\end{lemma}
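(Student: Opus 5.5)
We need to prove Lemma \ref{lemma:tech_sub_suph}: subharmonicity of $\eta_{r,+}+M|\cdot|^2$, of $\eta_{r,-}+M|\cdot|^2$, and of $a\eta_{r,+}+b\eta_{r,-}+M|\cdot|^2$.

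The plan is to compute the distributional Laplacian of $\eta_{r,\pm}$ via Kato's inequality, using $\eta_r\in W^{2,p}_{\mathrm{loc}}$. First I record that, by the scaling $\eta_r=\eta(r\cdot)/r^2$, the rescaled function satisfies
\[-\Delta\eta_r=f_r\mathds 1_{\{\eta_r>0\}}-g_r\mathds 1_{\{\eta_r\le 0\}}.\]
Hence $|\Delta\eta_r|\le \|f\|_{L^\infty}+\|g\|_{L^\infty}=:K$ a.e., with a bound independent of $r$, since $f_r,g_r$ are sup-norm bounded by $f,g$. Kato's inequality for $W^{2,p}$ functions then gives, in the sense of distributions,
\[\Delta\eta_{r,+}\ge \mathds 1_{\{\eta_r>0\}}\Delta\eta_r,\qquad \Delta\eta_{r,-}\ge -\mathds 1_{\{\eta_r<0\}}\Delta\eta_r,\]
where $\eta_{r,-}=\max(-\eta_r,0)$. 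Since $\Delta(M|x|^2)=2dM$, choosing $M\ge K/(2d)$ makes both $\eta_{r,\pm}+M|\cdot|^2$ subharmonic. This proves the first item.

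For the second item I would write $a\eta_{r,+}+b\eta_{r,-}$ and treat the cases $a,b\ge0$ and mixed signs separately. When $a,b\ge0$, the first item applies directly with $M'=(a+b)M$. Suppose instead $b<0<a$ (the case $a<0<b$ is symmetric). Since $\eta_{r,-}=\eta_{r,+}-\eta_r$,
\[a\eta_{r,+}+b\eta_{r,-}=(a+b)\eta_{r,+}-b\eta_r.\]
The term $(a+b)\eta_{r,+}$ has Laplacian $\ge -(a+b)K$ by Kato, because $a+b>0$. The term $-b\eta_r$ is $W^{2,p}$ with $|\Delta(-b\eta_r)|\le |b|K$. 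Therefore $M=((a+b)+|b|)K/(2d)$ works. This is exactly where the hypothesis $a+b>0$ is used: a negative coefficient in front of the kink term $\eta_{r,+}$ would produce a negative singular measure on $\{\eta_r=0\}$, which the quadratic correction cannot absorb.

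The only delicate point is the rigorous justification of Kato's inequality, i.e. that $\Delta\eta_+$ carries a nonnegative singular part on $\{\eta=0\}$. I would prove it by approximating $t_+$ with convex smooth $\phi_\varepsilon$: then $\Delta\phi_\varepsilon(\eta_r)=\phi_\varepsilon'(\eta_r)\Delta\eta_r+\phi_\varepsilon''(\eta_r)|\nabla\eta_r|^2\ge\phi_\varepsilon'(\eta_r)\Delta\eta_r$, and letting $\varepsilon\to0$ gives the claim. The limit uses $\Delta\eta_r=0$ a.e. on $\{\eta_r=0\}$, valid for $W^{2,p}$ functions by Stampacchia, so the choice of $\phi_\varepsilon'(0)$ is immaterial.
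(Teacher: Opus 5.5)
Your proof is correct and follows the paper's argument: Kato's inequality for $\eta_{r,\pm}$ combined with the uniform $L^\infty$ bound on $\Delta\eta_r$. For the second item, your identity $a\eta_{r,+}+b\eta_{r,-}=(a+b)\eta_{r,+}-b\eta_r$ is exactly the paper's rewriting $\frac12\bigl((a+b)|\eta_r|+(a-b)\eta_r\bigr)$; since it holds for all $a,b$ with $a+b>0$, your case distinction is harmless but unnecessary, and your convex-approximation derivation of Kato's inequality supplies what the paper only cites.
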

\begin{proof}[Proof of Lemma \ref{lemma:tech_sub_suph}]
   Recall the Kato inequality \cite{zbMATH03389535}: if $w\in H^2(\B(0;1))$ then $\Delta |w|\geq \text{sign}(w)\Delta w$ in the sense of distributions, that is
    \[
    \forall \Psi\in C^{\infty}_{c}(\B(0;1)),\, \Psi\geq0,\ -\int_{\B(0;1)}\nabla |w|\cdot\nabla\Psi\geq \int_{\B(0;1)}(\text{sign}(w)\Delta w)\Psi.
    \]
Using this inequality and the fact that 
\[
\eta_{r,+}=\frac{|\eta_r|+\eta_r}{2}
\]
we get, in the sense of distributions
\begin{align*}
    -\Delta \eta_{r,+}&\leq \frac{1}{2}\left(1+\text{sign}(\eta_r)\right)\left(f_r\mathds 1_{\{\eta_r> 0\}}-g_r\mathds 1_{\{\eta_r\leq 0\}}\right)\\
    &=f_r\mathds 1_{\{\eta_r> 0\}}
    \\&\leq \|f\|_{L^\infty(\B_r)}.
\end{align*}
Likewise, from 
\[
\eta_{r,-}=\frac{|\eta_r|-\eta_r}{2}
\]
we obtain
\[
-\Delta \eta_{r,-}\leq \|g\|_{L^\infty(\B_r)}
\]
and so we can fix a constant $M$ such that $\eta_{r,\pm}+M\| \cdot\|^2$ is sub-harmonic.

The last point of the lemma is proved the same way: observe that 
\[
a\eta_{r,+}+b\eta_{r,-}=\frac12\left((a+b)|\eta_r|+(a-b)\eta_r\right)
\]
so that, if $a+b>0$,
\begin{align*}
   - \Delta(a\eta_{r,+}+b\eta_{r,-})&\leq \frac12\left((a+b)\text{sign}(\eta_r)+(a-b)\right)\left(f_r\mathds 1_{\{\eta_r> 0\}}-g_r\mathds 1_{\{\eta_r\leq 0\}}\right)\\
    &=a f_r\mathds 1_{\{\eta_r> 0\}}+b g_r\mathds 1_{\{\eta_r< 0\}}.
\end{align*}
The conclusion follows.\end{proof}

\begin{lemma}\label{lemma:tech_b_up}
Under Assumption \eqref{Eq:MW6}, there exists $C>0$ such that
\begin{equation}\label{eq:ur-_ur+}
\forall r>0, \ \int_{\partial \B(0;1)}\eta_{r,-}^2\leq C\left(1+\int_{\partial \B(0;1)}\eta_{r,+}^2\right)
\end{equation}
for each $r>0$.
\end{lemma}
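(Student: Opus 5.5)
Under \eqref{Eq:MW6}, i.e.\ $f(0)>0$, the bound \eqref{eq:ur-_ur+} should follow from the fact that $\eta_r$ is quasi-harmonic, so it cannot have a large negative part on $\partial\B(0;1)$ without a comparable positive part. The natural tool is the Poisson-type representation coming from the sub-harmonicity of Lemma \ref{lemma:tech_sub_suph}, combined with the vanishing $\eta_r(0)=0$.

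\emph{Step 1: mean value identity.} Since $\eta(0)=0$ and $-\Delta\eta_r=f_r\mathds 1_{\{\eta_r>0\}}-g_r\mathds 1_{\{\eta_r\le 0\}}$ with right-hand side bounded in $L^\infty$ uniformly in $r$, Green's representation on $\B(0;1)$ gives
\[
\fint_{\partial\B(0;1)}\eta_r=\eta_r(0)+\int_{\B(0;1)}G(0,y)\,(-\Delta\eta_r)(y)\,dy .
\]
The right-hand side is bounded in absolute value by $C(\|f\|_\infty+\|g\|_\infty)$, since $G(0,\cdot)\in L^1$. Hence
\[
\int_{\partial\B(0;1)}\eta_{r,-}\le \int_{\partial\B(0;1)}\eta_{r,+}+C.
\]
This already controls the $L^1$ norms, uniformly in $r$.

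\emph{Step 2: upgrade to $L^2$.} To pass from $L^1$ to $L^2$ on the sphere, I would use Lemma \ref{lemma:tech_sub_suph}: $\eta_{r,-}+M|\cdot|^2$ is sub-harmonic. The sub-mean value property on balls $\B(x;\rho)$ with $x$ near the sphere does not reach the boundary, so I would work with the family of radii instead. Since $\eta_{r}(\rho\,\cdot)/\rho^2=\eta_{\rho r}$, I would prove the estimate on $\partial\B(0;\rho)$ for $\rho\in(1/2,1)$, average in $\rho$, and use interior $L^\infty$ bounds. Concretely, apply the sub-mean value property on $\B(x;1/2)$ for $|x|\le 1/2$ to get
\[
\sup_{\B(0;1/2)}\eta_{r,-}\lesssim \|\eta_{r,-}\|_{L^1(\B(0;1))}+M .
\]
Then rescale: $\eta_{r/2}$ restricted to $\partial\B(0;1)$ is a rescaling of $\eta_r$ on $\partial\B(0;1/2)$.

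\emph{Main obstacle.} This rescaling changes $r$. I expect the genuine difficulty to be closing the estimate at the same scale $r$, because an $L^\infty$ (hence $L^2$) bound for $\eta_{r,-}$ on the sphere must be compared with $\int_{\partial\B(0;1)}\eta_{r,+}^2$ rather than with a solid integral at a different radius. My first attempt would be a contradiction and compactness argument. Suppose there are $r_k$ with
\[
\int_{\partial\B(0;1)}\eta_{r_k,-}^2\ge k\Bigl(1+\int_{\partial\B(0;1)}\eta_{r_k,+}^2\Bigr).
\]
Normalize $w_k=\eta_{r_k}/\|\eta_{r_k}\|_{L^2(\partial\B(0;1))}$, so $\|w_k\|_{L^2(\partial\B(0;1))}=1$ and $\int_{\partial\B(0;1)}w_{k,+}^2\to 0$; in particular $\|\eta_{r_k}\|_{L^2(\partial\B(0;1))}\to\infty$. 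Then $\Delta w_k\to0$ uniformly, so Caccioppoli and boundary estimates (with the Dirichlet data controlled in $L^2$) give compactness in $L^2(\B(0;1))$. A limit $w$ is harmonic with $w(0)=0$ and $w\le 0$ on $\partial\B(0;1)$, hence $w\equiv0$ by the mean value property. The remaining task, which is the delicate point, is to show that the unit sphere norm passes to the limit. For this I would use the quasi-monotone Weiss/frequency control from Theorem \ref{Pr:C3Weiss}: through \eqref{Eq:C3QuantifiedWeiss} it bounds $\|\partial_t\eta_t\|$ and prevents loss of the boundary trace. The contradiction $w\neq 0$ then yields \eqref{eq:ur-_ur+}.
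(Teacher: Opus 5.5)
Your contradiction--compactness skeleton is the paper's. You normalise $\eta_{r_k}$ by its $L^2(\partial\B(0;1))$ norm, extract a harmonic limit $v$ with $v(0)=0$ and $v\le 0$ on $\partial\B(0;1)$, and get $v\equiv 0$ from the maximum principle. But the step that actually produces the contradiction is left open: showing that the unit boundary norm survives in the limit. The compactness you do establish cannot supply it. Bounds on $L^2$ Dirichlet data together with $\Delta w_k\to 0$ give compactness in $L^2(\B(0;1))$ only. Harmonic extensions of $L^2(\partial\B(0;1))$-normalised data concentrating near a boundary point tend to $0$ in $L^2(\B(0;1))$, while their traces keep norm $1$. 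Your Step 1 shows this is exactly the scenario to exclude: after normalisation it gives $\Vert w_{k,-}\Vert_{L^1(\partial\B(0;1))}\to 0$, whereas $\Vert w_{k,-}\Vert_{L^2(\partial\B(0;1))}\to 1$. What excludes it is a uniform $W^{1,2}(\B(0;1))$ bound on $w_k$. Since the trace map $W^{1,2}(\B(0;1))\to L^2(\partial\B(0;1))$ is compact, the traces then converge strongly, so $\int_{\partial\B(0;1)}v_-^2=1$, contradicting $v\equiv 0$.

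The paper gets this bound from the plain monotonicity of $W$ in Theorem \ref{Pr:C3Weiss}, not from \eqref{Eq:C3QuantifiedWeiss}. The inequality $\Psi(r_k)\le W(r_k)\le W(1)$ rearranges into
\[
\int_{\B(0;1)}|\n\eta_{r_k}|^2\le W(1)+2\int_{\B(0;1)}\left(f_{r_k}\eta_{r_k,+}+g_{r_k}\eta_{r_k,-}\right)+2\int_{\partial\B(0;1)}\eta_{r_k}^2 .
\]
Dividing by $\int_{\partial\B(0;1)}\eta_{r_k}^2\to+\infty$ bounds $\Vert \n w_k\Vert_{L^2(\B(0;1))}$, provided $w_k$ is bounded in $L^2(\B(0;1))$. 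The paper proves that bound through the auxiliary contradiction argument \eqref{eq:contrad2}; your Green/Poisson representation, or \eqref{Eq:MetzPoincare}, would also do.

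The route you sketch is not a substitute as stated, for three reasons:
\begin{itemize}
\item \eqref{Eq:C3QuantifiedWeiss} compares traces at two different scales, equivalently it controls the radial quantity $\partial_t\eta_t$, not the trace at a single scale.
\item Using it requires an upper bound on $W(s)-W(r)$ of order $\int_{\partial\B(0;1)}\eta_{r_k}^2$. That needs a lower bound on $\Psi$ at the smaller scale, which you have not proved; the lemma is stated and proved in the paper without assuming $\Psi(0^+)>-\infty$.
\item Converting radial control into compactness of the trace at the fixed scale $r_k$ needs a further argument that you do not give.
\end{itemize}
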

\begin{proof}[Proof of Lemma \ref{lemma:tech_b_up}]
We argue by contradiction and assume that there exists some sequence $r_k\underset{n\to \infty}\rightarrow 0$ such that 
\begin{equation}\label{eq:contrad1}
\forall n\in\N,\ \int_{\partial \B(0;1)}\eta_{r_k,-}^2\geq k\left(1+\int_{\partial \B(0;1)}\eta_{r_k,+}^2\right).
\end{equation}
Let us show that this implies
\begin{equation}\label{eq:contrad2}
\int_{\B(0;1)}\eta_{r_k}^2\lesssim \int_{\partial \B(0;1)}\eta_{r_k,-}^2
\end{equation}

\begin{proof}[Proof of \eqref{eq:contrad2}]
If \eqref{eq:contrad2} does not hold then, up to taking another subsequence (still labeled $\{r_k\}_{k\in \N}$)
\begin{equation}\label{Eq:AM}
\int_{\B(0;1)}\eta_{r_k}^2\geq k\int_{\partial \B(0;1)}\eta_{r_k,-}^2
\end{equation}
Define, for any $k\in \N$, 
\[w_k:=\frac{\eta_{r_k}}{\left(\int_{\B(0;1)}\eta_{r_k}^2\right)^{1/2}}\] which satisfies (by \eqref{eq:contrad1}--\eqref{Eq:AM})
\begin{equation}\label{eq:contrad3}\int_{\partial \B(0;1)}w_k^2\underset{n\to\infty}\rightarrow 0.\end{equation}
From Theorem \ref{Pr:C3Weiss}
\begin{multline*}
\int_{\B(0;1)}|\nabla w_k|^2\leq \frac{1}{\int_{\B(0;1)}\eta_{r_k}^2}W(1)+\frac{2}{\int_{\B(0;1)}\eta_{r_k}^2}\left(\int_{\B(0;1)}f_{r_k}\eta_{r_k,+}+g_{r_k}\eta_{r_k,-}\right)\\+2\int_{\partial \B(0;1)}w_k^2+C\frac{(1-r_k^\gamma)}{\int_{\B(0;1)}\eta_{r_k}^2}.
\end{multline*}
As  \eqref{eq:contrad1}--\eqref{Eq:AM} imply
\[ \int_{\B(0;1)}\eta_{r_k}^2\underset{n\to \infty}\rightarrow +\infty\]
 we deduce that $\{w_k\}_{n\in \N}$ is bounded in $W^{1,2}(\B(0;1))$, so that (up to a further subsequence) it converges weakly in $W^{1,2}(\B(0;1))$, strongly in $L^2(\B(0;1))$ and $L^2(\partial \B(0;1))$ to some $w_\infty\in W^{1,2}(\B(0;1))$, which is constant by passing to the weak limit in the above estimate. But then, the fact that $\int_{\partial \B(0;1)}w_\infty^2=0$ yields a contradiction with $\int_{\B(0;1)}w_\infty^2=1$.
\end{proof} Now, consider
\[
v_k:=\frac{\eta_{r_k}}{\left(\int_{\partial \B(0;1)} \eta_{r_k,-}^2\right)^{1/2}}.
\]
From \eqref{eq:contrad1}-\eqref{eq:contrad2} $\{v_k\}_{k\in \N}$ is bounded in $L^2(\B(0;1))$ and $L^2(\partial \B(0;1))$. From Theorem \ref{Pr:C3Weiss}, we deduce 
\begin{multline*}
\int_{\B(0;1)}|\nabla v_k|^2\leq \frac{W(1)}{\int_{\partial \B(0;1)}\eta_{r_k,-}^2}\\+\frac{2}{\int_{\partial \B(0;1)}\eta_{r_k,-}^2}\left(\int_{\B(0;1)}f_{r_k}\eta_{r_k,+}+g_{r_k}\eta_{r_k,-}\right)+2\int_{\partial \B(0;1)}v_k^2+C\frac{(1-r_k^\gamma)}{\int_{\partial \B(0;1)}\eta_{r_k}^2}.
\end{multline*}
Now, as $\int_{\partial \B(0;1)}\eta_{r_k,-}^2\to+\infty$, \eqref{eq:contrad1}--\eqref{eq:contrad2} imply the boundedness of $\{v_k\}_{k\in \N}$ in $W^{1,2}(\B(0;1))$. As $\{\Delta v_k\}_{n\in \N}$ is uniformly bounded in $L^\infty$  we deduce that, up to a subsequence, $\{v_k\}_{k\in \N}$ converges in $C^{1,\gamma}_{\text{loc}}(\B(0;1))\cap L^2(\partial \B(0;1))$ to $v_\infty$ satisfying 
\[
\begin{cases}
    -\Delta v_\infty=0, \text{ in }B_1,\\
    v_\infty(0)=0.
    %v_\infty\leq 0 \text{ in }\partial \B(0;1)
\end{cases}
\]
Furthermore, as
\[\int_{\partial \B(0;1)}v_{\infty,+}^2=\lim_{n\to+\infty}\frac{\int_{\partial \B(0;1)}\eta_{r_k,+}^2}{\int_{\partial \B(0;1)} \eta_{r_k,-}^2}=0, \text{ from \eqref{eq:contrad1}}\] we obtain 
\[ v_\infty\leq 0\text{ on }\partial \B(0;1).\] 
The strong maximum principle implies $v_\infty\equiv 0$. On the other hand, $\int_{\partial \B(0;1)}v_{\infty,-}^2=\lim \int_{\partial \B(0;1)}v_{n,-}^2=1$, thus yielding a final contradiction. This implies \eqref{eq:ur-_ur+}. 
\end{proof}

\subsubsection{Proof of Theorem \ref{Th:MW1}}We can now move on to the proof of Theorem \ref{Th:MW1}.

\begin{proof}[Proof of Theorem \ref{Th:MW1}]
We mostly follow the approach of \cite{zbMATH05129536}. As a preliminary step let us show that
\begin{equation}\label{Eq:ExpressionDI}
\frac12\frac{d}{dr}\int_{\partial \B(0;1)} \eta_r^2=\frac1r\left(\Psi(r)-\int_{\B(0;1)} f_r \eta_{r,+}-\int_{\B(0;1)} g_r \eta_{r,-}\right).
\end{equation}
Indeed, 
\begin{align*}
\frac12\frac{d}{dr}\int_{\partial \B(0;1)} \eta_r^2&=\int_{\partial \B(0;1)} \eta_r\frac{\partial \eta_r}{\partial r}
\\&=\frac1r\int_{\partial \B(0;1)} \eta_r\langle\n \eta_r,x\rangle-\frac2r\int_{\partial \B(0;1)}\eta_r^2
\\&=\frac1r\int_{ \B(0;1)}\left(\eta_r\Delta \eta_r+|\n \eta_r|^2\right)-\frac2r\int_{\partial \B(0;1)}\eta_r^2 
\\&=\frac1r\int_{\B(0;1)}\left(|\n \eta_r|^2-(f_r\eta_{r,+}+g_r\eta_{r,-})\right)-\frac2r\int_{\partial \B(0;1)}\eta_r^2
\\&=\frac1r\left(\Psi(r)+\int_{\B(0;1)} \left(f_r \eta_{r,+}+ g_r \eta_{r,-}\right)\right).\end{align*}We now distinguish between the cases $\Psi(0^+)>-\infty$ and $\Psi(0^+)=-\infty$.

\textbf{Case $\Psi(0^+)>-\infty$.} 

\textit{Step 1.} We first prove that there exists positive constant $\zeta, r_0, C>0$ only depending on $f(0), g(0), \|f\|_{C^{0,\gamma}(\B(0;1))}$ and $\|g\|_{C^{0,\gamma}(\B(0;1))}$ such that for any $r\in(0,r_0)$
\begin{equation}\label{eq:fg_below}\int_{\B(0;1)} \left(f_r \eta_{r,+}+ g_r \eta_{r,-}\right)\geq \zeta\int_{\B(0;1)}\eta_{r,+}-C.\end{equation}
By continuity of $f$ and $g$, and recalling that $\min (f+g)>0$, we can find  $a>0,b\in\R$ and a radius $r_0>0$ such that
\[
\forall r\in(0,r_0), \ f\geq  a, g\geq b \text{ in } \B_r,\,, a+b>0.
\]
We thus have
\begin{align*}
    \int_{\B(0;1)} \left(f_r \eta_{r,+}+ g_r \eta_{r,-}\right)&\geq \int_{\B(0;1)}\left(a \eta_{r,+}+b\eta_{r,-}\right)
\end{align*}
Let now $\zeta>0$ be such that $(a-c)+b>0$. We fix $M>0$ provided by Lemma \ref{lemma:tech_sub_suph} such that
\[
\overline{\eta_r}:=(a-\zeta) \eta_{r,+}+ b\eta_{r,-}+M|\cdot|^2 \text{ is sub-harmonic in }\B(0;1).
\]
Using the mean value property for sub-harmonic function, we deduce that
\[
\fint_{\B(0;1)} \left((a+\zeta) \eta_{r,+}+b \eta_{r,-}+M|\cdot|^2\right)\geq \overline\eta_r(0)=0.
\]
Consequently,
\[
\int_{\B(0;1)}\left(a \eta_{r,+}+b\eta_{r,-}\right)\geq \zeta\int_{\B(0;1)}\eta_{r,+} -M\fint_{\B(0;1)}|x|^2
\]
thus giving \eqref{eq:fg_below}.

\textit{Step 2.} We now show that, for any  $M'>0$ there exists $M''>0$ such that 
\begin{equation}\label{Eq:MW3}\int_{\partial \B(0;1)}\eta_r^2\geq M''\Rightarrow \int_{\B(0;1)} \eta_{r,+}\geq M'.\end{equation}
From the mean value property for the sub-harmonic function $\eta_{r,+}+M|\cdot|^2$ (for some $M>0$ given by Lemma \ref{lemma:tech_sub_suph}) we have
\[
\forall x\in \B_{3/4}\setminus \B_{1/2},\ \left(\eta_{r,+}(x)+M|x|^2\right)\leq\fint_{\B(0;1)} \left(\eta_{r,+}+M|\cdot|^2\right)
\]
so that it holds 
\[
\left(\int_{\B_{3/4}\setminus \B_{1/2}}\left(\eta_{r,+}+M|\cdot|^2\right)^2\right)^{1/2}\leq C_d\fint_{\B(0;1)}\left(\eta_{r,+}+M|\cdot|^2\right)
\]
for some dimensional constant $C_d>0$, providing 
\begin{align}
\nonumber
    \left(\int_{\B(0;1)}\eta_{r,+}\right)^{2}&\geq C'\int_{\B_{3/4}\setminus \B_{1/2}}\eta_{r,+}^2-C''\\
    &\geq C'''\int_{1/2}^{3/4}\left(\int_{\partial\B(0;1)}\eta_{rs,+}^2\right)ds-C''
    \label{eq:subh_ur+_est}
\end{align}
for constants $C',C'',C'''>0$. Now, for any $s\in\left(\frac12;\frac34\right)$,
\begin{align*}
    \int_{\partial\B(0;1)}\eta_{rs,+}^2&=\int_{\partial\B(0;1)}\eta_{rs}^2-\eta_{rs,-}^2\\
    &\geq \int_{\partial\B(0;1)}\eta_{rs}^2-C\int_{\partial\B(0;1)}\eta_{rs,+}^2-C
\end{align*}
where the constant $C>0$ is given by Lemma \ref{lemma:tech_b_up}, thus yielding
\begin{equation}
    \label{eq:urs_parB1_est}
    \int_{\partial\B(0;1)}\eta_{rs,+}^2\geq \frac{1}{C+1}\int_{\partial\B(0;1)}\eta_{rs}^2-\frac{C}{C+1}.
\end{equation}
for the same $s$. On the other hand, from the monotonicity formula \eqref{Eq:C3QuantifiedWeiss} we know that there exists $C,\, \beta>0$ such that for any $r$ small enough and any $s\in (1/2,3/4)$, 
\[C(r^{\beta}-(rs)^{\beta})+\Psi(r)-\Psi(rs)\geq C\int_{\partial \B(0;1)}(\eta_r-\eta_{rs})^2.\] 
Since $\Psi$ is uniformly continuous (as $\Psi(0^+)>-\infty$), we get that for any $r$ small enough,
\[
\int_{\partial \B(0;1)}(\eta_r-\eta_{rs})^2\leq1.
\]
Combining this estimate with the elementary inequality
\[
\frac{1}{2}\int_{\partial\B(0;1)}\eta_r^2\leq \int_{\partial\B(0;1)}(\eta_r-\eta_{rs})^2+\int_{\partial\B(0;1)}\eta_{rs}^2
\]
we obtain that if $r$ is small enough then
%\begin{equation}\label{Eq:Metz}
%\int_{\partial \B(0;1)}\eta_r^2\geq M''\Rightarrow \int_{\B(0;1)} \eta_r^2\geq M'.
%\end{equation}(\`a r\'ediger proprement, en gros on regarde simplement les moennes sur des sphères). 
\begin{equation}
    \label{eq:ur_urs}
    \forall s\in \left(\frac12;\frac34\right), \
    \int_{\partial\B(0;1)}\eta_{rs}^2\geq \frac{1}{2}\int_{\partial\B(0;1)}\eta_r^2-1
\end{equation} 
for some constant $C>0$. From \eqref{eq:ur_urs}, \eqref{eq:urs_parB1_est} and \eqref{eq:subh_ur+_est} we deduce that there exists $c,C>0$ such that for any $r$ small enough\[
\left(\int_{\B(0;1)}\eta_{r,+}\right)^{2}\geq c\int_{\partial\B(0;1)}\eta_r^2-C,
\]
which proves \eqref{Eq:MW3}.

\textit{Step 3.} We now show that
\begin{equation}
\label{Eq:Metz5}
\underset{r\to 0^+}{\lim\sup} \int_{\partial \B(0;1)}\eta_r^2<+\infty.\end{equation}

To this end, we first observe the following fact: from \eqref{Eq:ExpressionDI} and $\Psi(0^+)>-\infty$, we obtain, for two constants $A\,, B$, 
\[ \frac{d}{dr}\int_{\partial \B(0;1)}\eta_r^2\geq \frac1r\left(-B+A\int_{\B(0;1)}\eta_{r,+}\right).\] From \eqref{Eq:MW3},  we deduce that there exists $M''>0$ such that 
\begin{equation}\label{Eq:JJ5} \int_{\partial \B_1}\eta_r^2\geq M''\Rightarrow \frac{d}{dr}\int_{\partial \B_1}\eta_r^2> 0.\end{equation}  Assume there exist $r_1<r_0$ such that 
\[ S(r_1)>S(r_0)\geq M''.\]From \eqref{Eq:JJ5}, there exists $\e>0$ such that $S'>0$ in $(r_1;r_1+\e)$. By an easy reasoning, we deduce that 
\[ \sup\left\{r\in (r_1;r_0)\,, S'>0\text{ in }(r_1;r)\right\}=r_0\] and thus $S(r_1)<S(r_0)$. We obtain a contradiction, whence \eqref{Eq:Metz5} follows.

\textit{Step 4.}
We now prove
\begin{equation}\label{Eq:Metz7}
\underset{r\to 0^+}{\lim\sup}\Vert \eta_r\Vert_{\mathscr C^{1,\gamma}(\overline{\B}(0;1))}<\infty\text{ for any $\gamma\in (0;1)$}.\end{equation}
We begin with a weaker Sobolev estimate, namely, \begin{equation}\label{Eq:Metz6}
\underset{r\to 0}{\lim\sup}\int_{\B(0;1)}| \n \eta_r|^2<+\infty.
\end{equation} 
To prove \eqref{Eq:Metz6}, observe that on the one hand
\begin{equation}\label{Eq:MetzPoincare} \forall v \in W^{1,2}(\B(0;1))\,\Vert v\Vert_{L^2(\B(0;1))}\leq C\left(\Vert \n v\Vert_{L^2(\B(0;1))}+\Vert v\Vert_{L^2(\partial \B(0;1))}\right),\end{equation} for some constant $C>0$. On the other hand, from Theorem \ref{Pr:C3Weiss}, we have
\begin{multline}\label{Eq:NanxyPoincare}
    \forall r\in(0,1),\ \int_{\B(0;1)}|\n \eta_r|^2\leq \Psi(1)+2\max(\Vert f\Vert_\infty,\Vert g\Vert_\infty)\Vert \eta_r\Vert_{L^2(\B(0;1))}\\
    +2\int_{\partial \B(0;1)}\eta_r^2
    +C(1-r^\alpha)
    \end{multline} 
    where the constant $C>0$ is given by Theorem \ref{Pr:C3Weiss}.
    Combining \eqref{Eq:Metz5}--\eqref{Eq:MetzPoincare}--\eqref{Eq:NanxyPoincare} gives \eqref{Eq:Metz6}.
    
Now, from standard elliptic regularity estimates \cite[Theorem 12.2.2]{zbMATH06062283} and a bootstrap argument, we deduce that
\begin{equation}\label{Eq:Nancy1}
\forall p\in [1;+\infty),\,\underset{r\to 0}{\lim\sup}\Vert \eta_r\Vert_{W^{2,p}\left(\B\left(0;\frac34\right)\right)}<+\infty
\end{equation}
    whence 
    \begin{equation}\label{Eq:Nancy2}
    \forall \gamma\in(0;1)\,, \underset{r\to 0}{\lim\sup}\Vert \eta_r\Vert_{\mathscr C^{1,\gamma}\left(\overline \B\left(0;\frac12\right)\right)}<\infty.
    \end{equation}
    Using the fact that 
    \(\eta_r=\frac12\eta_{2r}\left(\frac\cdot2\right)\), we obtain  
    \[ \forall \gamma \in (0;1),\, \underset{r\to 0^+}{\lim\sup}\Vert \eta_{r}\Vert_{\mathscr C^{1,\gamma}(\overline\B(0;1))}\lesssim \underset{r\to 0}{\lim\sup}\Vert \eta_r\Vert_{\mathscr C^{1,\gamma}\left(\overline \B\left(0;\frac12\right)\right)}<\infty. \]
    In particular, \eqref{Eq:Metz7} is proved and there exists $\eta_0\in \cap_{\gamma\in (0;1)}\mathscr C^{1,\gamma}(\overline \B(0;1))$ such that, along a subsequence \( r_k\underset{k\to \infty}\rightarrow 0\), we have
    \begin{equation}\label{Eq:Nancy2}\forall \gamma\in (0;1),\, \eta_{r_k}\underset{k\to \infty}\rightarrow \eta_0\text{ in }\mathscr C^{1,\gamma}\left(\overline \B(0;1)\right).\end{equation} In particular, 
    \begin{equation}\label{Eq:ZeroCriticalPsi}
    \n \eta_0(0)=0.\end{equation}

\textit{Step 5.} To identify the equation satisfied by any blow-up limit $\eta_0$, we first rewrite the equation on $\eta_r$ as 
\[-\Delta \eta_r=(f_r+g_r)m_r-g_r\] where $m_r:=\mathds 1_{\{\eta_r>0\}}$. In particular, $m_r$ is a solution of 
\[\text{ Maximise }T(\cdot,\eta_r):L^\infty(\B(0;1));[0;1])\ni m\mapsto \int_{\B(0;1)} m\eta_r.\] Letting $\underline{m}_0$ denote a weak $L^\infty-*$ closure point of the sequence $\{m_r\}_{r\to 0^+}$, we deduce first that 
\begin{equation}\label{Eq:MaxT} \underline{m}_0\in \mathrm{arg max}\,T(\cdot,\eta_0)\end{equation} and, passing to the limit in the equation, that 
\[-\Delta \eta_0=(f(0)+g(0))\underline{m}_0-g(0).\]

\textit{Step 6.}  We now prove that blow-up limits are $2$-homogeneous. To this end, fix a sequence $\{r_k\}_{k\in \N}$ and $\eta_0$ such that \eqref{Eq:Nancy2} holds. For any $s>0$, we have 
\[ \eta_{sr_k}=\frac{\eta_{ r_k}(s\cdot)}{s^2}\underset{k\to +\infty}\to\frac1{s^2}\eta_0(s\cdot)=:\eta_{0,s}\text{ in }\mathscr C^{1,\gamma}\left(\overline\B(0;1)\right) \text{ for any } \gamma\in(0,1).\]
Introduce, for any $v\in W^{1,2}(\B(0;1))$, the notation $v_s:=\frac{v(s\cdot)}{s^2}$ and set
\begin{multline} \forall r\,, s>0\,, \forall v \in W^{1,2}(\B(0;1)),\,\\ \mathcal W(r,s;v):=\int_{\B(0;1)}|\n v_s|^2-2\int_{\B(0;1)}\left(f_rv_{s,+}+g_r v_{s,-}\right)-2\int_{\partial \B(0;1)}v_s^2.\end{multline}
On the one-hand, by continuity of $W$ at $0^+$ we have 
\begin{equation}\label{Eq:Nancy3}
\mathcal W(sr_k,sr_k;\eta)=W(sr_k)\underset{k\to \infty}\rightarrow W(0^+)=\Psi(0^+).\end{equation}
On the other hand, we also have that
\begin{align*}
\mathcal W(sr_k,sr_k;\eta)&=\int_{\B(0;1)}\left|\n (\eta_s)_{r_k}\right|^2-2\int_{\B(0;1)}\left((f_s)_{r_k} (\eta_s)_{r_k,+}+(g_s)_{r_k} (\eta_s)_{r_k,)}\right)
\\&-2\int_{\partial \B(0;1)}(\eta_s)_{r_k}^2
\\&\underset{k\to \infty}\rightarrow \int_{\B(0;1)}\left|\n\eta_{0,s}\right|^2-2\int_{\B(0;1)}\left(f(0)\eta_{0,s,+}+g(0)\eta_{0,s,-}\right)
\\&-2\int_{\partial \B(0;1)}\eta_{0,s}^2.
\end{align*} 
Introducing with a slight abuse the notation 
\[ \mathcal W(0,s;v):=\int_{\B(0;1)}|\n v_s|^2-2\int_{\B(0;1)}\left(f(0)v_{s,+}+g(0) v_{s,-}\right)-2\int_{\partial \B(0;1)}v_s^2\] we thus conclude
\begin{equation}\label{Eq:C3ScaleInvariance}
\forall s,s'>0\,, \mathcal W(0,s;\eta_0)=\Psi(0^+)=\mathcal W(0,s';\eta_0).
\end{equation}
In other words, the ``limiting Weiss functional" $\mathcal W_0(\cdot;\eta_0):=\mathcal W(0,\cdot;\eta_0)$ is constant in $s>0$. We now deduce that this implies:
\begin{equation}\label{Eq:Nancy4}
\eta_0\text{ is $2$-homogeneous: }\eta_0(r\cdot)=r^2\eta_0(\cdot).\end{equation}
Indeed, we can directly adapt the proof of Theorem \ref{Pr:C3Weiss} with $f$ and $g$ constant, up to the following minor adaptation: first, $\int_{\B(0;1)}\underline{m}_0\eta_0=\int_{\B(0;1)}\eta_{0,+}$. Second, from \eqref{Eq:MaxT}, we can write\[ \underline{m}_0=\mathds 1_{\{\eta_0>0\}}+\overline{m_1},\, \mathrm{supp}(\overline{m_1})\subset \{\eta_0=0\},\] which gives, using \eqref{Eq:JuanTabo},
\[ \int_{\B(0;1)} \frac{\partial \eta_{0,s}}{\partial s}\cdot \left(\underline{m}_0-\mathds 1_{\{\eta_{0,s}>0\}}\right)=\int_{\B(0;1)}\overline{m_1}\frac{\partial \eta_{0,s}}{\partial s}=0.\]  

This gives the existence of a constant $C>0$ such that
\begin{multline} \forall r,\, s>0\,, \mathcal W_0(s;\eta_0)-\mathcal W_0(r;\eta_0)\geq \int_r^s t\int_{\partial \B(0;1)}\left(\frac{\partial \eta_{0,s}}{\partial t}\right)^2dt\\=C\int_r^s t\int_{\partial \B(0;1)}\left|\langle \n \eta_{0,t},\cdot\rangle-\frac2t\eta_{0,t}\right|^2dt.\end{multline} We thus deduce from the fact that $\mathcal W_0(\cdot,\eta_0)$ is constant that 
\[ \forall s>0\,, \langle \n \eta_{0,s},\cdot\rangle=\frac2s\eta_{0,s}\text{ on }\partial \B(0;1).\]Then again, this implies that 
\[ \forall s>0\,, \langle \n \eta_0,\cdot\rangle=\frac{2}r\eta_{0}\text{ on }\partial \B(0;s),\] which gives \eqref{Eq:Nancy4}  and thus concludes the proof of Theorem \ref{Th:MW1} when $\Psi(0^+)>-\infty$.

\textbf{Case $\Psi(0^+)=-\infty$.} 

Let us first prove that 
\begin{equation}\label{Eq:Nancy5}
\Psi(0^+)=-\infty\Rightarrow \left(S(r)=\int_{\partial \B(0;1)}\eta_r^2\underset{r\to 0^+}\rightarrow +\infty\right).
\end{equation}
To this end, first observe that, as $\Psi(0^+)=-\infty$, we have 
\begin{equation}\label{Eq:Nancy6}-\int_{\B(0;1)}\left(f_r\eta_{r,+}+g_r\eta_{r,-}\right)+\int_{\partial\B(0;1)}\eta_r^2\to+\infty.
\end{equation}
On the other hand, since $\eta_{r,+}+M|\cdot|^2$ and $\eta_{r,-}+M|\cdot|^2$ are sub-harmonic and non-negative for some $M>0$ (see Lemma \ref{lemma:tech_sub_suph}), there exists $C_n,C_n'>0$ such that for all $r\in(0,1)$
\[
\int_{\B(0;1)}\eta_{r,+}\leq C_n\left(1+\int_{\partial\B(0;1)}\eta_{r,+}\right)\leq C_n'\left(1+\left(\int_{\partial\B(0;1)}\eta_{r,+}^2\right)^{1/2}\right)
\]
and likewise
\[
\int_{\B(0;1)}\eta_{r,-}\leq C_n'\left(1+\left(\int_{\partial\B(0;1)}\eta_{r,-}^2\right)^{1/2}\right)
.\]
As $f\,, g \in L^\infty(\B(0;1))$, \eqref{Eq:Nancy5} follows.

 Now, from the monotonicity of the Weiss functional $W$ (see Theorem \ref{Pr:C3Weiss}) for any $r\in(0;r_0)$ we have
\[\Psi(r)\leq \Psi(r_0)+C(r_0^\beta-r^\beta),\] so that 
\begin{equation}\label{eq:dege_Monot}\int_{\B(0;1)}|\n \tilde \eta_r|^2\leq \frac{1}{S(r)}\left(\Psi(r_0)+Cr_0^\alpha\right)+\frac2{\sqrt{S(r)}}\int_{\B(0;1)}(f_r\tilde \eta_{r,+}+g_r\tilde \eta_{r,-})+2\int_{\partial \B(0;1)}\tilde \eta_r^2.\end{equation}
Recalling the uniform $L^\infty$ bounds for $f$ and $g$, and that by definition $\|\tilde \eta_r\|_{L^2(\partial \B(0;1))}=1$, using the Poincaré inequality \eqref{Eq:MetzPoincare} in this estimate ensures that 
\[
\lim\sup_{r\to 0}\int_{\B(0;1)}| \n \eta_r|^2<+\infty.
\]
The same inequality \eqref{Eq:MetzPoincare} then provides a uniform $L^2(\B(0;1))$ bound on $\tilde \eta_r$, so that overall we obtain that $\tilde \eta_r$ is bounded in $H^1(\B(0;1))$. Passing to the weak limit in $H^1(\B(0;1))$ strong in $L^2(\partial\B(0;1))$ in \eqref{eq:dege_Monot}, we obtain that any limit $\tilde \eta_0$ of any subsequence is harmonic and satisfies
\[ \int_{\B(0;1)}|\n \tilde \eta_0|^2\leq 2\int_{\partial \B(0;1)}\tilde \eta_0^2\,, \int_{\partial \B(0;1)}\tilde\eta_0^2=1.\] In particular, $\tilde\eta_0\not\equiv 0$.
Furthermore, adapting the argument given in the case $\Psi(0^+)>-\infty$, we deduce that, letting $\tilde\eta_{0,s}:=\frac{\tilde\eta_0(s\cdot)}{s^2}$, we obtain 
\[ \forall s>0\,,  \int_{\B(0;1)}|\n \tilde \eta_{0,s}|^2\leq 2\int_{\partial \B(0;1)}\tilde \eta_{0,s}^2\,, \int_{\partial \B(0;1)}\tilde\eta_{0,s}^2=1\] and that 
\begin{equation}\label{Eq:Fre} \forall r\,, s>0\,,  \int_{\B(0;1)}|\n \tilde \eta_{0,s}|^2- 2\int_{\partial \B(0;1)}\tilde \eta_{0,s}^2=  \int_{\B(0;1)}|\n \tilde \eta_{0,r}|^2- 2\int_{\partial \B(0;1)}\tilde \eta_{0,r}^2.\end{equation}  Now, by the same arguments that led to Theorem \ref{Pr:C3Weiss}, the function $\mathcal W_0$ defined by 
\[ s>0\mapsto \mathcal W_0(s):=\int_{\B(0;1)}|\n\tilde\eta_{0,s}|^2-2\int_{\partial \B(0;1)}\tilde\eta_{0,s}^2\] satisfies
\begin{equation}\label{Eq:CdP1}
\forall s>0\,, \mathcal W_0'(s)=\frac2s\int_{\partial \B(0;1)}\left(\frac{\partial\tilde\eta_{0,s}}{\partial\nu}-2\tilde\eta_{0,s}\right)^2.
\end{equation}
From \eqref{Eq:Fre}, we deduce that for any $s>0$ we have
\[\frac{\partial\tilde\eta_{0,s}}{\partial\nu}-2\tilde\eta_{0,s}=0\text{ on }\partial \B(0;1),\] which implies the $2$-homogeneity of the limit.

\end{proof}

\end{document}